\documentclass[a4paper]{amsart}

\usepackage[utf8]{inputenc}
\usepackage{amssymb}
\usepackage{amsthm}
\usepackage{amsmath}
\usepackage{mathtools}
\usepackage{enumitem}
\usepackage{nicefrac}
\usepackage{xcolor}
\usepackage{tikz-cd}
\usepackage{hyperref}

\DeclareFontFamily{U}{matha}{\hyphenchar\font45}
\DeclareFontShape{U}{matha}{m}{n}{
	<5> <6> <7> <8> <9> <10> gen * matha
	<10.95> matha10 <12> <14.4> <17.28> <20.74> <24.88> matha12
}{}
\DeclareSymbolFont{matha}{U}{matha}{m}{n}
\DeclareMathSymbol{\hash}{2}{matha}{"23}


\begin{document}
	
\theoremstyle{plain}
\newtheorem{lemma}{Lemma}
\numberwithin{lemma}{section}
\newtheorem{proposition}[lemma]{Proposition}
\newtheorem{corollary}[lemma]{Corollary}
\newtheorem{theorem}[lemma]{Theorem}
	
\theoremstyle{definition}
\newtheorem{definition}[lemma]{Definition}
\newtheorem{question}[lemma]{Question}

\theoremstyle{remark}
\newtheorem{remark}[lemma]{Remark}

\newcommand{\period}{\text{.}}
\newcommand{\comma}{\text{,}}

\newcommand{\rca}{\textsf{RCA}}
\newcommand{\aca}{\textsf{ACA}}
\newcommand{\sigmatwoind}{\Sigma^0_2\textsf{-IND}}
\newcommand{\zfc}{\textsf{ZFC}}

\newcommand{\n}{\mathbb{N}}
\newcommand{\on}{\textsf{On}}
\newcommand{\wo}{\textsf{WO}}
\newcommand{\atr}{\textsf{ATR}}
\newcommand{\pica}{\Pi^1_1\textsf{-CA}_0}
\newcommand{\pitr}{\Pi^1_1\textsf{-TR}_0}
\newcommand{\kriz}{K\v{r}\'{i}\v{z}}
\newcommand{\bta}{\textsf{B}}
\newcommand{\btaalt}{\textsf{\~{B}}}
\newcommand{\btal}{\textsf{B}^l}
\newcommand{\btalalt}{\textsf{\~{B}}^l}
\newcommand{\eqnf}{=_{\textsf{NF}}}
\newcommand{\phinf}{=_{\varphi\textsf{-NF}}}
\newcommand{\alphanf}{=_{\alpha\textsf{-NF}}}
\newcommand{\weak}{\textsf{w}}
\newcommand{\strong}{\textsf{s}}
\newcommand{\gordeev}{\textsf{g}}
\newcommand{\rec}{\textsf{r}}
\newcommand{\gapw}{\textsf{S}^\weak}
\newcommand{\gapg}{\textsf{S}^\gordeev}
\newcommand{\gaps}{\textsf{S}^\strong}
\newcommand{\gapr}{\textsf{S}^\rec} 

\newcommand{\REF}{\textcolor{red}{REF}}
\newcommand{\CHECK}{\textcolor{red}{CHECK}}

\newcommand{\treetype}{\mathfrak{B}}
\newcommand{\labeltype}{\mathfrak{L}}
\newcommand{\emptytype}{\mathfrak{E}}

\newcommand{\dom}{\textsf{dom}}

\newcommand{\badfin}{\textsf{Bad}_\textsf{fin}}

\newcommand{\hess}{\mathbin{\hash}}
\newcommand{\hessMul}{\mathbin{\rotatebox[origin=c]{-45}{$\hash$}}}

\title[Maximal order types for sequences with gap condition]{Maximal order types\\for sequences with gap condition}
\author[P. Uftring]{Patrick Uftring}

\address{University of the Bundeswehr Munich, Department of Computer Science, Werner-Heisenberg-Weg 39, 85579 Neubiberg, Germany}
\email{patrick.uftring@unibw.de}
\thanks{This research is partially funded by the Deutsche Forschungsgemeinschaft (DFG, German Research Foundation) -- Project number 460597863.}

\begin{abstract}
    Higman's lemma states that for any well partial order~$X$, the partial order~$X^*$ of finite sequences with members from~$X$ is also well. By combining results due to Girard as well as Sch\"{u}tte and Simpson, one can show that Higman's lemma is equivalent to \emph{arithmetical comprehension} over~$\rca_0$, the usual base system of reverse mathematics.

    By incorporating Friedman's \emph{gap condition}, Sch\"{u}tte and Simpson defined a slightly different order on finite number sequences with fewer comparisons. While it is still true that their definition yields a well partial order, it turns out that arithmetical comprehension is not enough to prove this fact.

    Gordeev considered a symmetric variation of this gap condition for sequences with members from arbitrary well orders. He could show, over~$\rca_0$, that his partial order on sequences is well (for any underlying well order) if and only if \emph{arithmetical transfinite recursion} is available.

	We present a new and simpler proof of this fact and extend Gordeev's results to weak and strong gap conditions as well as binary trees with weakly ascending labels.
	Moreover, we compute the maximal order types of all considered structures.
\end{abstract}

\maketitle

\section{Introduction}
Well partial orders generalize the concept of well orders: We call a partial order~${X}$ \emph{well} if and only if any sequence~${(x_n)_{n \in \n} \subseteq X}$ is \emph{good}, i.e., it has indices~${i < j}$ with~${x_i \leq x_j}$. Sequences without this property are called \emph{bad}. Of course, any well order is a (linear) well partial order. However, there are also interesting well partial orders that are not linear, e.g.~several orders on finite trees or even the graph minor relation on finite undirected graphs. The latter is a celebrated result due to Robertson and Seymour (cf.~\cite{RobertsonSeymour}).

Given a well partial order~${X}$, we can consider finite sequences~${X^*}$ together with Higman's famous embeddability relation on sequences (cf.~\cite{Higman}): Let~$|s|$ and~$s_i$ denote the length and the $i$-th member, respectively, of any sequence~${s \in X^*}$ for any index~${i < |s|}$. For~${s, t \in X^*}$, we write~${s \leq t}$ if and only if there is a strictly increasing map~${f: |s| \to |t|}$ such that~${s_i \leq t_{f(i)}}$ holds for any index~${i < |s|}$. This partial order on sequences~${X^*}$ is quite fundamental since it appears as a suborder of many other interesting examples, e.g., labeled trees or graphs. Moreover, it is easy to show that~${X^*}$ is a well partial order: We use the so-called \emph{minimal bad sequence argument}, which is due to Nash-Williams (cf.~\cite{NashWilliams}). Assume, for contradiction, that there is a bad sequence~${(s_n)_{n \in \n} \subseteq X^*}$. Then, we can assume that~${s_0}$ has minimal length among the set of all such bad sequences. Similarly, we can assume that~${s_1}$ has minimal length among all such bad sequences that begin with~${s_0}$, and so on. Now, we split each sequence~${s_n}$ into its head~${x_n}$ and its tail~${s'_n}$. Writing~${*}$ in order to denote the concatenation of sequences, we have~${s_n = \langle x_n \rangle * s'_n}$ for any index~${n \in \n}$. Using (a weak consequence of) Ramsey's theorem for pairs and two colors together with the assumption that~$X$ is a well partial order, we can find an increasing sequence of indices~${(n_i)_{i \in \n}}$ such that~${(s'_{n_i})_{i \in \n}}$ is bad. With this, one readily shows that~${(t_i)_{i \in \n}}$ with
\begin{equation*}
	t_i :=
	\begin{cases}
		s_i &\text{if~${i < n_0}$,}\\
		s'_{n_{i - n_0}} &\text{otherwise}
	\end{cases}
\end{equation*}
constitutes a bad sequence in~${X^*}$ that is even \emph{smaller} than our original~${(s_n)_{n \in \n}}$, i.e., we can find an index~${i \in \n}$ such that~${|t_i| < |s_i|}$ and~${|t_j| = |s_j|}$ hold for all~${j < i}$. In fact, we can simply choose~${i := n_0}$.
Clearly, this contradicts our assumption on~${(s_n)_{n \in \n}}$.

This minimal bad sequence argument is quite powerful: It does not only work for sequences but also for more complicated structures, e.g., labeled trees. But it has its limits, as any mathematical tool does, and it is exactly this limit that Friedman explored when he invented a new relation on certain trees: \emph{embeddability with gap condition} (cf.~\cite{SimpsonGap}). In the formal setting of reverse mathematics (cf.~\cite{Simpson}), he could show that even~${\pica}$, the strongest of the \emph{Big Five} (a collection of five subsystems of second-order arithmetic, which correspond to many results from ordinary mathematics), is not strong enough to prove that his construction yields a well partial order.

The system~${\pica}$ is characterized by a set existence principle for formulas of a certain form (``for all sets, some arithmetical statement holds''). This axiom is exactly as strong as the minimal bad sequence argument (cf.~\cite{MarconeMinimalBad}). Later, \kriz{} extended Friedman's results to trees with labels from arbitrary well orders (cf.~\cite{Kriz}). Freund could show that the minimality principle employed by \kriz{} is as strong as~${\pitr}$ (cf.~\cite{FreundKruskal}), a logical system that may use transfinite iterations of the axiom that characterizes~${\pica}$.

Instead of considering trees, we can also restrict ourselves to sequences. This case was first studied by Sch\"{u}tte and Simpson for sequences with members from finite linear orders (cf.~\cite{SchuetteSimpson}, see also \cite{RMWGapCondition} for further investigations). From there, Gordeev considered an extension to arbitrary well orders, quite similar in spirit to \kriz's extension of Friedman's original result to arbitrary well orders published in the same year (cf.~\cite{Gordeev}). Gordeev's definition is the one that we begin with:
\begin{definition}[Gordeev's gap condition]\label{def:gordeev_gap_sequence}
	Given a well order~${\alpha}$, we define the partial order~${\gapg_\alpha}$ of sequences~${\alpha^*}$ ordered using Gordeev's symmetric gap condition: For elements~${s, t \in \gapg_\alpha}$, we write~${s \leq_{\gapg_\alpha} t}$ or~${s \leq_\gordeev t}$ if and only if there exists a strictly increasing map~${f: |s| \to |t|}$ such that the following are satisfied:
	\begin{enumerate}[label=\roman*)]
		\item ${s_i \leq t_{f(i)}}$ for all~${i < |s|}$,
		\item ${s_i \leq t_j}$ or~${s_{i+1} \leq t_j}$ for all~${i < |s| - 1}$ and~${j}$ with~${f(i) < j < f(i+1)}$.
	\end{enumerate}
\end{definition}
The first property is nothing else than the usual requirement for embedding sequences. The second property is the gap condition.
It is called the \emph{symmetric} gap condition as it, in contrast to the original definitions, not only considers the right end of the gap with~${s_{i+1} \leq t_j}$ but also the left end with~${s_i \leq t_j}$. (This has the effect that~${s \leq_\gordeev t}$ holds if and only if~${s' \leq_\gordeev t'}$ does, for sequences~$s'$ and~$t'$ that result from~$s$ and~$t$, respectively, if we reverse the order of their members.) Gordeev's main result is that over the weak base theory of~${\rca_0}$, these partial orders~${\gapg_\alpha}$ are well for all well orders~${\alpha}$, if and only if the axiom of \emph{arithmetical transfinite recursion} is available.

As we will see in Lemma~\ref{lem:gapg_gapw_are_identical}, this partial order is not only isomorphic but, actually, \emph{equal} to the non-symmetric variant, which is known as the \emph{weak gap condition}:
\begin{definition}[Weak gap condition]\label{def:weak_gap_sequence}
	Given a well order~${\alpha}$, we define the partial order~${\gapw_\alpha}$ of sequences~${\alpha^*}$ ordered using the weak gap condition: For elements~${s, t \in \gapw_\alpha}$, we write~${s \leq_{\gapw_\alpha} t}$ or~${s \leq_\weak t}$ if and only if there exists a strictly increasing map~${f: |s| \to |t|}$ such that the following are satisfied:
	\begin{enumerate}[label=\roman*)]
		\item ${s_i \leq t_{f(i)}}$ for all~${i < |s|}$,
		\item ${s_{i+1} \leq t_j}$ for all~${i < |s| - 1}$ and~${j}$ with~${f(i) < j < f(i+1)}$.
	\end{enumerate}
\end{definition}
So far, we have only considered \emph{inner gaps}. Similar to the original definitions, we can also take all those members into account that lie to the left of anything that we map to. These members sit in the \emph{outer gap}. Additionally considering this new gap yields what is known as the \emph{strong gap condition}:
\begin{definition}[Strong gap condition]\label{def:strong_gap_sequence}
	Given a well order~${\alpha}$, we define the partial order~${\gaps_\alpha}$ of sequences~${\alpha^*}$ ordered using the strong gap condition: For elements~${s, t \in \gaps_\alpha}$, we write~${s \leq_{\gaps_\alpha} t}$ or~${s \leq_\strong t}$ if and only if there exists a strictly increasing map~${f: |s| \to |t|}$ such that the following are satisfied:
	\begin{enumerate}[label=\roman*)]
		\item ${s_i \leq t_{f(i)}}$ for all~${i < |s|}$,
		\item ${s_{i+1} \leq t_j}$ for all~${i < |s| - 1}$ and~${j}$ with~${f(i) < j < f(i+1)}$,
		\item ${s_0 \leq t_j}$ for all~${j < f(0)}$ if~${|s| > 0}$.
	\end{enumerate}
\end{definition}
The nice property of this strong variant is that its definition can equivalently be given in a concise recursive way (the equivalence of both definitions will be shown in Lemma~\ref{lem:gapg_gapw_are_identical}):
\begin{definition}[Strong gap condition, recursive definition]\label{def:strong_gap_sequence_recursive}
	For any well order~${\alpha}$, we define the partial order~${\gapr_\alpha}$ of sequences~${\alpha^*}$ ordered using the strong gap condition given by a recursive definition. We write~${\leq_{\gapr_\alpha}}$ or~${\leq_\rec}$ for the smallest relation satisfying the following properties:
	\begin{enumerate}[label=\roman*)]
		\item ${\langle \rangle \leq_\rec s}$ for all~${s \in \gapr_\alpha}$,
		\item ${\langle \beta \rangle * s \leq_\rec \langle \gamma \rangle * t}$ for~${\beta \leq \gamma \in \alpha}$ and~${s, t \in \gapr_\alpha}$ if~${s \leq_\rec t}$ or~${\langle \beta \rangle * s \leq_\rec t}$.
	\end{enumerate}
\end{definition}
Notice that, in contrast to the recursive definition of the simpler order on sequences employed by Higman's lemma, the relation ${\beta \leq \gamma}$ is even required in case of ${\langle \beta \rangle * s \leq_{\rec} t}$.

In Section~\ref{sec:ATR}, we extend Gordeev's result from the symmetric gap condition to the weak and strong gap conditions. Moreover, we use a different and, arguably, more transparent approach involving \emph{quasi-embeddings} for trees. We call a map~${f: X \to Y}$ between partial orders~${X}$ and~${Y}$ a quasi-embedding if it is order reflecting, i.e.,~${f(x) \leq f(x')}$ implies~${x \leq x'}$ for~${x, x' \in X}$. One can easily see that if~${Y}$ is a well partial order, then this property is reflected to~${X}$ via~${f}$.

Consider a simple number sequence, e.g.,~${s := \langle 2, 0, 1, 0, 3 \rangle}$. We can turn this sequence into a binary tree with labels from numbers as follows: Take the first occurrence of the smallest member ($0$). This will be the label of our root. Then, split our sequence into two subsequences at this member ($s_l := \langle 2 \rangle$ and~${s_r := \langle 1, 0, 3 \rangle}$). Recursively compute the binary trees for these sequences. The tree of the former sequence~${s_l}$ will be our left subtree, the tree of the latter sequence~${s_r}$ will be our right subtree. This results in the following tree:
\begin{center}
\begin{tikzcd}[row sep = small, column sep = small]
	& & 1 & & 3\\
	2 & & & 0\ar[lu]\ar[ru] & \\
	& 0\ar[lu]\ar[rru] & & & \\
\end{tikzcd}
\end{center}
\vspace{-0.5em}
It turns out that this map from~${\gapg_{\omega}}$ into binary trees with labels in~${\omega}$ yields a quasi-embedding if we consider the embeddability relation used in Kruskal's theorem (cf.~\cite{Kruskal}) extended to ordered trees. Similarly, the question whether~${\gapg_{\alpha}}$ is a well partial order can now be reduced to the question whether binary trees with labels in~${\alpha}$ are a well partial order. The latter is answered positively by Kruskal's theorem for ordered trees, which is a principle that is proof-theoretically weaker than~${\pica}$ and, thus, also weaker than the minimal bad sequence argument.\footnote{In general, we can argue as follows: First, Kruskal's theorem (for ordered trees and labels from an arbitrary well order) can be proved using the minimal bad sequence argument and, hence,~${\pica}$. Now, the quantifier complexity of Kruskal's theorem makes it strictly weaker than~${\pica}$ over weak systems (see, e.g.,~\cite[Theorem~4.1.1]{UftringPhD}).}

We may also observe that our constructed tree has \emph{weakly ascending labels}, i.e., for any node with label~${\beta}$ all its children have labels~${\gamma}$ with~${\gamma \geq \beta}$. We make this precise with the following definition:
\begin{definition}[Binary trees with weakly ascending labels]
	Given a well order~${\alpha}$ and a (well) partial order~${X}$, we define the partial order~${\bta_{\alpha, X}}$ of binary trees with weakly ascending inner labels in~${\alpha}$ and leaf labels in~${X}$ as follows: The set contains terms
	\begin{enumerate}[label=\roman*)]
		\item ${x \star []}$ for~${x \in X}$,
		\item ${\beta \star [t_l, t_r]}$ for~${\beta \in \alpha}$ and~${t_l, t_r \in \bta_{\alpha, X}}$ such that~${\beta}$ is less than or equal to any \emph{inner label} in~${t_l}$ or~${t_r}$.
	\end{enumerate}
	The tree~${x \star []}$ is a \emph{leaf} and its label is a \emph{leaf label}. The tree~${\beta \star [t_l, t_r]}$ is an \emph{inner node} and its label~$\beta$ as well as all inner labels in~$t_l$ or~$t_r$ are its \emph{inner labels}. Moreover,~${t_l}$ is its \emph{left subtree}, whereas~${t_r}$ is its \emph{right subtree}.
	
	The partial order on~${\bta_{\alpha, X}}$ is the smallest relation~${\leq}$ satisfying
	\begin{enumerate}[label=\roman*), start=3]
		\item ${x \star [] \leq y \star []}$ for~${x \leq y \in X}$,
		\item ${s \leq \beta \star [t_l, t_r]}$ for~${\beta \in \alpha}$ and~${s, t_l, t_r \in \bta_{\alpha, X}}$ if~${s \leq t_l}$ or~${s \leq t_r}$,
		\item ${\beta \star [s_l, s_r] \leq \gamma \star [t_l, t_r]}$ for~${\beta \leq \gamma \in \alpha}$ as well as~${s_l, s_r, t_l, t_r \in \bta_{\alpha, X}}$ if~${s_l \leq t_l}$ and~${s_r \leq t_r}$.
	\end{enumerate}
\end{definition}
Using our method from before, we see that~${\gapg_\alpha}$ may be quasi-embedded into~${\bta_{\alpha, 1}}$.
In fact, we can give an even tighter characterization of our trees: By construction, we always look for the \emph{first occurrence} of the smallest member in our sequence. Thus, all inner labels to the left must be strictly greater. We can conclude that for any node~${\beta}$ all children in its \emph{left} subtree have labels~${\gamma}$ with~${\gamma > \beta}$:
\begin{definition}[Binary trees with weakly, but on left subtrees strictly, ascending labels]
		Given a well order~${\alpha}$ and a (well) partial order~${X}$, we define the partial order~${\btal_{\alpha, X}}$ to be the suborder of~${\bta_{\alpha, X}}$ containing the terms
		\begin{enumerate}[label=\roman*)]
			\item ${x \star []}$ for~${x \in X}$,
			\item ${\beta \star [t_l, t_r]}$ for~${\beta \in \alpha}$ and~${t_l, t_r \in \btal_{\alpha, X}}$ such that~${\beta}$ is strictly less than any inner label in~${t_l}$ and less than or equal to any inner label in~${t_r}$.
		\end{enumerate}
\end{definition}
Our proofs that~${\gapg_{\alpha}}$ and~${\gapw_{\alpha}}$ (but also~${\gaps_{\alpha}}$ and~${\btal_{\alpha, 1}}$) are well partial orders in~${\atr_0}$, i.e.~$\rca_0$ together with arithmetical transfinite recursion, will all reduce to the fact that~${\atr_0}$ already proves that~${\bta_{\alpha, 1}}$ is a well partial order, for any well order~${\alpha}$. This is the main result of Section~\ref{sec:ATR}.

In addition to the equivalence with arithmetical transfinite recursion, we want to compare ``the size'' of~$\gapg_\alpha$, $\gapw_\alpha$, $\gaps_\alpha$, $\bta_\alpha$, and~$\btal_\alpha$ for different well orders~$\alpha$. This can be done using \emph{maximal order types}.

Let~${X}$ be a well partial order. Then, its maximal order type is the supremum of all well orders~${\alpha}$ such that there exists a quasi-embedding from~${\alpha}$ into~${X}$. We denote this supremum by~${o(X)}$. By a famous result due to de Jongh and Parikh (cf.~\cite{deJonghParikh}), we know that this supremum is actually a maximum, i.e., there is a quasi-embedding from~${o(X)}$ into~${X}$.

With this definition, we have a notion to compare the sizes of our well partial orders.
\begin{theorem}\label{thm:maximal_order_types}
	For any ordinal~${\alpha}$, we have the following order types for~${\bta_{\alpha, 1}}$, ${\btal_{\alpha, 1}}$, ${\gapw_\alpha}$, ${\gapg_\alpha}$, and~${\gaps_\alpha}$:
	{\allowdisplaybreaks
	\begin{align*}
		&o(\bta_{\alpha, 1}) = F(\alpha) :=\\
		&\hspace{5em}\begin{cases}
			1 & \text{ if } \alpha = 0\comma\\
			\varphi_{1 + \gamma}(0) & \text{ if } \alpha = \omega^{\gamma} \text{ and } \gamma < \varphi_{\gamma}(0)\comma\\
			\varphi_{1 + \gamma}(F(\delta)) & \text{ otherwise if } \alpha \eqnf \omega^{\gamma} + \delta\period
		\end{cases}\\\\
		&o(\gapg_\alpha) = o(\gapw_\alpha) = o(\btal_{\alpha, 1}) = G(\alpha) :=\\
		&\hspace{5em}\begin{cases}
			1 & \text{ if } \alpha = 0\comma\\
			\omega & \text{ if } \alpha = 1\comma\\
			\omega^{\omega^{G(n)}} & \text{ if } \alpha = n + 1 \text{ and } 0 < n < \omega\comma\\
			\varphi_{\gamma}(0) & \text{ if } \alpha = \omega^{\gamma} \text{ and } 0 < \gamma < \varphi_{\gamma}(0)\comma\\
			\varphi_{\gamma}(G(\delta)) & \text{ otherwise if } \alpha \eqnf \omega^{\gamma} + \delta\period
		\end{cases}\\\\
		&o(\gaps_\alpha) = H(\alpha) :=\\
		&\hspace{5em}\begin{cases}
			1 & \text{ if } \alpha = 0\comma\\
			G(\alpha)^{\omega^{\gamma}} \cdot H(\delta) & \text{ if } \alpha \eqnf \omega^{\gamma} + \delta\period
		\end{cases}
	\end{align*}
	}
    Here, ``$\eqnf$'' restricts~$\delta$ to values strictly below~$\omega^{\gamma + 1}$.
\end{theorem}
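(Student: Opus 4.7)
The plan is to reduce the five identities to the two tree-order formulas $o(\bta_{\alpha,1})=F(\alpha)$ and $o(\btal_{\alpha,1})=G(\alpha)$, and then to prove these by a simultaneous transfinite induction on the Cantor normal form of~$\alpha$. First, Lemma~\ref{lem:gapg_gapw_are_identical} will give that $\gapg_\alpha$ and $\gapw_\alpha$ are literally the same partial order, so their order types coincide. For the identification with $\btal_{\alpha,1}$ I would use the ``first occurrence of the smallest member'' construction sketched after Definition~\ref{def:strong_gap_sequence_recursive}: one checks that this map and its inverse (the in-order traversal reading off the inner labels of a tree) both reflect the weak gap order, giving a two-sided quasi-embedding $\gapw_\alpha \leftrightarrows \btal_{\alpha,1}$ and hence $o(\gapw_\alpha)=o(\btal_{\alpha,1})=G(\alpha)$. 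For $\gaps_\alpha$, the extra outer-gap clause splits a sequence at the first block of overall minima into an outer part (handled by the weak order) and a strict remainder; transporting this through the tree picture I would construct a bi-quasi-embedding between $\gaps_\alpha$ and an appropriate product structure, explaining the multiplicative shape of~$H(\alpha) = G(\alpha)^{\omega^\gamma} \cdot H(\delta)$ via the standard identity $o(X\times Y)=o(X)\hessMul o(Y)$.

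For the two remaining tree formulas I would proceed by induction on $\alpha \eqnf \omega^\gamma + \delta$. The upper bound on $o(\btal_{\alpha,1})$ comes from a decomposition lemma: any tree whose root label lies in the $\delta$-summand is coded into $\btal_{\delta,1}$, while any tree whose root label lies in the $\omega^\gamma$-head decomposes, via iterated splitting at the first occurrence of each threshold $\omega^{\gamma'}<\omega^\gamma$, into a Veblen-style term built over lower values $G(\delta')$. Matching lower bounds are obtained by explicit quasi-embeddings of the ordinal $G(\alpha)$ in Veblen normal form into $\btal_{\alpha,1}$: ordinal sums are realised by right-concatenation of trees and Veblen fixed points by their standard iterated-nesting approximations. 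The constant shift between $\varphi_\gamma$ (for $G$) and $\varphi_{1+\gamma}$ (for $F$) arises because $\bta_{\alpha,1}$, unlike $\btal_{\alpha,1}$, permits a left child to carry the \emph{same} label as its parent; this adds exactly one extra level of nesting below each label and bumps the Veblen index by one.

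The main obstacle will be the tight upper bounds at the critical cases $\alpha=\omega^\gamma$ with $\gamma<\varphi_\gamma(0)$, where the naive induction does not yield a strictly smaller parameter to feed back in. Here I expect to need a reification argument in the spirit of Sch\"utte's Klammersymbole or the Rathjen--Weiermann collapse: one shows that an arbitrary quasi-embedding from an ordinal into $\btal_{\omega^\gamma,1}$ can be rewritten as a bounded system of nested Veblen witnesses whose rank is already below $\varphi_\gamma(0)$, and symmetrically for $F$. Once this critical-point lemma is established, the general Cantor normal form cases follow by the standard sum identity $o(X\sqcup Y)=o(X)\hess o(Y)$ applied to the decompositions above; the formula for~$H$ then follows from the product reduction in the first paragraph combined with the formula for~$G$, and the stated identities for $\gapg_\alpha$, $\gapw_\alpha$, and~$\btal_{\alpha,1}$ are read off directly from the two-sided quasi-embeddings already set up.
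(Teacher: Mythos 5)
Your overall architecture (sandwich the order types by separate upper and lower bounds, induct on Cantor normal forms, handle the critical indecomposable cases by a reification-style argument) matches the paper's strategy, and your heuristic for the shift from $\varphi_\gamma$ to $\varphi_{1+\gamma}$ is essentially the right one. But there are several genuine gaps. The most serious is the finite case $G(n+1)=\omega^{\omega^{G(n)}}$, which your proposal does not address at all: the lower bound here is \emph{not} obtained by embedding Higman-ordered sequences or by ``right-concatenation of trees'', because the gap condition destroys the naive embedding --- the paper proves (Remark~\ref{rem:cannot_embed_omega_star_into_gapw_2}) that there is no quasi-embedding $\omega^*\to\gapw_2$ even though $o(\omega^*)=\omega^{\omega^\omega}=o(\gapw_2)$. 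The paper has to introduce a genuinely new partial order $\alpha^\bullet$ on sequences (a weakening of Higman's order with an extra comparison clause), prove it is transitive, compute lower bounds for it, and only then quasi-embed $\alpha^\bullet$ into $\gapw_{n+1}$. Without something playing this role your induction has no base for the finite labels. A second missing idea is the lower bound at $\Gamma$-ordinals: when $\gamma=\varphi_\gamma(0)$ the formula demands $\varphi_\gamma(1)\le o(\gapw_{\omega^\gamma})$, and ``standard iterated-nesting approximations'' of the fixed point only reach $\varphi_\gamma(0)=\gamma$ itself; the paper needs a separate construction (Lemma~\ref{lem:quasi-embedding_varphi_1_into_weak}) exploiting that $\gapw_\alpha$, unlike the Veblen hierarchy, has no fixed points.

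Two further steps as stated would not go through. First, you assert a \emph{two-sided} quasi-embedding $\gapw_\alpha\leftrightarrows\btal_{\alpha,1}$ with the in-order traversal as inverse; the paper only proves the direction $\gapw_\alpha\to\btal_{\alpha,1}$ (Lemma~\ref{lem:embed_weak_into_btal}) and obtains $o(\gapw_\alpha)=o(\btal_{\alpha,1})$ by sandwiching both between $G(\alpha)$, precisely because it is not clear (and is never claimed) that the traversal map reflects the tree order back into the weak gap order. You would need to either prove that the canonical map is also order-preserving or, as the paper does, prove the lower bound $G(\alpha)\le o(\gapw_\alpha)$ directly. Second, for $\gaps_\alpha$ a ``bi-quasi-embedding into a product structure'' cannot produce the factor $G(\alpha)^{\omega^\gamma}$: a natural product of finitely many pieces only multiplies order types, whereas the exponent $\omega^\gamma$ in $H(\alpha)=G(\alpha)^{\omega^\gamma}\cdot H(\delta)$ comes from a \emph{finite-sequence} construction --- the paper's upper bound goes through $\gaps_\delta\otimes(\gapw_\alpha\oplus(-1+\gamma))^*$ and uses $o(Z^*)$, and the matching lower bound uses base-$\alpha$ normal forms of ordinals below $\alpha^{(\omega^\gamma)}$. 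These are fixable within your framework, but as written the plan for $H$ and for the $\gapw\leftrightarrow\btal$ identification does not yet constitute a proof.
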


This theorem will be proven in Section~\ref{sec:order_types}. We observe that the given characterization of our trees constructed from sequences with gap condition using~${\btal_{\alpha, 1}}$ is as tight as possible, at least when it comes to maximal order types. This answers an open question asked during ``Trends in Proof Theory 2023'' at the University of Ghent.

Nested applications of Veblen functions can be described using \emph{hyperations} (cf.~\cite[Corollary~4.10]{FDJoosten}) and have already been considered by Girard and Vauzeilles (cf.~\cite[Corollary~III.3.2]{GirardVauzeilles}. Maximal order types for similar trees using a different embeddability relation (that does not preserve infima) are independently being investigated by Alakh Dhruv Chopra.

This article corresponds to Chapter~3 of the author's PhD thesis~\cite{UftringPhD}, which also considers definitions of~${\gapg_X}$,~${\gapw_X}$,~${\gaps_X}$,~${\bta_{X, Y}}$, and~${\btal_{X, Y}}$ for proper well partial orders~$X$ and~$Y$.

\subsection*{Acknowledgments}

I would like to thank my PhD advisor Anton Freund as well as Alakh Dhruv Chopra, Fedor Pakhomov, and Andreas Weiermann for our discussions and their valuable input. Parts of this work have been funded by the Deutsche Forschungsgemeinschaft (DFG, German Research Foundation) – Project number 460597863.

\section{Basic properties of sequences with gap condition}

We prove the following basic properties with~${\rca_0}$ in mind. For a general introduction to reverse mathematics, we refer to \cite{Simpson}. All of these results can be repeated in set theory for arbitrary (and even uncountable) well orders.

The proofs are very detailed in order to be convincing in the context of~${\rca_0}$. In many cases, they are more tedious than insightful and can, therefore, safely be skipped.

\begin{lemma}
	For any well order~${\alpha}$ and sequences~${s, t_l, t, t_r \in \gapw_\alpha}$ with~${s \leq_\weak t}$, we also have~${s \leq_\weak t_l * t * t_r}$.
\end{lemma}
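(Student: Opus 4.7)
The plan is to take the witness $f: |s| \to |t|$ for $s \leq_\weak t$ and shift it by $|t_l|$ to obtain a witness $g: |s| \to |t_l * t * t_r|$ for $s \leq_\weak t_l * t * t_r$. Concretely, I would set $g(i) := |t_l| + f(i)$ for all $i < |s|$. This is well-defined and strictly increasing since $f$ is, and the values lie in $|t_l * t * t_r| = |t_l| + |t| + |t_r|$ because $f(i) < |t|$.

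Next I would verify the two requirements of Definition~\ref{def:weak_gap_sequence} for $g$. For (i), by the definition of concatenation we have $(t_l * t * t_r)_{g(i)} = t_{f(i)}$ (since $|t_l| \leq g(i) < |t_l| + |t|$), and $s_i \leq t_{f(i)}$ holds by the assumption $s \leq_\weak t$. For (ii), suppose $i < |s| - 1$ and $g(i) < j < g(i+1)$. Setting $j' := j - |t_l|$ we obtain $f(i) < j' < f(i+1) \leq |t|$, so in particular $|t_l| \leq j < |t_l| + |t|$, whence $(t_l * t * t_r)_j = t_{j'}$. The weak gap condition for the witness $f$ then gives $s_{i+1} \leq t_{j'} = (t_l * t * t_r)_j$, as required. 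Notice that the outer positions $j < |t_l|$ and $j \geq |t_l| + f(|s|-1)$ do not need to be addressed, which is exactly why the \emph{weak} (as opposed to strong) gap condition behaves well under left and right concatenation.

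The argument involves no obstacle of substance; the only thing to be careful about in the context of~$\rca_0$ is the bookkeeping of indices and the case distinction in computing $(t_l * t * t_r)_j$ from the indices. Since the entire claim reduces to a single shift of the embedding map and a short verification, I would present it essentially as above.
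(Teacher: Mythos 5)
Your argument is correct and is exactly the immediate verification from Definition~\ref{def:weak_gap_sequence} that the paper has in mind (the paper simply states that the claim follows immediately from the definition). The shift of the witness by~$|t_l|$ and the check of conditions~i) and~ii) is the intended reasoning, so no comparison is needed.
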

\begin{proof}
	This follows immediately from Definition~\ref{def:weak_gap_sequence}.
\end{proof}
This lemma is more of an observation than an actual result and we will use it implicitly from now on. It should be noted, though, that this property does not hold for the strong gap condition in general since members that are newly introduced via~$t_l$ might violate the outer gap condition: We have~${\langle 1 \rangle \leq_\strong \langle 1 \rangle}$ but~${\langle 1 \rangle \nleq_\strong \langle 0, 1 \rangle}$.

\subsection{The interplay between different definitions}

In this part, we study the relationship between the partial orders~${\gapg_\alpha}$, ~${\gapw_\alpha}$,~${\gaps_\alpha}$, and~${\gapr_\alpha}$, for arbitrary well orders~${\alpha}$.

\begin{lemma}\label{lem:gapg_gapw_are_identical}
	For any well order~${\alpha}$, the orders~${\gapg_\alpha}$ and~${\gapw_\alpha}$ are identical, i.e., the identity map between the underlying sets is an isomorphism.
\end{lemma}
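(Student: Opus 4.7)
The inclusion $\leq_\weak \subseteq \leq_\gordeev$ is immediate from the definitions: the weak gap clause $s_{i+1} \leq t_j$ trivially implies the Gordeev disjunction $s_i \leq t_j \vee s_{i+1} \leq t_j$, so any witness for $s \leq_\weak t$ also witnesses $s \leq_\gordeev t$.

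For the reverse inclusion, I plan to take a Gordeev witness $f : |s| \to |t|$ and construct a weak witness $g$ by shifting each $f(i)$ to the right, within its own original gap, so as to swallow all positions $k$ where the Gordeev disjunction is satisfied only by the ``left'' clause $s_i \leq t_k$. Concretely, the candidate definition is
\[
    J_i := \{k : f(i) \leq k < f(i+1) \text{ and } s_{i+1} \not\leq t_k\}\comma \quad g(i) := \max(J_i \cup \{f(i)\})
\]
for $i < |s|-1$, together with $g(|s|-1) := f(|s|-1)$. With this choice, strict monotonicity and the inequality $s_i \leq t_{g(i)}$ drop out almost for free: the first from $g(i) < f(i+1) \leq g(i+1)$; the second because $g(i) = f(i)$ is covered by $f$ itself, while $g(i) > f(i)$ places $g(i)$ inside the original gap $(f(i), f(i+1))$ with $s_{i+1} \not\leq t_{g(i)}$, so Gordeev forces the remaining disjunct $s_i \leq t_{g(i)}$.

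The heart of the proof will be verifying the weak gap condition $s_{i+1} \leq t_k$ for all $g(i) < k < g(i+1)$. The subranges $g(i) < k < f(i+1)$ and $k = f(i+1)$ dispatch themselves, by maximality of $g(i)$ in $J_i$ (so $k \notin J_i$) and by the properties of $f$, respectively. The real obstacle is the subcase $f(i+1) < k < g(i+1)$, which can only arise when the shift $g(i+1) > f(i+1)$ has actually taken effect, i.e., $g(i+1) = \max J_{i+1}$. My plan there is to exploit this definition directly: unpacking it yields $s_{i+2} \not\leq t_{g(i+1)}$, while Gordeev at $g(i+1)$ simultaneously forces $s_{i+1} \leq t_{g(i+1)}$. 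Since $\alpha$ is linearly ordered, these combine to $s_{i+1} \leq s_{i+2}$, and then the original Gordeev disjunction at $k$ collapses into $s_{i+1} \leq t_k$ (either directly, or by transitivity through $s_{i+2} \leq t_k$). This is the one step where I expect real work, and it is the only point that invokes the linearity of $\alpha$; everything else is bounded search over the given data and so goes through in $\rca_0$.
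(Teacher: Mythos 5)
Your proof is correct, and all three verification steps (monotonicity, $s_i \leq t_{g(i)}$, and the gap condition including the delicate subcase $f(i+1) < k < g(i+1)$) go through as you describe. The underlying mechanism is the same as in the paper -- shift $f(i)$ rightward past positions where only the left Gordeev disjunct $s_i \leq t_k$ holds, using the disjunction itself to preserve $s_i \leq t_{g(i)}$ -- but the architecture differs. The paper argues by contradiction from an extremal witness: it picks a Gordeev realizer $f$ maximizing $\sigma(f) = \sum_i f(i)$ (which needs $\Delta^0_1$-induction to justify), shows that a single violation of the weak gap condition lets one move one index and strictly increase $\sigma$, and the burden of proof is to check that the \emph{shifted map is still a Gordeev realizer}, in particular in the gap to the \emph{left} of the moved index. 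You instead perform all shifts greedily in one pass ($g(i) := \max(J_i \cup \{f(i)\})$, a bounded search) and verify \emph{directly} that the result is a weak realizer; your burden lands instead on the gap to the \emph{right} of $f(i+1)$, where you must compare $s_{i+1}$ with $s_{i+2}$ via linearity of $\alpha$ (deriving $s_{i+1} \leq t_{g(i+1)} < s_{i+2}$ and collapsing the Gordeev disjunction at $k$ by transitivity). Both proofs use linearity exactly once and in the same spirit. Your version is slightly more constructive -- no extremal principle, just bounded maxima -- which is a mild bonus in the $\rca_0$ setting, at the cost of a somewhat longer case analysis in the verification.
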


\begin{proof}
	Given two arbitrary sequences~${s, t \in \gapg_\alpha}$, we prove that~${s \leq_\gordeev t}$ holds if and only if~${s \leq_\weak t}$ does. This yields our claim since the underlying sets of~${\gapg_\alpha}$ and~${\gapw_\alpha}$ are identical.
	
	By definition, it is clear that~${s \leq_\weak t}$ implies~${s \leq_\gordeev t}$. Therefore, assume that~${s \leq_\gordeev t}$ holds. This relation is realized by a function~${f: |s| \to |t|}$ that satisfies the properties demanded by Definition~\ref{def:gordeev_gap_sequence}. Let us denote by~${\sigma(f)}$ the sum of the range of~${f}$, i.e.,~${\sigma(f) := \sum_{i < |s|} f(i)}$. Using~${\Sigma^0_1}$-induction (actually already~${\Delta^0_1}$-induction), we can assume that for our choice of~${f}$, the value~${\sigma(f)}$ is maximal among all~${\sigma(g)}$ for realizers~${g}$ of~${s \leq_\gordeev t}$.
	
	We claim that~${f}$ realizes~${s \leq_\weak t}$. In order to derive a contradiction, assume that this is not the case and we find an index~${i < |s| - 1}$ together with an index~${j < |t|}$ such that~${f(i) < j < f(i+1)}$ and~${t_j < s_{i+1}}$ hold. Since~${f}$ is a valid realizer for~${s \leq_\gordeev t}$, this immediately entails~${s_i \leq t_j}$. Thus, we can define a new realizer~${g: |s| \to |t|}$ as follows:
	\begin{equation*}
		g(k) :=
		\begin{cases}
			j &\text{if } k = i\comma\\
			f(k) &\text{otherwise.}
		\end{cases}
	\end{equation*}
	First, from~${s_i \leq t_j}$ it is clear that~${s_k \leq t_{g(k)}}$ holds for all~${k < |s|}$. Next,~${g}$ is still strictly increasing since~${g(i) = j < f(i+1) = g(i+1)}$ holds. Finally, we prove that~${g}$ realizes~${s \leq_\gordeev t}$. We only need to check the gaps that have changed with respect to~${f}$, i.e., the gap between~${g(i-1)}$ and~${g(i)}$ (provided that~${i}$ is positive) as well as the gap between~${g(i)}$ and~${g(i+1)}$. The latter case is trivial since the interval between~${g(i)}$ and~${g(i+1)}$ is a subset of the interval between~${f(i)}$ and~${f(i+1)}$. In the former case, assume that~${i}$ is positive and there is some index~${l < |t|}$ that violates the gap condition, i.e.,~${g(i-1) < l < g(i)}$ holds together with~${t_l < s_{i-1}, s_i}$. If~${l}$ is strictly less than~${f(i)}$, we have~${f(i-1) < l < f(i)}$ because of~${f(i-1) = g(i-1)}$ and, thus, already a violation in the gap condition of~${f}$. If~${l}$ is equal to~${f(i)}$, then~${t_{f(i)} < s_i}$ violates the basic property~${s_i \leq t_{f(i)}}$ of~${f}$. If~${l}$ is strictly greater than~${f(i)}$, then we have~${f(i) < l < g(i) \leq f(i+1)}$. This entails one of~${t_l \geq s_i}$ or~${t_l \geq s_{i+1}}$ since~${f}$ is a valid realizer. With~${s_i \leq t_j < s_{i+1}}$, this implies~${t_l \geq s_i}$. However, this violates our assumption~${t_l < s_i}$.
	
	Finally, it is clear that~${\sigma(g) = \sigma(f) + (j - f(i)) > \sigma(f)}$ holds. This contradicts our assumption on the maximality of~${\sigma(f)}$.
\end{proof}
As an immediate consequence, we see that the maximal order types of~${\gapg_\alpha}$ and~${\gapw_\alpha}$ are identical, for any well order~$\alpha$.
We continue with a result that shows the connection between the weak and the strong gap condition. This will be important later, when we move to recursive definitions.
\begin{lemma}\label{lem:gap_strong_and_weak}
	Let~${\alpha}$ be a well order. Given~${s, t \in \gapw_\alpha}$, the following are equivalent:
	\begin{enumerate}[label=\alph*)]
		\item ${s \leq_\weak t}$
		\item ${\langle 0 \rangle * s \leq_\strong \langle 0 \rangle * t}$
		\item ${\langle \beta \rangle * s \leq_\strong \langle \gamma \rangle * t}$ for all~${\beta \leq \gamma \in \alpha}$ such that~${\beta}$ is less than or equal to any element in~${t}$.
	\end{enumerate}
\end{lemma}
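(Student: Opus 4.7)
The plan is to prove the cycle (a) $\Rightarrow$ (c) $\Rightarrow$ (b) $\Rightarrow$ (a). Each implication is a concrete transformation of realizers, so no induction beyond what $\rca_0$ already provides is required.

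For (a) $\Rightarrow$ (c), assume a realizer $f \colon |s| \to |t|$ of $s \leq_\weak t$. The naive attempt of setting $g(0) := 0$ and $g(k+1) := f(k)+1$ breaks down at the gap between $g(0)$ and $g(1)$, where the strong condition would demand $s_0 \leq t_j$ for every $j < f(0)$, a fact that the weak hypothesis does not supply. Instead, I would put $g(0) := f(0)$ (with the trivial adjustment $g(0) := 0$ when $s$ is empty) and $g(k+1) := f(k)+1$ otherwise. This makes the gap between $g(0)$ and $g(1)$ vacuous, so the only remaining boundary work is the outer gap (indices $j < f(0)$) and the basic comparison at position~$0$. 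Both reduce exactly to $\beta \leq \gamma$ together with the hypothesis that $\beta$ lies below every element of $t$, since the target at position $g(0)=f(0)$ is either $\gamma$ (if $f(0)=0$) or $t_{f(0)-1}$. All later inner gaps translate verbatim from the weak gap condition on $f$ via the shift $j \mapsto j+1$.

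For (c) $\Rightarrow$ (b), I would instantiate (c) with $\beta = \gamma = 0$, using that $0$ is the least element of $\alpha$ so that the side condition $\beta \leq t_j$ is automatic; the degenerate cases $\alpha = 0$ or $|t| = 0$ are handled directly. For (b) $\Rightarrow$ (a), given a realizer $h \colon |s|+1 \to |t|+1$ of $\langle 0 \rangle * s \leq_\strong \langle 0 \rangle * t$, I would define $f(i) := h(i+1)-1$. Strict monotonicity of $h$ gives $h(i+1) \geq 1$, so $f$ is well-defined, strictly increasing, and maps into $|t|$; the basic comparison and weak gap condition for $f$ are inherited directly from the strong gap data for $h$ restricted to indices $\geq 1$, discarding the outer-gap information and the zeroth coordinate, neither of which has a counterpart in the weak condition.

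The one non-trivial aspect, and the reason the hypothesis on $\beta$ appears explicitly in (c), is the leftmost boundary in (a) $\Rightarrow$ (c): by pushing $g(0)$ all the way to $f(0)$, the initial segment of $t$ that is unconstrained by the weak relation gets absorbed into the outer gap, where it is covered precisely by the assumption $\beta \leq t_j$ for all $j$. Once this reindexing is arranged correctly, the rest is routine bookkeeping at the level of indices.
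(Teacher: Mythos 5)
Your proposal is correct and follows essentially the same route as the paper: the same cycle a)\,$\Rightarrow$\,c)\,$\Rightarrow$\,b)\,$\Rightarrow$\,a), with the same realizer $g(0):=f(0)$, $g(k+1):=f(k)+1$ for the first implication (including the correct observation that this makes the gap between $g(0)$ and $g(1)$ vacuous and pushes all the work into the outer gap, covered by the hypothesis on $\beta$) and the same shift $i\mapsto h(i+1)-1$ for the last.
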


\begin{proof}
	We begin with ``a)~${\Rightarrow}$ c)'':
	Assume that~${s \leq_\weak t}$ holds and is realized by a function~${f: |s| \to |t|}$.
	If~${s}$ is empty, then~${\langle \beta \rangle * s \leq_\strong \langle \gamma \rangle * t}$ holds trivially. Assume that~${s}$ is nonempty, i.e.,~${f(0)}$ is defined.
	We define~${g: |s|+1 \to |t|+1}$ with
	\begin{equation*}
		g(i) :=
		\begin{cases}
			f(0) & \text{if } i = 0\comma\\
			f(i - 1) + 1 & \text{otherwise.}
		\end{cases}
	\end{equation*}
	Clearly,~${g}$ is strictly monotone.
	We prove that~${g}$ is a realizer of~${\langle \beta \rangle * s \leq_\strong \langle \gamma \rangle * t}$. First, we show that~${(\langle \beta \rangle * s)_i \leq (\langle \gamma \rangle * t)_{g(i)}}$ holds for all~${i \leq |s|}$: In the case of~${i = 0}$, we have~${(\langle \beta \rangle * s)_0 = \beta \leq (\langle \gamma \rangle * t)_{g(0)}}$ by assumption on~${\beta}$. Otherwise, if~${i = j+1}$ holds, we derive the inequality~${(\langle \beta \rangle * s)_i = s_j \leq t_{f(j)} = (\langle \gamma \rangle * t)_{f(j)+1} = (\langle \gamma \rangle * t)_{g(i)}}$ by assumption on~${f}$.
	
	Finally, we prove that~${g}$ respects all gap conditions. Clearly, the outer gap condition holds since any index~${j}$ with~${j < g(0)}$ satisfies~${(\langle \beta \rangle * s)_0 = \beta \leq t_j}$ by assumption on~${\beta}$. Now, we consider an inner gap, i.e., let there be~${i < |s|}$ together with an index~${j}$ such that~${g(i) < j < g(i+1)}$ holds. Since~${g(0) = f(0)}$ and~${g(1) = f(1 -1) + 1 = f(0) + 1}$ do not leave a gap, we can assume that~${i}$ is positive. Thus, we have the inequality~${f(i-1)+1 < j < f(i)+1}$. By assumption,~${f}$ respects the gap condition, i.e., we have~${s_i \leq t_{j-1}}$. Hence,~${(\langle \beta \rangle * s)_{i+1} \leq (\langle \gamma \rangle * t)_{j}}$ entails our claim.
	
	The direction ``c)~${\Rightarrow}$ b)'' is trivial. Thus, we finish with ``b)~${\Rightarrow}$ a)'':
	Assume that~${\langle 0 \rangle * s \leq_\strong \langle 0 \rangle * t}$ holds and that it is realized by~${f: |s|+1 \to |t|+1}$. We define~${g: |s| \to |t|}$ with~${g(i) := f(i+1)-1}$. Clearly,~${g}$ is well-defined and strictly monotone as the same already holds for~${f}$. Now, we prove that~${g}$ realizes~${s \leq_\weak t}$. First, we have~${s_i = (\langle 0 \rangle * s)_{i+1} \leq (\langle 0 \rangle * t)_{f(i+1)} = t_{f(i+1)-1}}$.
	
	Finally, we prove that~${g}$ respects the gap conditions. There is no outer gap condition since we only want to show~${s \leq_\weak t}$. We consider an inner gap, i.e., let~${i < |s| - 1}$ together with an index~${j}$ such that~${g(i) < j < g(i+1)}$ holds. This entails~${f(i+1) < j+1 < f(i+2)}$. Thus, we can arrive at our claim using the inequality~${s_{i+1} = (\langle 0 \rangle * s)_{i+2} \leq (\langle 0 \rangle * t)_{j+1} = t_j}$.
\end{proof}

\begin{lemma}
	The realizer-based relation~${\leq_\strong}$ and the recursively defined relation~${\leq_\rec}$ coincide.
\end{lemma}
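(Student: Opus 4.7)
The plan is to prove the two inclusions $\leq_\rec \subseteq \leq_\strong$ and $\leq_\strong \subseteq \leq_\rec$ separately. The first I obtain from minimality of~$\leq_\rec$ by showing that $\leq_\strong$ satisfies the two closure clauses of Definition~\ref{def:strong_gap_sequence_recursive}. The second I prove by induction on $|s| + |t|$, distinguishing whether the realizer sends position~$0$ to position~$0$ of~$t$ or not. Throughout, let $\alpha$ be an arbitrary well order.

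For $\leq_\rec \subseteq \leq_\strong$, clause~(i) of Definition~\ref{def:strong_gap_sequence_recursive} is witnessed for $\leq_\strong$ by the empty realizer, whose conditions are all vacuous. For clause~(ii), I assume $\beta \leq \gamma$ together with either $s \leq_\strong t$ via some realizer~$f$ or $\langle \beta \rangle * s \leq_\strong t$ via some~$f$, and construct an explicit realizer~$g$ of $\langle \beta \rangle * s \leq_\strong \langle \gamma \rangle * t$ by shifting~$f$. In the former subcase, setting $g(0) := 0$ and $g(i+1) := f(i) + 1$ works: the outer gap condition for~$g$ is vacuous, the inner gap of~$g$ between positions~$0$ and~$f(0)+1$ is covered by the outer gap condition of~$f$, and every later inner gap translates into an inner gap of~$f$. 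In the latter subcase, setting $g(i) := f(i) + 1$ works: the new outer gap position $j = 0$ is handled by $\beta \leq \gamma$, the remaining outer gap positions by those of~$f$, and all inner gaps by a one-position shift.

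For $\leq_\strong \subseteq \leq_\rec$, let $s \leq_\strong t$ be realized by some~$f$. If $s = \langle \rangle$, clause~(i) of Definition~\ref{def:strong_gap_sequence_recursive} applies directly. Otherwise $s = \langle \beta \rangle * s'$, and then $t = \langle \gamma \rangle * t'$ since $|s| \geq 1$ forces $|t| \geq 1$. I first observe that $\beta \leq \gamma$ holds in either subcase: if $f(0) = 0$, from the basic condition $\beta = s_0 \leq t_{f(0)} = \gamma$; if $f(0) > 0$, from the outer gap condition of $f$ applied at $j = 0 < f(0)$. Now I split cases. If $f(0) = 0$, setting $f'(i) := f(i+1) - 1$ yields a realizer of $s' \leq_\strong t'$, where the new outer gap of~$f'$ derives from the inner gap of~$f$ between positions~$f(0) = 0$ and~$f(1)$; the induction hypothesis then gives $s' \leq_\rec t'$, and clause~(ii) via its first disjunct, together with $\beta \leq \gamma$, yields $\langle \beta \rangle * s' \leq_\rec \langle \gamma \rangle * t'$. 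If instead $f(0) > 0$, setting $f'(i) := f(i) - 1$ yields a realizer of $s \leq_\strong t'$, whose new outer gap is covered by a prefix of the old outer gap of~$f$; the induction hypothesis gives $\langle \beta \rangle * s' \leq_\rec t'$, and clause~(ii) via its second disjunct, together with $\beta \leq \gamma$, yields $\langle \beta \rangle * s' \leq_\rec \langle \gamma \rangle * t'$.

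The main obstacle is not conceptual but notational: the careful bookkeeping that translates outer gap conditions of one realizer into inner gap conditions of a shifted realizer (and vice versa), due to the off-by-one that arises from prepending a single element. This is essentially the same tracking already carried out in the proof of Lemma~\ref{lem:gap_strong_and_weak}, so I expect no genuinely new difficulty. The induction measure $|s| + |t|$ strictly decreases in both subcases of the second inclusion, which makes the argument formalizable in $\rca_0$ using only $\Sigma^0_1$-induction.
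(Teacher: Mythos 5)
Your proposal is correct and follows essentially the same route as the paper: the paper also proves the direction $\leq_\strong\subseteq\leq_\rec$ by induction on $|s|+|t|$ with the case split on whether $f(0)=0$, using exactly the shifted realizers $i\mapsto f(i)-1$ and $i\mapsto f(i+1)-1$, and establishes the converse by constructing the realizers $g(0)=0$, $g(i+1)=f(i)+1$ and $g(i)=f(i)+1$ for the two disjuncts of the recursive clause (phrased there as an induction over the recursive definition rather than via minimality, which amounts to the same closure argument). The off-by-one bookkeeping you describe matches the paper's verification step for step.
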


\begin{proof}
	Let~${s, t \in \gapw_\alpha}$ be such that~${s \leq_\strong t}$ holds. We prove by induction on~${|s| + |t|}$ that this implies~${s \leq_\rec t}$. If~${s}$ is empty, we are already finished. Therefore, assume that~${s}$ is nonempty and the relation~${s \leq_\strong t}$ is realized by~${f: |s| \to |t|}$. Since~${s}$ is nonempty, we can assume it to be of the form~${s = \langle s_0 \rangle * s'}$ for~${s' \in \gapw_\alpha}$.
	
	If~${f(0)}$ is positive, then it can easily be seen that~${i \mapsto f(i) - 1}$ is a valid realizer for~${s \leq_\strong t'}$ if~${t}$ is of the form~${t = \langle t_0 \rangle * t'}$: First, we have~${s_i \leq t_{f(i)} = t'_{f(i) - 1}}$ for indices~${i < |s|}$. Next, consider an index~${j}$ in the outer gap, i.e.,~${j < f(0) - 1}$. Using the original outer gap, we have~${s_0 \leq t_{j+1}}$ and, thus,~${s_0 \leq t'_j}$. Finally, consider an index~${j}$ in an inner gap, i.e., there is some index~${i < |s| - 1}$ with~${f(i)-1 < j < f(i+1)-1}$. We have~${s_{i+1} \leq t_{j+1} = t'_j}$. By our induction hypothesis,~${s \leq_\strong t'}$ yields~${s \leq_\rec t'}$. Now, we only have to show that~${s_0 \leq t_0}$ holds in order to derive~${s \leq_\rec t}$. This easily follows from~${0 < f(0)}$, i.e.,~${0}$ is in the outer gap realized by~${f}$, which entails~${s_0 \leq t_0}$.
	
	If~${f(0) = 0}$ holds, then we claim that~${g: |s|-1 \to |t|-1}$ with~${g(i) := f(i+1)-1}$ is a realizer for~${s' \leq_\strong t'}$ if we write~${t = \langle t_0 \rangle * t'}$: First, we observe the inequality~${s'_i = s_{i+1} \leq t_{f(i+1)} = t'_{f(i+1)-1}}$ for all~${i < |s'|}$. Next, consider an index~${j}$ in the outer gap, i.e.,~${j < g(0)}$. This yields~${f(0) = 0 < j + 1 < f(1)}$. Thus, we have the inequality~${s'_0 = s_1 \leq t_{j+1} = t'_j}$. Finally, consider an index~${j}$ in some inner gap, i.e., there is some index~${i < |s'| - 1}$ with~${g(i) < j < g(i+1)}$. This entails the strict inequalities~${f(i+1) < j+1 < f(i+2)}$ and, thus,~${s'_{i+1} = s_{i+2} \leq t_{j+1} = t'_j}$. We conclude that~${s' \leq_\strong t'}$ holds and, by induction hypothesis, this yields~${s' \leq_\rec t'}$. Finally, we have to show the inequality~${s_0 \leq t_0}$. However, this is directly given by the fact that our realizer~${f}$ maps the index~${0}$ (of~${s}$) to the index~${0}$ (of~${t}$).
	
	Let~${s, t \in \gapw_{\alpha}}$ such that~${s \leq_\rec t}$ holds. Similar to before, we prove by induction on~${|s| + |t|}$ that this implies~${s \leq_\strong t}$. Again, we can assume that~${s}$ is nonempty and, hence, of the form~${s = \langle s_0 \rangle * s'}$ for some sequence~${s' \in \gapw_\alpha}$ since~${\langle \rangle \leq_\strong t}$ holds trivially.
	
	Assume that~${s \leq_\rec t}$ holds because we have~${s_0 \leq t_0}$ and~${s' \leq_\rec t'}$ for~${t = \langle t_0 \rangle * t'}$. By induction hypothesis, this yields a realizer~${f: |s'| \to |t'|}$ of~${s' \leq_\strong t'}$. We define a function~${g: |s| \to |t|}$ as follows:
	\begin{equation*}
		g(i) :=
		\begin{cases}
			0 & \text{if } i = 0\comma\\
			f(i-1)+1 & \text{otherwise.}
		\end{cases}
	\end{equation*}
	Clearly,~${g}$ is strictly monotone. We prove that~${g}$ is a valid realizer for~${s \leq_\strong t}$: First, we have~${s_0 \leq t_0 = t_{g(0)}}$. For positive indices~${i < |s|}$, we get the inequality~${s_i = s'_{i-1} \leq t'_{f(i-1)} = t_{f(i-1)+1}}$. We can skip the outer gap condition since there are no indices~${j < g(0) = 0}$. Finally, we show that all inner gaps are respected. Let~${i < |s| - 1}$ and~${j}$ be indices such that~${g(i) < j < g(i+1)}$ holds. In the case where~${i = 0}$ holds, this entails~${j-1 < f(0)}$ and, thus,~${s_1 = s'_0 \leq t'_{j-1} = t_{j}}$ by the outer gap condition of~${f}$. If~${i}$ is positive, then we have~${f(i-1) < j-1 < f(i)}$ and, therefore,~${s_{i+1} = s'_i \leq t'_{j-1} = t_j}$ by some inner gap condition of~${f}$.
	
	Assume that~${s \leq_\rec t}$ holds because we have both~${s_0 \leq t_0}$ and~${s \leq_\rec t'}$ for~${t = \langle t_0 \rangle * t'}$. By induction hypothesis, this yields a realizer~${f: |s| \to |t'|}$ of~${s \leq_\strong t'}$. We define a function~${g: |s| \to |t|}$ with~${g(i) := f(i) + 1}$. Clearly,~${g}$ is strictly monotone. We prove that~${g}$ is a valid realizer for~${s \leq_\strong t}$. First, we have~${s_i \leq t'_{f(i)} = t_{f(i) + 1} = t_{g(i)}}$ for all indices~${i < |s|}$. Next, we prove that~${g}$ respects the outer gap: Let~${j}$ be an index with~${j < g(0)}$. If~${j = 0}$, then we have~${s_0 \leq t_0}$ simply by assumption. If~${j}$ is positive, then this yields~${j - 1 < f(0)}$. Thus, by the outer gap condition of~${f}$, we get~${s_0 \leq t'_{j-1} = t_j}$. Finally, we show that~${g}$ respects all inner gaps: Let~${i < |s| - 1}$ and~${j}$ be indices such that~${g(i) < j < g(i+1)}$ holds. Then, we derive~${f(i) < j-1 < f(i+1)}$. Thus, by the inner gap condition of~${f}$, this yields the inequality~${s_{i+1} \leq t'_{j-1} = t_j}$.
\end{proof}

From now on, we will mostly continue working with the recursive definition. It allows us to present relatively short proofs for the following results:

\subsection{Constructing sequences}

In this part, we consider results that let us combine multiple relations of sequences from~${\gapw_\alpha}$ or~${\gaps_\alpha}$ into relations of larger sequences, for well orders~${\alpha}$.

\begin{lemma}\label{lem:gap_concat_weak_and_strong}
	Let~${\alpha}$ be a well order. Then~${s_l \leq_\weak t_l}$ and~${s_r \leq_\strong t_r}$ imply the relation~${s_l * s_r \leq_\weak t_l * t_r}$ for sequences~${s_l, s_r, t_l, t_r \in \gapw_{\alpha}}$ provided that~${s_r}$ is empty or begins with an element that is less than or equal to any element in~${t_l}$. The relation holds with respect to~${\leq_\strong}$ if we assume~${s_l \leq_\strong t_l}$.
\end{lemma}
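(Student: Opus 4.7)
My plan is to prove the strong version first, by induction using the recursive characterization $\leq_\rec = \leq_\strong$ of Definition~\ref{def:strong_gap_sequence_recursive}, and then to derive the weak version by prepending a suitable common head and invoking Lemma~\ref{lem:gap_strong_and_weak}.

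For the strong version I induct on $|s_l| + |t_l|$. If $s_l = \langle \rangle$, the goal reduces to $s_r \leq_\rec t_l * t_r$, which a secondary induction on $|t_l|$ handles: writing $t_l = \langle \gamma \rangle * t'_l$ and $s_r = \langle \beta \rangle * s'_r$ (the subcase $s_r = \langle \rangle$ being trivial), the side hypothesis gives $\beta \leq \gamma$ together with $\beta \leq$ every element of $t'_l$, so the inner induction hypothesis yields $s_r \leq_\rec t'_l * t_r$, and clause (ii) of Definition~\ref{def:strong_gap_sequence_recursive}, taken with its second disjunct, rebuilds $s_r \leq_\rec t_l * t_r$. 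If $s_l$ is nonempty, write $s_l = \langle \beta \rangle * s'_l$; then $s_l \leq_\rec t_l$ forces $t_l = \langle \gamma \rangle * t'_l$ with $\beta \leq \gamma$ and either $s'_l \leq_\rec t'_l$ or $s_l \leq_\rec t'_l$. Applying the outer induction hypothesis in the respective case gives either $s'_l * s_r \leq_\rec t'_l * t_r$ or $s_l * s_r \leq_\rec t'_l * t_r$ (using that any element of $t'_l$ remains an element of $t_l$, so the side hypothesis transfers), and in either case clause (ii) with the matching disjunct produces $s_l * s_r \leq_\rec t_l * t_r$.

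For the weak version, the case $s_r = \langle \rangle$ is an instance of the extension observation stated immediately after the first lemma of this section. If $s_r$ is nonempty, let $\beta := (s_r)_0$. By the side hypothesis $\beta$ is less than or equal to every element of $t_l$, so the direction a~$\Rightarrow$~c of Lemma~\ref{lem:gap_strong_and_weak} converts $s_l \leq_\weak t_l$ into $\langle \beta \rangle * s_l \leq_\strong \langle \beta \rangle * t_l$. The strong version proved above then applies to this relation together with $s_r \leq_\strong t_r$, since $\beta$ is the first element of $\langle \beta \rangle * t_l$ and is trivially less than or equal to every element there. The resulting inequality $\langle \beta \rangle * s_l * s_r \leq_\strong \langle \beta \rangle * t_l * t_r$ peels back to $s_l * s_r \leq_\weak t_l * t_r$ via the construction $i \mapsto f(i+1) - 1$ from the proof of the b~$\Rightarrow$~a direction of Lemma~\ref{lem:gap_strong_and_weak}, which goes through verbatim for an arbitrary head $\beta$.

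The main subtlety is the secondary induction in the base case $s_l = \langle \rangle$ of the strong version: the second disjunct of clause~(ii) in Definition~\ref{def:strong_gap_sequence_recursive}---the disjunct that distinguishes the recursive gap condition from Higman's ordinary sequence order---is precisely what allows $s_r$ to ``skip past'' each head of $t_l$, with the side hypothesis on $(s_r)_0$ supplying the required head comparison and being preserved as $t_l$ shrinks to its suffixes.
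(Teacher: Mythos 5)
Your proof is correct. The strong half is essentially the paper's argument: a double induction on the lengths of $s_l$ and $t_l$ over the recursive characterization $\leq_\rec$, with the base case $s_l = \langle\rangle$ handled by a secondary induction that repeatedly uses the second disjunct of clause~(ii) to let $s_r$ absorb the heads of $t_l$ (the side hypothesis being exactly what supplies the needed comparisons $(s_r)_0 \leq \gamma$). The weak half resolves the delicate point --- that $(s_r)_0$ need not be comparable with the artificial head $0$ --- differently from the paper. The paper prepends $\langle 0 \rangle$ to both sides, then case-splits on which disjunct of the recursive definition witnesses $\langle 0 \rangle * s_l \leq_\strong \langle 0 \rangle * t_l$, reducing to the strong statement in each branch. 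You instead prepend $\langle \beta \rangle$ with $\beta := (s_r)_0$, so that the side hypothesis for the concatenation becomes automatic, and then peel the common head back off. The cost of your route is that you need the implication ``$\langle \beta \rangle * u \leq_\strong \langle \beta \rangle * v$ implies $u \leq_\weak v$'' for an \emph{arbitrary} head $\beta$, which is not literally the statement b)$\Rightarrow$a) of Lemma~\ref{lem:gap_strong_and_weak}; you correctly observe that the realizer construction $i \mapsto f(i+1)-1$ in that proof never uses that the head equals $0$, so the generalization is immediate, but strictly speaking this is a (small) strengthening of the cited lemma rather than an application of it. The paper's route stays entirely within the stated lemmas at the price of one extra case distinction; yours trades that for a one-line inspection of an earlier proof. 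Both are sound.
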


Usually, we invoke this lemma for sequences~${s_r}$ that are empty or begin with~${0}$.

\begin{proof}
	First, we assume that~${s_l \leq_\strong t_l}$ and~${s_r \leq_\strong t_r}$ hold. Our claim is that this entails~${s_l * s_r \leq_\strong t_l * t_r}$ under the specified conditions. We proceed by induction on the length of~${t_l}$. If~${t_l}$ is empty, then so must be~${s_l}$ and our claim follows immediately. We continue under the assumption that~${t_l = \langle \beta \rangle * t'_l}$ holds for some~${\beta \in \alpha}$ and~${t'_l \in \gaps_{\alpha}}$. We apply induction on the length of~${s_l}$: Let this sequence be empty. If~${s_r}$ is also empty, we are done. Otherwise, we recall that the first member of~${s_r}$ must be less than or equal to any element in~${t_l}$. Thus, a quick induction yields~${s_r \leq_\strong t_l * t_r}$, which is our claim since~${s_l}$ is empty. If~${s_l}$ is nonempty, i.e., of the form~${s_l = \langle \gamma \rangle * s'_l}$ for some~${\gamma \in \alpha}$ and~${s'_l \in \gaps_{\alpha}}$, there are two possible reasons for~${s_l \leq_\strong t_l}$: If we have~${\gamma \leq \beta}$ and~${s_l \leq_\strong t'_l}$, then we apply our induction hypothesis, which yields~${s_l * s_r \leq_\strong t'_l * t_r}$. With~${\gamma \leq \beta}$, this results in our claim. Otherwise, if we have~${\gamma \leq \beta}$ and~${s'_l \leq_\strong t'_l}$, we apply our induction hypothesis, which yields~${s'_l * s_r \leq_\strong t'_l * t_r}$. Again, with~${\gamma \leq \beta}$, this results in our claim.
	
	Secondly, we assume that~${s_l \leq_\weak t_l}$ and~${s_r \leq_\strong t_r}$ hold. Our claim is that this entails~${s_l * s_r \leq_\weak t_l * t_r}$. By Lemma~\ref{lem:gap_strong_and_weak}, we know that the first assumption implies~${\langle 0 \rangle * s_l \leq_\strong \langle 0 \rangle * t_l}$. We cannot apply our result for the strong gap condition immediately since the first member in~$s_r$ might not be less than or equal to~$0$. Instead, we argue as follows: By definition, there are two possible cases: In the first case, we have~${s_l \leq_\strong t_l}$. Using the argument from the previous paragraph, we arrive at our claim. In the second case, we have~${\langle 0 \rangle * s_l \leq_\strong t_l}$. By the previous paragraph, we conclude~${\langle 0 \rangle * s_l * s_r \leq_\strong t_l * t_r}$, which leads to~${\langle 0 \rangle * s_l * s_r \leq_\strong \langle 0 \rangle * t_l * t_r}$ and, by Lemma~\ref{lem:gap_strong_and_weak}, to our claim.
\end{proof}

\begin{lemma}
	Let~${\alpha}$ be a well order. Then~${s \leq_\strong t}$ implies~${u * s \leq_\strong u * t}$ for sequences~${s, t, u \in \gaps_{\alpha}}$.
\end{lemma}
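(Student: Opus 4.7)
The plan is to reduce to the recursive formulation and argue by induction on the length of~$u$. Since the previous lemma identifies~$\leq_\strong$ with~$\leq_\rec$, it suffices to show that~$s \leq_\rec t$ implies~$u * s \leq_\rec u * t$.

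The base case, where~$u$ is empty, is immediate. For the inductive step, write~$u = \langle \beta \rangle * u'$ for some~$\beta \in \alpha$ and~$u' \in \gaps_\alpha$. By the induction hypothesis applied to~$u'$, we have~${u' * s \leq_\rec u' * t}$. Now observe that~$\beta \leq \beta$, so clause~ii) of Definition~\ref{def:strong_gap_sequence_recursive} applies directly: from~${u' * s \leq_\rec u' * t}$ we conclude~${\langle \beta \rangle * u' * s \leq_\rec \langle \beta \rangle * u' * t}$, which is exactly~$u * s \leq_\rec u * t$.

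There is essentially no obstacle here — the recursive definition was designed precisely to make prepending a common prefix trivial, since the alternative ``$\langle \beta \rangle * s \leq_\rec t$'' clause is not needed in this monotone-prepending situation. The only mild subtlety is that one must invoke the preceding coincidence lemma at the very beginning (and at the very end) in order to translate the realizer-based hypothesis and conclusion into the recursive framework where the induction is clean; but this translation is automatic and requires no further calculation.
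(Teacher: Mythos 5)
Your proof is correct and matches the paper's argument: the paper also proves this by induction on the length of~$u$, with the empty case being immediate and the inductive step applying clause~ii) of the recursive definition (with~$\beta \leq \beta$) to the induction hypothesis~${u' * s \leq_\strong u' * t}$, relying on the already-established coincidence of~$\leq_\strong$ and~$\leq_\rec$.
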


\begin{proof}
	We proceed by a simple induction along the length of~${u}$: If~${u}$ is empty, our claim is clear. Otherwise, if~${u}$ is of the form~${u = \langle u_0 \rangle * u'}$, we know by induction hypothesis that~${u' * s \leq_\strong u' * s}$ holds. Finally, this entails~${u * s \leq_\strong u * t}$ by the (recursive) definition of~${\gaps_\alpha}$.
\end{proof}

\subsection{Destructing sequences}

In this part, we consider results that let us break apart relations of sequences from~${\gapw_\alpha}$ or~${\gaps_\alpha}$ into relations of smaller sequences, for well orders~${\alpha}$.

\begin{lemma}\label{lem:gap_split_weak_weak_or_strong}
	Let~${\alpha}$ be a well order. Given sequences~${s, t_l, t_r \in \gapw_{\alpha}}$ such that the relation~${s \leq_\weak t_l * t_r}$ holds, we can split~${s}$ into sequences~${s_l, s_r \in \gapw_\alpha}$, i.e.,~${s = s_l * s_r}$ with~${s_l \leq_\weak t_l}$ and~${s_r \leq_\weak t_r}$. The latter inequality also holds with respect to~${\leq_\strong}$ provided that~${s_l}$ is nonempty.
\end{lemma}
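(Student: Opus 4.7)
The plan is to work directly with a realizer of~${s \leq_\weak t_l * t_r}$, split~${s}$ at the index where the realizer crosses the boundary between~${t_l}$ and~${t_r}$, and then verify the two resulting inequalities by translating indices.

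Concretely, I fix a strictly increasing map~${f \colon |s| \to |t_l| + |t_r|}$ witnessing~${s \leq_\weak t_l * t_r}$, and I let~${k}$ be the number of indices~${i < |s|}$ with~${f(i) < |t_l|}$ (this is a~${\Delta^0_1}$-definable number in~${\rca_0}$). Setting~${s_l := \langle s_0, \dots, s_{k-1} \rangle}$ and~${s_r := \langle s_k, \dots, s_{|s|-1} \rangle}$, the restriction of~${f}$ to the first~${k}$ indices takes values below~${|t_l|}$ and so witnesses~${s_l \leq_\weak t_l}$; the inner gap conditions for this restriction are inherited immediately from those of~${f}$ (the upper neighbor index~${f(i+1)}$ is still below~${|t_l|}$, so the relevant~${t_j}$ lies in~${t_l}$).

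For the right piece, I define~${g \colon |s_r| \to |t_r|}$ by~${g(i) := f(k + i) - |t_l|}$. This is well defined and strictly increasing by the choice of~${k}$. The main-diagonal condition~${(s_r)_i \leq t_{r,g(i)}}$ and the inner gap conditions translate mechanically from those for~${f}$: if~${g(i) < j < g(i+1)}$, then~${f(k+i) < j + |t_l| < f(k+i+1)}$, and the weak gap condition for~${f}$ yields~${s_{k+i+1} \leq (t_l * t_r)_{j + |t_l|} = t_{r,j}}$. This proves~${s_r \leq_\weak t_r}$.

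The only genuine content is the strong addendum, which will be the main obstacle in the sense that it is the one step that uses the extra hypothesis that~${s_l}$ is nonempty (so that~${k \geq 1}$). For the outer gap, let~${j < g(0)}$, i.e.~${|t_l| \leq j + |t_l| < f(k)}$. Since~${k - 1 \geq 0}$, we have~${f(k-1) < |t_l|}$ by definition of~${k}$, and if~${s_r}$ is also nonempty the index~${k-1}$ satisfies~${k - 1 < |s| - 1}$, so the inner gap condition for~${f}$ between~${f(k-1)}$ and~${f(k)}$ applies at position~${j + |t_l|}$ and delivers~${s_k \leq (t_l * t_r)_{j + |t_l|} = t_{r,j}}$, which is exactly~${(s_r)_0 \leq t_{r,j}}$. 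If instead~${s_r}$ is empty there is nothing to verify. This, together with the inner gap conditions already established, shows~${s_r \leq_\strong t_r}$ and completes the argument.
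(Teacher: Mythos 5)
Your proof is correct, and it takes a genuinely different route from the paper. You argue directly with a realizer $f$ of $s \leq_\weak t_l * t_r$: the split point $k$ is where $f$ crosses the boundary $|t_l|$, the two restricted/shifted maps witness the weak inequalities, and the outer gap condition for $s_r \leq_\strong t_r$ is exactly the inner gap condition of $f$ between the indices $k-1$ and $k$ — which is precisely where the hypothesis that $s_l$ is nonempty enters, since it guarantees the index $k-1$ exists. The paper instead first proves the strong version ($s \leq_\strong t_l * t_r$ yields $s_l \leq_\strong t_l$ and $s_r \leq_\strong t_r$) by a double induction on $|t_l|$ and on the recursive characterization $\leq_\rec$, and then derives the weak version by prepending $\langle 0 \rangle$ via Lemma~\ref{lem:gap_strong_and_weak}. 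The paper's route fits its overall strategy of reducing everything to the recursive definition, so that this lemma and its neighbours become short structural inductions; your route is self-contained, needs no auxiliary lemmas or the equivalence of $\leq_\strong$ with $\leq_\rec$, and makes the role of the nonemptiness hypothesis completely transparent. Both are unproblematic in $\rca_0$, since your $k$ is $\Delta^0_1$-definable from $f$. The only points worth stating explicitly in a write-up are that the indices $i$ with $f(i) < |t_l|$ form an initial segment because $f$ is strictly increasing (so the split is well defined), and that the case $s_r = \langle\rangle$ is trivial for the strong addendum — both of which you do address.
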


\begin{proof}
	We begin by assuming~${s \leq_\strong t_l * t_r}$ and show that this entails both~${s_l \leq_\strong t_l}$ and~${s_r \leq_\strong t_r}$ for sequences~${s_l, s_r \in \alpha^*}$ with~${s = s_l * s_r}$. For this, we proceed by induction on the length of~${t_l}$: If~${t_l}$ is empty, then we choose~${s_l := \langle\rangle}$ and~${s_r := s}$. Our claim follows immediately. Otherwise, let~${t_l}$ be of the form~${t_l = \langle t_0 \rangle * t'_l}$. We continue by induction on the recursive definition of~${s \leq_\strong t_l * t_r}$. If this relation holds since~${s}$ is empty, then we can only define~${s_l := s_r := \langle \rangle}$ and our claim is trivial. Next, assume that~${s}$ is nonempty and we have both~${s_0 \leq t_0}$ and~${s \leq_\strong t'_l * t_r}$. The induction hypothesis yields two sequences~${s_l}$ and~${s_r}$ with~${s = s_l * s_r}$ such that~${s_l \leq_\strong t'_l}$ and~${s_r \leq_\strong t_r}$ hold. If~$s_l$ is nonempty, then~${s_0 \leq t_0}$ leads to~${s_l \leq_\strong t_l}$ (which, otherwise, would be trivial) and we are done. Finally, let~${s}$ be of the form~${s = \langle s_0 \rangle * s'}$ for~${s' \in \gapw_\alpha}$ such that~${s_0 \leq t_0}$ and~${s' \leq_\strong t'_l * t_r}$ hold. The induction hypothesis yields sequences~${s'_l}$ and~${s_r}$ with~${s' = s'_l * s_r}$ such that~${s'_l \leq_\strong t'_l}$ and~${s_r \leq_\strong t_r}$ hold. With~${s_0 \leq t_0}$, the former entails~${\langle s_0 \rangle * s'_l \leq_\strong t_l}$. Thus, we choose~${s_l := \langle s_0 \rangle * s'_l}$ and are finished.
	
	Now, assume~${s \leq_\weak t_l * t_r}$. We want to show that this entails~${s_l \leq_\weak t_l}$ and~${s_r \leq_\weak t_r}$ for some sequences~${s_l}$ and~${s_r}$ with~${s = s_l * s_r}$. By Lemma~\ref{lem:gap_strong_and_weak}, our assumption implies~${\langle 0 \rangle * s \leq_\strong \langle 0 \rangle * t_l * t_r}$. Thus, our previous considerations yield sequences~${s'_l}$ and~${s'_r}$ with~${\langle 0 \rangle * s = s'_l * s'_r}$ such that~${s'_l \leq_\strong \langle 0 \rangle * t_l}$ and~${s'_r \leq_\strong t_r}$ hold. If~${s'_l}$ is empty, then choose~${s_l := \langle \rangle}$ and~${s_r := s}$. Clearly, we have~${s_r = s \leq_\weak \langle 0 \rangle * s = s'_r \leq_\weak t_l * t_r}$. Otherwise, if~${s'_l}$ is nonempty, we know that it must be begin with a zero. Thus, there is a unique sequence~${s_l}$ such that~${\langle 0 \rangle * s_l = s'_l}$ holds. We conclude that~${s_l \leq_\weak s'_l \leq_\strong t_l}$ and~${s_r := s'_r \leq_\strong t_r}$ hold and, hence, our claim.
\end{proof}

\begin{lemma}\label{lem:gap_remove_tail}
	Let~${\alpha}$ be a well order. Let~${s_l, s_r, t_l, t_r \in \gaps_\alpha}$ be sequences such that~${t_r}$ is empty or begins with an element that lies strictly below all elements in~${s_l}$. Then, the inequality~${s_l * s_r \leq_\strong t_l * t_r}$ implies~${s_l \leq_\strong t_l}$.
\end{lemma}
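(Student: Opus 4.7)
The plan is to proceed by induction on $|t_l|$, working throughout with the recursive characterization $\leq_\rec$ (which coincides with $\leq_\strong$ by the previous lemma). The crucial observation is that rule (ii) of Definition \ref{def:strong_gap_sequence_recursive} forces any relation $\langle d \rangle * s \leq_\rec \langle c \rangle * t$ to begin with the inequality $d \leq c$; this is exactly what will cash out the strict hypothesis on the first element of $t_r$.

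For the base case $t_l = \langle\rangle$, the goal is to show that $s_l$ must already be empty. If $s_l = \langle d \rangle * s'_l$ were nonempty, then $t_r$ could not be empty either; writing $t_r = \langle c \rangle * t'_r$ with $c < d$ by hypothesis, the relation $s_l * s_r \leq_\rec t_r$ would force $d \leq c$, a contradiction.

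For the inductive step, write $t_l = \langle b \rangle * t'_l$ and assume $s_l = \langle d \rangle * s'_l$ is nonempty (else there is nothing to show). Applying rule (ii) to $s_l * s_r \leq_\rec t_l * t_r$ yields $d \leq b$ together with one of two subcases: either $s'_l * s_r \leq_\rec t'_l * t_r$, or $s_l * s_r \leq_\rec t'_l * t_r$. Both have strictly shorter $t_l$, and the hypothesis on $t_r$ persists (the elements of $s'_l$ lie among those of $s_l$ in the first subcase, and $s_l$ is unchanged in the second). The induction hypothesis therefore delivers $s'_l \leq_\rec t'_l$ or $s_l \leq_\rec t'_l$ respectively; combining either with $d \leq b$ through one more application of rule (ii) yields $s_l \leq_\rec \langle b \rangle * t'_l = t_l$.

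There is no serious obstacle beyond careful bookkeeping; the argument unfolds directly from the recursive definition. What is worth underlining is that the \emph{strict} inequality in the hypothesis is essential: under a weaker assumption such as $c \leq d$, the forced inequality $d \leq c$ would be consistent with equality and the base case would collapse.
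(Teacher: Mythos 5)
Your proof is correct and follows essentially the same route as the paper: the empty-$t_l$ case is ruled out by the forced head comparison against $t_r$, and the nonempty case splits into the same two subcases of rule (ii), each closed by the induction hypothesis (you induct on $|t_l|$ where the paper inducts on the derivation of $\leq_\rec$, but this is immaterial since $t_l$ shrinks in both subcases). The only cosmetic gap is in the base case, where "then $t_r$ could not be empty either" deserves the one-line justification that a nonempty sequence is never $\leq_\rec$ the empty one.
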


\begin{proof}
	If~${s_l}$ is empty, our claim holds trivially. Assume that~${s_l = \langle s_0 \rangle * s'_l}$ holds for some sequence~${s'_l \in \gaps_\alpha}$. Assume that~${t_l}$ is empty. Should~${t_r}$ also be empty, then~${s_l * s_r \leq_\strong \langle \rangle}$ must lead to a contradiction since~${s_l}$ is nonempty. Otherwise, if~${t_r}$ is nonempty, our assumption yields~${(t_r)_0 < s_0}$. This clearly contradicts~${s_l * s_r \leq_\strong t_r}$.
	
	Now, let~${t_l}$ be of the form~${t_l = \langle t_0 \rangle * t'_l}$ for some sequence~${t'_l \in \gaps_\alpha}$. We proceed by induction on the recursive definition of~${s_l * s_r \leq_\strong t_l * t_r}$. We already excluded the case where~${s_l * s_r}$ is empty. Therefore, assume that~${s_0 \leq t_0}$ and~${s_l * s_r \leq_\strong t'_l * t_r}$ hold. By induction hypothesis, this yields~${s_l \leq_\strong t'_l}$. With~${s_0 \leq t_0}$, we arrive at our claim. Finally, assume that~${s_0 \leq t_0}$ and~${s'_l * s_r \leq_\strong t'_l * t_r}$ hold. By induction hypothesis, this results in~${s'_l \leq_\strong t'_l}$ and, with~${s_0 \leq t_0}$, our claim.
\end{proof}

\begin{lemma}\label{lem:gap_remove_head}
	Let~${\alpha}$ be a well order and let~${s, t, u \in \gaps_\alpha}$ be sequences such that~${t}$ is empty or begins with an element that lies strictly below all elements in~${u}$. Then, the inequality~${u * s \leq_\strong u * t}$ implies~${s \leq_\strong t}$.
\end{lemma}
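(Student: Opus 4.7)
The plan is to induct on $|u|$, working with the recursive characterization $\leq_\rec$ of $\leq_\strong$ established above. If $u$ is empty, then $u * s = s$ and $u * t = t$, and the conclusion is immediate.

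For the inductive step, write $u = \langle u_0 \rangle * u'$. First observe that the hypothesis on $t$ transfers to $u'$: any element of $u'$ is an element of $u$, so if $t$ begins with some element strictly below every element of $u$, then in particular it lies strictly below every element of $u'$. Unfolding the recursive definition applied to $\langle u_0 \rangle * (u' * s) \leq_\rec \langle u_0 \rangle * (u' * t)$ yields two cases: either (a) $u' * s \leq_\rec u' * t$, in which case the induction hypothesis immediately yields $s \leq_\strong t$; or (b) $\langle u_0 \rangle * u' * s \leq_\rec u' * t$, i.e., $u * s \leq_\strong u' * t$.

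I would rule out case (b) by applying Lemma~\ref{lem:gap_remove_tail} with $s_l := u$, $s_r := s$, $t_l := u'$, and $t_r := t$. The lemma's premise — that $t_r$ is empty or begins with an element strictly below every element of $s_l$ — is exactly our standing hypothesis on $t$ and $u$, since $s_l = u$. The conclusion $u \leq_\strong u'$ is then impossible for length reasons: any realizer of $\leq_\strong$ is a strictly increasing map from $|u|$ to $|u'|$, whereas $|u| = |u'| + 1$. Hence only case (a) can occur.

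The main obstacle is recognizing that case (b) is vacuous; Lemma~\ref{lem:gap_remove_tail} provides the cleanest route to the contradiction, since its hypothesis matches the standing assumption on $t$ verbatim once we take $s_l := u$. No analysis of the inner or outer gap conditions is needed beyond what is already packaged into the recursive definition and Lemma~\ref{lem:gap_remove_tail}.
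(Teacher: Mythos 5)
Your proof is correct, but it takes a different route from the paper's. The paper proves a strengthened statement -- that ${u * s \leq_\strong v * t}$ with ${|u| \geq |v|}$ already implies ${s \leq_\strong t}$ -- and inducts on the derivation of the relation, using the invariant ${|u| \geq |v|}$ to force a contradiction with the hypothesis on~$t$ once~$v$ has been exhausted (at that point both remaining clauses of the recursive definition would demand ${u_0 \leq t_0}$). You instead keep the statement as is, induct on~${|u|}$, and dispose of the problematic case ${u * s \leq_\strong u' * t}$ by invoking Lemma~\ref{lem:gap_remove_tail} with ${s_l := u}$, ${t_l := u'}$ -- whose hypothesis indeed coincides verbatim with the standing assumption on~$t$ -- and then observing that ${u \leq_\strong u'}$ is impossible since a realizer would be a strictly increasing map from ${|u|}$ into ${|u'| = |u| - 1}$. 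Both arguments are sound; yours is more modular, reusing the preceding lemma as a black box and avoiding the need to generalize the induction hypothesis, whereas the paper's self-contained generalization is the pattern it also reuses for the weak-gap analogue in Lemma~\ref{lem:gap_remove_head_weak} (where the two prefixes genuinely differ in length, so your specialization to ${v = u}$ would not suffice there). One small point to make explicit: ruling out ${u \leq_\strong u'}$ by counting appeals to the realizer-based definition, so it implicitly uses the equivalence of ${\leq_\strong}$ and ${\leq_\rec}$ established earlier; alternatively, a one-line induction on the recursive definition shows that ${a \leq_\rec b}$ forces ${|a| \leq |b|}$.
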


\begin{proof}
	We prove a slightly stronger claim: Let~${u, v \in \gaps_\alpha}$ be sequences with~${|u| \geq |v|}$ such that~${t}$ is either empty or begins with an element that lies strictly below all elements in~${u}$. Then, we claim that~${u * s \leq_\strong v * t}$ implies~${s \leq_\strong t}$.
	
	First, if~${t}$ is empty, then a short induction reveals that the same most hold for~${s}$ (since, in this case,~${u * s}$ is a longer sequence than~${v * t}$) and our claim follows trivially. Therefore, we continue under the assumption that~${t}$ is nonempty.
	
	If~${u}$ is empty, then the same must hold for~${v}$. Thus, our claim is trivial. Now, assume that~${u}$ is nonempty and of the form~${u = \langle u_0 \rangle * u'}$ for some sequence~${u' \in \gaps_\alpha}$. We proceed by induction on the recursive definition of~${u * s \leq_\strong v * t}$: Clearly, we can skip the case where~${u * s}$ is empty since we already assume that~${u}$ is nonempty. Moreover, we can continue under the assumption that~${v}$ is nonempty since, otherwise, both remaining cases demand~${u_0 \leq t_0}$, which is a clear violation of the conditions that we impose on~${t}$. Let~${v}$, therefore, be of the form~${v = \langle v_0 \rangle * v'}$ for some sequence~${v' \in \gaps_\alpha}$. Assume that~${u * s \leq_\strong v * t}$ holds because of~${u_0 \leq v_0}$ and~${u * s \leq_\strong v' * t}$. We have~${|u| \geq |v| > |v'|}$. Thus, our induction hypothesis yields our claim. Finally, assume that the inequality holds because of both~${u_0 \leq v_0}$ and~${u' * s \leq_\strong v' * t}$. Similarly, we have~${|u'| = |u| - 1 \geq |v| - 1 = |v'|}$. Again, by induction hypothesis, we arrive at our claim.
\end{proof}

\begin{lemma}\label{lem:gap_remove_head_weak}
	Let~${\alpha}$ be a well order and let~${s, t, u \in \gapw_\alpha}$ be sequences such that the inequality~${u * s \leq_\weak u * t}$ holds. Then, this implies~${s \leq_\weak t}$.
\end{lemma}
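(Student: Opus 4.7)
The plan is to construct a realizer for $s \leq_\weak t$ directly from a realizer $f\colon |u| + |s| \to |u| + |t|$ for $u * s \leq_\weak u * t$ by restriction and shift. The essential point is that, unlike in Lemma~\ref{lem:gap_remove_head}, the weak relation imposes no outer gap condition, so no hypothesis on how $t$ begins is needed. I would simply set $g(i) := f(|u| + i) - |u|$ for $i < |s|$ and verify that $g$ realizes $s \leq_\weak t$.

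Well-definedness and monotonicity of $g$ come essentially for free: strict monotonicity of $f$ gives $f(|u| + i) \geq |u| + i \geq |u|$, so $g(i) \geq 0$; from $f(|u| + i) < |u| + |t|$ we get $g(i) < |t|$; and strict monotonicity of $g$ is inherited from $f$. The pointwise inequality $s_i \leq t_{g(i)}$ then unpacks as $(u * s)_{|u| + i} \leq (u * t)_{f(|u| + i)}$ via the identity $(u * t)_{|u| + k} = t_k$, and is immediate from the realizer property of $f$.

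The only substantive verification is the inner gap condition for $g$: given $i < |s| - 1$ and $j$ with $g(i) < j < g(i+1)$, I set $j' := |u| + j$, obtaining $f(|u| + i) < j' < f(|u| + i + 1)$. The inner gap condition for $f$ at $j'$ then yields $(u * s)_{|u| + i + 1} \leq (u * t)_{j'}$, which is precisely $s_{i+1} \leq t_j$.

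There is no real obstacle here, but it is worth noting why this construction fails for $\leq_\strong$: the outer gap condition for $s \leq_\strong t$ would demand $s_0 \leq t_j$ for all $j < g(0)$, corresponding to positions $j' < f(|u|)$ in $u * t$. The information $f$ gives at such positions is about members of $u$, not about $s_0$, and this is precisely the asymmetry that forces the extra hypothesis in Lemma~\ref{lem:gap_remove_head}. Since $\leq_\weak$ simply has no outer gap to check, the shift construction goes through unconditionally.
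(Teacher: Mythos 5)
Your proof is correct, and it takes a genuinely different route from the paper's. You argue directly with the realizer-based Definition~\ref{def:weak_gap_sequence}: since a strictly increasing map satisfies ${f(|u|+i) \geq |u|+i \geq |u|}$, the shifted restriction ${g(i) := f(|u|+i) - |u|}$ is well-defined, strictly increasing, inherits the pointwise inequalities, and its inner gaps correspond exactly to inner gaps of~$f$ at indices~${\geq |u|}$, so it realizes~${s \leq_\weak t}$; as you note, the absence of an outer gap condition for~$\leq_\weak$ is what makes this unconditional. The paper instead proves a strengthened auxiliary claim -- that ${u * s \leq_\strong v * t}$ with ${|u| \geq |v|}$ already entails ${s \leq_\weak t}$ -- by induction on the recursive characterization~$\leq_\rec$, and then reduces the weak hypothesis to it via Lemma~\ref{lem:gap_strong_and_weak} by prepending~${\langle 0 \rangle}$ to both sides. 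Your argument is shorter and more elementary: it needs neither Lemma~\ref{lem:gap_strong_and_weak} nor the equivalence of~$\leq_\strong$ with~$\leq_\rec$, and it isolates precisely the asymmetry that forces the extra hypothesis in Lemma~\ref{lem:gap_remove_head}. What the paper's route buys is uniformity with the surrounding development (all destruction lemmas are proved by structural induction on the recursive definition, without realizer bookkeeping) and the more flexible ${|u| \geq |v|}$ formulation that the induction requires. One small imprecision in your closing remark: the inner gap of~$f$ at the boundary index~${|u|-1}$ does yield ${s_0 \leq t_j}$ for those~$j$ with ${f(|u|-1) < |u|+j < f(|u|)}$, just not for all ${j < g(0)}$; this does not affect your proof, which correctly never needs such information.
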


\begin{proof}
	Again, we prove a slightly stronger claim:
	We begin by showing that the relation~${u * s \leq_\strong v * t}$ for sequences~${s, t, u, v \in \gapw_\alpha}$ with~${|u| \geq |v|}$ entails~${s \leq_\weak t}$. If~${u}$ is empty, then the same must hold for~${v}$ and our claim is trivial. Also, if~${v}$ is empty, we arrive at our claim via~${s \leq_\weak u * s \leq_\strong t}$. Thus, we can assume both~${u}$ and~${v}$ to be nonempty, i.e., they are of the form~${u = \langle u_0 \rangle * u'}$ and~${v = \langle v_0 \rangle * v'}$ for sequences~${u', v' \in \gapw_\alpha}$. We proceed by induction on the recursive definition of our assumed inequality.
	
	We can skip the case where~${u * s}$ is empty since we already handled the case where~${u}$ is empty. Thus, assume that~${u_0 \leq v_0}$ and~${u * s \leq_\strong v' * t}$ hold. We have the strict inequality~${|u| \geq |v| > |v'|}$ and, therefore, our claim by induction hypothesis. Similarly, assume that~${u_0 \leq v_0}$ and~${u' * s \leq_\strong v' * t}$ hold. Then, we have the inequality~${|u'| = |u| - 1 \geq |v| - 1 \geq |v'|}$ and, again, our claim follows from the induction hypothesis.
	
	Finally, assume that~${u * s \leq_\weak u * t}$ holds. By Lemma~\ref{lem:gap_strong_and_weak}, this implies the relation~${\langle 0 \rangle * u * s \leq_\strong \langle 0 \rangle * u * t}$. Therefore, our previous considerations yield~${s \leq_\weak t}$.
\end{proof}

\section{Simplified construction for~${\atr_0}$}\label{sec:ATR}

In this section, we extend the following result due to Gordeev (cf.~\cite{Gordeev}): Over the system~${\rca_0}$, the order~${\gapg_{\alpha}}$ is a well partial order for any well order~${\alpha}$ if and only if the principle of \emph{arithmetical transfinite recursion} holds. The definition of this axiom and more on the ensuing system~${\atr_0}$ can be found in \cite[Chapter~V]{Simpson}. For our purposes, we work directly with an alternative characterization using the \emph{Veblen hierarchy}. The following definition closely follows the one given in \cite{RW11}:
\begin{definition}
	Given a well order~${\alpha}$, we define a linear order~${\varphi(\alpha, 0)}$. We begin with the terms of its underlying set, which we define together with a function~${h: \varphi(\alpha, 0) \to \alpha}$ and a set~${H \subseteq \varphi(\alpha, 0)}$:
	\begin{enumerate}[label=\roman*)]
		\item ${0 \in \varphi(\alpha, 0)}$, with~${h(0) := 0}$ and~${0 \notin H}$,
		\item ${\varphi(\beta, x) \in \varphi(\alpha, 0)}$ for~${\beta \in \alpha}$ and~${x \in \varphi(\alpha, 0)}$ if~${h(x) \leq \beta}$; with~${h(\varphi(\beta, x)) := \beta}$ and~${\varphi(\beta, x) \in H}$,
		\item ${x_0 + \dots + x_{n-1}}$ for~${x_i \in \varphi(\alpha, 0)}$ and~${n \geq 2}$ satisfying~${x_i \geq x_j}$ for~${i < j < n}$; with~${h(x_0 + \dots + x_{n-1}) := 0}$ and~${x_0 + \dots + x_{n-1} \notin H}$.
	\end{enumerate}
	We call terms in~${H}$ \emph{indecomposable}. All other terms are \emph{decomposable}.
	Using simultaneous recursion, the order on~${\varphi(\alpha, 0)}$ is given as follows:
	\begin{enumerate}[label=\roman*)]
		\item ${0 < x}$ for all~${x \in \varphi(\alpha, 0)}$ with~${x \neq 0}$,
		\item ${x_0 + \dots + x_{n-1} < y_0 + \dots + y_{m-1}}$ if and only if~${n < m}$ as well as~${x_i = y_i}$ for all~${i < n}$, or there is some~${i < \min(n, m)}$ with~${x_i < y_i}$ and~${x_j = y_j}$ for all~${j < i}$,
		\item ${x_0 + \dots + x_{n-1} < \varphi(\beta, y)}$ if and only if~${x_0 < \varphi(\beta, y)}$,
		\item ${\varphi(\beta, x) < y_0 + \dots + y_{m-1}}$ if and only if~${\varphi(\beta, x) \leq y_0}$,
		\item ${\varphi(\beta, x) < \varphi(\gamma, y)}$ if and only if one of the following holds:
		\begin{itemize}
			\item ${\beta < \gamma}$ and~${x < \varphi(\gamma, y)}$,
			\item ${\beta = \gamma}$ and~${x < y}$,
			\item ${\beta > \gamma}$ and~${\varphi(\beta, x) < y}$.
		\end{itemize}
	\end{enumerate}
\end{definition}
The definition of~${\atr_0}$ that we will employ in this article is then given by the following theorem (cf.~\cite{FriedmanATR, RW11}) due to Friedman:
\begin{theorem}\label{thm:atr_veblen}
	The following are equivalent over~${\rca_0}$:
	\begin{enumerate}[label=\alph*)]
		\item Arithmetical transfinite recursion,
		\item ${\varphi(\alpha, 0)}$ is well-founded for any well order~${\alpha}$.
	\end{enumerate}
\end{theorem}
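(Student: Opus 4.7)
The plan is to prove the equivalence in the standard way, treating each direction separately and then leaning on the extensive literature for the technical bookkeeping.

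For the direction (a)~${\Rightarrow}$~(b), I would work inside~$\atr_0$ and, given a well order~$\alpha$, construct by arithmetical transfinite recursion along~$\alpha$ a hierarchy~${\langle W_\beta : \beta \in \alpha \rangle}$ in which each~$W_\beta$ codes a well order of the ordinal~$\varphi(\beta, 0)$ built from the structures at lower levels. Each step amounts to closing the previous structure under the next Veblen function, and arithmetical transfinite recursion is exactly the principle that permits such an iteration along an arbitrary well order. After defining the hierarchy, I would define a rank function from syntactic Veblen terms in~$\varphi(\alpha, 0)$ into this semantic hierarchy by recursion on the build-up of terms, exploiting the normal-form structure given by clauses~(i)--(iii) of the definition. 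A routine induction then shows that this assignment is order-reflecting, so that well-foundedness is transferred from the constructed hierarchy back to~$\varphi(\alpha, 0)$.

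For the converse (b)~${\Rightarrow}$~(a), which is the substantive direction due to Friedman, I would argue in~$\rca_0$ under the hypothesis that every~$\varphi(\alpha, 0)$ is well-founded. Given an arithmetical formula~$\theta(n, Y)$ and a well order~$\prec$ along which we wish to iterate~$\theta$, I would employ the pseudo-hierarchy technique: using~$\Sigma^1_1$ choice in~$\rca_0$, one produces a ``hierarchy'' along a linear extension of~$\prec$ and then shows that failure of the genuine hierarchy yields a descending sequence in some Veblen ordinal whose height depends arithmetically on~$\prec$ and the quantifier complexity of~$\theta$. The challenge is to calibrate this height so that it is of the form~$\varphi(\alpha', 0)$ for some~$\alpha'$ obtained arithmetically from~$\prec$; well-foundedness of~$\varphi(\alpha', 0)$ then forces the hierarchy to exist genuinely.

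The main obstacle is clearly the second direction: the match between syntactic Veblen terms and the transfinite recursions they underwrite is delicate, and the three-case definition of the order on~$\varphi(\alpha, 0)$ makes explicit rank comparisons technical. In a self-contained write-up I would largely defer to~\cite{FriedmanATR, RW11} for the core collapsing and pseudo-hierarchy arguments and quote the theorem as the interface used in the subsequent sections.
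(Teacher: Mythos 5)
The paper does not prove this theorem at all: it is quoted as a known result due to Friedman, with the proof deferred entirely to \cite{FriedmanATR, RW11}, and it serves only as the interface through which arithmetical transfinite recursion is accessed in Section~\ref{sec:ATR}. Your decision to ultimately defer to the same references is therefore consistent with the paper's treatment, and there is no in-paper argument to compare against.

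That said, two points in your sketch would not survive being fleshed out. First, for a)~$\Rightarrow$~b) you propose to carry ``$W_\beta$ codes a well order of $\varphi(\beta,0)$'' as an invariant of an arithmetical transfinite recursion. Well-foundedness is $\Pi^1_1$, so it cannot serve as the inductive hypothesis of an arithmetical recursion, nor can it be verified afterwards by the arithmetical transfinite induction that $\atr_0$ provides; the linear orders $\varphi(\beta,0)$ are in any case uniformly computable from $\alpha$ and need no recursion to construct. The published proofs of this direction instead route the well-ordering argument through countable coded $\omega$-models available in $\atr_0$, or through a Gentzen-style ordinal analysis. Second, for b)~$\Rightarrow$~a) you invoke ``$\Sigma^1_1$ choice in $\rca_0$,'' which is not available there ($\Sigma^1_1\textsf{-AC}_0$ is strictly stronger than $\atr_0$'s base theory and indeed implies more than $\rca_0$); moreover, pseudo-hierarchies are the standard tool for proving consequences \emph{of} $\atr_0$, not for reversals \emph{to} it. The actual reversal shows that well-foundedness of the $\varphi(\alpha,0)$'s yields the existence of jump hierarchies along arbitrary well orders, via a deduction-chain or search-tree construction whose failure is converted into a descending sequence in a suitable $\varphi(\alpha,0)$. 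Since you quote the theorem as a black box anyway, neither issue affects the role it plays in the rest of the paper, but the outline as written is not a correct proof strategy.
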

This result is essential for the characterization of arithmetical transfinite recursion using sequences with gap condition. It should be noted, however, that it was only proved after Gordeev's initial work on such sequences. To be more precise, while Gordeev showed that arithmetical transfinite recursion proves all partial orders of sequences with gap condition to be well, he did not show the other direction in its full strength but restricted himself to $\Pi^1_1$-consequences. In light of Theorem~\ref{thm:atr_veblen}, the full equivalence follows immediately from Gordeev's work. Hence, we still attribute this result to Gordeev.

The following theorem extends Gordeev's result to the orders~${\gapg_{\alpha}}$,~${\gapw_{\alpha}}$,~${\gaps_{\alpha}}$,~${\btal_{\alpha, 1}}$, and~${\bta_{\alpha, 1}}$ for well orders~${\alpha}$:
\begin{theorem}\label{thm:ATR_equivalent_gap_sequences_and_trees}
	The following are equivalent over~${\rca_0}$:
	\begin{enumerate}[label=\alph*)]
		\item Arithmetical transfinite recursion,
		\item ${\gapg_\alpha}$ is a well partial order for any well order~${\alpha}$,
		\item ${\gapw_\alpha}$ is a well partial order for any well order~${\alpha}$,
		\item ${\gaps_\alpha}$ is a well partial order for any well order~${\alpha}$,
		\item ${\btal_{\alpha, 1}}$ is a well partial order for any well order~${\alpha}$,
		\item ${\bta_{\alpha, 1}}$ is a well partial order for any well order~${\alpha}$.
	\end{enumerate}
\end{theorem}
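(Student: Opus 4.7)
The plan is to prove (a) $\Rightarrow$ (f) as the single substantive new result and propagate it through the remaining statements using quasi-embeddings and sub-orders, closing the loop through Gordeev's original reverse implication. By Lemma~\ref{lem:gapg_gapw_are_identical}, items (b) and (c) are literally the same statement, so it suffices to establish (a) $\Rightarrow$ (f) $\Rightarrow$ (e) $\Rightarrow$ (c), separately (a) $\Rightarrow$ (d) and (d) $\Rightarrow$ (c), and finally (c) $\Rightarrow$ (a).

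For the central step (a) $\Rightarrow$ (f), I would work in $\atr_0$ and construct a quasi-embedding $r\colon \bta_{\alpha,1} \to F(\alpha)$ into the ordinal $F(\alpha)$ computed in Theorem~\ref{thm:maximal_order_types}. The definition is by recursion on trees: a leaf is sent to~$0$, and an inner node $\beta \star [t_l, t_r]$ is sent to a Veblen expression $\varphi_{1+\gamma}(r(t_l) \hess r(t_r))$, with $\gamma$ extracted from the normal form of $\beta$ so that the result lies in the correct indecomposable segment. Order-reflection is then verified by induction on trees, matching the three rules defining $\leq$ on $\bta_{\alpha,1}$ against the five clauses defining $<$ on $\varphi(\alpha,0)$, using crucially that inner labels are weakly ascending so that the outer Veblen index at a node constrains the children's $r$-values consistently. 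Once $r$ is a quasi-embedding, well-foundedness of $F(\alpha)$, which is $\atr_0$-provable by Theorem~\ref{thm:atr_veblen} via a reduction to some $\varphi(\alpha',0)$ arithmetical in $\alpha$, transfers to $\bta_{\alpha,1}$.

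The remaining forward implications are comparatively light. Step (f) $\Rightarrow$ (e) is immediate since $\btal_{\alpha,1}$ is by definition a sub-order of $\bta_{\alpha,1}$ with the inherited relation, so the wpo property descends. For (e) $\Rightarrow$ (c), the introduction already exhibits a quasi-embedding $\gapw_\alpha \to \btal_{\alpha,1}$ by splitting a sequence at the first occurrence of its minimum and recursing on the two halves to form the left and right subtrees; the "first occurrence" requirement is precisely what yields strict ascent on left subtrees, and order-reflection is a routine induction on sequence length using the recursive definition of $\leq_\rec = \leq_\strong$. For (d) $\Rightarrow$ (c), the map $s \mapsto \langle 0 \rangle * s$ is an order-reflecting embedding $\gapw_\alpha \hookrightarrow \gaps_\alpha$ by Lemma~\ref{lem:gap_strong_and_weak} together with Lemma~\ref{lem:gap_remove_head}. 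Finally, (a) $\Rightarrow$ (d) is obtained by an analogous quasi-embedding $\gaps_\alpha \to \bta_{\alpha, 1}$, built by the same recursive splitting but with a phantom minimal root label absorbing the outer-gap condition (alternatively, one can reify directly into an ordinal of the form $H(\alpha)$ and invoke Theorem~\ref{thm:atr_veblen}).

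The reverse direction (c) $\Rightarrow$ (a) is already available as Gordeev's original argument in combination with Theorem~\ref{thm:atr_veblen}, as noted in the paragraph immediately preceding the theorem: from wpo of every $\gapg_\alpha = \gapw_\alpha$ one extracts well-foundedness of every $\varphi(\alpha,0)$, which yields arithmetical transfinite recursion. The main obstacle will be the reification step in (a) $\Rightarrow$ (f): defining $r$ so that, for each tree, the Veblen expression produced is genuinely of the correct form (indecomposable, with appropriate index, and with arguments strictly smaller than the whole) in the presence of the weakly-ascending-label constraint, and then matching the case splits in the definition of $\leq$ on $\bta_{\alpha,1}$ against those of $<$ on $\varphi(\alpha,0)$ cleanly enough that order-reflection follows by a manageable induction. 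Once $r$ is in hand, everything else is book-keeping with the quasi-embeddings above.
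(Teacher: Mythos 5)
Your overall skeleton of implications matches the paper's: $(a)\Rightarrow(f)\Rightarrow(e)\Rightarrow(c)$, $(c)\Leftrightarrow(d)$, $(b)\Leftrightarrow(c)$ via Lemma~\ref{lem:gapg_gapw_are_identical}, and $(c)\Rightarrow(a)$ via Proposition~\ref{prop:embed_varphi_into_weak_gap_sequence} and Theorem~\ref{thm:atr_veblen}. The light steps are handled essentially as in the paper (with one small deviation: the paper realizes $(d)\Rightarrow(c)$ by the identity $\gapw_\alpha\to\gaps_\alpha$ and $(c)\Rightarrow(d)$ by prepending a \emph{top} element, $\gaps_\alpha\to\gapw_{\alpha+1}$, rather than going back through trees; prepending a \emph{minimal} label, as you suggest for absorbing the outer gap, would not reflect the strong gap condition correctly).

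However, your central step $(a)\Rightarrow(f)$ contains a genuine error. You propose to construct a \emph{quasi-embedding} $r\colon \bta_{\alpha,1}\to F(\alpha)$ into an ordinal. No such map can exist: if $r$ is order-reflecting and the codomain is a linear order, then for any $s,t$ we have $r(s)\le r(t)$ or $r(t)\le r(s)$, hence $s\le t$ or $t\le s$, forcing the domain to be linear. But already $\bta_{1,1}$ is a genuinely partial order (e.g.\ $0\star[0\star[x,x],x]$ and $0\star[x,0\star[x,x]]$ with $x$ the unique leaf are incomparable), so the quasi-embedding you describe cannot be defined, and the "main obstacle" you identify at the end is in fact insurmountable in the form you state it. This is precisely why the paper replaces quasi-embeddings by \emph{reifications}: a map from finite \emph{bad sequences} of $\bta_{\alpha,1}$ into $\varphi(2+\alpha,0)$ that strictly decreases under proper extension (Proposition~\ref{prop:reification_b_alpha_1_into_varphi_2+alpha_1}). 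Constructing that reification is the real technical core of Section~\ref{sec:ATR}: one introduces types, the derived types $A(a)$ indexed by terms, the order-reflecting residual maps $e_A(a,\cdot)$ on $\{b : a\nleq b\}$, and an ordinal assignment $o(A)$ shown to drop along $A\mapsto A(a)$. Your proposal collapses all of this into a single impossible map, so the key idea of the proof is missing.
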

As already mentioned during the introduction, in the author's PhD thesis, this result is also proved for a possible generalization of~${\gapg_{X}}$,~${\gapw_{X}}$,~${\gaps_{X}}$,~${\btal_{X, Y}}$, and~${\bta_{X, Y}}$ for (proper) well partial orders~$X$ and~$Y$ (cf.~\cite[Theorem~3.3.1]{UftringPhD}).

Similar results can also be derived if one considers arbitrary finite trees but with a different embeddability relation (that does not preserve infima). In fact, Friedman and Weiermann proved a characterization for such trees that also involves the Veblen hierarchy (cf.~\cite{FriedmanWeiermann}). Similar questions are currently being studied by Chopra and Pakhomov (cf.~\cite{ChopraPakhomov}). Moreover, a characterization with respect to better quasi orders for such trees with strictly ascending labels is being investigated by Alakh Dhruv Chopra, Fedor Pakhomov, Philipp
Provenzano, and Giovanni Sold\`{a}.

Before we continue, we need to introduce a further order:
\begin{definition}
    Given a well order~$\alpha$, we define a linear order~$\omega^\alpha$. It contains the terms
    \begin{equation*}
        x := \omega^{x_0} + \dots + \omega^{x_{n-1}}
    \end{equation*}
    for~${n \in \n}$ if~${x_i \in \omega^\alpha}$ for all~${i < n}$ and~${x_i \geq x_j}$ for all~${i < j < n}$. If~${n = 0}$ holds, we usually write~${x = 0}$ instead of an empty sum.
    Simultaneously, we define the order using a second term
    \begin{equation*}
        y := \omega^{y_0} + \dots + \omega^{y_{m-1}}\text{.}
    \end{equation*}
    We have~${x < y}$ if and only if~${n < m}$ holds together with~${x_i = y_i}$ for all~${i < n}$, or there exists some common~${i < \min(n, m)}$ with~${x_i < y_i}$ as well as~${x_j = y_j}$ for all~${j < i}$.
\end{definition}
Similar to the characterization of~$\atr_0$ via the Veblen hierarchy, it is a classical result due to Girard (cf.~\cite[Section~II.5]{Girard}, see also~\cite[Theorem~2.6]{Hirst}) that~$\aca_0$, i.e.~$\rca_0$ together with arithmetical comprehension, can be characterized using orders~$\omega^\alpha$ for well orders~$\alpha$.
\begin{definition}
	For any well order~${\alpha}$, we define the sum~${x + y}$ of any two elements~${x, y \in \omega^\alpha}$. Given~${x := \omega^{x_0} + \dots + \omega^{x_{n-1}}}$ and~${y := \omega^{y_0} + \dots + \omega^{y_{m-1}}}$, we set
	\begin{equation*}
		x + y := \omega^{x_0} + \dots + \omega^{x_i} + \omega^{y_0} + \dots + \omega^{y_{m-1}}\comma
	\end{equation*}
	where~${i < n}$ is the largest index such that~${x_i \geq y_0}$ holds. If~${y}$ is~${0}$, we set~${i := n-1}$. Otherwise, we take~${i := -1}$ if no such index exists.

    The same definition can be applied to~${x, y \in \varphi(\alpha, 0)}$. In this case, we write indecomposable elements~${\varphi(0, z)}$ and~${\varphi(\beta, z)}$ for ${\beta > 0}$ (where~$0$ is the smallest element in~$\alpha$) as~${\omega^z}$ and~$\omega^{\varphi(\beta, z)}$, respectively. Decomposable elements are then written as sums of such~$\omega$-terms.
\end{definition}
Addition enjoys the usual properties of ordinal arithmetic as associativity and, e.g., one can easily show in~${\rca_0}$ that~${x < y}$ implies~${z + x < z + y}$ for~${x, y, z \in \omega^\alpha}$ and any well order~${\alpha}$. (The same holds, if we consider~${x, y, z \in \varphi(\alpha, 0)}$.)

We derive Theorem~\ref{thm:ATR_equivalent_gap_sequences_and_trees} using several propositions and lemmas.
The following first step matches a similar result by Gordeev (cf.~\cite[Section~II.4]{Gordeev}):
\begin{proposition}\label{prop:embed_varphi_into_weak_gap_sequence}
	The system~$\rca_0$ proves that for any well order~${\alpha}$, there is a quasi-embedding from~${\varphi(\alpha, 0)}$ into~${\gapw_{\omega^\alpha}}$.
\end{proposition}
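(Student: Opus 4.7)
The plan is to construct an explicit quasi-embedding $e : \varphi(\alpha, 0) \to \gapw_{\omega^\alpha}$ by recursion on Veblen normal form and then verify order-reflection by a simultaneous structural induction.

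First, I would define $e(0) := \langle \rangle$, and for a term $x_0 + \dots + x_{n-1}$ in Veblen normal form set $e(x_0 + \dots + x_{n-1}) := e(x_0) * \dots * e(x_{n-1})$. For an indecomposable term $\varphi(\beta, x)$ I would set
\[
e(\varphi(\beta, x)) := \langle \omega^\beta \rangle * s * \langle \omega^\beta \rangle\comma
\]
where $s$ is obtained from $e(x)$ by left-adding $\omega^\beta$ to every entry. Since $\beta \in \alpha$ we have $\omega^\beta \in \omega^\alpha$, and each shifted entry $\omega^\beta + a$ lies in $\omega^\alpha$, so the result is well-formed in $\gapw_{\omega^\alpha}$. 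The important design feature is that the two outer $\omega^\beta$'s are the unique minima of $e(\varphi(\beta, x))$: every interior entry has the form $\omega^\beta + a$ with $a \geq 1$, hence strictly exceeds $\omega^\beta$. This makes the outermost Veblen layer structurally recoverable from the sequence.

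The order-reflection $e(s) \leq_\weak e(t) \Rightarrow s \leq t$ is then proved by simultaneous induction on the structural complexity of $s$ and $t$, using the splitting lemmas of Section~2 (in particular Lemmas~\ref{lem:gap_split_weak_weak_or_strong}, \ref{lem:gap_remove_tail}, and \ref{lem:gap_remove_head_weak}) to decompose a realizer of $e(s) \leq_\weak e(t)$ according to the summand structure on either side. Sum-vs-anything and anything-vs-sum cases then reduce to instances on smaller summands via the induction hypothesis. The heart of the argument is the case where both $s = \varphi(\beta, x)$ and $t = \varphi(\gamma, y)$ are indecomposable: the realizer must map the outer $\omega^\beta$'s of $e(s)$ into entries of $e(t)$ of value at least $\omega^\beta$, and the structure of $e(t)$ (two outer $\omega^\gamma$'s surrounding entries bounded below by $\omega^\gamma + 1$) forces a three-way case split matching the three clauses of the Veblen comparison rule for $\varphi(\beta, x)$ versus $\varphi(\gamma, y)$.

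The main obstacle is verifying that these three subcases match \emph{exactly} the Veblen rule and nothing strictly weaker. For $\beta = \gamma$ the outer markers must align, the shifts strip off, and the induction hypothesis applied to $e(x) \leq_\weak e(y)$ delivers $x \leq y$. For $\beta > \gamma$ the outer $\omega^\beta$'s cannot reach the outer $\omega^\gamma$'s of $e(t)$, so the whole of $e(s)$ embeds into the interior of $e(t)$, and a secondary induction on the structure of $y$ recovers $\varphi(\beta, x) \leq y$. For $\beta < \gamma$ the outer $\omega^\beta$'s may reach the outer $\omega^\gamma$'s, and one must argue that the shielding provided by the weak gap condition still forces $x \leq t$, which is precisely the relevant Veblen clause. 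The formalization in $\rca_0$ is then routine, since the recursive definition of $e$ is primitive recursive once ordinal arithmetic on normal forms in $\omega^\alpha$ is available, and the inductions used are all bounded by the complexity of the input terms.
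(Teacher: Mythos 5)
Your encoding is genuinely different from the paper's (the paper sets $f(0) = \langle 0 \rangle$, separates the summands of a decomposable term by an explicit $\langle 0 \rangle$, and encodes $\varphi(\beta,y)$ as $\omega^\beta + f(y)$ with no end markers), and unfortunately it is not order-reflecting. The failure comes from the combination of two design choices: sums are concatenated with no separator, and the left shift $a \mapsto \omega^\beta + a$ is absorbed by entries with larger leading exponent (e.g.\ $\omega^0 + \omega = \omega$), so for $\beta = 0$ the only trace of the outer $\varphi(0,\cdot)$ is the pair of markers of value $1$. Concretely, take $x := \varphi(0,\, \varphi(1,0)+\varphi(1,0))$ --- a legitimate term, since the height of a sum is $0$ --- and $y := \varphi(1,0)+\varphi(1,0)+\varphi(1,0)$. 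Your map gives
\begin{equation*}
	e(x) = \langle 1, \omega, \omega, \omega, \omega, 1 \rangle
	\qquad\text{and}\qquad
	e(y) = \langle \omega, \omega, \omega, \omega, \omega, \omega \rangle\comma
\end{equation*}
and the identity on positions realizes $e(x) \leq_\weak e(y)$ (there are no gaps, so the gap condition is vacuous), yet $x$ denotes $\omega^{\varepsilon_0 \cdot 2} = \varepsilon_0^2$ while $y$ denotes $\varepsilon_0 \cdot 3$, so $x > y$. This lands exactly in the case your sketch dismisses as routine (``sum-vs-anything and anything-vs-sum cases then reduce to instances on smaller summands''): for indecomposable $x = \varphi(\beta,x')$ against decomposable $y = y_0 + \dots + y_{m-1}$ one must derive $x \leq y_0$, and nothing in your encoding prevents the image of $e(x)$ from spreading across several summands of $e(y)$, because $e(y)$ contains no entry small enough for the weak gap condition to catch on.

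For contrast, in the paper's encoding the value $0$ occurs in $f(w)$ exactly when $w$ is decomposable (the zeros are the separators), while $f$ of an indecomposable term has only positive members; the splitting lemma (Lemma~\ref{lem:gap_split_weak_weak_or_strong}, with its strong-gap refinement forcing the right part of a nontrivial split to begin with $0$) then pins the image of an indecomposable term entirely inside a single block of the target. Any repair of your construction has to restore this mechanism: the markers delimiting a summand must be \emph{strictly} below every value that can occur inside a competing summand, not merely below the interior of their own block --- which is essentially what a genuine $\langle 0 \rangle$ separator between summands accomplishes. Until the construction is changed along these lines, the verification cannot go through.
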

For this proof, we introduce new notation:
\begin{definition}
	Consider a well order~${\alpha}$ on which addition is defined. We may write~${\beta + s}$ for any~${\beta \in \alpha}$ and~${s \in \gapw_\alpha}$ in order to denote the sequence in~${\gapw_\alpha}$ that results from~${s}$ if each member~${\gamma}$ in~${s}$ is replaced by~${\beta + \gamma}$.
\end{definition}
\begin{lemma}\label{lem:left_addition_ATR}
	The system~$\rca_0$ proves the following:
	Let~${\alpha}$ be a well order. Consider~${s, t \in \gapw_{\omega^\alpha}}$ and~${\beta, \gamma \in \omega^\alpha}$.
	\begin{enumerate}[label=\alph*)]
		\item ${\beta + s \leq_\weak \beta + t}$ implies~${s \leq_\weak t}$.
		\item ${\omega^\beta + s \leq_\weak \gamma + t}$ with~${\omega^\beta > \gamma}$ implies~${\omega^\beta + s \leq_\weak t}$.
	\end{enumerate}
\end{lemma}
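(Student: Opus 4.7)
The plan for both parts is to reuse the realizer of the hypothesis as the realizer of the conclusion, reducing everything to pointwise facts about addition on~$\omega^\alpha$.

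For part~(a), I would invoke the strict monotonicity of left addition, which the paper has already noted is available in~$\rca_0$: for $x, y, \beta \in \omega^\alpha$, we have $\beta + x \leq \beta + y$ iff $x \leq y$. Given a realizer $f \colon |s| \to |t|$ for $\beta + s \leq_\weak \beta + t$, each inequality $\beta + s_i \leq \beta + t_{f(i)}$ cancels to $s_i \leq t_{f(i)}$, and each gap inequality $\beta + s_{i+1} \leq \beta + t_j$ cancels to $s_{i+1} \leq t_j$. So the same $f$ realizes $s \leq_\weak t$.

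For part~(b), the plan is to first establish an absorption sublemma: for $\gamma < \omega^\beta$ and $\eta \in \omega^\alpha$, we have $\omega^\beta \leq \gamma + \eta$ if and only if $\omega^\beta \leq \eta$, and in that case $\gamma + \eta = \eta$. I would prove this by inspecting Cantor normal forms. If $\eta = \omega^{y_0} + \dots + \omega^{y_{m-1}}$ with $y_0 \geq \beta$, then every $\omega$-term in the normal form of $\gamma$ has exponent strictly below $\beta \leq y_0$ and is therefore absorbed by the definition of~$+$ given in the paper (the splitting index becomes $i = -1$), so $\gamma + \eta = \eta \geq \omega^\beta$. If instead $\eta < \omega^\beta$ (so either $\eta = 0$ or $y_0 < \beta$), then $\gamma$ and $\eta$ both have all CNF exponents below~$\beta$, and a short induction shows that the same is true of their sum, yielding $\gamma + \eta < \omega^\beta$. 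With the sublemma in hand, the conclusion is immediate: given a realizer $f$ for $\omega^\beta + s \leq_\weak \gamma + t$, the inequality $\omega^\beta + s_i \leq \gamma + t_{f(i)}$ forces the right-hand side to be at least~$\omega^\beta$, hence $\gamma + t_{f(i)} = t_{f(i)}$, so $\omega^\beta + s_i \leq t_{f(i)}$; the gap inequalities $\omega^\beta + s_{i+1} \leq \gamma + t_j$ are handled identically. Thus $f$ also realizes $\omega^\beta + s \leq_\weak t$.

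The only real work is the absorption sublemma, whose verification inside $\rca_0$ reduces to a routine induction on Cantor normal forms using the recursive definition of~$+$ already given in the paper. No deeper obstacle is expected, because once the sublemma is available both parts amount to a direct reinterpretation of the given realizer.
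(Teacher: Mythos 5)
Your proposal is correct and follows essentially the same route as the paper: both reduce the claim to pointwise statements about elements of~$\omega^\alpha$ by reusing the given realizer, prove~(a) by left-cancellation, and prove~(b) from the absorption property of Cantor-normal-form addition when~$\gamma < \omega^\beta$. The only cosmetic difference is that you apply absorption to the right-hand side ($\gamma + t_j = t_j$ once~$t_j \geq \omega^\beta$), whereas the paper applies it to the left-hand side in a proof by contradiction; the underlying fact is identical.
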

\begin{proof}
	By definition of sequences with (weak) gap condition, it suffices to prove these with~${s}$ and~${t}$ replaced by elements from~${\omega^\alpha}$:
	\begin{enumerate}[label=\alph*)]
		\item Assume~${\beta + \lambda \leq \beta + \rho}$ for~${\beta, \lambda, \rho \in \omega^\alpha}$. Assume, for contradiction, that the strict inequality~${\lambda > \rho}$ holds. It is easy to show that addition on~${\omega^\alpha}$ entails the contradiction~${\beta + \lambda > \beta + \rho}$.
		\item Assume~${\omega^\beta + \lambda \leq \gamma + \rho}$ for~${\beta, \gamma, \lambda, \rho \in \omega^\alpha}$. Assume, for contradiction, that the inequality~${\omega^\beta + \lambda > \rho}$ holds. Hence,~${\gamma + \omega^\beta + \lambda = \omega^\beta + \lambda}$ holds by definition of addition. Thus, we arrive at~${\omega^\beta + \lambda = \gamma + \omega^\beta + \lambda > \gamma + \rho}$.\qedhere
	\end{enumerate}
\end{proof}

\begin{proof}[Proof of Proposition~\ref{prop:embed_varphi_into_weak_gap_sequence}]
	It~${\alpha}$ is empty, the claim is trivial. Assume that~${\alpha}$ is nonempty.
	We define a map~${f: \varphi(\alpha, 0) \to \gapw_{\omega^{\alpha}}}$ as follows:
	\begin{equation*}
		f(x) :=
		\begin{cases*}
			\langle 0 \rangle & \text{if~${x = 0}$,}\\
			f(x_0) * \langle 0 \rangle * f(x_1 + \dots + x_{n-1}) & \text{if~${x = x_0 + \dots + x_{n-1}}$,}\\
			\omega^\beta + f(y) & \text{if~${x = \varphi(\beta, y)}$.}
		\end{cases*}
	\end{equation*}
	We prove that~${f}$ is a quasi-embedding: Let~${x, y \in \varphi(\alpha, 0)}$ be such that~${f(x) \leq_\weak f(y)}$ holds. By induction along the combined amount of symbols in~${x}$ and~${y}$, we prove that this entails~${x \leq y}$.
	For~${y = 0}$, a short induction reveals that this entails~${x = 0}$. In the following, we assume that both~${x}$ and~${y}$ are different from~${0}$.
	
	Assume that~${x = x_0 + \dots + x_{n-1}}$ and~${y = y_0 + \dots + y_{m-1}}$ are decomposable. By Lemma~\ref{lem:gap_split_weak_weak_or_strong}, we can find two sequences~${s_l}$ and~${s_r}$ with~${s_l * s_r = f(x)}$ such that~${s_l \leq_\weak f(y_0)}$ and~${s_r \leq_\weak \langle 0 \rangle * f(y_1 + \dots + y_{m-1})}$ hold. If~${s_l}$ is empty, then the inequality~${f(x) \leq_\weak \langle 0 \rangle * f(y_1 + \dots + y_{m-1})}$ must hold. Moreover,~${f(x)}$ begins with a positive member since~${x_0}$ is indecomposable and, thus, a short induction reveals that~${f(x_0)}$ must be nonempty and only consist of positive members. Therefore, we conclude that already~${f(x) \leq_\weak f(y_1 + \dots + y_{m-1})}$ holds. By induction hypothesis, this leads to~${x \leq y_1 + \dots + y_{m-1} < y}$. Otherwise, if~${s_r}$ is empty, we have the relation~${f(x) \leq_\weak f(y_0)}$. By induction hypothesis, this entails~${x \leq y_0 < y}$. Now if both~${s_l}$ and~${s_r}$ are nonempty, Lemma~\ref{lem:gap_split_weak_weak_or_strong} already yields~${s_r \leq_\strong \langle 0 \rangle * f(y_1 + \dots + y_{m-1})}$ with respect to the strong gap condition. By definition, this means that~${s_r}$ must begin with a~${0}$. Hence,~${f(x_0)}$ is an initial segment of~${s_l}$, i.e.,~${f(x_0) \leq_\weak s_l \leq_\weak f(y_0)}$ holds. By induction hypothesis, this yields~${x_0 \leq y_0}$. If the inequality is strict, i.e.~if we have~${x_0 < y_0}$, we are already done. Otherwise, assume~${x_0 = y_0}$. This implies~${f(x_0) * \langle 0 \rangle = f(y_0) * \langle 0 \rangle}$. Now, we can apply Lemma~\ref{lem:gap_remove_head_weak}, which results in~${f(x_1 + \dots + x_{n-1}) \leq_\weak f(y_1 + \dots + y_{m-1})}$. By induction hypothesis, we have the inequality~${x_1 + \dots + x_{n-1} \leq y_1 + \dots + y_{m-1}}$ and, with~${x_0 = y_0}$, our claim.
	
	Assume that~${x = x_0 + \dots + x_{n-1}}$ is decomposable but~${y = \varphi(\beta, y')}$ is not. We have~${f(x_0) <_\weak f(x) \leq_\weak f(y)}$. By induction hypothesis, this yields~${x_0 < y}$. Now, by definition of the order on~${\varphi(\alpha, 0)}$, we conclude~${x < y}$.
	
	Assume that~${x = \varphi(\beta, x')}$ is indecomposable but~${y = y_0 + \dots + y_{m-1}}$ is decomposable. Similar to before, Lemma~\ref{lem:gap_split_weak_weak_or_strong} produces two sequences~${s_l}$ and~${s_r}$ satisfying~${s_l * s_r = f(x)}$ such that~${s_l \leq_\weak f(y_0)}$ and~${s_r \leq_\weak \langle 0 \rangle * f(y_1 + \dots + y_{m-1})}$ hold. Again, if~${s_l}$ is empty, then~${f(x) \leq_\weak \langle 0 \rangle * f(y_1 + \dots + y_{m-1})}$ already yields the inequality~${f(x) \leq_\weak f(y_1 + \dots + y_{m-1})}$ since~${f(x)}$ does not contain any zeros. By induction hypothesis, we conclude~${x \leq y_1 + \dots + y_{m-1} < y}$. Otherwise, if~${s_r}$ is empty, then~${f(x) \leq_\weak f(y_0)}$ holds and, by induction hypothesis, we get~${x \leq y_0 < y}$. Finally, if~${s_l}$ and~${s_r}$ are both nonempty, Lemma~\ref{lem:gap_split_weak_weak_or_strong} tells us that~${s_r}$ must begin with a zero. However,~${f(x)}$ only consists of positive members, i.e., this leads to a contradiction.
	
	Assume that~${x = \varphi(\beta, x')}$ and~${y = \varphi(\gamma, y')}$ are both indecomposable. We continue by case distinction on the relation between~${\beta}$ and~${\gamma}$. If~${\beta < \gamma}$ holds, then we have~${f(x') \leq_\weak f(x) \leq_\weak f(y)}$. By induction hypothesis, this results in~${x' \leq y}$. Now, by the restrictions imposed on terms in~${\varphi(\alpha, 0)}$ ($x'$ must be~${0}$, decomposable, or a~${\varphi}$-term with a value less of equal to~${\beta}$ in the first argument) it cannot be that~${x' = y}$ holds. We conclude~${x' < y}$. If~${\beta = \gamma}$ holds, then Lemma~\ref{lem:left_addition_ATR}~a) yields~${f(x') \leq_\weak f(y')}$ and, by induction hypothesis,~${x' \leq y'}$. Finally, if~${\beta > \gamma}$ holds, then Lemma~\ref{lem:left_addition_ATR}~b) yields~${f(x) \leq_\weak f(y')}$ and, by induction hypothesis,~${x \leq y'}$. Again, we have~${x < y'}$ since the equality~${x = y'}$ cannot hold by the restrictions imposed on terms in~${\varphi(\alpha, 0)}$.  In all three cases, we arrive at~${x < y}$.
\end{proof}
We continue with quasi-embeddings between orders with weak and strong gap condition. In general, we can only embed~${\gaps_\alpha}$ into~${\gapw_\beta}$ if~${\alpha}$ is strictly smaller than~${\beta}$. This is not a problem for our theorem, however, since our claims are expressed uniformly for all well orders.
\begin{lemma}\label{lem:embed_weak_into_strong_and_strong_into_weak}
	The system~$\rca_0$ proves that for any well order~${\alpha}$, there are quasi-embeddings~${\gapw_{\alpha} \to \gaps_{\alpha}}$ and~${\gaps_{\alpha} \to \gapw_{\alpha + 1}}$.
\end{lemma}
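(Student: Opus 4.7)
I would exhibit each quasi-embedding explicitly and verify order reflection using the structural lemmas established earlier in this section, without returning to realizer manipulations.

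For $\gapw_\alpha \to \gaps_\alpha$, I would take $f(s) := \langle 0 \rangle * s$, where $0$ denotes the least element of $\alpha$; if $\alpha$ is empty then both orders collapse to $\{\langle \rangle\}$ and the identity map is trivially a quasi-embedding. The quasi-embedding property of $f$ is then an immediate consequence of the equivalence of conditions~(a) and~(b) in Lemma~\ref{lem:gap_strong_and_weak}, which states precisely that $s \leq_\weak t$ holds if and only if $\langle 0 \rangle * s \leq_\strong \langle 0 \rangle * t$ does (so $f$ is in fact an order isomorphism onto its image).

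For $\gaps_\alpha \to \gapw_{\alpha + 1}$, I would write $\top$ for the maximal element of $\alpha + 1$ (which sits strictly above every element of $\alpha$) and set $g(s) := \langle \top \rangle * s$. The key idea is that the maximality of $\top$, together with the fact that any $t \in \gaps_\alpha$ is a sequence over $\alpha$ and hence contains no element matching $\top$, rigidly forces the initial $\top$'s to align, and the resulting constraint recreates the outer-gap requirement of $\leq_\strong$. Concretely, suppose $\langle \top \rangle * s \leq_\weak \langle \top \rangle * t$. Applying Lemma~\ref{lem:gap_split_weak_weak_or_strong} to the factorization $\langle \top \rangle * t$ of the right-hand side produces sequences $s_l, s_r$ with $s_l * s_r = \langle \top \rangle * s$ such that $s_l \leq_\weak \langle \top \rangle$ and $s_r \leq_\weak t$. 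Since $s_l$ embeds into a singleton it has length at most one, and being a prefix of $\langle \top \rangle * s$ it is either $\langle \rangle$ or $\langle \top \rangle$. The empty case would reduce to $\langle \top \rangle * s \leq_\weak t$, which is impossible because no element of $t$ dominates $\top$. Hence $s_l = \langle \top \rangle$ and $s_r = s$, and since $s_l$ is nonempty the stronger conclusion of Lemma~\ref{lem:gap_split_weak_weak_or_strong} upgrades $s_r \leq_\weak t$ to $s \leq_\strong t$, which is exactly what we need.

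The only point requiring genuine attention is ruling out the splitting in which $s_l$ is empty; once that is done, the outer-gap condition of $\leq_\strong$ falls out automatically from the upgrade clause of the splitting lemma, so no further combinatorics is needed.
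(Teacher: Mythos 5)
Your proposal is correct, but it takes a somewhat different route from the paper. For the first map the paper simply uses the identity $\gapw_\alpha \to \gaps_\alpha$, which is order reflecting because ${s \leq_\strong t}$ trivially implies ${s \leq_\weak t}$; your map ${s \mapsto \langle 0 \rangle * s}$ together with the equivalence a)${}\Leftrightarrow{}$b) of Lemma~\ref{lem:gap_strong_and_weak} works just as well, at the small cost of a separate (trivial) case for empty~$\alpha$. For the second map you choose the same function ${s \mapsto \langle \top \rangle * s}$ as the paper, but verify it differently: the paper passes to the strong/recursive setting via Lemma~\ref{lem:gap_strong_and_weak}, obtaining ${\langle 0, \top \rangle * s \leq_\strong \langle 0, \top \rangle * t}$, and then does a case analysis on the recursive definition, using Lemma~\ref{lem:gap_remove_head} in the good case and the maximality of~$\top$ to exclude the bad one; you instead stay with~$\leq_\weak$ and invoke the splitting Lemma~\ref{lem:gap_split_weak_weak_or_strong} with ${t_l = \langle \top \rangle}$, ruling out the split with empty~$s_l$ by the maximality of~$\top$ and then harvesting the outer gap from the lemma's strong-upgrade clause. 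Both arguments are sound; yours packages the outer-gap bookkeeping into the splitting lemma (which itself was proved via the recursive characterization), while the paper's unfolds the recursive definition directly and so uses slightly more elementary ingredients at this point.
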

\begin{proof}
	The quasi-embedding from~${\gapw_{\alpha}}$ to~${\gaps_{\alpha}}$ is given by the identity since any sequences~${s, t \in \gaps_{\alpha}}$ with~${s \leq_\strong t}$ also satisfy~${s \leq_\weak t}$.
	
	Let~${\top}$ denote the top element of~${\alpha + 1}$. Then, we consider~${f: \gaps_{\alpha} \to \gapw_{\alpha + 1}}$ defined by~${f(s) := \langle \top \rangle * s}$. We show that~${f}$ is a quasi-embedding: Assume that we have two sequences~${s, t \in \gaps_{\alpha}}$ with~${f(s) \leq_\weak f(t)}$, i.e.,~${\langle 0, \top \rangle * s \leq_\strong \langle 0, \top \rangle * t}$, where~${0}$ is the smallest element in~${\alpha}$. (If~${\alpha}$ is empty, then our claim is trivial.) Now, this inequality entails one of two possibilities: If~${\langle \top \rangle * s \leq_\strong \langle \top \rangle * t}$ holds, then this yields our claim~${s \leq_\strong t}$ by Lemma~\ref{lem:gap_remove_head}. Otherwise, if~${\langle 0, \top \rangle * s \leq_\strong \langle \top \rangle * t}$ holds, then this results, even with respect to Higman's order without any gap condition, in a contradiction since~${\top}$ is strictly greater than any element in~${t}$.
\end{proof}
Next, we embed sequences with weak gap condition into binary trees with weakly ascending labels that strictly ascend for left subtrees. This makes precise what we sketched during the introduction.
\begin{lemma}\label{lem:embed_weak_into_btal}
	The system~$\rca_0$ proves that for any well order~${\alpha}$, there is a quasi-embedding~${\gapw_\alpha \to \btal_{\alpha, 1}}$.
\end{lemma}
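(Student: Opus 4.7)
The plan is to define a map~${f: \gapw_\alpha \to \btal_{\alpha, 1}}$ recursively as sketched in the introduction: set~${f(\langle \rangle)}$ to be the unique leaf, and for nonempty~$s$ locate the first position~$k$ at which the minimum~$\beta$ of~$s$ is attained, factor~${s = s_l * \langle \beta \rangle * s_r}$, and put~${f(s) := \beta \star [f(s_l), f(s_r)]}$. To see that~$f(s)$ really lies in~$\btal_{\alpha, 1}$, I would observe that every entry of~$s_l$ is strictly greater than~$\beta$ (since~$k$ is the \emph{first} occurrence of the minimum) while every entry of~$s_r$ is~${\geq \beta}$; a short induction identifies the inner labels of~$f(s_l)$ and~$f(s_r)$ with the entries of~$s_l$ and~$s_r$, so the ``strictly ascending on the left, weakly ascending on the right'' condition required by~$\btal_{\alpha, 1}$ is met.

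Next, I would show that~$f$ is a quasi-embedding by induction on~${|s| + |t|}$: assuming~${f(s) \leq f(t)}$ in~$\btal_{\alpha, 1}$, derive~${s \leq_\weak t}$. If~$s$ is empty the claim is trivial, and if~$s$ is nonempty while~$t$ is empty then~$f(t)$ is a leaf that no inner node can embed into by inspection of the three defining clauses of the tree order. Otherwise, write~${f(s) = \beta \star [f(s_l), f(s_r)]}$ and~${f(t) = \gamma \star [f(t_l), f(t_r)]}$. The relation~${f(s) \leq f(t)}$ arises either by subtree containment---in which case the induction hypothesis yields~${s \leq_\weak t_l}$ or~${s \leq_\weak t_r}$, and the opening observation of Section~2 promotes either to~${s \leq_\weak t_l * \langle \gamma \rangle * t_r = t}$---or by root matching, meaning~${\beta \leq \gamma}$ together with~${f(s_l) \leq f(t_l)}$ and~${f(s_r) \leq f(t_r)}$.

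The main obstacle is the root-matching case. From the induction hypothesis I obtain~${s_l \leq_\weak t_l}$ and~${s_r \leq_\weak t_r}$, and I need to reassemble these into~${s \leq_\weak t}$. The bridge between the weak and the strong relation is supplied by Lemma~\ref{lem:gap_strong_and_weak}~c): because~$\gamma$ is the minimum of~$t$, every entry of~$t_r$ is~${\geq \gamma \geq \beta}$, so~${s_r \leq_\weak t_r}$ upgrades to~${\langle \beta \rangle * s_r \leq_\strong \langle \gamma \rangle * t_r}$. Since every entry of~$t_l$ is strictly greater than~$\gamma$ and in particular~${\geq \beta}$, Lemma~\ref{lem:gap_concat_weak_and_strong} then combines~${s_l \leq_\weak t_l}$ with~${\langle \beta \rangle * s_r \leq_\strong \langle \gamma \rangle * t_r}$ to give~${s_l * \langle \beta \rangle * s_r \leq_\weak t_l * \langle \gamma \rangle * t_r}$, which is precisely~${s \leq_\weak t}$.
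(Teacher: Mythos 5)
Your proposal is correct and follows essentially the same route as the paper: the same map (split at the first occurrence of the minimum), the same induction on~${|s|+|t|}$ with the subtree and root-matching cases, and the same reassembly via Lemma~\ref{lem:gap_strong_and_weak} and Lemma~\ref{lem:gap_concat_weak_and_strong}. The justifications you spell out (why~$\beta$ is below every entry of~$t_r$ and of~$t_l$) are exactly what the paper's terser phrasing relies on.
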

Note that this result is also valid in sufficiently strong set theories (which allows us to reuse it in Section~\ref{sec:order_types}). In particular, it also holds with respect to uncountable well orders~$\alpha$. It is important to state this explicitly since it could potentially be the case that our arguments make use of the fact that, in the context of second-order arithmetic, all considered linear orders are countable.
\begin{proof}[Proof of Lemma~\ref{lem:embed_weak_into_btal}]
	We define a map~${f: \gapw_{\alpha} \to \btal_{\alpha, 1}}$ with
	\begin{equation*}
		f(s) :=
		\begin{cases*}
			0 \star [] & \text{if~${s = \langle \rangle}$,}\\
			\beta \star [f(s_l), f(s_r)] & \parbox[t]{20em}{if~${s = s_l * \langle \beta \rangle * s_r}$, where~${\beta}$ is minimal in~${s}$ and~${s_l}$ is as short as possible.}
		\end{cases*}
	\end{equation*}
	A short induction reveals that all inner nodes in~${f(s)}$ must be members of~${s}$, for any sequence~${s \in \gapw_{\alpha}}$. Since, in the second case,~${s_l}$ must be as short as possible, we know that all inner nodes in the left subtree of~${f(s)}$ must be strictly greater than~${\beta}$ and all inner nodes in the right subtree of~${f(s)}$ must be greater than or equal to~$\beta$. Thus,~${f}$ maps to elements in~${\btal_{\alpha, 1}}$.
	
	We show that~${f}$ is a quasi-embedding: Let~${s, t \in \gapw_{\alpha}}$ be such that~${f(s) \leq f(t)}$ holds. Using induction along~${|s| + |t|}$, we prove that this implies~${s \leq_\weak t}$. If~${t = \langle \rangle}$ holds, this immediately implies~${s = \langle \rangle}$. Now, we can assume that~${s = s_l * \langle \beta \rangle * s_r}$ and~${t = t_l * \langle \gamma \rangle * t_r}$ hold for sequences~${s_l, s_r, t_l, t_r \in \gapw_{\alpha}}$ and elements~${\beta, \gamma \in \alpha}$. If the inequality~${f(s) \leq f(t)}$ holds since~${f(s)}$ embeds into one of~${f(t_l)}$ or~${f(t_r)}$, then we have~${s \leq_\weak t_l}$ or~${s \leq_\weak t_r}$, by induction hypothesis. This results in our claim. Otherwise, if~${f(s)}$ does not embed into one of the subtrees of~${f(t)}$, then we have the inequalities~${\beta \leq \gamma}$,~${f(s_l) \leq f(t_l)}$, and~${f(s_r) \leq f(t_r)}$. By induction hypothesis, we get both~${s_l \leq_\weak t_l}$ and~${s_r \leq_\weak t_r}$. With Lemma~\ref{lem:gap_strong_and_weak}, we have~${\langle \beta \rangle * s_r \leq_\strong \langle \gamma \rangle * t_r}$. Finally, with Lemma~\ref{lem:gap_concat_weak_and_strong}, we arrive at our claim since~${\beta}$ is less than any element in~${t_l}$.
\end{proof}
In order to show that the well-foundedness of linear orders implies that our trees are well partial orders, simple quasi-embeddings do not suffice. For this, let us introduce a useful tool (cf.~\cite[p.~85]{SchuetteSimpson} and~\cite[Definition~4.1]{SimpsonHilbertBasis}): \emph{reifications}.
\begin{definition}
	Given a partial order~${X}$, we say that a finite sequence~${s \in X^*}$ is \emph{bad} if~${s_i \nleq s_j}$ holds for any two indices~${i < j < |s|}$. We write~${\badfin(X)}$ for the set of nonempty finite bad sequences in~${X}$. Given a well order~${\alpha}$, we call a map~${r: \badfin(X) \to \alpha}$ a \emph{reification} from~${X}$ into~${\alpha}$ if and only if~${r(s) > r(t)}$ holds for any two sequences~${s, t \in \badfin(X)}$ such that~${t}$ is a proper extension of~${s}$.
\end{definition}
Reifications can be used to prove that a given order is a well partial order (cf.~\cite[p.~85]{SchuetteSimpson} and~\cite[Lemma~4.2]{SimpsonHilbertBasis}):
\begin{lemma}
	The system~$\rca_0$ proves the following:
	Let~${X}$ be a partial order and let~${\alpha}$ be a well order such that there exists a reification from~${X}$ into~${\alpha}$. Then,~${X}$ is a well partial order.
\end{lemma}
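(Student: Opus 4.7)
The plan is to argue by contradiction: assume that $X$ is not a well partial order, so there exists an infinite bad sequence $(x_n)_{n \in \n} \subseteq X$, and use the reification to produce an infinite strictly descending chain in $\alpha$, contradicting its well-foundedness.

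Concretely, I would define for each positive~$n \in \n$ the finite sequence $s_n := \langle x_0, x_1, \dots, x_{n-1} \rangle \in X^*$. Since~${(x_n)_{n \in \n}}$ is bad, we have~${x_i \nleq x_j}$ for all~${i < j}$, so every initial segment~$s_n$ is itself a nonempty finite bad sequence, i.e.~${s_n \in \badfin(X)}$. Moreover,~${s_{n+1}}$ is a proper extension of~${s_n}$, so the defining property of a reification yields~${r(s_n) > r(s_{n+1})}$ for all~${n \geq 1}$. In~$\rca_0$, the map~${n \mapsto r(s_n)}$ is definable by recursive composition from the given sequence~${(x_n)_{n \in \n}}$ and the reification~$r$ (both available as parameters), so the resulting sequence exists as a set via~$\Delta^0_1$-comprehension. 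This sequence is an infinite strictly descending chain in~$\alpha$, contradicting the assumption that~$\alpha$ is a well order.

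There is no real obstacle here; the only point that requires mild care is the formalization in~$\rca_0$, namely that the strictly descending sequence in~$\alpha$ can indeed be formed as a set rather than merely being given term by term. Since both the bad sequence in~$X$ and the reification~$r$ are available as parameters, this is straightforward. Thus, the contrapositive form of the lemma follows: the existence of a reification from~$X$ into a well order~$\alpha$ precludes any infinite bad sequence in~$X$, i.e.,~$X$ is a well partial order.
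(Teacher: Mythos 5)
Your proposal is correct and follows essentially the same argument as the paper: apply the reification to the initial segments of a hypothetical infinite bad sequence and obtain an infinite descending chain in~$\alpha$. The extra remark about forming the descending sequence as a set via $\Delta^0_1$-comprehension in~$\rca_0$ is a fine (if unneeded in the paper's level of detail) touch.
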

\begin{proof}
	Assume that~${(x_n)_{n \in \n}}$ is an infinite bad sequence in~${X}$. Then, the sequence~${(r(\langle x_0, \dots, x_{n} \rangle))_{n \in \n}}$ descends infinitely in~${\alpha}$. This contradicts the assumption that~${\alpha}$ is well-founded.
\end{proof}
Using this tool, we can prove the following proposition:
\begin{proposition}\label{prop:reification_b_alpha_1_into_varphi_2+alpha_1}
	The system~$\rca_0$ proves that for any well order~${\alpha}$, there is a reification from~${\bta_{\alpha, 1}}$ into~${\varphi(2 + \alpha, 0)}$.
\end{proposition}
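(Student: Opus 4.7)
The plan is to assign to each tree an ordinal weight in $\varphi(2+\alpha, 0)$ by structural recursion, and then to combine the weights of the trees of a bad sequence into a single $\varphi$-term that strictly decreases along every proper extension. Since $\bta_{\alpha, 1}$ is non-linear (one easily exhibits incomparable trees already for $\alpha \geq 1$), no quasi-embedding into the linear order $\varphi(2+\alpha, 0)$ exists, so the combination must genuinely exploit the full bad sequence rather than only its last entry.

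The weight is defined by $\|0 \star []\| := 0$ on the unique leaf and
\begin{equation*}
    \|\beta \star [t_l, t_r]\| := \omega^{\varphi(1+\beta,\, 0)} \hess \|t_l\| \hess \|t_r\|
\end{equation*}
on inner nodes. Every additive component of $\|t\|$ is then of the form $\omega^{\varphi(1+\gamma, 0)}$ for some inner label $\gamma$ of $t$; since $1+\gamma \in 2+\alpha$, we have $\|t\| \in \varphi(2+\alpha, 0)$. A short induction along the clauses iii)--v) defining the order on $\bta_{\alpha, 1}$ verifies that the assignment is monotone: $s \leq t$ implies $\|s\| \leq \|t\|$, using monotonicity of the Hessenberg sum and of $\gamma \mapsto \varphi(1+\gamma, 0)$.

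For the reification itself, because $\|\cdot\|$ is only monotone and not order-reflecting, the naive choice $r(s) := \|s_{|s| - 1}\|$ does not strictly decrease along all proper bad extensions. I would instead use a Sch\"{u}tte--Simpson-style position-shifted Hessenberg sum of the form
\begin{equation*}
    r(\langle t_0, \ldots, t_n \rangle) := \omega^{\|t_0\| + \rho(0, n)} \hess \cdots \hess \omega^{\|t_n\| + \rho(n, n)}\comma
\end{equation*}
where the position-dependent shift $\rho(i, n) \in \varphi(2+\alpha, 0)$ is chosen so that incrementing $n$ to $n+1$ strictly lowers the total by forcing the newly appended summand $\omega^{\|t_{n+1}\| + \rho(n+1, n+1)}$ to fit strictly below the (shifted) old summands while the shifts of the old summands themselves decrease. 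The output stays bounded by $\varphi(2+\alpha, 0)$ by choosing $\rho$ to take sufficiently small values.

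The main obstacle is the verification of strict decrease, which rests on a combinatorial lemma linking non-embedding in $\bta_{\alpha, 1}$ to strict inequalities in the Hessenberg normal form of the weights: because $t_{n+1}$ does not embed into any previous $t_i$, some structural obstruction---either a root-label mismatch in clause v), or a propagated subtree failure via clauses iv) and v)---exhibits a $\varphi$-block of some $\|t_i\|$ strictly exceeding the contribution of $t_{n+1}$ to the extended sum. The lemma is proved by induction along the recursive definition of $\leq$ on $\bta_{\alpha, 1}$, handling the three clauses separately; the subtlest case is v) with $\beta > \gamma$, where one uses the weakly ascending label condition to bound every summand of $\|t\|$ by $\omega^{\varphi(1+\gamma',0)}$ with $\gamma' \geq \gamma$ and to isolate the leading $\omega^{\varphi(1+\beta, 0)}$-contribution of $\|s\|$. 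Once this lemma and the precise choice of $\rho$ are in place, strict decrease of $r$ under proper extensions is immediate, and the preceding reification lemma yields that $\bta_{\alpha, 1}$ is a well partial order, concluding the proposition.
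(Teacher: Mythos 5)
There is a genuine gap in the combination step, and it is fatal to the approach as described. Your reification value $r(\langle t_0,\dots,t_{n+1}\rangle)$ is a Hessenberg sum containing the summand $\omega^{\|t_{n+1}\|+\rho(n+1,n+1)}$, and a Hessenberg sum always dominates each of its summands, so $r(\langle t_0,\dots,t_{n+1}\rangle)\geq\omega^{\|t_{n+1}\|}$. But in a bad sequence the appended tree can be arbitrarily large relative to its predecessors: already in $\bta_{1,1}$, fix $t_0$ to be a left comb with two inner nodes; then every right comb $c_k$ with $k$ inner nodes satisfies $t_0\nleq c_k$ and (for $k$ large) $c_k\nleq t_0$, so $\langle t_0,c_k\rangle$ is bad, while $\|c_k\|$ --- which by your definition only records the multiset of inner labels, here just the number of inner nodes --- is unbounded in $k$. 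Hence $r(\langle t_0,c_k\rangle)$ is unbounded while $r(\langle t_0\rangle)$ is a fixed ordinal, and strict decrease fails for \emph{every} choice of the position shifts $\rho(i,n)$. The same example refutes the combinatorial lemma you appeal to: when all labels coincide, no ``$\varphi$-block'' of $\|t_0\|$ can exceed the contribution of a much larger incomparable $c_k$. The underlying obstruction is structural: any reification built by monotonically combining \emph{absolute} weights of the individual members must assign the extension a value at least the weight of the new member, which is not controlled by the earlier members.

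The paper avoids this by never measuring the appended tree in absolute terms. Following Freund's type-theoretic reification, it assigns an ordinal complexity $o(A)\in\varphi(2+\alpha,0)$ to each ``type'' $A$ (a name for a partial order built from $\labeltype_\beta$, $\treetype_{\beta,A}$, sums, products, and $A^*$) and, for every term $a$ of $A$, constructs a residual type $A(a)$ with $o(A(a))<o(A)$ together with an order-reflecting map $e_A(a,\cdot)$ sending each $b$ with $a\nleq b$ into $A(a)$. Iterating along the bad sequence gives $A[s]$, and the reification is $r(s):=o(\treetype_{\alpha,\emptytype^*}[g(s)])$; strict decrease then reduces to $o(A(a))<o(A)$, which holds uniformly, no matter how large the appended element is. To salvage your plan you would have to replace the absolute weight of $t_{n+1}$ by a measure of its position inside the residual order determined by $t_0,\dots,t_n$ --- which is precisely what the maps $e_A$ and the types $A[s]$ supply.
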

The proof of this last proposition will be our subject for the rest of this section. First, however, we use this and all the other results that we have gathered so far in order to prove our theorem:
\begin{proof}[Proof of Theorem~\ref{thm:ATR_equivalent_gap_sequences_and_trees}]
	If arithmetical transfinite recursion is available, then~${\varphi(\alpha, 0)}$ is well-founded for any well order~${\alpha}$ using Theorem~\ref{thm:atr_veblen}. Thus,~a) implies~f) using Proposition~\ref{prop:reification_b_alpha_1_into_varphi_2+alpha_1}. Since~${\btal_{\alpha, 1}}$ is a suborder of~${\bta_{\alpha, 1}}$ for any linear order~${\alpha}$,~f) implies~e). Next, with Lemma~\ref{lem:embed_weak_into_btal}, we know that~e) entails~c). Statements~c) and~d) are equivalent by Lemma~\ref{lem:embed_weak_into_strong_and_strong_into_weak}, statements~b) and~c) are equivalent by Lemma~\ref{lem:gapg_gapw_are_identical}. Finally, for the direction from~c) to~a): If~c) holds\footnote{To be very precise,~c) first implies that~${\omega^\alpha}$ is well-founded for any well order~$\alpha$. Hence,~$\gapw_{\omega^\alpha}$ is a well partial order for any well order~$\alpha$. Then, we apply Proposition~\ref{prop:embed_varphi_into_weak_gap_sequence}.}, then~${\varphi(\alpha, 0)}$ is well-founded for any well order~${\alpha}$ by Proposition~\ref{prop:embed_varphi_into_weak_gap_sequence}, which yields arithmetical transfinite recursion again using Theorem~\ref{thm:atr_veblen}.
\end{proof}

For the rest of this section, we will construct the reification claimed in Proposition~\ref{prop:reification_b_alpha_1_into_varphi_2+alpha_1}. We closely follow the construction of a \emph{primitive recursive} reification of finite binary trees into~${\varepsilon_0}$ by Freund in \cite[Section~6]{FreundReification}, which was influenced by \cite{Hasegawa}. Originally, the result on the maximal order types of finite binary trees is due to de Jongh and Schmidt (cf.~the introduction of \cite{SchmidtBounds} and \cite{SchmidtHabilitation}), effective reifications for finitely branching trees were given by Rathjen and Weiermann (cf.~\cite{RW93}).

It should be noted that there also exist reifications of~${\bta_{\alpha, 1}}$ into smaller well orders such as~${\varphi(1 + \alpha, 0)}$. However, the reification into~${\varphi(2 + \alpha, 0)}$ is much simpler conceptually, since it avoids a distinction between decomposable and indecomposable types. Moreover, in the next section we will see that for some well orders~${\alpha}$, we can even find reifications into well orders that are strictly smaller than~${\varphi(1 + \alpha, 0)}$, when we compute all maximal order types precisely.

For the following constructions,~${\alpha}$ always denotes a well order. We begin by defining~\emph{types}. These can be understood as names for partial orders.
\begin{definition}
	We define the following types:
	\begin{enumerate}[label=\roman*)]
		\item ${\labeltype_\beta}$ and~${\emptytype}$ are types for any~${\beta \in \alpha}$.
		\item If~${A}$ is a type and~${\beta \in \alpha}$, then~${\treetype_{\beta, A}}$ is a type.
		\item If~${A}$ and~${B}$ are types, then~${A + B}$ is a type.
		\item If~${A}$ and~${B}$ are types, then~${A \times B}$ is a type.
		\item If~${A}$ is a type, then~${A^*}$ is a type.
	\end{enumerate}
\end{definition}
As usual, products~${\times}$ enjoy a higher precedence than sums~${+}$.
Each type (except for the empty one(s)) is inhabited by \emph{terms}. These are the elements of the partial orders that our types are the names of.
\begin{definition}
	We define the terms of each type~${A}$ together with a height function~${h_A: A \to \n}$.
	\begin{enumerate}[label=\roman*)]
		\item Let~${\beta \in \alpha}$ be arbitrary. Each~${\gamma \in \alpha}$ with~${\gamma < \beta}$ lies in~${\labeltype_\beta}$ with~${h(\gamma) := 0}$.
		\item Let~${\beta \in \alpha}$ be arbitrary and let~${A}$ be a type. Each tree in~${\bta_{\beta, A}}$ is an element\footnote{In~${\bta_{\beta, A}}$, we already interpret~${A}$ as the partial order of terms (of type~${A}$) that we define in the next definition. This can be made precise by considering both definitions as part of the same simultaneously recursive definition.} of~${\treetype_{\beta, A}}$. We set~${h(a \star []) := h(a) + 1}$ for~${a \in A}$. Moreover, given~${\gamma < \beta}$ as well as~${t_l, t_r \in \bta_{\beta, A}}$, we define~${h(\gamma \star [t_l, t_r]) := h(t_l) + h(t_r) + 1}$.
		\item If~${a}$ or~${b}$ are terms of type~${A}$ or~${B}$, respectively, then~${\iota_0(a)}$ or~${\iota_1(b)}$ are terms of type~${A + B}$ with~${h(\iota_0(a)) := h(a) + 1}$ or~${h(\iota_1(b)) := h(b) + 1}$.
		\item If~${a}$ and~${b}$ are terms of type~${A}$ and~${B}$, respectively, then~${\langle a, b \rangle}$ is a term of type~${A \times B}$ with~${h(\langle a, b \rangle) := h(a) + h(b) + 1}$.
		\item If~${a_0, \dots, a_{n-1}}$ are terms of type~${A}$, then~${a := \langle a_0, \dots, a_{n-1} \rangle}$ is a term of type~${A^*}$ with~${h(a) := \max\{a_i \mid i < n\} + 1}$.
	\end{enumerate}
\end{definition}
Of course, in order to arrive at partial orders, we require relations on our terms:
\begin{definition}
	For any term~${A}$, we define a relation~${\leq_A}$ on its elements:
	\begin{enumerate}[label=\roman*)]
		\item Let~${\beta \in \alpha}$ be arbitrary. Given~${\gamma, \delta < \beta}$, we have~${\gamma \leq_{\labeltype_\beta} \delta}$ if~${\gamma \leq \beta}$ holds.
		\item Let~${\beta \in \alpha}$ be arbitrary. Given trees~${s, t \in \bta_{\beta, A}}$, we have~${s \leq_{\treetype_{\beta, A}} t}$ if~${s \leq t}$ holds.
		\item If~${a \leq_{A} a'}$ or~${b \leq_{B} b'}$ hold, we have~${\iota_0(a) \leq_{A + B} \iota_0(a')}$ or~${\iota_1(b) \leq_{A + B} \iota_1(b')}$, respectively.
		\item If~${a \leq_{A} a'}$ and~${b \leq_{B} b'}$ hold, then we have~${\langle a, b \rangle \leq_{A \times B} \langle a', b' \rangle}$.
		\item Let~${a_0, \dots, a_{n-1}}$ and~${a'_0, \dots, a'_{m-1}}$ be terms of type~${A}$ such that there exists a strictly increasing function~${f: n \to m}$ with~${a_i \leq_A a'_{f(i)}}$ for all~${i < n}$. Then we have~${\langle a_0, \dots, a_{n-1} \rangle \leq_{A^*} \langle a'_0, \dots, a'_{m-1} \rangle}$.
	\end{enumerate}
\end{definition}
\begin{lemma}
	The system~$\rca_0$ proves that for any term~${A}$, the relation~${\leq_A}$ is a partial order.
\end{lemma}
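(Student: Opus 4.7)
The plan is to establish reflexivity, antisymmetry, and transitivity for $\leq_A$ by induction on the build-up of the type $A$. Each relation $\leq_A$ is decidable (all its defining clauses involve only bounded searches through finite data), so the three statements are uniformly $\Pi^0_1$ in a code for $A$, placing the induction comfortably within the reach of $\rca_0$. For the base cases, $\emptytype$ is vacuous and $\leq_{\labeltype_\beta}$ is the restriction of $\alpha$'s given order and so inherits all three properties.

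The cases $A + B$ and $A \times B$ are routine: elements of $A + B$ carry an explicit tag $\iota_0$ or $\iota_1$, so every comparison reduces to one in the corresponding summand by the induction hypothesis, while for products all three properties hold componentwise. For $A^*$, this is the standard Higman-style verification: reflexivity uses the identity witness; if $\langle a_0,\dots,a_{n-1}\rangle$ and $\langle b_0,\dots,b_{m-1}\rangle$ embed into each other via strictly increasing $f:n\to m$ and $g:m\to n$, then necessarily $n=m$ and $f=g$ is the identity, so antisymmetry reduces to antisymmetry in $A$; transitivity follows by composing the two witnessing functions and invoking transitivity in $A$.

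The main obstacle is $\treetype_{\beta,A}$, whose order is only specified as the smallest relation closed under clauses (iii)--(v) of the definition of $\bta_{\beta,A}$. Reflexivity goes by structural induction on the tree: at a leaf $a \star []$ we use $a \leq_A a$ (from the outer induction hypothesis) together with clause (iii); at an inner node $\gamma \star [s_l,s_r]$ we combine $\gamma \leq \gamma$, the structural induction hypothesis applied to $s_l$ and $s_r$, and clause (v). Antisymmetry and transitivity both proceed by induction on the sum of heights of the trees involved, with a case analysis on which clauses produced the given derivations. The delicate configuration for transitivity is when $s \leq t$ was obtained via clause (iv)---so $s \leq t_l$ or $s \leq t_r$---while $t \leq u$ was obtained via clause (v); here one first descends into the corresponding subtree of $u$ (again via clause (iv) or (v)) to extract $t_l \leq u_l$ or $t_r \leq u_r$, and then applies the induction hypothesis to the triple $(s, t_i, u_i)$, which has strictly smaller combined height. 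The remaining configurations---both derivations via (v), or $s \leq t$ via (iii) and $t \leq u$ via (v), etc.---are handled analogously, each ultimately reducing to the induction hypothesis on subtrees or to transitivity of $\leq_A$.
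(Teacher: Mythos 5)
Your proof is correct and follows the route the paper intends: the paper's own proof merely asserts the claim is clear for $\labeltype_\beta$ and $\treetype_{\beta, A}$ and defers the sum, product, and sequence cases to the cited work of Freund, so your write-up simply supplies the induction on type structure that is being taken for granted. The one point worth making explicit in the tree case is the auxiliary monotonicity fact that $s \leq t$ forces $s$ to have at most as many nodes as $t$, which is what rules out the configuration where one of two mutually comparing derivations uses clause~(iv) in the antisymmetry argument.
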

\begin{proof}
	This is clear for types~${\labeltype_{\beta}}$ and~${\treetype_{\beta, A}}$ for any~${\beta \in \alpha}$ and any type~${A}$. It is not hard to see that our claim also holds for all other types. For more details, we refer to \cite{FreundReification}.
\end{proof}
Given a type~${A}$ and a term~${a}$ of~${A}$, we define a ``simplified'' or ``smaller'' type~${A(a)}$ such that we can quasi-embed the suborder of all terms~${b}$ in~${A}$ with~${a \nleq b}$ into~${A(a)}$. Informally, this corresponds to quasi-embeddings~${L_A(a) \to A(a)}$, which are used for the computation of maximal order types as we will see in the next section. It might not be immediately clear in which sense~${A(a)}$ is supposed to be a simplification of~$A$. This will be revealed at a later stage of the proof, where we assign elements of~${\varphi(2 + \alpha, 0)}$ to types in order to measure their complexity.
\begin{definition}
	For each type~${A}$ and term~${a}$ of~${A}$, we give a type~${A(a)}$:
	\begin{enumerate}[label=\roman*)]
		\item Let~${\beta, \gamma \in \alpha}$ with~${\gamma < \beta}$ be arbitrary. We set~${\labeltype_\beta(\gamma) := \labeltype_\gamma}$.
		\item Let~${\beta \in \alpha}$ be arbitrary. Given a term~${t}$ of type~${\treetype_{\beta, A}}$, we set
		\begin{equation*}
			\treetype_{\beta, A}(t) :=
			\begin{cases}
				\treetype_{\beta, A(a)} & \text{if~${t = a \star []}$,}\\
				\treetype_{\gamma, (A + \labeltype_\beta \times \treetype_{\beta, A}(t_l) + \labeltype_\beta \times \treetype_{\beta, A}(t_r))^*} & \text{if~${t = \gamma \star [t_l, t_r]}$.}
			\end{cases}
		\end{equation*}
		\item Let~${A}$ and~${B}$ be types. We define~${(A + B)(\iota_0(a)) := A(a) + B}$ if~${a}$ is a term of type~${A}$. If~${b}$ is a term of type~${B}$, we set~${(A + B)(\iota_1(b)) := A + B(b)}$.
		\item Let~${a}$ and~${b}$ be terms of types~${A}$ and~${B}$, respectively. Then, we define~${(A \times B)(\langle a, b \rangle) := A(a) \times B + A \times B(b)}$.
		\item Let~${A}$ be a type. We set
		\begin{equation*}
			A^*(s) :=
			\begin{cases}
				\emptytype & \text{if~${s = \langle \rangle}$,}\\
				A(a)^* + A(a)^* \times A \times A^*(s') & \text{if~${s = \langle a \rangle * s'}$.}
			\end{cases}
		\end{equation*}
	\end{enumerate}
\end{definition}

For convenience, we may use~${\iota_0}$,~${\iota_1}$, and~${\iota_2}$ in order to refer to the three different summands in the type~${A + \labeltype_\beta \times \treetype_{\beta, A}(t_l) + \labeltype_\beta \times \treetype_{\beta, A}(t_r)}$.
The equivalent of the usual quasi-embeddings~${L_A(a) \to A(a)}$ is then given by the following family of functions:
\begin{proposition}\label{prop:e_quasi-embedding}
	The system~$\rca_0$ proves that there is a family of (dependent) functions~${e_A}$ (for each type~${A}$) such that the following properties hold: An element is in the domain of~${e_A}$ if and only if it is a pair~${(a, b) \in A \times A}$ with~${a \nleq b}$. Moreover,~${e_A(a, b)}$ is an element of~${A(a)}$ for any such pair.
	Finally, consider elements~${a, b, c \in A}$ with~${a \nleq b, c}$. Then,~${e_A(a, b) \leq e_A(a, c)}$ entails~${b \leq c}$.
\end{proposition}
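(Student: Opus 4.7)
The plan is to define $e_A$ and verify the reflection property simultaneously by structural recursion on the type $A$. The intended meaning of $A(a)$ is a simpler ``copy'' of $A$ into which the suborder of terms $b$ with $a \nleq b$ admits a quasi-embedding, and $e_A(a,b)$ simply records the shape of $b$ together with the witnesses of non-comparability. For $A = \emptytype$ there is nothing to check. For $A = \labeltype_\beta$ a pair $a \nleq b$ forces $b < a$, so we set $e_A(a,b) := b \in \labeltype_a = \labeltype_\beta(a)$. For $A = B + C$ and $A = B \times C$ the definitions are forced by the definition of $A(a)$: for instance, from $\iota_0(a') \nleq \iota_0(b')$ we obtain $a' \nleq_B b'$ and set $e_A(\iota_0(a'), \iota_0(b')) := \iota_0(e_B(a', b'))$, while the ``cross'' case $\iota_0(a'), \iota_1(b')$ is absorbed into the summand $C$ of $(B+C)(\iota_0(a')) = B(a') + C$; the reflection property then reduces to the inductive hypothesis by case analysis on the outer constructor of the inputs.

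For $A = B^*$, with $s = \langle a \rangle * s'$ and $t$ satisfying $s \nleq t$, I would locate the smallest index $k < |t|$ (if any) with $a \leq t_k$. If no such $k$ exists, then $a \nleq t_i$ for every $i$, and we set $e(s, t) := \iota_0\bigl(\langle e_B(a, t_0), \dots, e_B(a, t_{|t|-1}) \rangle\bigr)$ in $B(a)^*$. Otherwise $s' \nleq \langle t_{k+1}, \dots, t_{|t|-1} \rangle$, for else one could extend the match of $a$ at position $k$ to an embedding of $s$ into $t$, and we put
\[
    e(s, t) := \iota_1\bigl(\langle e_B(a, t_0), \dots, e_B(a, t_{k-1}) \rangle,\; t_k,\; e_{B^*}(s', \langle t_{k+1}, \dots, t_{|t|-1} \rangle)\bigr).
\]
Reflection follows by applying the inductive hypothesis to $B$ and to $B^*$ with the strictly shorter argument $s'$, after observing that Higman-style embeddings in $B(a)^*$ and embeddings in the product summand identify the three recorded pieces of data in an order-preserving way.

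The main obstacle is the tree case $A = \treetype_{\beta, B}$. When $s = a \star []$ is a leaf, $s \nleq t$ holds iff no leaf of $t$ carries a label $\geq_B a$; we relabel each leaf $b \star []$ of $t$ as $e_B(a, b) \star []$, leaving inner labels unchanged, obtaining an element of $\treetype_{\beta, B(a)} = \treetype_{\beta, B}(s)$, and a routine induction on the tree embedding relation verifies the reflection property. When $s = \gamma \star [s_l, s_r]$ the target is $\treetype_{\gamma, C^*}$ for $C := B + \labeltype_\beta \times \treetype_{\beta, B}(s_l) + \labeltype_\beta \times \treetype_{\beta, B}(s_r)$. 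Since labels in $\bta_{\beta, B}$ ascend weakly along each branch, the nodes of $t$ with labels~$< \gamma$ form an upward-closed subtree near the root; we define $e(s, t)$ by keeping such nodes as inner nodes of the output and, upon meeting a leaf or an inner node $\delta \star [t_l, t_r]$ with $\delta \geq \gamma$, collapsing the whole subtree rooted there into a single leaf whose label is a $C$-sequence built recursively as follows: a leaf $b \star []$ collapses to $\langle \iota_0(b) \rangle$, and a $\geq \gamma$-node $\delta \star [t_l, t_r]$ collapses to the concatenation of the $C$-sequences for $t_l$ and $t_r$ together with an extra entry $\iota_1\bigl(\delta, e_{\treetype_{\beta, B}(s_l)}(s_l, t_l)\bigr)$ if $s_l \nleq t_l$, or $\iota_2\bigl(\delta, e_{\treetype_{\beta, B}(s_r)}(s_r, t_r)\bigr)$ otherwise (the assumption $s \nleq t$ together with $\delta \geq \gamma$ guarantees that at least one of these options is available).

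The hardest step is verifying reflection in this last subcase: given $e(s, t) \leq e(s, t')$ in $\bta_{\gamma, C^*}$, one must reconstruct $t \leq t'$ in $\bta_{\beta, B}$. The argument proceeds by a nested induction following both the tree embedding in $\bta_{\gamma, C^*}$ and the Higman embeddings of the $C$-sequences at collapsed leaves: the $\iota_0$-entries pair up original leaf labels via the inductive hypothesis on~$B$, while the $\iota_1$- and $\iota_2$-entries, through the inductive hypothesis on the types $\treetype_{\beta, B}(s_l)$ and $\treetype_{\beta, B}(s_r)$, yield embeddings between subtrees of~$t$ and of~$t'$; these local embeddings can then be reassembled into the required full embedding $t \leq t'$ using clauses~iv) and~v) in the definition of $\bta_{\beta, B}$. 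I expect the combinatorial bookkeeping that aligns collapsed leaves on the two sides and propagates reconstructed embeddings back up the $< \gamma$-skeleton to be the bulk of the technical work, closely paralleling the reification argument for finite binary trees of Freund that the paper cites as its model.
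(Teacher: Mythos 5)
Your overall architecture --- the simplified types $A(a)$, the label/leaf/sum/product/sequence cases, and the idea of collapsing the part of $t$ with labels~${\geq \gamma}$ into leaves carrying $C$-sequences --- is the same as the paper's, which in turn follows Freund's reification of finite binary trees. The gap is in your collapse of an inner node~${\delta \star [t_l, t_r]}$ with~${\delta \geq \gamma}$: you concatenate the recursive $C$-sequences of \emph{both} subtrees and add one extra entry. The paper instead records the single entry~${\iota_1(\langle \delta, e(s_l, t_l)\rangle)}$ (resp.~${\iota_2(\langle \delta, e(s_r, t_r)\rangle)}$) and then recurses into \emph{only the other} subtree, so the resulting sequence is a root-to-leaf spine whose tail is literally the sequence of one subtree. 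That shape is what makes reflection work: either the whole sequence for~$t$ embeds into the tail, giving~${t \leq u_l}$ or~${t \leq u_r}$ by induction, or head embeds into head and tail into tail, giving~${\delta \leq \delta'}$,~${t_l \leq u_l}$ (from the extra entries via the induction hypothesis) and~${t_r \leq u_r}$ (from the tails). With both subtrees' sequences present, a Higman embedding need not respect the block boundaries, and your sketched ``reassembly'' has nothing to force the image of the block for~$t_r$ to land inside the block for~$u_r$.

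In fact reflection genuinely fails for your encoding (with the extra entry appended last). Take~${B = \labeltype_2}$,~${s_l = 1 \star []}$, leaves~${L := 0 \star []}$, and~${Q := \delta_2 \star [L, L]}$,~${Q' := \delta_2 \star [Q, L]}$ with~${\gamma \leq \delta_1 < \delta_2}$; set~${t := \delta_1 \star [L, Q]}$ and~${u := \delta_1 \star [Q', L]}$. Since every leaf is~${0 \star []}$ and~${1 \nleq 0}$, we have~${s \nleq t, u}$ and every extra entry is of the~$\iota_1$-kind. Writing~${E_v := e(s_l, v)}$ (so~${E_L \leq E_Q \leq E_{Q'}}$, as~$e(s_l,\cdot)$ just relabels leaves), your sequences are~${c(t) = \langle \iota_0(0), \iota_0(0), \iota_0(0), \iota_1(\langle\delta_2, E_L\rangle), \iota_1(\langle\delta_1, E_L\rangle) \rangle}$ and~${c(u) = \langle \iota_0(0), \iota_0(0), \iota_1(\langle\delta_2, E_L\rangle), \iota_0(0), \iota_1(\langle\delta_2, E_Q\rangle), \iota_0(0), \iota_1(\langle\delta_1, E_{Q'}\rangle) \rangle}$, and~$c(t)$ embeds into~$c(u)$ via positions~${0,1,3,4,6}$, although~${t \nleq u}$ because~${Q \nleq L}$ and~$t$ embeds into no subtree of~$u$. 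So the duplicated subtree sequence must be dropped in favour of the spine decomposition. A smaller point: the recursion cannot be purely structural on the type~$A$, since~$e_{\treetype_{\beta, B}}$ calls itself on smaller arguments of the \emph{same} type; the paper recurses on~${h(a) + h(b)}$ simultaneously for all types.
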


\begin{proof}
	The functions~${e_A}$ are defined simultaneously for all types~${A}$ by induction along~${h(a) + h(b)}$ for arguments~${(a, b) \in \dom(e_A)}$. We only do the constructions and proofs for types of the form~${\labeltype_\beta}$ and~${\treetype_{\beta, A}}$. The rest is covered by~\cite{FreundReification}.
	
	Given~${\beta \in \alpha}$, we set
	\begin{equation*}
		e_{\labeltype_\beta}(\gamma, \delta) := \delta
	\end{equation*}
	for~${\gamma, \delta \in \labeltype_\beta}$ with~${\gamma \nleq \delta}$. Clearly, for any further~${\delta' \in \labeltype_\beta}$ with~${\gamma \nleq \delta'}$ satisfying the inequality~${e_{\labeltype_\beta}(\gamma, \delta) \leq e_{\labeltype_\beta}(\gamma, \delta')}$, we have~${\delta \leq \delta'}$.
	Now, given~${\beta \in \alpha}$, some type~${A}$, and a term~${a \in A}$, we set
	\begin{equation*}
		e_{\treetype_{\beta, A}}(a \star [], t) :=
		\begin{cases}
			e_A(a, b) \star [] & \text{if~${t = b \star []}$,}\\
			\gamma \star [e_{\treetype_{\beta, A}}(a \star [], t_l), e_{\treetype_{\beta, A}}(a \star [], t_r)] & \text{if~${t = \gamma \star [t_l, t_r]}$\period}
		\end{cases}
	\end{equation*}
	By assumption on our domain, we have~${a \star [] \nleq t}$. Clearly, this implies~${a \nleq b}$ in the first case and~${a \star [] \nleq t_l, t_r}$ in the second one.
	Now, let~${s = \gamma \star [s_l, s_r]}$. In order to define~${e_{\treetype_{\beta, A}}}$ on pairs that start with~${s}$, we use side induction to define a function~${f}$ that maps any~${t \in \treetype_{\beta, A}}$ (with~${s \nleq t}$ such that all inner nodes of~${t}$ are greater than or equal to~${\gamma}$) to a term in~${(A + \labeltype_\beta \times \treetype_{\beta, A}(s_l) + \labeltype_\beta \times \treetype_{\beta, A}(s_r))^*}$:
	\begin{align*}
		f(t) :=
		\begin{cases}
			\langle \iota_0(a) \rangle & \text{if~${t = a \star []}$,}\\
			\langle \iota_1(\langle \delta, e_{\treetype_{\beta, A}}(s_l, t_l) \rangle) \rangle * f(t_r) &\text{if~${t = \delta \star [t_l, t_r]}$ and~${s_l \nleq t_l}$,}\\
			\langle \iota_2(\langle \delta, e_{\treetype_{\beta, A}}(s_r, t_r) \rangle) \rangle * f(t_l) &\text{if~${t = \delta \star [t_l, t_r]}$ and~${s_l \leq t_l}$.}\\
		\end{cases}
	\end{align*}
	Note that all recursive uses of~${f}$ are valid since the fact that all inner nodes of~${t}$ are greater than or equal to~${\gamma}$, clearly, is transferred to subtrees of~${t}$. Moreover, in the third case, we have~${s_r \nleq t_r}$ since, otherwise,~${\gamma \leq \delta}$ and~${s_l \leq t_l}$ lead to the contradiction~${s \leq t}$. Now, we are ready to define~${e_{\treetype_{\beta, A}}}$ for pairs that start with~${s}$:
	\begin{align*}
		&e_{\treetype_{\beta, A}}(s, t) :=\\
		&\begin{cases}
			\delta \star [e_{\treetype_{\beta, A}}(s, t_l), e_{\treetype_{\beta, A}}(s, t_r)] & \text{if~${t = \delta \star [t_l, t_r]}$ with~${\delta < \gamma}$,}\\
			f(t) \star [] & \text{otherwise.}
		\end{cases}
	\end{align*}
	Let us prove that~${s, t, u \in \treetype_{\beta, A}}$ with~${s \nleq t, u}$ and~${e_{\treetype_{\beta, A}}(s, t) \leq e_{\treetype_{\beta, A}}(s, u)}$ satisfy~${t \leq u}$: First, we assume that~${s = a \star []}$ holds for some term~${a}$ of type~${A}$. Now, if~${e_{\treetype_{\beta, A}}(s, t)}$ embeds into a subtree of~${e_{\treetype_{\beta, A}}(s, u)}$, we know that~${u}$ is of the form~${\delta \star [u_l, u_r]}$ and~${e_{\treetype_{\beta, A}}(s, t)}$ embeds into one of~${e_{\treetype_{\beta, A}}(s, u_l)}$ or~${e_{\treetype_{\beta, A}}(s, u_r)}$. In both cases, we have~${t \leq u_l}$ or~${t \leq u_r}$ and, thus, our claim~${t \leq u}$. Otherwise, assume that~${e_{\treetype_{\beta, A}}(s, t)}$ does not embed into a subtree of~${e_{\treetype_{\beta, A}}(s, u)}$. In the first case,~${t}$ and~${u}$  are of the form~${b \star []}$ and~${c \star []}$, respectively. Then,~${e_A(a, b) \leq e_A(a, c)}$ implies~${b \leq c}$ by induction hypothesis and we arrive at~${t \leq u}$. In the second case,~${t}$ and~${u}$ must be of the form~${\gamma \star [t_l, t_r]}$ and~${\delta \star [u_l, u_r]}$, respectively and we have~${\gamma \leq \delta}$ together with both~${e_{\treetype_{\beta, A}}(s, t_l) \leq e_{\treetype_{\beta, A}}(s, u_l)}$ and~${e_{\treetype_{\beta, A}}(s, t_r) \leq e_{\treetype_{\beta, A}}(s, u_r)}$. By induction hypothesis, this leads to~${t_l \leq u_l}$ and~${t_r \leq u_r}$. Finally, we combine everything and have~${t \leq u}$.
	
	Now, we assume that~${s}$ is of the form~${s = \gamma \star [s_l, s_r]}$. Before we prove our claim for~${e_{\treetype_{\beta, A}}}$, we show that~${f(t) \leq f(u)}$ implies~${t \leq u}$. For this, of course, we need to assume that all inner labels in~${t}$ and~${u}$ are greater than or equal to~${\gamma}$. Now, if~${f(t)}$ is less than or equal to the tail of~${f(u)}$, then~${u}$ must be of the form~${\delta \star [u_l, u_r]}$ and we have~${f(t) \leq f(u_l)}$ or~${f(t) \leq f(u_r)}$. By induction hypothesis, this leads to~${t \leq u_l}$ or~${t \leq u_r}$ and we, clearly, have~${t \leq u}$ in both cases. If~${f(t)}$ is not less than or equal to the tail of~${f(u)}$, then we know that both~${t}$ and~${u}$ must be of the same case with respect to the definition of~${f}$. If we are in the first case, then we have~${t = a \star []}$ and~${u = b \star []}$ for terms~${a}$ and~${b}$ of type~${A}$. The inequality~${f(t) \leq f(u)}$ immediately leads to~${a \leq b}$ and, thus,~${t \leq u}$. If we are in the second case, then~${t}$ and~${u}$ are of the form~${t = \delta \star [t_l, t_r]}$ and~${u = \delta' \star [u_l, u_r]}$, respectively. By definition and induction hypotheses, we arrive at~${\delta \leq \delta'}$,~${t_l \leq u_l}$, and~${t_r \leq u_r}$. Clearly, putting everything together yields~${t \leq u}$. The third case is similar.
	
	Let~${t, u \in \treetype_{\beta, A}}$ such that~${s \nleq t, u}$ holds with~${e_{\treetype_{\beta, A}}(s, t) \leq e_{\treetype_{\beta, A}}(s, u)}$. We want to derive~${t \leq u}$. If~${e_{\treetype_{\beta, A}}(s, t)}$ embeds into a subtree of~${e_{\treetype_{\beta, A}}(s, u)}$, then~${u}$ must be of the form~${u = \delta \star [u_l, u_r]}$ and we have one of the inequalities~${e_{\treetype_{\beta, A}}(s, t) \leq e_{\treetype_{\beta, A}}(s, u_l)}$ or~${e_{\treetype_{\beta, A}}(s, t) \leq e_{\treetype_{\beta, A}}(s, u_r)}$. By induction hypothesis, this yields~${t \leq u_l}$ or~${t \leq u_r}$ and, thus, our claim. Assume that~${e_{\treetype_{\beta, A}}(s, t)}$ does not embed into a subtree of~${e_{\treetype_{\beta, A}}(s, u)}$. Again,~${t}$ and~${u}$ must belong to the same case in the definition of~${e_{\treetype_{\beta, A}}}$. If both~${t}$ and~${u}$ are of the form~${t = \delta \star [t_l, t_r]}$ and~${u = \delta' \star [u_l, u_r]}$, respectively, such that~${\delta, \delta' < \gamma}$ holds, we have~${\delta \leq \delta'}$ and both~${e_{\treetype_{\beta, A}}(s, t_l) \leq e_{\treetype_{\beta, A}}(s, u_l)}$ and~${e_{\treetype_{\beta, A}}(s, t_r) \leq e_{\treetype_{\beta, A}}(s, u_r)}$. The latter inequalities lead to~${t_l \leq u_l}$ and~${t_r \leq u_r}$, by induction hypothesis. Our claim follows immediately. Otherwise, if~${t}$ and~${u}$ are not of this form, we have~${f(t) \leq f(u)}$ and, thus,~${t \leq u}$ from previous considerations.
\end{proof}

For the definition of the complexity of our types, we need natural (Hessenberg) sums as defined in \cite[Definition~4.4]{SimpsonHilbertBasis}:
\begin{definition}
	Consider any two elements of~${\varphi(2 + \alpha, 0)}$ written in form of sums~${x := x_0 + \dots + x_{n-1}}$ and~${y := y_0 + \dots + y_{m-1}}$ (where~$0$ and indecomposable elements are interpreted as empty and singleton sums, respectively). We define the Hessenberg sum
    \begin{equation*}
        x \hess y := z_0 + \dots + z_{n + m - 1}
    \end{equation*}
    where~${\langle z_0, \dots, z_{n+m-1} \rangle}$ is the (unique) weakly descending (re)ordering of the sequence~${\langle x_0, \dots, x_{n-1}, y_0, \dots, y_{m-1} \rangle}$.
\end{definition}
\begin{lemma}
	The system $\rca_0$ proves the following:
    \begin{enumerate}[label=\alph*)]
        \item The Hessenberg sum is both associative and symmetric.
        \item ${x \hess y < z \hess y}$ holds for any~${x, y, z \in \varphi(2 + \alpha, 0)}$ with~${x < z}$.
        \item ${x \hess y < z}$ holds for any~${x, y, z \in \varphi(2 + \alpha, 0)}$ with~${x, y < z}$ if~$z$ is indecomposable.
    \end{enumerate}
\end{lemma}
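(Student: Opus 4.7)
The unifying observation behind all three parts is that~${x \hess y}$ can be read off from the multiset union of the indecomposable summands of~$x$ and those of~$y$: writing both in weakly descending form and concatenating gives a (possibly not yet sorted) sequence, and~${x \hess y}$ is its weakly descending reordering. From this perspective, part~(a) is essentially bookkeeping. For symmetry, the two concatenations of the summand sequences of~$x$ and~$y$ are permutations of each other, so their weakly descending reorderings coincide. For associativity, both~${(x \hess y) \hess z}$ and~${x \hess (y \hess z)}$ equal the weakly descending reordering of the combined summand sequences of all three, since in each intermediate step we simply merge in more summands. Formally, this can be proved in~$\rca_0$ by induction on the total number of summands and comparing the result of the two sorting procedures element-wise.

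For part~(b), the plan is to prove first the auxiliary claim that, for any indecomposable~${\omega^\gamma \in \varphi(2 + \alpha, 0)}$ and any~${a < b}$ in~${\varphi(2 + \alpha, 0)}$, we have~${a \hess \omega^\gamma < b \hess \omega^\gamma}$. This is handled by a direct case distinction on the definition of~${a < b}$ for sums, tracking the unique position into which~$\omega^\gamma$ gets inserted on either side; one checks that the first place where the two weakly descending reorderings differ is determined by the first place where~$a$ and~$b$ differ, shifted by at most one depending on whether~$\omega^\gamma$ falls before, between, or after those positions. Granting this auxiliary lemma, the full claim follows by induction on the number~$m$ of indecomposable summands of~$y$: the case~${y = 0}$ is immediate from~${x \hess 0 = x}$, and in the inductive step one writes~${y = y' + \omega^\gamma}$ for~$\omega^\gamma$ the smallest summand of~$y$, uses associativity from~(a) to rewrite~${x \hess y = (x \hess y') \hess \omega^\gamma}$ and likewise for~$z$, applies the inductive hypothesis to~$y'$, and finishes with the auxiliary lemma.

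For part~(c), assume~$z$ is indecomposable, so~${z = \omega^\delta}$ for some~$\delta$, and~${x, y < z}$. By the definition of the order on~${\varphi(2 + \alpha, 0)}$, the inequality ``decomposable~${<}$~indecomposable'' is determined solely by the largest summand; combined with the trivial cases~${x = 0}$ and~$x$ indecomposable, this gives that every indecomposable summand of~$x$ lies strictly below~${\omega^\delta}$, and the same for~$y$. The largest summand of~${x \hess y}$ is the maximum of the largest summands of~$x$ and~$y$, and therefore still strictly below~${\omega^\delta}$; invoking the same order clause in the other direction yields~${x \hess y < z}$. The main obstacle in the whole proof is the auxiliary lemma behind~(b): the case analysis has several subcases depending on where~$\omega^\gamma$ is inserted relative to the index at which~$a$ and~$b$ first disagree, and one must verify in each subcase that the new first point of disagreement of the merged sums still witnesses strict inequality, which is the only genuinely non-routine calculation in the argument.
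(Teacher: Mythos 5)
The paper does not prove this lemma at all: it delegates all three parts to the citation \cite[Lemma~4.5]{SimpsonHilbertBasis}, so there is no in-text argument to compare against. Your proof is correct and is essentially the standard argument that the cited reference formalizes: viewing~${x \hess y}$ as the weakly descending merge of the summand multisets makes (a) immediate, reduces (b) via associativity to the insertion of a single indecomposable~${\omega^\gamma}$ into two lexicographically comparable descending sequences (your case split on where~${\omega^\gamma}$ lands relative to the first index of disagreement is exactly the right bookkeeping, and it covers the degenerate empty/singleton sums once~$0$ and indecomposables are read as empty and singleton sums, as the paper's definition already stipulates), and settles (c) by noting that the leading summand of the merge is the maximum of the two leading summands, so the clause ``sum~${<}$~indecomposable iff leading summand~${<}$~indecomposable'' applies. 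Nothing further is needed.
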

For proof of these facts, we refer to~\cite[Lemma~4.5]{SimpsonHilbertBasis}.
\begin{definition}
	We define a function~${o}$ that maps types to elements of~${\varphi(2 + \alpha, 0)}$: 
	\begin{equation*}
		o(A) :=
		\begin{cases}
			0 & \text{if~${A = \emptytype}$,}\\
			\varphi(2 + \beta, 0) & \text{if~${A = \labeltype_\beta}$,}\\
			\varphi(2 + \beta, o(B) + 1) & \text{if~${A = \treetype_{\beta, B}}$,}\\
			o(B) \hess o(C) & \text{if~${A = B + C}$,}\\
			\varphi(0, o(B + C)) & \text{if~${A = B \times C}$,}\\
			\varphi(1, o(B) + 1) & \text{if~${A = B^*}$.}
		\end{cases}
	\end{equation*}
\end{definition}
The addition ``$+1$'' in the third and in the last line circumvents fixed points and ensures that all our given terms of~${\varphi(2 + \alpha, 0)}$ are well-defined.

\begin{lemma}\label{lem:simplification_descends_in_order_type}
	The system~$\rca_0$ proves~${o(A(a)) < o(A)}$ for any term~${a}$ of a type~${A}$.
\end{lemma}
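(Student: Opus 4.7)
The plan is to proceed by induction on a suitable well-founded measure on terms (driven essentially by $h(a)$, with a secondary induction on length in the sequence case), with an inner case distinction on the shape of the type $A$ and, when $A$ is compound, on the shape of $a$. The cases $A = \emptytype$, $A = \labeltype_\beta$, $A = B + C$, and $A = B \times C$ I expect to dispatch almost mechanically from the inductive hypothesis, strict monotonicity of $\varphi$ in its first argument, strict monotonicity of $\hess$ in each argument, and the strict monotonicity of $x \mapsto \omega^x$; the product case additionally uses that $\omega^z$ is additively indecomposable, so that two ordinals strictly below $\omega^{o(B) \hess o(C)}$ Hessenberg-sum to something still below it.

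The first genuinely nontrivial case is $A = \treetype_{\beta, B}$. For $a = b \star []$ the claim drops out of $o(B(b)) < o(B)$ together with monotonicity of $\varphi(2 + \beta, \cdot)$. The hard sub-case is $a = \gamma \star [t_l, t_r]$ with $\gamma < \beta$, where $A(a) = \treetype_{\gamma, C}$ for the elaborate type
\[
    C = (B + \labeltype_\beta \times \treetype_{\beta, B}(t_l) + \labeltype_\beta \times \treetype_{\beta, B}(t_r))^*\text{.}
\]
Here I would exploit two structural properties of the ordinal $o(A) = \varphi(2 + \beta, o(B) + 1)$: first, because $\gamma < \beta$, $o(A)$ is a fixed point of $\varphi(2 + \gamma, \cdot)$; second, because the Veblen index $2 + \beta$ is at least $1$, $o(A)$ is an epsilon number and hence closed under $\hess$, under $x \mapsto \omega^x$, and indeed under $\varphi(1, \cdot)$ (which governs the outer star in $C$). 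The inductive hypothesis applied to $t_l$ and $t_r$ gives $o(\treetype_{\beta, B}(t_l)), o(\treetype_{\beta, B}(t_r)) < o(A)$, and the two closure properties then propagate these bounds through the Hessenberg sums, through the $\omega$-powers coming from the product types, and through the outer $(-)^*$, yielding $o(C) < o(A)$. The fixed-point property finishes the case via $o(A(a)) = \varphi(2 + \gamma, o(C) + 1) < \varphi(2 + \gamma, o(A)) = o(A)$. The star case $A = B^*$ is analogous and simpler: for $s = \langle b \rangle * s'$ the three summands of $A(s) = B(b)^* + B(b)^* \times B \times B^*(s')$ all lie below $o(A) = \varphi(1, o(B) + 1)$ by the inductive hypothesis (once on $b$ and once on the shorter $s'$), and the epsilon-closure of $o(A)$ absorbs their Hessenberg combination.

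The main obstacle is that the ``simplified'' type $A(a)$ is generally \emph{not} structurally smaller than $A$; in the inner-tree case, $\treetype_{\gamma, C}$ contains freshly introduced stars and products of unbounded nesting, so the induction cannot be structural on types and must be driven by a measure on terms. The reason the inequality nonetheless goes through is the deliberate constant $2$ in $o(\treetype_{\beta, B}) = \varphi(2 + \beta, o(B) + 1)$: it secures simultaneously that $o(A)$ is a fixed point of $\varphi(2 + \gamma, \cdot)$ for every $\gamma < \beta$ and that $o(A)$ is an epsilon number closed under $\hess$ and $x \mapsto \omega^x$. Pinpointing these two closure properties is the real content of the argument; once they are in place, the remaining ordinal arithmetic is routine.
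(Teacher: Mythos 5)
Your proposal is correct and follows essentially the same route as the paper: a case distinction on types with an induction on terms (node count for trees, length for sequences), where the inner-node case of $\treetype_{\beta,B}$ is settled by observing that $o(\treetype_{\beta,B}) = \varphi(2+\beta, o(B)+1)$ is a fixed point of every Veblen function of index below $2+\beta$ and closed under~$\hess$ and $x \mapsto \omega^x$, so the bounds from the induction hypothesis propagate through $o(C)$ exactly as you describe. One small repair: closure under $\varphi(1,\cdot)$ does not follow from being an epsilon number (e.g.\ $\varphi(1,0) = \varepsilon_0$ is not below $\varepsilon_0$); it holds here because the index $2+\beta$ is at least $2$, so $o(A)$ is a fixed point of $\varphi(1,\cdot)$ -- which is the fact you actually need for the outer star in $C$.
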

\begin{proof}
	We proceed by case distinction on all defined types. For~${A = \emptytype}$, the claim is immediate since this term does not have any terms. Consider~${\labeltype_\beta}$ for some~${\beta \in \alpha}$ and~${\gamma < \beta}$. Then, we have
	\begin{equation*}
		o(\labeltype_\beta(\gamma)) = o(\labeltype_\gamma) = \varphi(2 + \gamma, 0) < \varphi(2 + \beta, 0) = o(\labeltype_\beta)\period
	\end{equation*}
	Consider~${\treetype_{\beta, A}}$ for some~${\beta \in \alpha}$ and a type~${A}$. Let~${t}$ be a term in~${\treetype_{\beta, A}}$. We proceed by side induction on the amount of nodes in~${t}$. If~${t = a \star []}$ holds for some~${a}$ of type~${A}$, then we have
	\begin{align*}
		o(\treetype_{\beta, A}(t)) &= o(\treetype_{\beta, A(a)}) = \varphi(2 + \beta, o(A(a)) + 1)\\
		&< \varphi(2 + \beta, o(A) + 1) = o(\treetype_{\beta, A})
	\end{align*}
	by induction. Otherwise, if~${t = \gamma \star [t_l, t_r]}$ holds for~${\gamma < \beta}$ and trees~${t_l, t_r \in \treetype_{\beta, A}}$, then we derive
	\begin{align*}
		o(\treetype_{\beta, A}(t)) &{}= o(\treetype_{\gamma, (A + \labeltype_\beta \times \treetype_{\beta, A}(t_l) + \labeltype_\beta \times \treetype_{\beta, A}(t_r))^*})\\
		&{}= \varphi(2 + \gamma, \varphi(1, o(A) \hess \varphi(0, \varphi(2 + \beta, 0) \hess o(\treetype_{\beta, A}(t_l)))\\
		&\phantom{{}={} \varphi(2 + \gamma, \varphi(1, o(A)} \hess \varphi(0, \varphi(2 + \beta, 0)\hess o(\treetype_{\beta, A}(t_r))) + 1) + 1)\\
		&{}< \varphi(2 + \beta, o(A) + 1) = o(\treetype_{\beta, A})\period
	\end{align*}
	For the inequality, we are using the facts~${0, 1, 2 + \gamma < 2 + \beta}$ together with the inequality~${\varphi(2 + \beta, 0) < o(\treetype_{\beta, A})}$ and, with the help of our induction hypothesis,~${o(\treetype_{\beta, A}(t_l)), o(\treetype_{\beta, A}(t_r)) < o(\treetype_{\beta, A})}$.
	
	We conclude with the cases for sums, products, and sequences. Here, we cannot simply refer to \cite{FreundReification} since we use different values for~${o}$ in order to avoid the distinction between decomposable and indecomposable types.
	
	Let~${A = B + C}$ for types~${B}$ and~${C}$. W.l.o.g., consider some term~${\iota_0(b)}$ of~${B + C}$, where~${b}$ is a term of type~${B}$. We have
	\begin{equation*}
		o((B + C)(\iota_0(b)) = o(B(b) + C) = o(B(b)) \hess o(C) < o(B) \hess o(C) = o(B + C)\period
	\end{equation*}
    The case for~$\iota_1(c)$ with~$c$ in~$C$ works analogously.
	Let~${A = B \times C}$ for types~${B}$ and~${C}$. Consider some term~${\langle b, c \rangle}$ of~${B \times C}$, where~${b}$ and~${c}$ are terms of type~${B}$ and~${C}$, respectively. We have
	\begin{align*}
		o((B \times C)(\langle b, c \rangle)) &= o(B(b) \times C + B \times C(c)) = o(B(b) \times C) \hess o(B \times C(c))\\
		&= \varphi(0, o(B(b)) \hess o(C)) \hess \varphi(0, o(B) \hess o(C(c))\\
		&< \varphi(0, o(B) \hess o(C)) = o(B \times C)\period
	\end{align*}
	Finally, let~${A = B^*}$ for some type~${B}$. Consider a sequence~${s}$ of type~${A}$. We proceed by side induction on the length of~${s}$. If~${s}$ is empty, then we have the inequality~${o(A(s)) = o(\emptytype) = 0 < \varphi(1, o(B) + 1) = o(A)}$. Otherwise, if~${s}$ is of the form~${s = \langle b \rangle * s'}$ for some term of type~${B}$ and a sequence~${s'}$ in~${A}$, we have the following:
	\begin{align*}
		o(A(s)) &= o(B(b)^* + B(b)^* \times B \times A(s')) = o(B(b)^*) \hess o(B(b)^* \times B \times A(s'))\\
		&= o(B(b)^*) \hess \varphi(0, o(B(b)^*) \hess o(B \times A(s')))\\
		&= o(B(b)^*) \hess \varphi(0, o(B(b)^*) \hess \varphi(0, o(B) \hess o(A(s'))))\\
		&< \varphi(1, o(B) + 1) = o(A)\comma
	\end{align*}
	where we use~${o(B(b)^*) = \varphi(1, o(B(b)) + 1) < \varphi(1, o(B) + 1)}$ and~${o(A(s')) < o(A)}$, which hold by induction hypothesis, in the inequality.
\end{proof}

Next, we extend our definition of~${A(a)}$ from single terms~${a}$ of type~${A}$ to whole sequences:
\begin{definition}
	Given a type~${A}$ and a finite bad sequence~${s}$ consisting of terms in~${A}$, we define the type~${A[s]}$ as follows:
	\begin{equation*}
		A[s] :=
		\begin{cases}
			A & \text{if~${s = \langle \rangle}$,}\\
			A(s_0)[e_A(s_0, s_1), \dots, e_A(s_0, s_{|s|-1})] & \text{otherwise.}
		\end{cases}
	\end{equation*}
\end{definition}
We quickly argue that this definition is valid in~${\rca_0}$: Given a nonempty bad sequence~${s}$ of terms in~${A}$, we clearly have~${s_0 \nleq s_i}$ for any index~${i}$ with~${0 < i < |s|}$. Thus, we can consider~${e_A(s_0, s_i) \in A(s_0)}$. Moreover, from Proposition~\ref{prop:e_quasi-embedding}, it is immediate that~${e_A(s_0, s_1), \dots, e_A(s_0, s_{|s|-1})}$ is, again, a finite bad sequence.

\begin{lemma}\label{lem:simplification_descends_sequence}
    The system~$\rca_0$ proves~${o(A[s * \langle t \rangle]) < o(A[s])}$ for any finite bad sequence~${s* \langle t \rangle}$ of terms in~$A$.
\end{lemma}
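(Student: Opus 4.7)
The plan is to induct on~$|s|$. The base case~${s = \langle \rangle}$ is immediate: by the definition of~$A[\cdot]$, we have~${A[\langle \rangle] = A}$ and~${A[\langle t \rangle] = A(t)}$, so the claim~${o(A(t)) < o(A)}$ is exactly Lemma~\ref{lem:simplification_descends_in_order_type}.

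For the inductive step, suppose~${s = \langle a \rangle * s'}$ with~${|s'| < |s|}$. Unfolding the definition once yields
\begin{equation*}
    A[s] = A(a)\bigl[\langle e_A(a, s'_0), \dots, e_A(a, s'_{|s'|-1}) \rangle\bigr]
\end{equation*}
and likewise~${A[s * \langle t \rangle] = A(a)[u * \langle e_A(a, t) \rangle]}$ where~${u := \langle e_A(a, s'_0), \dots, e_A(a, s'_{|s'|-1}) \rangle}$. The key step is to verify that~${u * \langle e_A(a, t) \rangle}$ is a bad sequence in~$A(a)$, so that the induction hypothesis applies at the type~$A(a)$ and the sequence~$u$ of smaller length. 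Badness follows from Proposition~\ref{prop:e_quasi-embedding}: since~${s * \langle t \rangle}$ is bad we have~${a \nleq s'_i}$ and~${a \nleq t}$, so all the values~$e_A(a, \cdot)$ are defined; moreover any relation~${e_A(a, x) \leq e_A(a, y)}$ among these terms would, by the quasi-embedding property, give~${x \leq y}$, contradicting badness of~${\langle s'_0, \dots, s'_{|s'|-1}, t \rangle}$.

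Applying the induction hypothesis to~$A(a)$ then gives
\begin{equation*}
    o\bigl(A(a)[u * \langle e_A(a, t) \rangle]\bigr) < o\bigl(A(a)[u]\bigr)\comma
\end{equation*}
which is precisely~${o(A[s * \langle t \rangle]) < o(A[s])}$.

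The only real obstacle is the badness verification just described; everything else is formal unfolding of the definition. One mild care point is that the induction is on~$|s|$ uniformly in~$A$, since the type at which we invoke the hypothesis changes from~$A$ to~$A(a)$. This is legitimate because the statement quantifies over all types and bad sequences, and the length of the inner bracketed sequence strictly decreases.
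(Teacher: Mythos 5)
Your proposal is correct and matches the paper's proof: both induct on $|s|$ simultaneously over all types, reduce the base case to Lemma~\ref{lem:simplification_descends_in_order_type}, and apply the induction hypothesis at the type $A(s_0)$ to the shifted sequence of $e_A(s_0,\cdot)$-values. The badness verification you single out is indeed needed, and the paper handles it in the remark immediately following the definition of $A[s]$, using Proposition~\ref{prop:e_quasi-embedding} exactly as you do.
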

\begin{proof}
    We prove our claim by induction along the length of~$s$ simultaneously for all types~$A$. First, if it is empty, then we have
    \begin{equation*}
        o(A[\langle t \rangle]) = o(A(t)[\langle \rangle]) = o(A(t)) < o(A) = o(A[\langle \rangle])
    \end{equation*}
    by Lemma~\ref{lem:simplification_descends_in_order_type}. Now, assume that~${n := |s|}$ is positive. Then, we derive
    \begin{align*}
        o(A[s * \langle t \rangle]) &= o(A(s_0)[\langle e_A(s_0, s_1), \dots, e_A(s_0, s_{n-1}), e_A(s_0, t) \rangle])\\
        &< o(A(s_0)[\langle e_A(s_0, s_1), \dots, e_A(s_0, s_{n-1}) \rangle])\\
        &= o(A[s])
    \end{align*}
    using our induction hypothesis applied to~${\langle e_A(s_0, s_1), \dots, e_A(s_0, s_{n-1}), e_A(s_0, t) \rangle}$ with respect to the term~$A(s_0)$.
\end{proof}

\begin{proof}[Proof of Proposition~\ref{prop:reification_b_alpha_1_into_varphi_2+alpha_1}]
	Let~${f}$ be the quasi-embedding from~${\bta_{\alpha, 1}}$ into~${\treetype_{\alpha, \emptytype^*}}$, which is canonically given by mapping the unique element in~$1$ to the empty sequence in~${\emptytype^*}$. We lift this function to sequences in~${(\bta_{\alpha, 1})^*}$ and call the resulting map~${g}$.
	
	Now, let our reification~${r}$ be the function that maps sequences~${s}$ from~${\badfin(\bta_{\alpha, 1})}$ to~${o(\treetype_{\alpha, \emptytype^*}[g(s)])}$ in~${\varphi(2 + \alpha, 0)}$. We prove that~${r}$ satisfies the condition for reifications: Consider a nonempty sequence~${s}$ and an element~${t}$ in~${\bta_{\alpha, 1}}$ such that~${s * \langle t \rangle}$ is a finite bad sequence. We need to verify that~${r(s * \langle t \rangle) < r(s)}$ holds.
    This can be seen using Lemma~\ref{lem:simplification_descends_sequence}:
    Let~${n := |s|}$. Then, we derive:
    \begin{align*}
        r(s * \langle t \rangle) &= o(\treetype_{\alpha, \emptytype^*}[g(s * \langle t \rangle)])\\
        &= o(\treetype_{\alpha, \emptytype^*}[\langle f(s_0), \dots, f(s_{n-1}), f(t) \rangle])\\
        &< o(\treetype_{\alpha, \emptytype^*}[\langle f(s_0), \dots, f(s_{n-1}) \rangle])\\
        &= o(\treetype_{\alpha, \emptytype^*}[g(s)])\\
        &= r(s)\text{.}
    \end{align*}
    This concludes our argument.
\end{proof}

\section{Maximal order types}\label{sec:order_types}

In this section, we compute the maximal order types claimed by Theorem~\ref{thm:maximal_order_types}. From now on, we are not working in the setting of second order arithmetic anymore. Instead, the following proofs assume the axioms of a sufficiently strong set theory such as~${\zfc}$. With enough effort, most of these results should be transferrable to weaker systems considered in reverse mathematics. However, already for the Cantor normal forms used by our theorem, the system of~${\atr_0}$ is required, in general (cf.~\cite{Hirst}).

\subsection{Upper bounds}

In this part, we calculate the upper bounds of our claims in Theorem~\ref{thm:maximal_order_types}. The general approach works as follows: Given a well partial order~${X}$ and an ordinal~${\alpha}$, we can prove~${o(X) \leq \alpha}$ by considering suborders
\begin{equation*}
	L_X(x) := \{y \in X \mid x \nleq y\} \subseteq X
\end{equation*}
for all~${x \in X}$.
We prove our claim simply by showing that~${l_X(x) := o(L_X(x)) < \alpha}$ holds for all~${x \in X}$. Note that the inequality is strict this time. At first glance, this might not seem that useful. However, it turns out that, in general, we can say a lot about the structure of~${L_X(x)}$ for each~${x \in X}$. Using quasi-embeddings from~${L_X(x)}$ into compositions of simpler well partial orders, for which we already know the respective order types, we arrive at~${l_X(x) < \alpha}$ for each~${x \in X}$. This approach is due to de Jongh and Parikh; more on this topic can be found in \cite{deJonghParikh}.

The ingredient that is vital for this approach is the knowledge of order types for simpler well partial orders and how order types behave with respect to different kinds of composition. E.g., given two well partial orders~${X}$ and~${Y}$, their disjoint union~${X \oplus Y}$ (this corresponds to the type~${A + B}$ from Section~\ref{sec:ATR}) has order type~${o(X \oplus Y) = X \hess Y}$, where~${\hess}$ denotes the natural (Hessenberg) sum. Also, their product~${X \otimes Y}$ (this corresponds to the type~${A \times B}$ from Section~\ref{sec:ATR}) has order type~${o(X \otimes Y) = X \hessMul Y}$, where~${\hessMul}$ denotes the natural product. Again, see \cite{deJonghParikh} for proofs.

Since we are working with sequences, it is not surprising that we will make use of the order types of sequences (ordered as in Higman's lemma). See \cite{deJonghParikh} and \cite{SchmidtHabilitation} for the following result:
\begin{theorem}\label{thm:Higman_order_types}
	Let~${X}$ be a well partial order, then~${X^*}$ is also a well partial order and we have the following maximal order types:
	\begin{equation*}
		o(X^*) :=
		\begin{cases}
			1 & \text{if~${X}$ is empty,}\\
			\omega^{\omega^{o(X) - 1}} & \text{if~${X}$ is nonempty but finite,}\\
			\omega^{\omega^{o(X) + 1}} & \text{if~${o(X) = \alpha + n}$ for an~${\varepsilon}$-number\footnotemark{}~${\alpha}$ and~${n \in \n}$,}\\
			\omega^{\omega^{o(X)}} & \text{otherwise.}
		\end{cases}
	\end{equation*}
    \footnotetext{{By this, we mean an ordinal~$\alpha$ satisfying~$\omega^\alpha = \alpha$.}}
\end{theorem}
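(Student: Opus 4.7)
The plan is to establish upper and lower bounds separately, following the de Jongh--Parikh approach already used in the rest of this section. The case of empty $X$ is trivial, so from here on I assume $X$ is nonempty.

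For the upper bound, I would apply the recursion $o(X^*) = \sup\{o(L_{X^*}(s)) + 1 : s \in X^*\}$ and bound $o(L_{X^*}(s))$ by induction on $|s|$. For $s = \langle x \rangle$ we have $L_{X^*}(\langle x \rangle) = L_X(x)^*$ on the nose. For $s = \langle x \rangle * s'$ I would exhibit a quasi-embedding
\[
    L_{X^*}(s) \longrightarrow L_X(x)^* \oplus \bigl( L_X(x)^* \otimes X \otimes L_{X^*}(s') \bigr)
\]
sending $t$ either to itself (if every member of $t$ lies in $L_X(x)$) or to the triple $(u, y, v)$ where $y$ is the first member of $t$ with $y \geq x$, and $u, v$ are the corresponding prefix and suffix. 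Using $o(A \oplus B) = o(A) \hess o(B)$ and $o(A \otimes B) = o(A) \hessMul o(B)$, combined with an outer induction on the well partial order $X$ that controls $o(L_X(x)^*)$, taking the supremum over $s$ yields the claimed $\omega^{\omega^{\cdots}}$-bound.

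For the lower bound, I would explicitly quasi-embed $\omega^{\omega^\beta}$ into $X^*$ for the exponent $\beta$ dictated by each case. Using Cantor normal form, every element of $\omega^{\omega^\beta}$ is a weakly descending sum $\omega^{\beta_1} + \cdots + \omega^{\beta_n}$ with $\beta_i < \omega^\beta$, and each $\beta_i$ is itself a weakly descending sum $\omega^{\gamma_{i,1}} + \cdots + \omega^{\gamma_{i,m_i}}$ with $\gamma_{i,j} < \beta$. Fixing a quasi-embedding $f : \beta \to X$ (obtained from a prior induction on $o(X)$), I would encode each summand $\omega^{\beta_i}$ as the block $\langle f(\gamma_{i,1}), \dots, f(\gamma_{i,m_i}) \rangle$ and concatenate these blocks in the given order. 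Order reflection would be verified by the standard argument that a Higman embedding between two such encodings must align the block structure and therefore transports to a comparison of the underlying Cantor normal forms.

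The main obstacle is the $\varepsilon$-number jump in the third case: when $o(X) = \alpha + n$ with $\alpha$ an $\varepsilon$-number, the formula gives $\omega^{\omega^{o(X) + 1}}$, strictly larger than the naive $\omega^{\omega^{o(X)}}$. On the lower-bound side one has to exploit that $\omega^{o(X)} = \omega^{\alpha+n}$ sits strictly below $\omega^{\alpha+n+1}$, and fill this gap by prefixing the encoding with a fresh element of maximal rank, pumping the recursion up by one level. Matching this on the upper-bound side requires careful accounting in the natural sums $o(L_X(x)^*) \hess ( o(L_X(x)^*) \hessMul o(X) \hessMul \cdots )$, since nested $\omega$-exponentials collapse differently depending on whether the exponents sit strictly below or exactly at an $\varepsilon$-fixed point. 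The finite case of nonempty $X$ falls out of the same recursion with $o(X) - 1$ in the exponent, and the empty case has already been handled.
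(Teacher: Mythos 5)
First, a point of comparison: the paper does not actually prove Theorem~\ref{thm:Higman_order_types} — it is quoted from de Jongh--Parikh and Schmidt — so there is no in-paper proof to measure your proposal against. Your upper-bound plan is the standard left-set decomposition from that literature and is sound in outline: $L_{X^*}(\langle x\rangle)=L_X(x)^*$ holds on the nose, and the quasi-embedding $L_{X^*}(\langle x\rangle*s')\to L_X(x)^*\oplus\bigl(L_X(x)^*\otimes X\otimes L_{X^*}(s')\bigr)$ does reflect the order. The part you defer (``careful accounting in the natural sums'') is where all the actual content of the case distinction lives, but the strategy is the right one.

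The genuine gap is the lower bound. Concatenating the blocks $\langle f(\gamma_{i,1}),\dots,f(\gamma_{i,m_i})\rangle$ with no separating device is not order-reflecting, and there is no ``standard argument'' that a Higman embedding aligns block boundaries: a subsequence embedding is free to scatter one block of the source across several blocks of the target, or to ignore the boundaries entirely. Concretely, for $X=\omega$ with $f=\mathrm{id}$ your map sends both $\omega^{\omega^2+\omega}$ and $\omega^{\omega^2}+\omega^{\omega}$ to $\langle 2,1\rangle$, although the first ordinal is strictly larger; for $X=2$ the encoding of $\omega^{\omega}$ degenerates to constant sequences and collapses $\omega^{2}$ with $\omega\cdot 2$. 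Inserting minimal elements as separators does not repair this, since $\langle 1,1\rangle\leq\langle 1,0,1\rangle$ in Higman's order, so $\omega^{2}$ would still compare below $\omega+\omega$. This is precisely the phenomenon the paper has to fight elsewhere: Remark~\ref{rem:cannot_embed_omega_star_into_gapw_2} shows that even $\omega^{*}$ admits no quasi-embedding into $\gapw_{2}$, and the block-alignment arguments in Lemma~\ref{lem:quasi-embedding_varphi_into_weak} and Lemma~\ref{lem:quasi-embedding:alpha_dot_into_weak} all lean on the gap condition (or on the extra clause~iv) in the definition of $X^\bullet$), none of which is available for plain $X^{*}$. The classical lower bound is obtained by a different route — de Jongh and Parikh exhibit a linear extension of $X^{*}$ of the required order type and use that $o(X^{*})$ is the maximum over linearizations — and your sketch would have to be replaced by an argument of that kind.
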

In many of the following proofs, we need to subtract from ordinals in order to reach earlier induction hypotheses:
\begin{definition}[Left subtraction]
	Given ordinals~${\alpha \leq \beta}$, we define the \emph{left subtraction} of~${\alpha}$ from~${\beta}$, written as~${-\alpha + \beta}$, to be the unique ordinal~${\gamma}$ such that~${\alpha + \gamma = \beta}$ holds. For convenience, we set~${-\alpha + \beta := 0}$ for~${\alpha > \beta}$.
\end{definition}
Together with left addition, this can be extended to the members of sequences and inner nodes of trees:
\begin{definition}
	Let~${\alpha}$ be an ordinal and~${X}$ a well partial order.
	
	Given an ordinal~${\beta}$ and a sequence~${s \in \gapw_\alpha}$ (a tree~${t \in \bta_{\alpha, X}}$), we write~${\beta + s}$ ($\beta + t$) for the sequence in~${\gapw_{\beta + \alpha}}$ (tree in~${\bta_{\beta + \alpha, X}}$) that results if we replace any member~${\gamma}$ in~${s}$ (inner label~${\gamma}$ in~${t}$) with~${\beta + \gamma}$. Similarly, we write~${-\beta + s}$ ($-\beta + t$) for the sequence in~${\gapw_\alpha}$ (tree in~${\bta_{\alpha, X}}$) that results if we replace any member~${\gamma}$ is~${s}$ (inner label~${\gamma}$ in~${t}$) with~${-\beta + \gamma}$.
\end{definition}
Finally, we gather some basic properties about these new operations. This corresponds to Lemma~\ref{lem:left_addition_ATR} from the previous section.
\begin{lemma}\label{lem:left_subtraction}
	Let~${\alpha}$ be an ordinal and~${X}$ a well partial order. The following properties hold:
	\begin{enumerate}[label=\alph*)]
		\item ${-\beta + s_l \leq_\weak -\gamma + s_r}$ implies~${s_l \leq_\weak s_r}$ for sequences~${s_l, s_r \in \gapw_\alpha}$ and ordinals~${\beta \leq \gamma}$ such that~${\beta}$ is less than or equal to any member of~${s_l}$ and~${\gamma}$ is less than or equal to any member of~${s_r}$.
		\item ${\beta + s_l \leq_\weak \beta + s_r}$ implies~${s_l \leq_\weak s_r}$ for sequences~${s_l, s_r \in \gapw_\alpha}$ and ordinals~${\beta}$.
		\item ${\omega^\beta + s_l \leq_\weak \gamma + s_r}$ implies~${\omega^\beta + s_l \leq_\weak s_r}$ for sequences~${s_l, s_r \in \gapw_\alpha}$, and ordinals~${\beta}$ and~${\gamma}$ with~${\omega^\beta > \gamma}$.
	\end{enumerate}
	Properties a)-c) also hold for trees (if in~a) we replace ``member'' by ``inner label'') and sequences using the strong gap condition.
\end{lemma}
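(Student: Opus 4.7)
The plan is to mirror the strategy of Lemma~\ref{lem:left_addition_ATR}. The three operations $\beta + (\cdot)$ and $-\beta + (\cdot)$ act entirely element-wise on members (or inner labels), and the weak and strong gap conditions, as well as the recursive/realizer-based embedding clauses for trees, compare members (labels) pointwise. So in each case I can try to reuse the same realizer (or the same structural derivation) and check that each pointwise comparison is preserved. That reduces the whole lemma to three elementary ordinal-arithmetic statements:
\begin{enumerate}[label=(\alph*')]
    \item if $\beta \leq \gamma$, $\beta \leq x$, $\gamma \leq y$ and $-\beta + x \leq -\gamma + y$, then $x \leq y$;
    \item if $\beta + x \leq \beta + y$, then $x \leq y$;
    \item if $\omega^\beta > \gamma$ and $\omega^\beta + x \leq \gamma + y$, then $\omega^\beta + x \leq y$.
\end{enumerate}

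I would prove (a') by left monotonicity: $\gamma + (-\beta + x) \leq \gamma + (-\gamma + y) = y$, and since $\beta \leq \gamma$ with $\beta + (-\beta + x) = x$ we also have $x = \beta + (-\beta + x) \leq \gamma + (-\beta + x)$, hence $x \leq y$. For (b') I would invoke the usual left cancellation of ordinal addition. For (c') I would argue by contradiction: if $y < \omega^\beta + x$ then $\gamma + y < \gamma + \omega^\beta + x$, and the absorption identity $\gamma + \omega^\beta = \omega^\beta$ (which holds for any $\gamma < \omega^\beta$) gives $\gamma + y < \omega^\beta + x$, contradicting the hypothesis.

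With (a'), (b'), (c') in hand, the statements for $\gapw_\alpha$ follow immediately: if $f$ is a realizer for the left-hand relation, then the same $f$ realizes the right-hand one, because the basic inequality at each index and the (inner and, in the strong case, outer) gap inequalities are all pointwise comparisons of the form handled above. For the tree variants, I would proceed by structural induction along the recursive definition of the embedding relation in $\bta_{\alpha, X}$: in each clause (leaf, ``$s \leq \gamma \star [t_l, t_r]$'' via a subtree, or ``$\gamma \star [s_l, s_r] \leq \delta \star [t_l, t_r]$'' with $\gamma \leq \delta$ and componentwise embeddings of subtrees) the $\pm$ operations commute with the tree constructors, so the comparison of the root labels is an instance of (a'), (b'), or (c'), and the subtree comparisons are handled by the induction hypothesis.

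The main obstacle is largely bookkeeping — checking all variants (weak/strong gap condition, sequences vs.\ trees, inner vs.\ outer gap) without accidentally needing an element-wise hypothesis that (a') does not provide — but conceptually no step is harder than the analogous verification for $\omega^\alpha$ already carried out in Lemma~\ref{lem:left_addition_ATR}. One subtlety worth noting: in (a'), the hypotheses $\beta \leq x$ and $\gamma \leq y$ must be genuinely applied to every element compared by the realizer, which is exactly why the lemma requires them to hold for every member of $s_l$ and every member of $s_r$ (respectively every inner label, in the tree case).
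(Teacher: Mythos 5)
Your proposal is correct and follows essentially the same route as the paper: the paper likewise observes that the sequence and tree orders use only positive occurrences of the ordinal comparison, so it suffices to verify the three pointwise ordinal facts, which it proves by the same left-monotonicity, cancellation, and absorption arguments you give (your contradiction argument for c) is just a rephrasing of the paper's use of $-\gamma + (\omega^\beta + \lambda) = \omega^\beta + \lambda$). Your remark that the hypotheses $\beta \leq x$, $\gamma \leq y$ must hold for every compared member is exactly the point the lemma's side conditions are there to guarantee, so the reduction is sound.
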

\begin{proof}
	Since our orders on trees and sequences only use \emph{positive} occurrences of the relation on ordinals, we simply have to show that these claims hold for ordinals.
	\begin{enumerate}[label=\alph*)]
		\item Assume that the inequality~${-\beta + \lambda \leq -\beta + \rho}$ holds for ordinals~${\beta \leq \lambda, \rho}$. Then, we have~${\lambda = \beta + (-\beta + \lambda) \leq \beta + (-\beta + \rho) = \rho}$.
		\item Assume that~${-\beta + \lambda \leq -\gamma + \rho}$ holds for~${\beta \leq \lambda}$ and~${\gamma \leq \rho}$ with~${\beta \leq \gamma}$. Then, we have~${\lambda = \beta + (-\beta + \lambda) \leq \gamma + (-\beta + \lambda) \leq \gamma + (-\gamma + \rho) = \rho}$.
		\item Assume~${\beta + \lambda \leq \beta + \rho}$. We have~${\lambda = -\beta + (\beta + \lambda) \leq -\beta + (\beta + \rho) = \rho}$ using b).
		\item Assume the inequality~${\omega^\beta + \lambda \leq \gamma + \rho}$ for ordinals with~${\omega^\beta > \gamma}$. First, notice that~${-\gamma + (\omega^\beta + \lambda) = \omega^\beta + \lambda}$ holds since~${\omega^\beta}$ is indecomposable and strictly greater than~${\gamma}$. Thus~${\omega^\beta + \lambda = -\gamma + (\omega^\beta + \lambda) \leq -\gamma + (\gamma + \rho) = \rho}$ holds by b).\qedhere
	\end{enumerate}
\end{proof}
We are ready to compute the first bounds for trees with finitely many inner labels:
\begin{lemma}\label{lem:small_upper_bounds_btal}
	For natural numbers~${n \in \n}$, the maximal order type of~${\btal_{n, 1}}$ is bounded as follows:
	\begin{enumerate}[label=\alph*)]
		\item ${o(\gapw_0) = o(\btal_{0, 1}) = 1}$,
		\item ${o(\gapw_1) = o(\btal_{1, 1}) = \omega}$,
		\item ${o(\btal_{n + 1, 1}) \leq \omega^{\omega^{o(\btal_{n,1})}}}$,
		\item ${o(\btal_{n + 1, X}) \leq \omega^{\omega^{o(\btal_{n, X}) + 1}}}$ for well partial orders~${X}$.
	\end{enumerate}
\end{lemma}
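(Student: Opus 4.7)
The plan for~(a) is direct: $\gapw_0$ contains only the empty sequence and~$\btal_{0, 1}$ contains only the unique leaf, so each has exactly one element. For~(b), any sequence in~$\gapw_1$ is determined by its length since all members must equal the unique element of~$1$, yielding a chain of type~$\omega$; similarly, in~$\btal_{1, 1}$ every inner node is labeled~$0$, which forces all left subtrees to be leaves (there being no labels strictly greater than~$0$ available), so the trees form a chain ${\text{leaf} \leq 0 \star [\text{leaf}, \text{leaf}] \leq 0 \star [\text{leaf}, 0 \star [\text{leaf}, \text{leaf}]] \leq \cdots}$ of type~$\omega$.

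For~(c) and~(d) my plan is to construct a quasi-embedding ${\phi: \btal_{n+1, X} \to \btal_{n, X}^*}$ and then appeal to Theorem~\ref{thm:Higman_order_types}. The key structural observation is that in any tree ${t \in \btal_{n+1, X}}$ the inner nodes labeled~$0$ form a right-descending path from the root, since any left subtree of a~$0$-labeled node must carry labels strictly greater than~$0$. Hence~$t$ admits a unique decomposition ${t = 0 \star [l_0, 0 \star [l_1, \dots, 0 \star [l_{k-1}, r] \dots]]}$ for some~${k \geq 0}$, where none of ${l_0, \dots, l_{k-1}, r}$ contains a~$0$-labeled inner node. After shifting all inner labels of these trees down by~$1$, they become elements of~$\btal_{n, X}$, and I define~${\phi(t)}$ as the corresponding length-${(k+1)}$ sequence; when~${k = 0}$ (i.e., when~$t$ itself has no~$0$-label) this means~${\phi(t)}$ is the singleton containing the shifted version of~$t$ itself.

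The main work, and the step I expect to be most delicate, is verifying that~$\phi$ reflects the order. Suppose~${\phi(t) \leq \phi(t')}$ is witnessed by a strictly increasing Higman map~$f$, and decompose~$t'$ analogously with parameters ${k', l'_i, r'}$. I would prove ${t \leq t'}$ by induction on~${k + k'}$. If ${f(0) > 0}$, the topmost~$0$ of~$t'$ is unmatched, so the induction hypothesis applied to~$t$ against the right subtree of that top~$0$ gives~${t \leq t'}$ via rule~iv). If~${f(0) = 0}$, then ${l_0 \leq l'_0}$ after unshifting, the induction hypothesis applied to the residuals below the top~$0$s yields the order relation there, and rule~v) combines the two. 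The subtle case is when~${f(k) < k'}$, so that the shifted~$r$ embeds into a shifted~$l'_{f(k)}$ rather than into the bottom~${(r')^{(-1)}}$; this is absorbed by repeated applications of rule~iv) along the remaining~$0$-spine of~$t'$, because~$r$ contains no~$0$-label and the tail of the spine of~$t'$ below the last matched~$0$ still contains~$l'_{f(k)}$ as a subtree.

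Once the quasi-embedding is established, Theorem~\ref{thm:Higman_order_types} gives ${o(\btal_{n+1, X}) \leq o(\btal_{n, X}^*)}$, and the uniformly safe Higman bound ${o(Y^*) \leq \omega^{\omega^{o(Y) + 1}}}$ immediately yields~(d). For~(c) I would use the tighter branches of Higman's formula, noting that for finite~$n$ the value ${o(\btal_{n, 1})}$ lies strictly below~$\varepsilon_0$ (this is easily shown by induction on~$n$ using the bound~(c) itself together with the closure of ordinals below~$\varepsilon_0$ under~${\beta \mapsto \omega^{\omega^\beta}}$), so it cannot be of the form~${\varepsilon + k}$ for an~$\varepsilon$-number~$\varepsilon$ and~${k \in \n}$. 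Hence Higman's formula sharpens to ${\omega^{\omega^{o(\btal_{n, 1})}}}$, giving~(c).
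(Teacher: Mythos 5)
Your proposal is correct and follows essentially the same route as the paper: the map sending a tree to the sequence of its ($-1$-shifted) subtrees hanging off the right-descending spine of $0$-labeled nodes is exactly the paper's quasi-embedding ${\btal_{n+1, X} \to (\btal_{n, X})^*}$, verified by the same induction with the same case split on whether the Higman witness skips the top of the spine. The concluding appeal to Theorem~\ref{thm:Higman_order_types}, including the observation that ${o(\btal_{n,1}) < \varepsilon_0}$ lets one use the sharper branch for part~(c), also matches the paper.
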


\begin{proof}
	The proofs for a) and b) are simple:~${\gapw_0}$ and~${\btal_{0, 1}}$ have exactly one element. Furthermore,~${\gapw_1}$ and~${\btal_{1, 1}}$ can be identified with the usual order on natural numbers. For c) and d), we construct a quasi-embedding~${f: \btal_{n+1, X} \to (\btal_{n, X})^*}$. In the case of~${X = 1}$, using a quick induction, we can see that the order type of~${\btal_{n, 1}}$ stays below~${\varepsilon_0}$. Thus, we have c) and d) by Theorem~\ref{thm:Higman_order_types} using the following map:
	\begin{equation*}
		f(t) :=
		\begin{cases}
			\langle -1 + t_l \rangle * f(t_r) & \text{if } t = 0 \star [t_l, t_r]\comma\\
			\langle -1 + t \rangle & \text{if each inner node in~${t}$ is positive.}
		\end{cases}
	\end{equation*}
	Note that in the first case, each inner node of~${t_l}$ must be positive since labels on left subtrees must strictly increase for~${t}$.
	
	We prove that~${f}$ is a quasi-embedding: Let~${s, t \in \btal_{n+1, 1}}$ with~${f(s) \leq f(t)}$. Using induction along the combined amount of nodes in~${s}$ and~${t}$, we show that this entails~${s \leq t}$.
	Assume that~$f(s)$ is less than or equal to the tail of~$f(t)$. Then, we know that~$t$ must be of the form~${t = 0 \star [t_l, t_r]}$ and we have~${f(s) \leq f(t_r)}$. By induction hypothesis, this yields~${s \leq t_r \leq t}$.

    Assume that~$f(s)$ is not less than or equal to the tail of~$f(t)$. There are three possible cases: In the first case,~$s$ is of the form~${s = x \star []}$ and~$t$ is of the form~${t = y \star []}$ for elements~${x, y \in X}$. Then,~${f(s) \leq f(t)}$ immediately leads to~${x \leq y}$ and, hence, to~${s \leq t}$. In the second case,~$s$ is still of this form but~${t = 0 \star [t_l, t_r]}$ holds for some trees~${t_l, t_r \in \btal_{n+1, X}}$. By assumption, we have~${-1 + s \leq -1 + t_l}$. Using Lemma~\ref{lem:left_subtraction}~a), this leads to~${s \leq t_l \leq t}$. Finally, assume that~$t$ is still of this form but we also have~${s = 0 \star [s_l, s_r]}$ for trees~${s_l, s_r \in \btal_{n+1, X}}$. By assumption, we have both~${-1 + s_l \leq -1 + t_l}$ and~${f(s_r) \leq f(t_r)}$. By Lemma~\ref{lem:left_subtraction}~a), the former entails~${s_l \leq t_l}$, while the latter leads to~${s_r \leq t_r}$ by our induction hypothesis. We conclude~${s \leq t}$.
\end{proof}
In order to distinguish regular sums from Cantor normal forms, we introduce the following notation:
\begin{definition}[Cantor normal form]
	Given ordinals~${\alpha}$ and~${\gamma_0, \dots, \gamma_{n-1}}$, we write
	\begin{equation*}
		\alpha \eqnf \omega^{\gamma_0} + \dots + \omega^{\gamma_{n-1}}
	\end{equation*}
	if~${\alpha}$ is equal to the right hand side and~${\gamma_i \geq \gamma_j}$ holds for any indices~${i < j < n}$. We may also abbreviate~${\alpha \eqnf \omega^{\gamma_0} + \delta}$, where~${\delta}$ stands for the term~${\omega^{\gamma_1} + \dots + \omega^{\gamma_{n-1}}}$.
\end{definition}
At the very end of this article, we will also use the more general version of this normal form, where base~${\omega}$ is replaced by some other ordinal~${\alpha \geq 2}$. In this case, we denote the ensuing normal form using~${\alphanf}$ instead of~${\eqnf}$.
We continue with the first quasi-embeddings from partial orders~${L_X(x)}$ as discussed in the introduction of this part:
\begin{lemma}\label{lem:left_set_quasi_embeddings}
	Given an ordinal~${\alpha}$ and a well partial order~${X}$, we can construct the following quasi-embeddings:
	\begin{enumerate}[label=\alph*)]
		\item ${L_{\bta_{\alpha, X}}(x \star []) \to \bta_{\alpha, L_X(x)}}$ for any~${x \in X}$
		\item ${L_{\bta_{\alpha, X}}(0 \star [t_l, t_r]) \to (X \oplus \alpha \otimes L_{\bta_{\alpha, X}}(t_l) \oplus \alpha \otimes L_{\bta_{\alpha, X}}(t_r))^*}$ for~${t_l, t_r \in \bta_{\alpha, X}}$
		\item ${L_{\bta_{\alpha, X}}(t) \to \bta_{\omega^{\gamma_0}, Y}}$ with~${Y := L_{\bta_{\alpha, X}}(-\omega^{\gamma_0} + t)}$ for trees~${t \in \bta_{\alpha, X} \setminus \bta_{1, X}}$, where we have~${\gamma \eqnf \omega^{\gamma_0} + \dots + \omega^{\gamma_{n-1}}}$ for the largest label~${\gamma}$ of an inner node occurring in~${t}$.
	\end{enumerate}	
	The quasi-embeddings still hold, if we replace~${\bta}$ with~${\btal}$.
\end{lemma}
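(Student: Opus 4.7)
The plan is to define each quasi-embedding by recursion on the input tree and to verify the order-reflecting property by a case analysis; the same definitions work for~$\btal$ without change. Part~a) is easiest: since $x\star[] \leq s$ is only provable via clauses~iii)/iv) of the tree order, it holds iff some leaf of~$s$ has label $\geq x$, so $s \in L_{\bta_{\alpha,X}}(x\star[])$ iff every leaf of~$s$ carries a label in~$L_X(x)$. The identity on the underlying tree, with leaves reinterpreted in~$L_X(x)$, is then the desired quasi-embedding; the reflecting property follows by an immediate induction since the order on~$L_X(x)$ is the restriction of that on~$X$.

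For part~b), I follow the pattern of the auxiliary map~$f$ from the proof of Proposition~\ref{prop:e_quasi-embedding}: for $s \in L_{\bta_{\alpha,X}}(0\star[t_l,t_r])$, set $f(x\star[]):=\langle\iota_0(x)\rangle$, then $f(\beta\star[s_l,s_r]):=\langle\iota_1(\langle\beta,s_l\rangle)\rangle * f(s_r)$ when $t_l \nleq s_l$, and $f(\beta\star[s_l,s_r]):=\langle\iota_2(\langle\beta,s_r\rangle)\rangle * f(s_l)$ when $t_l \leq s_l$. By clause~iv) of the tree order, $0\star[t_l,t_r]\nleq s$ transfers to both subtrees, so the recursion is valid; in the third case, $t_l \leq s_l$ together with $0\star[t_l,t_r] \nleq s$ forces $t_r \nleq s_r$ via clause~v), so~$f$ indeed lands in the prescribed codomain. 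Order reflection is then proved by induction on the combined length of~$f(s)$ and~$f(s')$ exactly as in the argument for~$e_{\treetype_{\beta,A}}$ in Proposition~\ref{prop:e_quasi-embedding}.

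For part~c), the substantive case, I cut~$s$ above every inner node with label~$\geq \omega^{\gamma_0}$ and turn the cut-off subtree into a leaf of the outer structure: set $f(x\star[]):=(x\star[])\star[]$, $f(\beta\star[s_l,s_r]):=\beta\star[f(s_l),f(s_r)]$ when $\beta<\omega^{\gamma_0}$, and $f(\beta\star[s_l,s_r]):=(-\omega^{\gamma_0}+s)\star[]$ when $\beta\geq\omega^{\gamma_0}$. In the last case, the weakly ascending property of~$s$ ensures that every inner label of~$s$ is $\geq\omega^{\gamma_0}$, so $\omega^{\gamma_0}+(-\omega^{\gamma_0}+s)=s$ holds labelwise. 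The key technical step is showing that the leaf label~$-\omega^{\gamma_0}+s$ lies in~$Y$: if $-\omega^{\gamma_0}+t \leq -\omega^{\gamma_0}+s$, then left-adding~$\omega^{\gamma_0}$ on both sides (which preserves embeddings by monotonicity of ordinal addition) gives $\omega^{\gamma_0}+(-\omega^{\gamma_0}+t) \leq s$, and since $t \leq \omega^{\gamma_0}+(-\omega^{\gamma_0}+t)$ labelwise we conclude $t \leq s$, contradicting $s \in L_{\bta_{\alpha,X}}(t)$. Order reflection then falls out of a case analysis on the shapes of~$f(s)$ and~$f(s')$: leaf-vs-inner cases reduce to the induction hypothesis on the subtrees of~$f(s')$; inner-vs-inner cases reduce via clauses~iv)/v) to the induction hypothesis; and the only subtle leaf-vs-leaf case, where both~$s$ and~$s'$ have root label~$\geq\omega^{\gamma_0}$, is resolved by the same labelwise monotonicity argument.

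The $\btal$ versions require no separate work: part~a) is structure-preserving, part~b) produces only sequences, and in part~c) the inner labels appearing in $f(s_l)$ and~$f(s_r)$ are subsets of those already present in~$s_l$ and~$s_r$, so the strict inequalities witnessed by the definition of~$\btal$ in~$s$ carry over directly. The main conceptual obstacle throughout is the label-shifting argument in~c); everything else is a routine structural induction.
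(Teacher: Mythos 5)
Your proposal is correct and follows essentially the same route as the paper: the identity map for~a), the zig-zag sequence encoding (mirroring the map~$f$ in the proof of Proposition~\ref{prop:e_quasi-embedding}) for~b), and the cut-at-$\omega^{\gamma_0}$ map for~c). The only cosmetic difference is in the well-definedness step of~c), where the paper passes through an auxiliary tree~$t'$ with all small labels raised to~$\omega^{\gamma_0}$ and then invokes Lemma~\ref{lem:left_subtraction}~a), whereas you argue directly via monotonicity of left addition of~$\omega^{\gamma_0}$; the two arguments are interchangeable.
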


\begin{remark}
	Equivalents of the quasi-embeddings from a) and b) can also be proved for trees with non-ascending labels. However, our proof for c) will not work for this more general class of trees: Such a result could be used to prove that binary trees with (not necessarily ascending) labels in~${2}$ have a maximal order type below~${\Gamma_0}$. This, however, does not hold (cf.~\cite[Theorem~14.11]{SchmidtvdMeerenWeiermann}).
\end{remark}

\begin{proof}
	In the following, we make implicit use of the fact that both~${\bta_{\alpha, X}}$ and~${\btal_{\alpha, X}}$ are closed under sub-trees.
	\begin{enumerate}[leftmargin=*, label=\alph*)]
		\item This (quasi-)embedding is simply given by the identity function: Consider some tree~${t \in L_{\bta_{\alpha, X}}(x \star [])}$ and assume that~${t}$ has a leaf label~${y \in X}$ with~${x \leq_X y}$. Then, we also have~${x \star [] \leq_{\bta_{\alpha, X}} t}$, which is a contradiction. Thus,~${x \nleq_X y}$, i.e.~$y \in L_X(x)$, holds for any leaf label~${y}$ in~${t}$.
		\item We construct~${f: L_{\bta_{\alpha, X}}(0 \star [t_l, t_r]) \to (X \oplus \alpha \otimes L_{\bta_{\alpha, X}}(t_l) \oplus \alpha \otimes L_{\bta_{\alpha, X}}(t_r))^*}$ as follows: Let~${s \in L_{\bta_{\alpha, X}}(0 \star [t_l, t_r])}$ be arbitrary, then we define
		\begin{equation*}
			f(s) :=
			\begin{cases}
				\langle \iota_0(x) \rangle & \text{ if } s = x \star [] \text{ for some } x \in X\comma\\
				\langle \iota_1((\beta, s_l)) \rangle * f(s_r) & \text{ if } s = \beta \star [s_l, s_r] \text{ with } t_l \nleq_{\bta_{\alpha, X}} s_l\comma\\
				\langle \iota_2((\beta, s_r)) \rangle * f(s_l) & \text{ if } s = \beta \star [s_l, s_r] \text{ with } t_l \leq_{\bta_{\alpha, X}} s_l\period
			\end{cases}
		\end{equation*}
		Note that in the third case,~${t_l \leq_{\bta_{\alpha, X}} s_l}$ implies~${t_r \nleq_{\bta_{\alpha, X}} s_r}$ since, otherwise, we arrive at the contradiction~${0 \star [t_l, t_r] \leq_{\bta_{\alpha, X}} s}$.
		By a simple induction on the height of trees, we can see that~${f}$ is well-defined.
		
		We prove that~${f}$ reflects orders: Consider trees~${s, s' \in L_{\bta_{\alpha, X}}(0 \star [t_l, t_r])}$ satisfying~${f(s) \leq f(s')}$. We want to derive~${s \leq s'}$ by induction on the sum of the heights of~${s}$ and~${s'}$. First, assume that~${f(s) \leq f(s')}$ holds since~${f(s)}$ is already less than or equal to the tail of~${f(s')}$. In this case,~${s'}$ must be of the form~${s' = \beta \star [s'_l, s'_r]}$ and we have one of~${f(s) \leq f(s'_l)}$ or~${f(s) \leq f(s'_r)}$. In both cases, the induction hypothesis yields~${s \leq s'_l}$ or~${s \leq s'_r}$, which results in~${s \leq s'}$.
		
		Now, assume that~${f(s) \leq f(s')}$ holds since the head and tail of~${f(s)}$ are each less than or equal to the head and tail of~${f(s')}$, respectively. Since both heads have to be comparable, we know that we are in the same of the three cases of the definition of~${f}$ for both~${f(s)}$ and~${f(s')}$. Assume that we are in the first case. Then,~${s = x \star []}$ and~${s' = y \star []}$ hold for elements~${x, y \in X}$. From~${\iota_0(x) \leq \iota_0(y)}$, we derive~${x \leq y}$ and, thus,~${s \leq s'}$.
		
		In the remaining cases, we have~${s = \beta \star [s_l, s_r]}$ and~${s' = \gamma \star [s'_l, s'_r]}$ for ordinals~${\beta, \gamma < \alpha}$ and sequences~${s_l, s_r, s'_l, s'_r \in \bta_{\alpha, X}}$. Assume that we are in the second case with~${t_l \nleq s_l, s'_l}$. First, since the head of~${f(s)}$ is less than or equal to the head of~${f(s')}$, we derive~${\iota_1((\beta, s_l)) \leq \iota_1((\gamma, s'_l))}$ and, thus, both~${\beta \leq \gamma}$ and~${s_l \leq s'_l}$ hold. Secondly, since the tail of~${f(s)}$ is less than or equal to the tail of~${f(s')}$, we get~${f(s_r) \leq f(s'_r)}$ and, hence, by induction hypothesis~${s_r \leq s'_r}$. Combining everything yields~${s \leq s'}$.
		The third case with~${t_l \leq s_l, s'_l}$ works analogously.
		\item
		Notice that~${n \geq 1}$ holds and, therefore,~${\gamma_0}$ is well-defined since we assumed that~${t}$ is not an element of~${\bta_{1, X}}$.
		We construct~${g: L_{\bta_{\alpha, X}}(t) \to \bta_{\omega^{\gamma_0}, Y}}$ as follows. Let~${s \in L_{\bta_{\alpha, X}}(t)}$ be arbitrary, then we define
		\begin{equation*}
			g(s) :=
			\begin{cases}
				(-\omega^{\gamma_0} + s) \star [] & \text{ if~${\beta \geq \omega^{\gamma_0}}$ holds for all inner labels~${\beta}$ in~${s}$,}\\
				\beta \star [g(s_l), g(s_r)] & \text{ if } s = \beta \star [s_l, s_r] \text{ and } \beta < \omega^{\gamma_0}\period
			\end{cases}
		\end{equation*}
		In the context of~${\bta_{\alpha, X}}$, this case distinction reaches all possible cases: If~${s}$ has an inner label below~${\omega^{\gamma_0}}$, then so does the root label since~${s}$ has weakly ascending labels.
		
		We check that~${g}$ is well-defined by induction on the height of the argument:
		In the first case, where all inner labels of~${s}$ are greater or equal to~${\omega^{\gamma_0}}$, we consider the tree~${t'}$ that is defined like~${t}$ but where each inner node that lies strictly below~${\omega^{\gamma_0}}$ is replaced by~${\omega^{\gamma_0}}$. From the assumption~${t \nleq s}$, we easily derive~${t' \nleq s}$. Thus, by Lemma~\ref{lem:left_subtraction}~a), we have~${-\omega^{\gamma_0} + t = -\omega^{\gamma_0} + t' \nleq -\omega^{\gamma_0} + s}$. We conclude that~${-\omega^{\gamma_0} + s}$ is an element of~${Y = L_{\bta_{\alpha, X}}(-\omega^{\gamma_0} + t)}$ and~${g(s)}$ is an element of~${\bta_{\omega^{\gamma_0}, Y}}$. In the second case, where the root label~${\beta}$ of~${s = \beta \star [s_l, s_r]}$ is strictly below~${\omega^{\gamma_0}}$, we know by induction hypothesis that both~${g(s_l)}$ and~${g(s_r)}$ are elements of~${\bta_{\omega^{\gamma_0}, Y}}$.
		
		Let us also note at this point that all trees produced by~${g}$ have weakly ascending inner labels. Likewise, in the case where we consider~${\btal}$ instead of~${\bta}$, inner nodes strictly increase when we consider the subtree to the left. Both can be verified by a short induction.
		
		Now, we verify that~${g}$ reflects orders. Let~${s, s' \in L_{\bta_{\alpha, X}}(t)}$ with~${g(s) \leq g(s')}$. By induction along the sum of the heights of~${s}$ and~${s'}$, we prove~${s \leq s'}$. Assume that~${g(s) \leq g(s')}$ holds since~${g(s)}$ embeds into a subtree of~${g(s')}$. Then,~${s'}$ must be of the form~${\beta' \star [s'_l, s'_r]}$ such that~${g(s)}$ embeds into~${g(s'_l)}$ or~${g(s'_r)}$. By induction hypothesis, we have~${s \leq s'_l}$ or~${s \leq s'_r}$. In both cases, we conclude~${s \leq s'}$. From now on, we assume that~${g(s)}$ does not embed into a subtree of~${g(s')}$.
		
		Assume that all inner labels in~${s}$ are greater than or equal to~${\omega^{\gamma_0}}$. Since~${g(s)}$ does not embed into a subtree of~${g(s')}$, we know that~${g(s')}$ cannot have any inner nodes. Thus, all inner labels in~${s'}$ must be greater than or equal to~${\omega^{\gamma_0}}$ and we have~${-\omega^{\gamma_0} + s \leq -\omega^{\gamma_0} + s'}$. By Lemma~\ref{lem:left_subtraction}~a), this entails~${s \leq s'}$.
		
		Assume that~${s}$ is of the form~${\beta \star [s_l, s_r]}$ with~${\beta < \omega^{\gamma_0}}$. Since~${g(s)}$ can only embed into a tree with inner nodes, we derive that~${s'}$ is of the form~${\beta' \star [s'_l, s'_r]}$. We assumed that~${g(s)}$ does not embed into a subtree of~${g(s')}$. Thus, we have~${\beta \leq \beta'}$,~${g(s_l) \leq g(s'_l)}$, and~${g(s_r) \leq g(s'_r)}$. By induction hypothesis, we derive~${s_l \leq s'_l}$ and~${s_r \leq s'_r}$ from the latter two. Finally, we conclude~${s \leq s'}$.\qedhere
	\end{enumerate}
\end{proof}
Already the nesting behavior in Lemma~\ref{lem:left_set_quasi_embeddings} strongly indicates that the maximal order types of our trees can be expressed using the Veblen hierarchy. However, because of their fixed points, we will first continue with the following function~${\psi}$.
\begin{definition}
	We define a function~${\psi: (\on \setminus \{0\}) \times (\on \setminus \{0\}) \to \on}$. For ordinals~${\alpha, \beta > 0}$, we have
	\begin{equation*}
		\psi(\alpha, \beta) :=
		\begin{cases}
			\varphi_{\alpha}(\beta + 1) & \text{ if } \beta \geq \varphi_{\alpha + 1}(0)\comma\\
			\varphi_{\alpha}(0) & \text{ if } \beta = 1 \text{ and } \alpha < \varphi_{\alpha}(0)\comma\\
			\varphi_{\alpha}(\beta) & \text{ otherwise.}
		\end{cases}
	\end{equation*}
\end{definition}

\begin{lemma}\label{lem:psi_properties}
	The function~${\psi}$ satisfies the following for ordinals~${\alpha, \beta, \gamma, \delta > 0}$:
	\begin{enumerate}[label=\alph*)]
		\item ${\psi(\alpha, \beta) < \psi(\gamma, \delta)}$ if
		\begin{itemize}
			\item ${\alpha = \gamma}$ and~${\beta < \delta}$, or
			\item ${\alpha < \gamma}$ and~${\psi(\alpha, \beta) < \delta}$, or
			\item ${\alpha > \gamma}$ and~${\beta < \psi(\gamma, \delta)}$.
		\end{itemize}
		\item ${\psi(\alpha, \beta) > 0}$.
		\item ${\psi(\alpha, \beta)}$ is closed under sums, products, and exponentials.
		\item ${\alpha < \psi(\alpha, \beta)}$.
		\item ${\beta < \psi(\alpha, \beta)}$.
	\end{enumerate}
\end{lemma}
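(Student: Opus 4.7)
The plan is to dispatch items~(b), (c), (d), and~(e) by direct case analysis on the three clauses of the definition of~$\psi$, and then to deduce item~(a) using the properties just established. The key inputs from ordinal theory throughout are that each Veblen function~$\varphi_\alpha$ (for $\alpha \geq 1$) is normal in its second argument and that $\alpha \mapsto \varphi_\alpha(0)$ is normal in~$\alpha$, so that $\beta \leq \varphi_\alpha(\beta)$ and $\alpha \leq \varphi_\alpha(0)$, with strict inequality whenever the argument is not a fixed point of the function in question.

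For items~(b) and~(c), in every clause we have $\psi(\alpha, \beta) = \varphi_\alpha(\xi)$ for some~$\xi$, and since $\alpha \geq 1$ this value is at least $\varphi_1(0) = \varepsilon_0$; in particular it is positive, and it is a fixed point of $\xi \mapsto \omega^\xi$, which yields closure under sums, products, and exponentiation. For item~(d), each clause gives $\psi(\alpha, \beta) \geq \varphi_\alpha(0)$, and strictness comes from clause-specific information: clause~1 takes a further step from~$0$ up to~$\beta + 1$, clause~2 uses the side-condition $\alpha < \varphi_\alpha(0)$ directly, and for the otherwise-clause~3 I would split on $\beta \geq 2$ (so $\varphi_\alpha(\beta) > \varphi_\alpha(0) \geq \alpha$) versus $\beta = 1$ with $\alpha$ forced to be a fixed point of $\xi \mapsto \varphi_\xi(0)$ (so $\varphi_\alpha(1) > \varphi_\alpha(0) = \alpha$). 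Item~(e) is analogous: in clause~1 the increment breaks any potential fixed point; in clause~2 one uses $\beta = 1 < \varepsilon_0 \leq \varphi_\alpha(0)$; and in clause~3 the side-condition $\beta < \varphi_{\alpha+1}(0)$ places~$\beta$ strictly below the least fixed point of~$\varphi_\alpha$, so that $\beta < \varphi_\alpha(\beta)$ by normality.

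For item~(a), the second bullet is immediate from~(e): applied to $(\gamma, \delta)$ it yields $\delta < \psi(\gamma, \delta)$, and combined with $\psi(\alpha, \beta) < \delta$ this gives the claim (the hypothesis $\alpha < \gamma$ is not actually needed). I expect the third bullet to be handled by a parallel argument that additionally invokes the Veblen comparison theorem between~$\varphi_\alpha$ and~$\varphi_\gamma$ for $\alpha > \gamma$, combined with case analysis on which clause of the definition produces each~$\psi$-value. The first bullet, $\alpha = \gamma$ and $\beta < \delta$, requires the most care because $\psi(\alpha, \cdot)$ may switch among the three clauses as its second argument grows; the underlying observation is that $\psi(\alpha, \beta) \leq \varphi_\alpha(\beta + 1)$ always and $\psi(\alpha, \delta) \geq \varphi_\alpha(\delta)$ for $\delta \geq 1$, so the strict inequality is immediate whenever $\beta + 1 < \delta$, and in the boundary case $\delta = \beta + 1$ one observes that $(\alpha, \beta)$ being in clause~1 forces $\beta \geq \varphi_{\alpha+1}(0)$, which in turn places $(\alpha, \delta)$ into clause~1, so both sides jump up uniformly and strictness survives.

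The main obstacle is precisely this boundary bookkeeping in bullet~1 of~(a), since clauses~1 and~2 of~$\psi$ are engineered to avoid fixed-point collisions both of~$\varphi_\alpha$ and of $\xi \mapsto \varphi_\xi(0)$, and one has to track those adjustments consistently across the comparison. Once that is done, all remaining sub-claims reduce to straightforward applications of the normality of the Veblen hierarchy.
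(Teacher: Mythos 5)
Your treatment of items b)--e) and of the first bullet of a) matches the paper's proof in substance: every value of $\psi$ is of the form $\varphi_\alpha(\xi)$ with $\alpha \geq 1$, hence an $\varepsilon$-number, and the case splits you describe for d) and e) are the ones the paper performs. (Your auxiliary claim that $\psi(\alpha,\delta) \geq \varphi_\alpha(\delta)$ for all $\delta \geq 1$ is literally false at $\delta = 1$ when the second clause applies, but in the first bullet $\delta > \beta \geq 1$ forces $\delta \geq 2$, so your argument goes through --- the paper makes exactly this remark, noting that $\delta > 1$ is needed there.)

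The genuine gap is the third bullet of a). You only say you ``expect'' it to follow from a parallel Veblen-comparison argument, and no such argument can exist: as printed, that bullet is false. Take $\alpha = 2$, $\beta = 1$, $\gamma = \delta = 1$; then $\psi(1,1) = \varphi_1(0) = \varepsilon_0$, so $\alpha > \gamma$ and $\beta < \psi(\gamma,\delta)$, yet $\psi(2,1) = \varphi_2(0) > \varepsilon_0 = \psi(1,1)$. What the paper's proof actually establishes is the statement with the side conditions of the second and third bullets interchanged (the usual Veblen comparison pattern): $\psi(\alpha,\beta) < \psi(\gamma,\delta)$ if $\alpha < \gamma$ and $\beta < \psi(\gamma,\delta)$, or if $\alpha > \gamma$ and $\psi(\alpha,\beta) < \delta$; the printed bullet list is a typo, and it is the corrected ``$\alpha < \gamma$, $\beta < \psi(\gamma,\delta)$'' case that is used later in Lemma~\ref{lem:tree_bound_with_psi}. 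The nontrivial content, absent from your proposal, is precisely that case: since $\psi(\gamma,\delta)$ is an $\varepsilon$-number, $\beta < \psi(\gamma,\delta)$ gives $\beta + 1 < \psi(\gamma,\delta)$, and since $\psi(\gamma,\delta) = \varphi_\gamma(\zeta)$ for some $\zeta$, one gets $\psi(\alpha,\beta) \leq \varphi_\alpha(\beta+1) < \varphi_\gamma(\zeta) = \psi(\gamma,\delta)$. (The case with hypothesis $\psi(\alpha,\beta) < \delta$, which you do prove via e), is the trivial one, and indeed needs no condition on $\alpha$ and $\gamma$.) By contrast, the ``case analysis on which clause produces each $\psi$-value'' you sketch for $\alpha > \gamma$ with $\beta < \psi(\gamma,\delta)$ breaks down already in the second clause, where $\psi(\alpha,\beta) = \varphi_\alpha(0)$ can dominate $\psi(\gamma,\delta)$ no matter how small $\beta$ is.
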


\begin{proof}\mbox{}
	\begin{enumerate}[leftmargin=*, label=\alph*)]
		\item Assume that we have~${\alpha = \gamma}$ and~${\beta < \delta}$. If~${\beta < \varphi_{\alpha + 1}(0)}$ holds, then this implies~${\psi(\alpha, \beta) \leq \varphi_{\alpha}(\beta) < \varphi_{\gamma}(\delta) \leq \psi(\gamma, \delta)}$. For the last inequality, we used~${\delta > 1}$, which follows from~${\beta > 0}$.
		
		If~${\beta \geq \varphi_{\alpha + 1}(0)}$ holds, then the same does hold for~${\delta}$. Thus, we have the inequality~${\psi(\alpha, \beta) = \varphi_{\alpha}(\beta + 1) < \varphi_{\gamma}(\delta + 1) = \psi(\gamma, \delta)}$.
		
		Assume that we have~${\alpha < \gamma}$ and~${\beta < \psi(\gamma, \delta)}$. The value of~${\psi(\gamma, \delta)}$ is an~${\varepsilon}$-ordinal. This implies~${\beta + 1 < \psi(\gamma, \delta)}$. Since~${\psi(\gamma, \delta)}$ is the value of~${\varphi_{\gamma}(\zeta)}$ for some~${\zeta}$, this yields~${\psi(\alpha, \beta) \leq \varphi_{\alpha}(\beta + 1) < \varphi_{\gamma}(\zeta) = \psi(\gamma, \delta)}$.
		
		Assume that we have~${\alpha > \gamma}$ and~${\psi(\alpha, \beta) < \delta}$. Similar to before,~${\psi(\alpha, \beta)}$ is equal to~${\varphi_{\alpha}(\zeta)}$ for some~${\zeta}$. Moreover, we know that~${\delta}$ must be greater than~${1}$. This implies~${\varphi_{\gamma}(\delta) \leq \psi(\gamma, \delta)}$. In conclusion, we arrive at this inequality:~${\psi(\alpha, \beta) = \varphi_{\alpha}(\zeta) < \varphi_{\gamma}(\delta) \leq \psi(\gamma, \delta)}$.
		\item All values in the Veblen-hierarchy are positive.
		\item Since we only consider~${\varphi_{\alpha}(\beta)}$ for positive~${\alpha}$, any value of~${\psi}$ is an~${\varepsilon}$-number and, therefore, closed under sums, products, and exponentials.
		\item If~${\alpha < \varphi_{\alpha}(0)}$ holds, then we have~${\alpha < \varphi_{\alpha}(0) = \psi(\alpha, 1) \leq \psi(\alpha, \beta)}$ by a). Otherwise, if~${\alpha = \varphi_{\alpha}(0)}$ holds, this implies~${\alpha = \varphi_{\alpha}(0) < \varphi_{\alpha}(\beta) \leq \psi(\alpha, \beta)}$.
		\item If~${\beta \geq \varphi_{\alpha + 1}(0)}$ holds, then we have~${\beta < \beta + 1 \leq \varphi_{\alpha}(\beta + 1) = \psi(\alpha, \beta)}$. Otherwise, if~${\beta < \varphi_{\alpha + 1}(0)}$ holds, this implies that~${\beta}$ is not a fixed point of~${\gamma \mapsto \varphi_{\alpha}(\gamma)}$. Thus, if~${\beta}$ is greater than~${1}$, we conclude~${\beta < \varphi_{\alpha}(\beta) = \psi(\alpha, \beta)}$. Finally, if~${\beta}$ is equal to~${1}$, then~${\beta < \psi(\alpha, \beta)}$ trivially follows from c), i.e., the fact that~${\psi(\alpha, \beta)}$ is an~${\varepsilon}$-number.\qedhere
	\end{enumerate}
\end{proof}
Now we are ready to combine several of our previous results and compute upper bounds for trees with labels from an infinite ordinal:
\begin{lemma}\label{lem:tree_bound_with_psi}
	For any ordinal~${\alpha}$ and nonempty well partial order~${X}$, the following bounds hold:
	\begin{enumerate}[label=\alph*)]
		\item ${o(\bta_{\omega^\alpha, X}) \leq \psi(1 + \alpha, o(X))}$.
		\item ${o(\btal_{\omega^\alpha, X}) \leq \psi(\alpha, o(X))}$ if~${\alpha > 0}$.
	\end{enumerate}
\end{lemma}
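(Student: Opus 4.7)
Since $o(\bta_{\omega^\alpha, X}) = \sup\{l(t)+1 : t \in \bta_{\omega^\alpha, X}\}$ with $l(t) := o(L_{\bta_{\omega^\alpha, X}}(t))$, it suffices to prove $l(t) < \psi(1+\alpha, o(X))$ for every $t$, and analogously for (b). I would establish (a) and (b) by a simultaneous transfinite induction, with the primary induction on $\alpha$, a secondary induction on $o(X)$, and a tertiary induction on trees $t$ measured by the lexicographic pair (largest inner label of $t$, total number of nodes of $t$). The argument splits according to the shape of $t$, invoking in each case the matching item of Lemma~\ref{lem:left_set_quasi_embeddings} and the closure properties of $\psi$-values from Lemma~\ref{lem:psi_properties}.

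If $t = x \star []$, Lemma~\ref{lem:left_set_quasi_embeddings}~a) yields a quasi-embedding $L_{\bta_{\omega^\alpha, X}}(t) \to \bta_{\omega^\alpha, L_X(x)}$, and the secondary IH on $o(X)$ (using $o(L_X(x)) < o(X)$) combined with the monotonicity of $\psi$ in its second argument (Lemma~\ref{lem:psi_properties}~a), first clause) gives the bound. If $t = 0 \star [t_l, t_r]$, Lemma~\ref{lem:left_set_quasi_embeddings}~b) embeds $L_{\bta_{\omega^\alpha, X}}(t)$ into the sequence order $(X \oplus \omega^\alpha \otimes L_{\bta_{\omega^\alpha, X}}(t_l) \oplus \omega^\alpha \otimes L_{\bta_{\omega^\alpha, X}}(t_r))^*$; the tertiary IH applied to the strictly smaller subtrees yields $l(t_l), l(t_r) < \psi(1+\alpha, o(X))$, while Lemma~\ref{lem:psi_properties}~c)-e) supplies $o(X), \omega^\alpha < \psi(1+\alpha, o(X))$ and that $\psi$-values are $\varepsilon$-numbers, so Theorem~\ref{thm:Higman_order_types} bounds the sequence's order type by $\omega^{\omega^{A+1}}$ for an $A$ still strictly below $\psi(1+\alpha, o(X))$.

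The main obstacle is the third case, where $t$ has some positive inner label. Let $\gamma$ be the largest such label, and write $\gamma \eqnf \omega^{\gamma_0} + \dots + \omega^{\gamma_{n-1}}$, so that $\gamma_0 < \alpha$. Lemma~\ref{lem:left_set_quasi_embeddings}~c) embeds $L_{\bta_{\omega^\alpha, X}}(t)$ into $\bta_{\omega^{\gamma_0}, Y}$ with $Y := L_{\bta_{\omega^\alpha, X}}(-\omega^{\gamma_0} + t)$. The transformed tree $-\omega^{\gamma_0} + t$ has strictly smaller largest inner label, so the tertiary IH yields $o(Y) < \psi(1+\alpha, o(X))$; the primary IH on $\alpha$ (since $\gamma_0 < \alpha$) then gives $o(\bta_{\omega^{\gamma_0}, Y}) \leq \psi(1+\gamma_0, o(Y))$. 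The crucial step is to combine these into $\psi(1+\gamma_0, o(Y)) < \psi(1+\alpha, o(X))$. For this, one exploits that $\psi(1+\alpha, o(X)) = \varphi_{1+\alpha}(\zeta)$ is a fixed point of $\varphi_{1+\gamma_0}$ (because $1+\gamma_0 < 1+\alpha$) and an $\varepsilon$-number; hence from $o(Y)+1 < \psi(1+\alpha, o(X))$ one obtains $\varphi_{1+\gamma_0}(o(Y)+1) < \psi(1+\alpha, o(X))$, which bounds all three possible values of $\psi(1+\gamma_0, o(Y))$.

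Part (b) runs on the same template, targeting $\psi(\alpha, o(X))$ and using the $\btal$-versions of the three quasi-embeddings from Lemma~\ref{lem:left_set_quasi_embeddings}. The one extra wrinkle occurs in the third case when $\gamma_0 = 0$: then the primary IH on (b) is unavailable (it demands a positive first argument), and instead one falls back on (a) to get $o(\btal_{1, Y}) \leq o(\bta_{1, Y}) \leq \psi(1, o(Y))$, which is then absorbed below $\psi(\alpha, o(X))$ by the same fixed-point argument, using precisely the hypothesis $\alpha > 0$ of (b). In every case the absorption works because $\psi$-values are closed under the natural sums, products, and exponentials (Lemma~\ref{lem:psi_properties}~c)), which is what makes the computation go through.
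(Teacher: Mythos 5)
Your overall architecture matches the paper's proof almost exactly: the same three quasi-embeddings from Lemma~\ref{lem:left_set_quasi_embeddings}, the same triple induction (your lexicographic measure on the tertiary level is a harmless variant of the paper's numerical assignment $i(t)$, and it does decrease in all the right places), and your explicit fixed-point computation for $\psi(1+\gamma_0, o(Y)) < \psi(1+\alpha, o(X))$ is a correct re-derivation of the relevant clause of Lemma~\ref{lem:psi_properties}~a). Parts (a) and the $\gamma_0 > 0$ subcase of (b) are fine modulo the small omitted edge cases where $L_X(x)$ or $Y$ is empty (then $\psi$ is undefined on argument $0$, but the claim is trivial).

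There is, however, a genuine gap in your treatment of the $\gamma_0 = 0$ subcase of part (b). You bound $o(\btal_{1,Y})$ by falling back on part (a), i.e.\ by $o(\bta_{1,Y}) \leq \psi(1, o(Y))$, and claim this is absorbed below $\psi(\alpha, o(X))$ by the fixed-point argument "using precisely the hypothesis $\alpha > 0$". The absorption of $\psi(1, o(Y)) \leq \varphi_1(o(Y)+1)$ requires $\psi(\alpha, o(X))$ to be a fixed point of $\varphi_1$, which needs $\alpha \geq 2$, not merely $\alpha > 0$. For $\alpha = 1$ — the base case of (b), where every positive inner label is finite and hence $\gamma_0 = 0$ is forced — the argument collapses: take $X = 1$, so the target is $\psi(1,1) = \varepsilon_0$; the tertiary IH only gives $o(Y) < \varepsilon_0$, and $\psi(1, o(Y))$ is then roughly $\varepsilon_{o(Y)}$, far above $\varepsilon_0$. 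Iterating your recursion drives the bound up to $\varphi_2(0)$, i.e.\ you recover only the $\bta$ bound, not the $\btal$ bound. The entire improvement from $\varphi_{1+\alpha}$ to $\varphi_\alpha$ for left-strict trees is earned at exactly this bottom level, and it cannot come from (a). The paper instead invokes Lemma~\ref{lem:small_upper_bounds_btal}~d), which rests on the quasi-embedding $\btal_{n+1,X} \to (\btal_{n,X})^*$ (possible only because left subtrees ascend strictly) and yields $o(\btal_{1,Y}) \leq \omega^{\omega^{o(Y)+1}}$; this \emph{is} absorbed below $\psi(\alpha, o(X))$ for every $\alpha > 0$, since $\psi$-values are $\varepsilon$-numbers. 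Your proof needs this ingredient; without it, part (b) is not established for $\alpha = 1$.
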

\begin{proof}
	By induction along~${\alpha}$, we claim both by (inner) induction along~$o(X)$. We do so by showing that the set~${L_{\bta_{\omega^\alpha, X}}(t)}$ (resp.~$L_{\btal_{\omega^\alpha, X}}(t)$) lies strictly below the ordinal on the right hand side for any~${X}$ and any considered tree~${t}$. This is done by side induction along the natural numbers using the following assignment~${i: \bta_{\omega^\alpha, X} \to \n}$:
	\begin{equation*}
		i(t) :=
		\begin{cases}
			0 & \text{ if } t = x \star [] \text{ for } x \in X\comma\\
			i(t_l) + i(t_r) + n + 1 & \text{ if } t = \beta \star [t_l, t_r] \text{ for } \beta \eqnf \omega^{\beta_0} + \dots + \omega^{\beta_{n-1}}\period
		\end{cases}
	\end{equation*}
	Given~${n \in \n}$, we assume that the claim of our side induction has been proved for all trees~${t}$ with~${i(t) < n}$ and show that it holds for all trees~${t}$ with~${i(t) = n}$. Thus, we have induction in three layers: The first layer proceeds along~${\alpha}$, the second along~${o(X)}$, and the third along~${i(t)}$.
	
	If~${n = 0}$, then any tree~${t}$ with~${i(t) = n}$ must be a leaf~${t = x \star []}$ for some~${x \in X}$. We use the quasi-embedding~${L_{\bta_{\omega^\alpha, X}}(x \star []) \to \bta_{\omega^\alpha, L_X(x)}}$ from Lemma~\ref{lem:left_set_quasi_embeddings}~a) (which also holds for~${\btal_{\alpha, X}}$) and prove:
	\begin{enumerate}[label=\alph*)]
		\item ${l_{\bta_{\omega^\alpha, X}}(x \star []) \leq o(\bta_{\omega^\alpha, L_X(x)}) \leq \psi(1 + \alpha, o(L_X(x)))< \psi(1 + \alpha, o(X))}$.
		\item ${l_{\btal_{\omega^\alpha, X}}(x \star []) \leq o(\btal_{\omega^\alpha, L_X(x)}) \leq \psi(\alpha, o(L_X(x))) < \psi(\alpha, o(X))}$.
	\end{enumerate}
	Here, we used Lemma~\ref{lem:psi_properties}~a) for the last inequalities. Moreover, we applied the main (inner) induction since~${\alpha}$ stays fixed but we consider~${L_X(x)}$ satisfying the inequality~${l_X(x) < o(X)}$. Of course, the whole argument is only valid as long as~$L_X(x)$ is nonempty. Should it be empty, then the same holds for~$\bta_{\omega^\alpha, L_X(x)}$ and~$\btal_{\omega^\alpha, L_X(x)}$, which renders our claim trivial as both~$\psi(1 + \alpha, o(X))$ and~$\psi(\alpha, o(X))$ must always be positive by definition.
	
	If~${n > 0}$, then any tree with~${i(t) = n}$ must be of the form~${t = \beta \star [t_l, t_r]}$ for~${\beta < \alpha}$. Let us first consider the case where~${\beta = 0}$ holds. Here, we make use of the quasi-embedding ~${L_{\bta_{\omega^\alpha, X}}(0 \star [t_l, t_r]) \to (X \oplus \omega^\alpha \otimes L_{\bta_{\omega^\alpha, X}}(t_l) \oplus \omega^\alpha \otimes L_{\bta_{\omega^\alpha, X}}(t_r))^*}$, which we constructed in Lemma~\ref{lem:left_set_quasi_embeddings}~b). Using the side induction for the smaller trees~${t_l}$ and~${t_r}$, which satisfy~${i(t_l), i(t_r) < i(t)}$, we prove our claim. During the following inequalites, we make use of Lemma~\ref{lem:psi_properties}~c), d), and e) for~${\omega^\alpha < \psi(\alpha, o(X))}$ and~${o(X) < \psi(\alpha, o(X))}$.
	\begin{enumerate}[label=\alph*)]
		\item
        $\begin{aligned}[t]
            l_{\bta_{\omega^\alpha, X}}(0 \star [t_l, t_r]) &\leq \varphi_0(\varphi_0(o(X) \hess \omega^\alpha \hessMul l_{\bta_{\omega^\alpha, X}}(t_l) \hess \omega^\alpha \hessMul l_{\bta_{\omega^\alpha, X}}(t_r) + 1))\\&< \psi(1 + \alpha, o(X))
        \end{aligned}$\vspace{0.5em}
		\item
        $\begin{aligned}[t]
            l_{\btal_{\omega^\alpha, X}}(0 \star [t_l, t_r]) &\leq \varphi_0(\varphi_0(o(X) \hess \omega^\alpha \hessMul l_{\btal_{\omega^\alpha, X}}(t_l) \hess \omega^\alpha \hessMul l_{\btal_{\omega^\alpha, X}}(t_r) + 1))\\&< \psi(\alpha, o(X))
        \end{aligned}$
	\end{enumerate}
	(We write~${\varphi_0(\dots)}$ instead of~${\omega^{\dots}}$ in order to avoid too many superscripts.)
	The last inequalites hold since both~${\psi(1 + \alpha, o(X))}$ and~${\psi(a, o(X))}$ are positive and closed under sums, products, and exponentials by Lemma~\ref{lem:psi_properties}~b) and~c).
	
	Let us continue with the case where~${\beta}$ is positive. This time, we use the quasi-embedding~${L_{\bta_{\omega^\alpha, X}}(t) \to \bta_{\omega^{\gamma_0}, Y}}$ with~${Y := L_{\bta_{\omega^\alpha, X}}(-\omega^{\gamma_0} + t)}$ where~${\gamma_0}$ is the highest power occurring in the Cantor normal form of an inner node in~${t}$. We use the main (outer) induction, which yields~${o(\bta_{\omega^{\gamma_0}, Y}) \leq \psi(1 + \gamma_0, o(Y))}$.	
	Moreover, we have~${i(-\omega^{\gamma_0} + t) < i(t)}$. Thus, we can apply our side induction to~${-\omega^{\gamma_0} + t}$.
	\begin{enumerate}[label=\alph*)]
		\item ${l_{\bta_{\omega^\alpha, X}}(t) \leq \psi(1 + \gamma_0, l_{\bta_{\omega^\alpha, X}}(-\omega^{\gamma_0} + t)) < \psi(1 + \alpha, o(X))}$.
		Here, we apply Lemma~\ref{lem:psi_properties}~a) to~${\gamma_0 < \alpha}$ and~${l_{\bta_{\omega^\alpha, X}}(-\omega^{\gamma_0} + t) < \psi(1 + \alpha, o(X))}$.
		\item For~${\gamma_0 > 0}$, the argument is analogous to a). However, for~${\gamma_0 = 0}$, we cannot apply our main induction hypothesis as this case is explicitly excluded. Using Lemma~\ref{lem:left_set_quasi_embeddings}~c), we have a quasi-embedding~${L_{\btal_{\omega^\alpha, X}}(t) \to \btal_{1, Y}}$ with~${Y := L_{\btal_{\omega^\alpha, X}}(-1 + t)}$. With Lemma~\ref{lem:small_upper_bounds_btal}~d), we get
		\begin{equation*}
			l_{\btal_{\omega^\alpha, X}}(t) \leq \varphi_0(\varphi_0(l_{\btal_{\omega^\alpha, X}}(-1 + t) + 1)) < \psi(\alpha, o(X))\period
		\end{equation*}
		Again, we used that~${\psi(\alpha, o(X))}$ is closed under sums, products, and exponentials.
	\end{enumerate}
    Of course, similar to before, it is possible that~$Y$ is empty. In this case, we are not allowed to apply the induction hypothesis. Still, our claims trivially follow as~${\psi(1 + \alpha, o(X))}$ and~${\psi(\alpha, o(X))}$ must always be positive.
\end{proof}
Using the definition of our function~${\psi}$ now yields results in terms of the Veblen hierarchy:
\begin{corollary}\label{cor:upper_bound_indecomposable_varphi}
	For ordinals~${\alpha}$ and well partial orders~${X}$, the following bounds hold:
	\begin{enumerate}[label=\alph*)]
		\item ${o(\bta_{\omega^\alpha, X}) \leq \varphi_{1 + \alpha}(o(X))}$ if~${o(X) < \varphi_{1 + \alpha + 1}(0)}$.
		\item ${o(\btal_{\omega^\alpha, X}) \leq \varphi_{\alpha}(o(X))}$ if~${o(X) < \varphi_{\alpha + 1}(0)}$ and~${\alpha > 0}$.
	\end{enumerate}
	Moreover, for any ordinal~${\alpha}$ with~${\alpha < \varphi_{\alpha}(0)}$, the following bounds hold:
	\begin{enumerate}[label=\alph*), start=3]
		\item ${o(\bta_{\omega^\alpha, 1}) \leq \varphi_{1 + \alpha}(0)}$.
		\item ${o(\btal_{\omega^\alpha, 1}) \leq \varphi_{\alpha}(0)}$ if~${\alpha > 0}$.
	\end{enumerate}
\end{corollary}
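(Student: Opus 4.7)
The plan is to derive all four bounds directly from Lemma~\ref{lem:tree_bound_with_psi} by unfolding the definition of~${\psi}$ and using the hypotheses to exclude the first branch (which produces~${\varphi_\alpha(\beta + 1)}$ rather than~${\varphi_\alpha(\beta)}$). No further work on trees is needed; everything is a three-way case distinction on the branches of~${\psi}$.

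For~(a), Lemma~\ref{lem:tree_bound_with_psi}~a) gives~${o(\bta_{\omega^\alpha, X}) \leq \psi(1 + \alpha, o(X))}$. The assumption~${o(X) < \varphi_{1 + \alpha + 1}(0)}$ rules out the first branch of~${\psi}$, so we are in the second or third branch. In the third branch we obtain precisely~${\varphi_{1 + \alpha}(o(X))}$. In the second branch (which requires~${o(X) = 1}$ together with~${1 + \alpha < \varphi_{1 + \alpha}(0)}$) we obtain~${\varphi_{1 + \alpha}(0)}$, and since~${o(X) = 1}$, this equals~${\varphi_{1 + \alpha}(o(X) - 1) \leq \varphi_{1 + \alpha}(o(X))}$, so the stated bound holds. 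Part~(b) is entirely analogous, using Lemma~\ref{lem:tree_bound_with_psi}~b) and the index~${\alpha}$ in place of~${1 + \alpha}$; the assumption~${\alpha > 0}$ is exactly what is needed to invoke that lemma.

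For~(c), I plan to apply Lemma~\ref{lem:tree_bound_with_psi}~a) with~${X = 1}$, so~${o(X) = 1}$. I want the second branch of~${\psi}$ to fire at~${\psi(1 + \alpha, 1)}$, which produces~${\varphi_{1 + \alpha}(0)}$; this requires the side condition~${1 + \alpha < \varphi_{1 + \alpha}(0)}$. Here the single subtlety is that we are only given~${\alpha < \varphi_\alpha(0)}$. The verification splits into the finite case (where~${\varphi_{1 + \alpha}(0) \geq \varphi_1(0) = \varepsilon_0 > 1 + \alpha}$) and the infinite case (where~${1 + \alpha = \alpha < \varphi_\alpha(0) \leq \varphi_{1 + \alpha}(0)}$). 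This is the only place where any real bookkeeping with ordinals occurs, and it is straightforward. Part~(d) is immediate from Lemma~\ref{lem:tree_bound_with_psi}~b) since the hypothesis~${\alpha < \varphi_\alpha(0)}$ is literally the side condition of the second branch of~${\psi(\alpha, 1)}$.

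I do not expect a serious obstacle. The closest thing to one is the verification that~${\alpha < \varphi_\alpha(0)}$ implies~${1 + \alpha < \varphi_{1 + \alpha}(0)}$ in part~(c); handling~${\alpha = 0}$ separately (so that~${1 + \alpha}$ and~${\alpha}$ may differ) dispatches it cleanly.
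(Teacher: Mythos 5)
Your proposal is correct and takes essentially the same route as the paper: both arguments simply instantiate Lemma~\ref{lem:tree_bound_with_psi} and unfold the case distinction in the definition of~$\psi$, with the same verification that~${\alpha < \varphi_\alpha(0)}$ entails~${1 + \alpha < \varphi_{1+\alpha}(0)}$ for parts~(c) and~(d). The one small omission is the case~${X = \emptyset}$ in parts~(a) and~(b): Lemma~\ref{lem:tree_bound_with_psi} is stated only for nonempty~$X$ (and~$\psi$ requires a positive second argument), so the paper handles it separately by observing that~${\bta_{\omega^\alpha, X}}$ and~${\btal_{\omega^\alpha, X}}$ are then themselves empty, making the bounds trivial; this is a one-line fix.
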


\begin{proof}
	For the empty well partial order~${X = \emptyset}$, both claims a) and b) are clear since then, both~${\bta_{\omega^\alpha, X}}$ and~${\btal_{\omega^\alpha, X}}$ are empty. For~${X \neq \emptyset}$, we assume~${o(X) < \varphi_{1 + \alpha + 1}(0)}$, resp.~$o(X) < \varphi_{\alpha + 1}(0)$, and instantiate Lemma~\ref{lem:tree_bound_with_psi}.
	\begin{enumerate}[label=\alph*)]
		\item ${o(\bta_{\omega^\alpha, X}) \leq \psi(1 + \alpha, o(X)) \leq \varphi_{1 + \alpha}(o(X))}$.
		\item ${o(\btal_{\omega^\alpha, X}) \leq \psi(\alpha, o(X)) \leq \varphi_{\alpha}(o(X))}$.
	\end{enumerate}
	Now, we assume that~${\alpha < \varphi_\alpha(0)}$ holds. Clearly, this entails~${1 + \alpha < \varphi_{1 + \alpha}(0)}$. Together with~${1 < \varphi_{\alpha + 1}(0)}$, this yields both equalities~${\psi(1 + \alpha, 1) = \varphi_{1 + \alpha}(0)}$ as well as~${\psi(\alpha, 1) = \varphi_{\alpha}(0)}$:
	\begin{enumerate}[label=\alph*), start=3]
		\item ${o(\bta_{\omega^\alpha, 1}) \leq \psi(1 + \alpha, 1) = \varphi_{1 + \alpha}(0)}$.
		\item ${o(\btal_{\omega^\alpha, 1}) \leq \psi(\alpha, 1) = \varphi_{\alpha}(0)}$.\qedhere
	\end{enumerate}
\end{proof}
Until now, our upper bounds for labels from an infinite ordinal required this ordinal to be indecomposable. In order to extend this to arbitrary infinite ordinals, we use the next lemma:
\begin{lemma}\label{lem:decomposable_inner_label}
	Given an ordinal~${\alpha \eqnf \omega^{\gamma} + \delta}$, we can prove the following quasi-embeddings:
	\begin{enumerate}[label=\alph*)]
		\item ${\bta_{\alpha, 1} \to \bta_{\omega^{\gamma}, X}}$ for~${X := \bta_{\delta, 1}}$.
		\item ${\btal_{\alpha, 1} \to \btal_{\omega^{\gamma}, X}}$ for~${X := \btal_{\delta, 1}}$.
	\end{enumerate}
\end{lemma}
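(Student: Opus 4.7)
The plan is to define a map $f\colon \bta_{\alpha,1}\to\bta_{\omega^{\gamma},X}$ (with $X=\bta_{\delta,1}$) that splits every tree $t$ along the threshold $\omega^{\gamma}$: the ``low'' portion, whose inner labels lie in $[0,\omega^{\gamma})$, becomes the skeleton of $f(t)$ in $\bta_{\omega^{\gamma},X}$, while each maximal ``high'' subtree---one whose inner labels all lie in $[\omega^{\gamma},\alpha)$---is contracted into a single leaf of $\bta_{\omega^{\gamma},X}$ whose label is obtained by left-subtraction. Concretely, I would set
\begin{equation*}
f(t) := \begin{cases}(-\omega^{\gamma}+t)\star[] & \text{if every inner label of $t$ is ${}\geq\omega^{\gamma}$,}\\ \beta\star[f(t_l),f(t_r)] & \text{if $t=\beta\star[t_l,t_r]$ with $\beta<\omega^{\gamma}$.}\end{cases}
\end{equation*}
The first clause also covers the unique leaf (vacuously). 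Well-definedness is a short check: since $t$ has weakly ascending labels, when $\beta<\omega^{\gamma}$ every inner label in $t_l,t_r$ is ${}\geq\beta$, and the inner labels that survive into $f(t_l),f(t_r)$ (those below $\omega^{\gamma}$) are still ${}\geq\beta$. In the first clause, the shift $-\omega^{\gamma}+t$ sends labels in $[\omega^{\gamma},\omega^{\gamma}+\delta)$ to labels in $[0,\delta)$, so it produces a legitimate element of $X$.

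To verify that $f$ is a quasi-embedding I would induct on $|s|+|t|$ and distinguish cases by the shape of $f(s)$ and $f(t)$. If both are leaves, $f(s)\le f(t)$ reads as $-\omega^{\gamma}+s\le-\omega^{\gamma}+t$ in $\bta_{\delta,1}$, and Lemma~\ref{lem:left_subtraction}~a) (whose tree version is explicitly stated there) lifts this back to $s\le t$ in $\bta_{\alpha,1}$. If $f(s)$ is a leaf and $f(t)=\beta'\star[f(t_l),f(t_r)]$, rule~iv) gives $f(s)\le f(t_l)$ or $f(s)\le f(t_r)$, and the induction hypothesis supplies $s\le t_l$ or $s\le t_r$, hence $s\le t$. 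If both $f(s)$ and $f(t)$ are inner nodes, the relation $f(s)\le f(t)$ arises either by rule~iv), which reduces to a subtree of $f(t)$ and is handled by induction, or by rule~v), which yields $\beta\le\beta'$ together with $f(s_l)\le f(t_l)$ and $f(s_r)\le f(t_r)$; the induction hypothesis then assembles $s\le t$. The remaining combination---$f(s)$ an inner node but $f(t)$ a leaf---is impossible, since there is no rule placing an inner node below a leaf, so the implication $f(s)\le f(t)\Rightarrow s\le t$ holds vacuously there.

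For part~b) the same formula defines $f\colon\btal_{\alpha,1}\to\btal_{\omega^{\gamma},X}$ with $X:=\btal_{\delta,1}$. The only additional verification concerns the strict ascent on left subtrees: in the second clause, the inner labels of $f(t_l)$ are precisely those inner labels of $t_l$ that lie below $\omega^{\gamma}$, and the $\btal$-condition on $t$ forces them to be \emph{strictly} greater than $\beta$. The shift $-\omega^{\gamma}+(\cdot)$ preserves both the weak and the strict order on labels ${}\geq\omega^{\gamma}$, so the first clause also lands in $\btal_{\delta,1}$. The quasi-embedding argument above carries through verbatim.

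The only subtle point is confirming that Lemma~\ref{lem:left_subtraction}~a) applies ``across ambient orders'', i.e.\ that a comparison in $\bta_{\delta,1}$ between $-\omega^{\gamma}+s$ and $-\omega^{\gamma}+t$ is the same information as the comparison of $s$ and $t$ in $\bta_{\alpha,1}$. This is exactly the content of the tree-version of that lemma, relying on the fact that left addition by $\omega^{\gamma}$ is an order-isomorphism between $[0,\delta)$ and $[\omega^{\gamma},\omega^{\gamma}+\delta)$; so no new work is required, and the argument is as routine as the analogous manipulation in the proof of Proposition~\ref{prop:embed_varphi_into_weak_gap_sequence}.
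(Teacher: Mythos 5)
Your proposal is correct and coincides with the paper's proof: the map $f$ you define is exactly the one used there, and your order-reflection argument is the same induction that the paper defers to by citing the analogous verification in Lemma~\ref{lem:left_set_quasi_embeddings}~c). The additional checks you spell out (exclusivity of the two clauses via weak ascent of labels, the $\btal$ condition, and the applicability of Lemma~\ref{lem:left_subtraction}~a) to the leaf-versus-leaf case) are all the points the paper treats as routine.
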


\begin{proof}
	We prove a): Consider the map~${f: \bta_{\alpha, 1} \to \bta_{\omega^\gamma, X}}$ with
	\begin{equation*}
		f(t) :=
		\begin{cases}
			(-\omega^\gamma + t) \star [] & \text{if all inner labels in~${t}$ are greater than or equal to~${\omega^\gamma}$,}\\
			\beta \star [f(t_l), f(t_r)] & \text{if~${t = \beta \star [t_l, t_r]}$ with~${\beta < \omega^\gamma}$.}
		\end{cases}
	\end{equation*}
	We have already encountered a quasi-embedding that is quite similar to~${f}$ in the proof of Lemma~\ref{lem:left_set_quasi_embeddings}~c) and, therefore, omit the proof that~${f}$ reflects orders.
	The proof of b) is analogous. We restrict the domain of~${f}$ to~${\btal_{\alpha, 1}}$ and only have to make sure that its range lies within~${\btal_{\omega^\gamma, Y}}$ for~${Y := \btal_{\delta, 1}}$, which is clear by definition of~$f$.
\end{proof}
Now, everything is ready for computing all upper bounds of binary trees with weakly ascending labels:
\begin{proposition}
	For any ordinal~${\alpha}$, we have~${o(\bta_{\alpha, 1}) \leq F(\alpha)}$.
\end{proposition}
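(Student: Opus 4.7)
The plan is to proceed by transfinite induction on $\alpha$, splitting into the three clauses of the definition of $F$. The case $\alpha = 0$ is immediate since $\bta_{0, 1}$ has a unique element (one leaf carries the trivial label), so $o(\bta_{0, 1}) = 1 = F(0)$. For $\alpha = \omega^\gamma$ with $\gamma < \varphi_\gamma(0)$, the bound $o(\bta_{\omega^\gamma, 1}) \leq \varphi_{1+\gamma}(0) = F(\alpha)$ is exactly Corollary~\ref{cor:upper_bound_indecomposable_varphi}~c).

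For the remaining ``otherwise'' case, write $\alpha \eqnf \omega^\gamma + \delta$ with $\delta < \omega^{\gamma+1}$. Lemma~\ref{lem:decomposable_inner_label}~a) supplies a quasi-embedding $\bta_{\alpha, 1} \to \bta_{\omega^\gamma, X}$ with $X := \bta_{\delta, 1}$, so $o(\bta_{\alpha, 1}) \leq o(\bta_{\omega^\gamma, X})$. Note that $\delta < \alpha$ (either because $\delta > 0$, or because $\delta = 0$ and we are in the subcase $\gamma = \varphi_\gamma(0)$ where $\alpha > 0$ is still strictly larger than $0 = \delta$), so the induction hypothesis yields $o(X) \leq F(\delta)$. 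Then I would invoke Corollary~\ref{cor:upper_bound_indecomposable_varphi}~a) and the monotonicity of $\varphi_{1+\gamma}$ to conclude
\begin{equation*}
    o(\bta_{\alpha, 1}) \leq o(\bta_{\omega^\gamma, X}) \leq \varphi_{1+\gamma}(o(X)) \leq \varphi_{1+\gamma}(F(\delta)) = F(\alpha)\text{.}
\end{equation*}

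The main obstacle is checking the side condition $o(X) < \varphi_{(1+\gamma)+1}(0) = \varphi_{\gamma+2}(0)$ needed to apply Corollary~\ref{cor:upper_bound_indecomposable_varphi}~a). I would handle this by proving an auxiliary lemma, by a short induction on the Cantor normal form: for every $\delta < \omega^{\gamma+1}$ we have $F(\delta) < \varphi_{\gamma+2}(0)$. Indeed, every exponent $\gamma'$ arising when unfolding the definition of $F(\delta)$ satisfies $\gamma' \leq \gamma$, so each outer $\varphi_{1+\gamma'}$ keeps the value below the first fixed point $\varphi_{\gamma+2}(0)$ of $\xi \mapsto \varphi_{1+\gamma}(\xi)$ above $\gamma$; combined with the induction hypothesis $o(X) \leq F(\delta)$, this guarantees $o(X) < \varphi_{\gamma+2}(0)$ as required. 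With this auxiliary fact in place, the displayed chain of inequalities completes the induction step.
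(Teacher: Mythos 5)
Your proposal is correct and follows essentially the same route as the paper's proof: the identical case split on the definition of $F$, with Corollary~\ref{cor:upper_bound_indecomposable_varphi}~c) for the indecomposable case and the combination of Lemma~\ref{lem:decomposable_inner_label}~a) with Corollary~\ref{cor:upper_bound_indecomposable_varphi}~a) for the remaining case. Your explicit verification of the side condition $o(X) < \varphi_{(1+\gamma)+1}(0)$, which the paper leaves implicit, is a sound addition (modulo the harmless slip that $(1+\gamma)+1$ equals $\gamma+1$ rather than $\gamma+2$ when $\gamma$ is infinite; the fixed-point argument goes through unchanged).
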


\begin{proof}
	If~${\alpha = 0}$, then~${\bta_{\alpha, 1}}$ only contains the tree~${0 \star []}$. Thus,~${o(\bta_{\alpha, 1}) = 1}$ holds.
	If~${\alpha = \omega^{\gamma}}$ holds for an ordinal~${\gamma}$ satisfying~${\gamma < \varphi_\gamma(0)}$, then we use Corollary~\ref{cor:upper_bound_indecomposable_varphi}~c), which yields~${o(\bta_{\alpha, 1}) \leq \varphi_{1 + \gamma}(0)}$.
    
	Otherwise, write~${\alpha \eqnf \omega^{\gamma} + \delta}$. From Lemma~\ref{lem:decomposable_inner_label}~a) and Corollary~\ref{cor:upper_bound_indecomposable_varphi}~a), we get~${o(\bta_{\alpha, 1}) \leq o(\bta_{\omega^{\gamma}, X}) \leq \varphi_{1 + \gamma}(o(X)) \leq \varphi_{1 + \gamma}(F(\delta))}$ for~${X := \bta_{\delta, 1}}$.
\end{proof}
Next, we consider the upper bounds for both sequences with weak gap condition as well as trees with ascending labels that ascend strictly for left subtrees:
\begin{proposition}
	For any ordinal~${\alpha}$, we have~${o(\gapw_{\alpha}) \leq o(\btal_{\alpha, 1}) \leq G(\alpha)}$.
\end{proposition}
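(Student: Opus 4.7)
The first inequality $o(\gapw_\alpha) \leq o(\btal_{\alpha,1})$ is immediate from Lemma~\ref{lem:embed_weak_into_btal}, which supplies a quasi-embedding $\gapw_\alpha \to \btal_{\alpha,1}$, and the standard fact that a quasi-embedding between well partial orders does not increase the maximal order type. So the real content lies in the bound $o(\btal_{\alpha,1}) \leq G(\alpha)$, which I would establish by transfinite induction on $\alpha$, following the case split in the definition of $G$.

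For the base cases $\alpha = 0$ and $\alpha = 1$ I would invoke Lemma~\ref{lem:small_upper_bounds_btal}~a) and~b). For $\alpha = n+1$ with $0 < n < \omega$, Lemma~\ref{lem:small_upper_bounds_btal}~c) gives $o(\btal_{n+1,1}) \leq \omega^{\omega^{o(\btal_{n,1})}}$; applying the induction hypothesis $o(\btal_{n,1}) \leq G(n)$ yields the bound $\omega^{\omega^{G(n)}} = G(n+1)$. For $\alpha = \omega^\gamma$ with $0 < \gamma < \varphi_\gamma(0)$, Corollary~\ref{cor:upper_bound_indecomposable_varphi}~d) gives $o(\btal_{\omega^\gamma,1}) \leq \varphi_\gamma(0) = G(\omega^\gamma)$.

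For the remaining ``otherwise'' case, write $\alpha \eqnf \omega^\gamma + \delta$, and note that $\gamma > 0$ (as $\gamma = 0$ would force $\alpha$ finite, handled above). Lemma~\ref{lem:decomposable_inner_label}~b) supplies a quasi-embedding $\btal_{\alpha,1} \to \btal_{\omega^\gamma, X}$ with $X := \btal_{\delta,1}$, and then Corollary~\ref{cor:upper_bound_indecomposable_varphi}~b) gives the bound $\varphi_\gamma(o(X))$ provided $o(X) < \varphi_{\gamma + 1}(0)$. Combined with the induction hypothesis $o(X) \leq G(\delta)$, this yields $o(\btal_{\alpha,1}) \leq \varphi_\gamma(G(\delta)) = G(\alpha)$, as required.

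The one obstacle worth calling out is the side condition $o(X) < \varphi_{\gamma + 1}(0)$, which I would address by a short auxiliary claim, proved by a secondary transfinite induction: for every ordinal $\delta < \omega^{\gamma + 1}$ one has $G(\delta) < \varphi_{\gamma + 1}(0)$. The induction runs through the same five clauses used to define $G$; in each clause one uses that $\varphi_{\gamma + 1}(0)$ is an $\varepsilon$-number (hence closed under the operations $\omega^{(\,\cdot\,)}$ and $\omega^{\omega^{(\,\cdot\,)}}$) and is a common fixed point of all $\varphi_{\gamma'}$ with $\gamma' \leq \gamma$, so that an exponent $\gamma'$ arising from the Cantor normal form of $\delta < \omega^{\gamma+1}$ satisfies $\gamma' \leq \gamma$ and the value $\varphi_{\gamma'}(G(\delta')) < \varphi_{\gamma + 1}(0)$ by the inner induction hypothesis applied to $\delta' < \delta$. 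With this auxiliary claim in hand, the main induction closes cleanly.
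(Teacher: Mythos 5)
Your proposal is correct and follows essentially the same route as the paper: the first inequality via Lemma~\ref{lem:embed_weak_into_btal}, then induction on~$\alpha$ through the clauses of~$G$ using Lemma~\ref{lem:small_upper_bounds_btal}, Corollary~\ref{cor:upper_bound_indecomposable_varphi}, and Lemma~\ref{lem:decomposable_inner_label}. Your explicit verification of the side condition~${o(X) < \varphi_{\gamma+1}(0)}$ (via the auxiliary claim that~${G(\delta) < \varphi_{\gamma+1}(0)}$ for~${\delta < \omega^{\gamma+1}}$) is a point the paper leaves implicit, and your argument for it is sound.
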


\begin{proof}
	By Lemma~\ref{lem:embed_weak_into_btal}, we know that there is a quasi-embedding from~${\gapw_{\alpha}}$ into~${\btal_{\alpha, 1}}$. Thus, we conclude~${o(\gapw_\alpha) \leq o(\btal_{\alpha, 1})}$. We prove~${o(\btal_{\alpha, 1}) \leq G(\alpha)}$ by induction along~${\alpha}$: First, if~${\alpha = 0}$ or~${\alpha = 1}$, we have~${o(\btal_{\alpha, 1}) = 1}$ or~${o(\btal_{\alpha, 1}) = \omega}$, respectively, by Lemma~\ref{lem:small_upper_bounds_btal}~a) and~b). If~${\alpha = n + 1}$ for~${n \in \n}$ with~${n > 0}$, then Lemma~\ref{lem:small_upper_bounds_btal}~c) yields~${o(\btal_{\alpha, 1}) \leq \omega^{\omega^{o(\btal_{n, 1})}} \leq \omega^{\omega^{G(n)}}}$.
	
	If~${\alpha}$ is infinite and we have~${\alpha = \omega^\gamma}$ for some ordinal~${\gamma}$ with~${0 < \gamma < \varphi_\gamma(0)}$, we invoke Corollary~\ref{cor:upper_bound_indecomposable_varphi}~d), which yields~${o(\btal_{\alpha, 1}) \leq \varphi_\gamma(0)}$. Finally, in any other case where~${\alpha \eqnf \omega^\gamma + \delta}$ holds for ordinals~${\gamma}$ and~${\delta}$, we have~${o(\btal_{\alpha, 1}) \leq o(\btal_{\omega^\gamma, X})}$ for~${X := \btal_{\delta, 1}}$ by Lemma~\ref{lem:decomposable_inner_label}~b). With Corollary~\ref{cor:upper_bound_indecomposable_varphi}~b), we arrive at the inequalities~${o(\btal_{\omega^\gamma, X}) \leq \varphi_\gamma(o(X)) \leq \varphi_\gamma(G(\delta))}$.
\end{proof}
We continue with sequences using the strong gap condition. The first step for their upper bounds lies in the next lemma. (Compare~a) to the proof of~\cite[Lemma~5.5]{SchuetteSimpson} as well as~\cite[Corollary~1 and Lemma~2]{RMWGapCondition}.)
\begin{lemma}\label{lem:quasi-embedding_strong_gap_condition}
	We have the following quasi-embeddings:
	\begin{enumerate}[label=\alph*)]
		\item ${\gaps_{n+1} \to \gaps_{n} \otimes \gapw_{n+1}}$ for any~${n \in \n}$.
		\item ${\gaps_{\alpha} \to \gaps_{\delta} \otimes (\gapw_{\alpha} \oplus (-1 + \gamma))^*}$ for any infinite ordinal~${\alpha \eqnf \omega^{\gamma} + \delta}$.
	\end{enumerate}
\end{lemma}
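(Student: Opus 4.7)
The plan is to construct explicit quasi-embeddings for both parts, imitating the constructions of Schütte and Simpson (\cite[Lemma~5.5]{SchuetteSimpson}) and of~\cite[Corollary~1, Lemma~2]{RMWGapCondition}. In both cases the underlying idea is the same: split a sequence in~$\gaps_\alpha$ (for $\alpha = n+1$ or~$\alpha \eqnf \omega^\gamma + \delta$) into an ``upper'' part that lives in a strictly smaller $\gaps$-order and a ``lower'' part whose structure is recorded using the weak-gap order.

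For part~(a) I would define a map $s \mapsto (\pi(s), \rho(s)) \in \gaps_n \otimes \gapw_{n+1}$ by a careful reduction based on the occurrences of the top letter~$n$. The second component~$\rho(s)$ is the full sequence~$s$ viewed in $\gapw_{n+1}$, possibly after a structural decoration (for instance prepending an extra copy of~$n$, in the spirit of Lemma~\ref{lem:gap_strong_and_weak}) so that the weak-gap inner conditions of $\rho$ translate back to the outer-gap condition of the strong order on~$s$. The first component~$\pi(s)$ is a reduction of~$s$ to an element of~$\gaps_n$ that encodes precisely the information that~$\rho$ loses: it must be chosen both to be preserved by~$\leq_\strong$ on~$\gaps_{n+1}$ (so a naive ``subsequence of entries below~$n$'' will not do, since such a subsequence may be empty while the original sequence is nontrivial), and to be rich enough to rule out the spurious weak embeddings of~$\rho(s)$ into~$\rho(t)$ that do not lift to a strong embedding of~$s$ into~$t$.

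The verification that this map is a quasi-embedding splits into two directions. Order-preservation is straightforward by exhibiting, from a strong realizer $f$ of $s \leq_\strong t$, the corresponding realizers on the $\pi$- and $\rho$-components, using that strong realizers lift cleanly across prepending and across the chosen reduction. Order-reflection is the substantial step: given a strong realizer $h$ of $\pi(s) \leq_\strong \pi(t)$ and a weak realizer $g$ of $\rho(s) \leq_\weak \rho(t)$, one builds a strong realizer~$f$ of $s \leq_\strong t$ by a case distinction on the initial value~$g(0)$. If $g(0)$ lands at position~$0$, a direct shift of~$g$ produces $f$, and the weak-gap information between positions $0$ and $g(1)$ translates exactly into the outer-gap condition for~$s \leq_\strong t$. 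Otherwise, we use~$h$ together with the portion of~$g$ that operates past~$g(0)$ to splice together a realizer that routes the initial maximal entries of~$s$ through the initial maximal entries of~$t$ and handles the tail via~$g$.

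For part~(b) the decomposition is analogous: using the Cantor normal form $\alpha \eqnf \omega^\gamma + \delta$, classify the entries of~$s$ as ``large'' (those $\geq \omega^\gamma$, which correspond by subtraction of~$\omega^\gamma$ to elements of~$\delta$) or ``small'' (those $< \omega^\gamma$). Let $\pi(s) \in \gaps_\delta$ be the subsequence of large entries shifted down by~$\omega^\gamma$. For~$\rho(s) \in (\gapw_\alpha \oplus (-1+\gamma))^*$, decompose~$s$ into the alternation of its maximal runs of small entries (each recorded as an element of~$\gapw_\alpha$) and its individual large entries (each recorded by an element of $-1 + \gamma$ capturing the degree of its leading Cantor normal form exponent below~$\gamma$). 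The verification that this is a quasi-embedding follows the same pattern as in~(a), with the main technical effort again concentrated in the reflection direction.

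The main obstacle, in both parts, is the choice of the $\pi$-component and the reconstruction of a strong realizer in the case where the weak realizer of~$\rho$ does not start at position~$0$; this is precisely where~$\pi$ must carry enough structural information to rule out the ``slack'' available in the weak-gap order.
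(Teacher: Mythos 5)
Your proposal identifies the right general shape (split off a part living in a smaller strong-gap order and a part recorded weakly, then verify reflection), but it stops exactly where the actual work is: you never say what $\pi(s)$ is, and you yourself flag the reconstruction of a strong realizer from a weak one as the unresolved ``main obstacle.'' The paper dissolves that obstacle by a different choice of decomposition, and with your stated choices the obstacle is real. For (a), you take $\rho(s)$ to be the \emph{full} sequence in $\gapw_{n+1}$, which forces you to upgrade an arbitrary weak realizer of $\rho(s)\leq_\weak\rho(t)$ to a strong one --- this is precisely the hard splicing argument you cannot complete. The paper instead writes $s=s_l*s_r$ where $s_l$ is the \emph{maximal initial segment of positive members} (so $s_r$ is empty or begins with $0$) and sets $f(s):=(-1+s_l,\,s_r)$. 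Then $-1+s_l\leq_\strong -1+t_l$ gives $s_l\leq_\strong t_l$ by Lemma~\ref{lem:left_subtraction}~a), and $s_r\leq_\weak t_r$ is automatically $s_r\leq_\strong t_r$ because $s_r$ begins with $0$, making the outer-gap condition vacuous; Lemma~\ref{lem:gap_concat_weak_and_strong} then concatenates the two. No realizer surgery is needed. (Also note only order reflection is required for a quasi-embedding as defined in this paper, so your order-preservation discussion is superfluous.)

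For (b) your committed details do not work as stated. Taking $\pi(s)$ to be the subsequence of \emph{all} entries $\geq\omega^\gamma$ (rather than the maximal \emph{initial} run of such entries) destroys the interleaving information needed to reassemble a strong embedding; the paper again splits at the first entry below $\omega^\gamma$, so that $g(s):=(-\omega^\gamma+s_l,\,s_r)$ with $s_r$ landing in the suborder $T$ of sequences beginning strictly below $\omega^\gamma$. Moreover, your plan to tag each individual large entry by ``the degree of its leading Cantor normal form exponent below $\gamma$'' is incoherent: entries $\geq\omega^\gamma$ have leading exponent $\geq\gamma$, not below it. In the paper the $(-1+\gamma)$-summand instead records the leading exponent $\rho_0<\gamma$ of the \emph{minimum} of the current (entirely small-entried) tail, enabling a recursive peeling: subtract $\omega^{\rho_0}$ from the whole remaining sequence, or split off the segment from the first $0$ onward as a $\gapw_\alpha$-entry, and recurse. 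Reflection then follows from Lemmas~\ref{lem:gap_remove_tail}, \ref{lem:gap_remove_head}, \ref{lem:left_subtraction}, and \ref{lem:gap_concat_weak_and_strong}. As written, your proposal is a research plan whose central construction is missing and whose provisional choices would have to be replaced before the verification could go through.
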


\begin{proof}
	For a), we define a map~${f: \gaps_{n+1} \to \gaps_{n} \otimes \gapw_{n+1}}$ with~${f(s) := (-1 + s_l, s_r)}$ for a sequence~${s = s_l * s_r}$ where~${s_l}$ only has positive members and it is as long as possible. Clearly, the decomposition of~${s}$ into~${s_l}$ and~${s_r}$ is unique and~${f}$ is, therefore, well-defined. We prove that~${f}$ is a quasi-embedding: Let~${s, t \in \gaps_{n+1}}$ with~${s = s_l * s_r}$ and~${t = t_l * t_r}$ such that~${s_l}$ and~${t_l}$ only contain positive members and are as long as possible. Assume~${f(s) \leq f(t)}$. This entails~${-1 + s_l \leq_\strong -1 + t_l}$ and~${s_r \leq_\weak t_r}$. Using Lemma~\ref{lem:left_subtraction}~a), the former results in~${s_l \leq_\strong t_l}$. By assumption,~${s_r}$ must be empty or begin with~${0}$. Otherwise,~${s_l}$ could have been chosen to be longer. Note that this already entails~${s_r \leq_\strong t_r}$ with respect to the strong gap condition: If~${s_r}$ is nonempty and, therefore, begins with~${0}$, the additional condition for the outer gap is trivially satisfied.	
	By Lemma~\ref{lem:gap_concat_weak_and_strong}, we arrive at our claim~${s_l * s_r \leq_{\strong} t_l * t_r}$.
	
	For b), let~${T}$ be the suborder of~${\gaps_{\alpha}}$ that consists of the empty sequence and also those that begin with an element strictly below~${\omega^\gamma}$.
	We continue with the definition of a map~${f: T \to (\gapw_{\alpha} \oplus (-1 + \gamma))^*}$ as follows:
	\begin{equation*}
		f(s) :=
		\begin{cases}
			\langle\rangle & \text{if } s = \langle\rangle\comma\\
			\langle \iota_0(s_r) \rangle * f(-1 + s_l) & \parbox[t]{17em}{if~${s = s_l * s_r}$ if~${s_l}$ only contains positive members and~$s_r$ begins with~$0$,}\\
            \langle \iota_0(\langle \rangle) * f(-1 + s) & \parbox[t]{17em}{if~$s$ is nonempty and its minimum is finite but positive,}\\
			\langle \iota_1(-1 + \rho_0) \rangle * f(-\omega^{\rho_0} + s) & \parbox[t]{17em}{if~${s}$ is nonempty and~${\rho \eqnf \omega^{\rho_0} + \rho'}$ is its minimum with~${\rho_0 > 0}$.}
		\end{cases}
	\end{equation*}
	Of course, we need to verify that~${f}$ is well defined: In the second, third, and fourth case,~$f$ is applied to a sequence that is either empty of begins with an element that clearly lies (weakly) below the first member of~$s$ and, hence, strictly below~$\omega^\gamma$. In the fourth case,~$\omega^{\rho_0}$ must be strictly below~$\omega^\gamma$ and, therefore,~${-1 + \rho_0}$ is strictly below~${-1 + \gamma}$.
	
	We show that~${f}$ is a quasi-embedding. Let~${s, t \in T}$ with~${f(s) \leq f(t)}$. The cases where one of~${s}$ or~${t}$ is the empty sequence are clear. Next, we assume that our assumed inequality holds since~${f(s)}$ is less than or equal to the tail of~${f(t)}$. Here, we have one of~${s \leq_{\strong} -1 + t_l}$,~${s \leq_{\strong} -1 + t}$, or~${s \leq_{\strong} -\omega^{\rho_0} + t}$. Clearly, each case entails~${s \leq_{\strong} t}$.

    Otherwise, assume that~${f(s)}$ is not less than or equal to the tail of~${f(t)}$. First, we consider the case in which both~$s$ and~$t$ have a finite minimum. If both contain~$0$, we derive~${s_r \leq_{\weak} t_r}$ and~${-1 + s_l \leq_\strong -1 + t_l}$, which entails~${s_l \leq_\strong t_l}$ by Lemma~\ref{lem:left_subtraction}~a). In fact, the former leads to~${s_r \leq_{\strong} t_r}$ since~$s_r$ is either empty or begins with~$0$, which has the effect that the outer gap condition becomes trivial. Now, we can concatenate both results using Lemma~\ref{lem:gap_concat_weak_and_strong} in order to arrive at~${s_l * s_r \leq_{\strong} t_l * t_r}$. It is not possible that~$s$ contains a~$0$ while~$t$ does not, since~$s_r$ can never be empty. Assume that~$s$ has a positive minimum and~$t$ contains a~$0$. Then, by induction hypothesis, we get~${-1 + s \leq_{\strong} -1 + t_l}$, which leads to~${s \leq_{\strong} t}$ by Lemma~\ref{lem:left_subtraction}~a) and~${t_l \leq_\strong t}$. If both~$s$ and~$t$ have a positive minimum, then~${-1 + s \leq_\strong -1 + t}$ leads to~${s \leq_\strong t}$ by Lemma~\ref{lem:left_subtraction}~a).
	
	If~$s$ and~$t$ both belong to the last case in the definition of~$f$, then this implies that~${s}$ and~${t}$ have~${\sigma \eqnf \omega^{\sigma_0} + \sigma'}$ and~${\tau \eqnf \omega^{\tau_0} + \tau'}$ as their minimum, respectively, for positive~$\sigma_0$ and~$\tau_0$. Also, we have~${-1 + \sigma_0 \leq -1 + \tau_0}$ and, by induction hypothesis,~${-\omega^{\sigma_0} + s \leq_\strong -\omega^{\tau_0} + t}$. Invoking Lemma~\ref{lem:left_subtraction}~a), this yields~${s \leq_\strong t}$.
	
	Finally, we proceed similar to the proof of a): We construct~${g: \gaps_{\alpha} \to \gaps_{\delta} \otimes T}$ with~${g(s) := (-\omega^{\gamma} + s_l, s_r)}$ for any sequence~${s = s_l * s_r}$ where~${s_l}$ only has members greater than or equal to~${\omega^{\gamma}}$ and is as long as possible. This immediately entails that the first element of~${s_r}$, if it exists, must be strictly below~${\omega^{\gamma}}$. We show that~${g}$ is a quasi-embedding. Then, combining it with~${f}$ will yield our claim.
	Let~${s, t \in \gaps_{\alpha}}$ with~${s = s_l * s_r}$ and~${t = t_l * t_r}$ such that~${s_l}$ and~${t_l}$ only have members greater than or equal to~${\omega^{\gamma_0}}$ and are as long as possible. We assume~${g(s) \leq g(t)}$. This entails~${-\omega^{\gamma_0} + s_l \leq_\strong -\omega^{\gamma_0} + t_l}$ and~${s_r \leq_\strong t_r}$. By Lemma~\ref{lem:left_subtraction}~a), the former yields~${s_l \leq_\strong t_l}$. Finally, with Lemma~\ref{lem:gap_concat_weak_and_strong}, we conclude our claim~${s_l * s_r \leq_\strong t_l * t_r}$.
\end{proof}
Finally, we compute the upper bounds for sequences with strong gap condition by simply calculating (upper bounds of) the maximal order types of the well partial orders that acted as codomains for the quasi-embeddings of the previous lemma:
\begin{proposition}
	For any ordinal~${\alpha}$, we have~${o(\gaps_{\alpha}) \leq H(\alpha)}$.
\end{proposition}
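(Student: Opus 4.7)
The plan is to induct on~$\alpha$, using the two quasi-embeddings from Lemma~\ref{lem:quasi-embedding_strong_gap_condition}, the previously established bound $o(\gapw_\alpha) \leq G(\alpha)$, and the Higman order-type formula in Theorem~\ref{thm:Higman_order_types}. The case $\alpha = 0$ is immediate since $\gaps_0$ is a singleton and $H(0) = 1$.

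For finite $\alpha = n + 1$, part~a) of Lemma~\ref{lem:quasi-embedding_strong_gap_condition} together with the formulas for the order type of a product and the inductive bounds give
\[
o(\gaps_{n+1}) \leq o(\gaps_n) \hessMul o(\gapw_{n+1}) \leq H(n) \hessMul G(n+1)\text{.}
\]
Since $\alpha \eqnf \omega^0 + n$, the target bound is $H(n+1) = G(n+1) \cdot H(n)$, so what I need is that this natural product collapses to the ordinary product. The key observation is that for $n \geq 1$ we have $G(n+1) = \omega^{\omega^{G(n)}}$, which is a principal additive ordinal $\omega^\mu$, and a short side induction shows that every Cantor-normal-form exponent of $H(n)$ is strictly below~$\mu$. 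Under this condition the natural and ordinary products coincide.

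For $\alpha \eqnf \omega^\gamma + \delta$ with $\gamma \geq 1$, part~b) of the lemma yields $o(\gaps_\alpha) \leq H(\delta) \hessMul o\bigl((\gapw_\alpha \oplus (-1 + \gamma))^*\bigr)$. Since $G(\alpha)$ is an $\varepsilon$-number strictly above $-1 + \gamma$, the Hessenberg sum $o(\gapw_\alpha) \hess (-1 + \gamma)$ is at most $G(\alpha) + (-1 + \gamma)$, and I compute the Higman bound by a case split on~$\gamma$. For finite $\gamma = n \geq 1$ one has $-1 + \gamma = n - 1$, so the inner order type lies in the ``$\varepsilon + k$'' regime of Theorem~\ref{thm:Higman_order_types}, yielding the bound $\omega^{\omega^{G(\alpha) + n}}$. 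For infinite $\gamma$ one has $-1 + \gamma = \gamma$, the inner order type is not of that form, and the ``otherwise'' clause yields $\omega^{\omega^{G(\alpha) + \gamma}}$. In both subcases this equals $G(\alpha)^{\omega^\gamma} = \omega^{G(\alpha) \cdot \omega^\gamma}$, using $G(\alpha) = \omega^{G(\alpha)}$ to rewrite $G(\alpha) \cdot \omega^\gamma = \omega^{G(\alpha) + \gamma}$. The Higman-shift by $+1$ in the finite-$\gamma$ case is precisely absorbed by this identification.

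The hard part will be upgrading the resulting natural product $H(\delta) \hessMul G(\alpha)^{\omega^\gamma}$ to the ordinary product $G(\alpha)^{\omega^\gamma} \cdot H(\delta) = H(\alpha)$. As in the finite case, this reduces to verifying that every Cantor-normal-form exponent of $H(\delta)$ is strictly below the unique CNF exponent $\omega^{G(\alpha) + \gamma}$ of $G(\alpha)^{\omega^\gamma}$. I plan to establish this by threading a simple monotonicity lemma about the exponents of~$H$ through the same induction, which should follow directly from the recursive definition of~$H$ together with the fact that $G(\alpha)$ grows fast enough to dominate the finite shifts introduced at each step. Once this is in place, $\omega^\mu \hessMul H(\delta) = \omega^\mu \cdot H(\delta)$ holds for $\mu = \omega^{G(\alpha) + \gamma}$, which closes the induction.
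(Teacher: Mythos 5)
Your proposal is correct and takes essentially the same route as the paper's proof: the same two quasi-embeddings from Lemma~\ref{lem:quasi-embedding_strong_gap_condition}, the same Higman/product order-type calculus with the case split on finite versus infinite~$\gamma$, the same $\varepsilon$-number identities, and the same collapse of the natural product to the ordinary product. The ``monotonicity lemma'' you defer amounts exactly to the inequalities~${H(n) \leq G(n+1)}$ and~${H(\delta) \leq \omega^{\omega^{G(\alpha) \hess \gamma}}}$, which the paper likewise invokes without further proof, so nothing essential is missing.
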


\begin{proof}
	If~${\alpha = 0}$, then~${\gaps_{\alpha}}$ only consists of the empty sequence. Therefore, we have~${o(\gaps_{\alpha}) = 1}$. If~${\alpha = n + 1}$ for some~${n \in \n}$, we can invoke Lemma~\ref{lem:quasi-embedding_strong_gap_condition}~a), which yields~${o(\gaps_{n+1}) \leq o(\gaps_{n}) \hessMul o(\gapw_{n+1}) \leq H(n) \hessMul G(n+1) = G(n+1) \cdot H(n)}$. During the last step, we used~${H(n) \leq G(n+1)}$ together with the fact that~${G(n+1)}$ is of the form~${\omega^{\omega^\beta}}$ for some ordinal~${\beta}$.
	Otherwise, if~${\alpha \eqnf \omega^{\gamma} + \delta}$ with~${\gamma > 0}$, then we invoke Lemma~\ref{lem:quasi-embedding_strong_gap_condition}~b), which yields
	\begin{align*}
		o(\gaps_{\alpha}) \leq{} &o(\gaps_{\delta}) \hessMul o((\gapw_{\alpha} \oplus (-1 + \gamma))^*)
		\leq{} H(\delta) \hessMul \omega^{\omega^{G(\alpha) \hess \gamma}}
		\leq{} \omega^{\omega^{G(\alpha) + \gamma}} \cdot H(\delta)\\
		={} &\omega^{G(\alpha) \cdot \omega^{\gamma}} \cdot H(\delta)
		={} G(\alpha)^{\omega^{\gamma}} \cdot H(\delta)\period
	\end{align*}
    In the second inequality, we used the following case distinction: If~${\gamma > 0}$ is finite, then~${-1 + \gamma = \gamma - 1}$ holds, which entails
    \begin{equation*}
        o((\gapw_{\alpha} \oplus (-1 + \gamma))^*) \leq \omega^{\omega^{(G(\alpha) \hess (-1 + \gamma)) + 1}} = \omega^{\omega^{G(\alpha) \hess \gamma}}\text{.}
    \end{equation*}
    Otherwise, if~$\gamma$ is infinite, then~${-1 + \gamma = \gamma}$ holds and, since~$G(\alpha)$ is infinite due to~${\alpha > 0}$, it is also guaranteed that~${o((\gapw_{\alpha} \oplus (-1 + \gamma))^*)}$ cannot be the sum of an~$\varepsilon$-number and a natural one. Hence, we derive~${o((\gapw_{\alpha} \oplus (-1 + \gamma))^*) \leq \omega^{\omega^{G(\alpha) \hess \gamma}}}$.
    
	In the third inequality, we used~${H(\delta) \leq \omega^{\omega^{G(\alpha) \hess \gamma}}}$ together with the fact that the ordinal~${\omega^{\omega^{G(\alpha) \hess \gamma}}}$ is of the form~${\omega^{\omega^{\zeta}}}$ for some~${\zeta}$. Moreover, we replaced~${G(\alpha) \hess \gamma}$ with~${G(\alpha) + \gamma}$ since~${\gamma \leq G(\alpha)}$ holds and~${G(\alpha)}$ is additively indecomposable. In the last two equalities, we used the fact that~$G(\alpha)$ is an~$\varepsilon$-number since~${\alpha > 0}$ holds.
\end{proof}

\subsection{Lower bounds}

In this part, we calculate all lower bounds of Theorem~\ref{thm:maximal_order_types}. This will be done by simply constructing quasi-embeddings from the claimed ordinals into the respective well partial orders.
We begin by defining notation for a variation of the Cantor normal form using the Veblen hierarchy (cf.~\cite[Section~3.4.3]{Pohlers}:

\begin{definition}[Veblen normal form]
	Given ordinals~${\alpha}$, with~${\gamma_0, \dots, \gamma_{n-1}}$, as well as~${\delta_0, \dots, \delta_{n-1}}$, we write
	\begin{equation*}
		\alpha \phinf \varphi_{\gamma_0}(\delta_0) + \dots + \varphi_{\gamma_{n-1}}(\delta_{n-1})
	\end{equation*}
	if~${\alpha}$ is equal to the right hand side and~${\varphi_{\gamma_i}(\delta_i) \geq \varphi_{\gamma_j}(\delta_j)}$ holds for all~${i < j < n}$ as well as~${\delta_i < \varphi_{\gamma_i}(\delta_i)}$ for all~${i < n}$. We may also abbreviate~${\alpha \phinf \varphi_{\gamma_0}(\delta_0) + \delta}$, where~${\delta}$ stands for the term~${\varphi_{\gamma_1}(\delta_1) + \dots + \varphi_{\gamma_{n-1}}(\delta_{n-1})}$.
\end{definition}
We will use the well-known result that any ordinal~${\alpha}$ can be written in such a normal form.
Now, we begin with the first embeddings into sequences ordered using the weak gap condition.
The following lemma corresponds to Proposition~\ref{prop:embed_varphi_into_weak_gap_sequence} (see also \cite{Gordeev}).
\begin{lemma}\label{lem:quasi-embedding_varphi_into_weak}
	Assume that there is a quasi-embedding~${f: \beta \to \gapw_{\rho} \setminus \{\langle \rangle\}}$. Then, we can construct a quasi-embedding~${g: \varphi_\alpha(\beta) \to \gapw_{\omega^\alpha + \rho}}$.
\end{lemma}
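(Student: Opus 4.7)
The plan is to directly generalise the construction in the proof of Proposition~\ref{prop:embed_varphi_into_weak_gap_sequence} by allowing an arbitrary second argument~$\beta$ and using~$f$ in the base case. Every $x < \varphi_{\alpha}(\beta)$ falls into exactly one of four cases: (i)~$x = 0$; (ii)~$x \phinf x_0 + x'$ with $x_0$ the leading indecomposable summand and $x'$ nonzero; (iii)~$x$ is indecomposable with Veblen normal form $x = \varphi_\gamma(\delta)$ satisfying $\gamma < \alpha$ (and automatically $\delta < x$); (iv)~$x$ is indecomposable whose Veblen normal form uses an exponent $\geq \alpha$, which forces $x$ to be a common fixed point of $\varphi_{\gamma'}$ for all $\gamma' < \alpha$, hence $x = \varphi_\alpha(\delta')$ for a unique $\delta'$, and $x < \varphi_\alpha(\beta)$ gives $\delta' < \beta$. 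Define
\begin{equation*}
	g(x) :=
	\begin{cases}
		\langle 0 \rangle & \text{if } x = 0,\\
		g(x_0) * \langle 0 \rangle * g(x') & \text{if } x \phinf x_0 + x',\\
		\omega^{\gamma} + g(\delta) & \text{if } x = \varphi_{\gamma}(\delta) \text{ with } \gamma < \alpha,\\
		\omega^{\alpha} + f(\delta') & \text{if } x = \varphi_{\alpha}(\delta') \text{ with } \delta' < \beta.
	\end{cases}
\end{equation*}
In case (iii) the shift $\omega^\gamma < \omega^\alpha$ gets absorbed, so labels of $g(x)$ remain in $\omega^\alpha + \rho$. A short induction shows that $g(x)$ is always nonempty, and that whenever $x$ is indecomposable (cases (iii)/(iv)) every label of $g(x)$ is strictly positive, while the only $0$-labels arise from cases (i) and the explicit separators in case (ii).

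The verification that $g$ reflects the order proceeds by induction on the total number of symbols of $x$ and $y$, with $g(x) \leq_\weak g(y)$ assumed. The case $y = 0$ forces $x = 0$ by the absence of $0$-labels in $g$ of a nonzero $x$. If $y$ is as in (ii), apply Lemma~\ref{lem:gap_split_weak_weak_or_strong} to write $g(x) = s_l * s_r$ with $s_l \leq_\weak g(y_0)$ and $s_r \leq_\weak \langle 0 \rangle * g(y')$, then distinguish (as in the proof of Proposition~\ref{prop:embed_varphi_into_weak_gap_sequence}) whether $s_l$ or $s_r$ is empty or both are nonempty; in the last situation $s_r$ must start with a $0$ (so that $0$ is an actual label in $g(x)$, forcing $x$ itself to be as in case (ii)), and the splitting of $g(x)$ mirrors the decomposition of $x$, allowing the induction hypothesis to be applied to $x_0, y_0$ and to $x', y'$. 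If instead $y$ is indecomposable and $x$ is decomposable, the absence of $0$-labels in $g(y)$ forces $g(x_0) \leq_\weak g(y)$, so by induction $x_0 \leq y < \varphi_\alpha(\beta)$; combined with the Veblen comparison rules from the definition of $\varphi(\alpha,0)$, this yields $x < y$.

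The heart of the argument is the case where both $x$ and $y$ are indecomposable. An analogue of Lemma~\ref{lem:left_addition_ATR} for the ordinals $\omega^\gamma, \omega^\alpha$ and sequences over $\omega^\alpha + \rho$ applies: if both $x, y$ fall into case (iii) with the same exponent $\gamma$ we strip the common prefix $\omega^\gamma$ and invoke the induction hypothesis on $\delta_x, \delta_y$; if the exponents differ, absorption forces the side with the smaller $\omega$-power to embed into the other without its prefix, which — again by the induction hypothesis — exactly reproduces the three-clause Veblen comparison. If both are in case (iv) the prefix $\omega^\alpha$ is shared and stripping it leaves $f(\delta'_x) \leq_\weak f(\delta'_y)$, whence $\delta'_x \leq \delta'_y$ since $f$ is a quasi-embedding, yielding $x \leq y$. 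The mixed cases ((iii) versus (iv)) are handled symmetrically: absorption gives $\omega^\alpha + f(\delta') \leq_\weak g(\delta)$ (or the reverse), and the induction hypothesis combined with the fact that Veblen normal forms with $\gamma < \alpha$ rule out fixed points of $\varphi_\gamma$ excludes equality, so strict inequality is obtained.

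The main obstacle is precisely the bookkeeping between cases (iii) and (iv), in particular the justification that indecomposables below $\varphi_\alpha(\beta)$ with Veblen exponent exceeding $\alpha$ are faithfully absorbed into case (iv) and that the induction strictly decreases the symbol complexity even though $g(\delta)$ in case (iii) may involve large $\delta$. These points are settled by the Veblen normal form inequality $\delta < \varphi_\gamma(\delta) = x$ and by the observation that any common fixed point of all $\varphi_{\gamma'}$ with $\gamma' < \alpha$ lies in the range of $\varphi_\alpha$; the remaining case analysis is a routine extension of the argument already carried out for Proposition~\ref{prop:embed_varphi_into_weak_gap_sequence}.
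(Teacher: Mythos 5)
Your construction of $g$ and the subsequent case analysis coincide with the paper's own proof: the same four-clause definition, the same use of Lemma~\ref{lem:gap_split_weak_weak_or_strong} and the left-addition/subtraction and head-removal lemmas, and the same treatment of the indecomposable cases via the three-clause Veblen comparison. Your explicit remark that indecomposables whose normal-form exponent exceeds $\alpha$ are fixed points of $\varphi_\alpha$ and hence still of the form $\varphi_\alpha(\delta')$ with $\delta'<\beta$ is a welcome clarification of a point the paper leaves implicit, but the argument is otherwise the same.
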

\begin{proof}
	We define~${g}$ as follows:
	\begin{equation*}
		g(\sigma) :=
		\begin{cases}
			\langle 0 \rangle & \text{if~${\sigma = 0}$,}\\
			g(\gamma) * \langle 0 \rangle * g(\delta) &
			\parbox[t]{19em}{
				if~${\sigma \phinf \gamma + \delta}$ where~${\gamma}$ is indecomposable and~${\delta}$ is positive,
			}\\
			\omega^{\gamma} + g(\delta) & \text{if } \sigma \phinf \varphi_\gamma(\delta) \text{ and } \gamma < \alpha\comma\\
			\omega^{\alpha} + f(\delta) & \text{if } \sigma \phinf \varphi_\alpha(\delta)\period
		\end{cases}
	\end{equation*}
	It can easily be seen that the range of~${g}$ lies within~${\gapw_{\omega^{\alpha} + \rho}}$.
	We prove that~${g}$ is a quasi-embedding by induction on Veblen normal forms. Let~${\sigma, \tau < \varphi_\alpha(\beta)}$ with~${g(\sigma) \leq_\weak g(\tau)}$. First, assume that~${\tau}$ is equal to~${0}$. Since~${f}$ and, therefore, also~${g}$ never map to empty sequences, this entails~${\sigma = 0}$. In the following, we assume that both~${\sigma}$ and~${\tau}$ are positive.
	
	First, assume that~${\sigma \phinf \gamma + \delta}$ and~${\tau \phinf \gamma' + \delta'}$ are decomposable. By Lemma~\ref{lem:gap_split_weak_weak_or_strong}, there are sequences~${s_l}$ and~${s_r}$ with~${s_l * s_r = g(\sigma)}$ such that both the inequalities~${{s_l \leq_\weak g(\gamma')}}$ and~${s_r \leq_\weak \langle 0 \rangle * g(\delta')}$ hold. If~${s_l}$ is empty, then the inequality~${g(\sigma) = s_r \leq_\weak \langle 0 \rangle * g(\delta')}$ even implies~${g(\sigma) \leq_\weak g(\delta')}$ since~${g(\gamma)}$ is nonempty and contains no zeros. The induction hypothesis yields~${\sigma \leq \delta' < \tau}$. If~${s_r}$ is empty, then we have~${g(\sigma) = s_l \leq_\weak g(\gamma')}$ and, by induction hypothesis,~${\sigma \leq \gamma' < \tau}$.
	Finally, if~${s_l}$ and~${s_r}$ are both nonempty, then Lemma~\ref{lem:gap_split_weak_weak_or_strong} tells us that already~${s_r \leq_\strong \langle 0 \rangle * g(\delta')}$ holds, i.e., with respect to the strong gap condition. This entails that~${s_r}$ must begin with~${0}$. Therefore,~${g(\gamma)}$ must lie within~${s_l}$, i.e.,~${g(\gamma) \leq_\weak s_l \leq_\weak g(\gamma')}$ holds and, by induction hypothesis,~${\gamma \leq \gamma'}$. If this relation is strict, we are finished. Otherwise, if~${\gamma = \gamma'}$ holds, then this yields~${g(\gamma) * \langle 0 \rangle = g(\gamma') * \langle 0 \rangle}$. Using Lemma~\ref{lem:gap_remove_head_weak}, we arrive at~${g(\delta) \leq_\weak g(\delta')}$ and, by induction hypothesis, at~${\delta \leq \delta'}$.
	
	Assume that~${\sigma \phinf \gamma + \delta}$ is decomposable but~${\tau \phinf \varphi_{\gamma'}(\delta')}$ is not. Using the inequalities~${g(\gamma) \leq_\weak g(\tau)}$ and~${g(\delta) \leq_\weak g(\tau)}$, we get both~${\gamma \leq \tau}$ and~${\delta \leq \tau}$. In particular, both must be strict as~$g(\gamma)$ and~$g(\delta)$ are proper subsequences of~$g(\sigma)$ by definition of~$g$. Since~${\varphi_{\gamma'}(\delta')}$ is closed under sums, we conclude~${\sigma \leq \tau}$.
	
	Assume that~${\sigma \phinf \varphi_\gamma(\delta)}$ is indecomposable but~${\tau \phinf \gamma' + \delta'}$ is decomposable. Similar to before, we find~${s_l}$ and~${s_r}$ such that~${s_l * s_r = g(\sigma)}$,~${s_l \leq_\weak g(\gamma')}$, and~${s_r \leq_\weak \langle 0 \rangle * g(\delta')}$ hold. If~${s_l}$ is empty, then~${g(\sigma) = s_r \leq_\weak \langle 0 \rangle * g(\delta')}$ yields the inequality~${g(\sigma) \leq_\weak g(\delta')}$ since~${g(\sigma)}$ does not contain any zeros. By induction hypothesis, we arrive at~${\sigma \leq \delta' < \tau}$. If~${s_r}$ is empty, then we have~${g(\sigma) = s_l \leq_\weak g(\gamma')}$ and, by induction hypothesis,~${\sigma \leq \gamma' < \tau}$.
	Finally, if both~${s_l}$ and~${s_r}$ are nonempty, then Lemma~\ref{lem:gap_split_weak_weak_or_strong} yields a strong gap condition for~${s_r \leq_\strong \langle 0 \rangle * g(\delta')}$, which entails that~${s_r}$ must begin with~${0}$. However,~${g(\sigma)}$ does not contain any zeros, which leads to a contradiction.
	
	In the following,~${\sigma \phinf \varphi_\gamma(\delta)}$ and~${\tau \phinf \varphi_{\gamma'}(\delta')}$ are both indecomposable. Assume that~${\gamma}$ and~${\gamma'}$ are both strictly less than~${\alpha}$. If~${\gamma < \gamma'}$ holds, then we have~${g(\delta) \leq_\weak \omega^{\gamma} + g(\delta) \leq_\weak g(\tau)}$. By induction hypothesis, this yields~${\delta \leq \tau}$ and, by definition of the Veblen hierarchy, we conclude~${\varphi_\gamma(\delta) \leq \varphi_{\gamma'}(\delta')}$ (we even have a strict inequality since we are working with normal forms). If~${\gamma = \gamma'}$ holds, then~${\omega^\gamma + g(\delta) \leq_\weak \omega^{\gamma'} + g(\delta')}$ implies~${g(\delta) \leq_\weak g(\delta')}$ by Lemma~\ref{lem:left_subtraction}~b). Our induction hypothesis yields~${\delta \leq \delta'}$ and we arrive at our claim. If~${\gamma > \gamma'}$, then the inequality~${\omega^{\gamma} + g(\delta) \leq_\weak \omega^{\gamma'} + g(\delta')}$ entails~${g(\sigma) = \omega^{\gamma} + g(\delta) \leq_\weak g(\delta')}$ by Lemma~\ref{lem:left_subtraction}~c). Combining this with our induction hypothesis produces~${\sigma \leq \delta' \leq \varphi_{\gamma'}(\delta')}$.
	
	If~${\gamma < \alpha}$ and~${\gamma' = \alpha}$ hold, then we have~${g(\delta) \leq_\weak \omega^{\gamma} + g(\delta) = g(\sigma) \leq_\weak g(\tau)}$. By induction hypothesis, this yields~${\delta \leq \tau}$ and, thus, our claim~${\varphi_\gamma(\delta) \leq \varphi_\alpha(\delta')}$. If~${\gamma = \alpha}$ and~${\gamma' < \alpha}$ hold, then~${\omega^{\alpha} + f(\delta) \leq_\weak \omega^{\gamma'} + g(\delta')}$ already leads us to the inequality~${g(\sigma) = \omega^{\alpha} + f(\delta) \leq_\weak g(\delta')}$ by Lemma~\ref{lem:left_subtraction}~c). The induction hypothesis then yields~${\sigma \leq \delta' \leq \tau}$. Finally, if~${\gamma = \gamma' = \alpha}$ holds, then~${g(\sigma) \leq_\weak g(\tau)}$ implies the inequality~${\omega^{\alpha} + f(\delta) \leq_\weak \omega^{\alpha} + f(\delta')}$ and, therefore,~${f(\delta) \leq_\weak f(\delta')}$ by Lemma~\ref{lem:left_subtraction}~b). Since~${f}$ is a quasi-embedding, we get~${\delta \leq \delta'}$, which results in our claim.
\end{proof}
We continue with a similar result for trees:
\begin{lemma}\label{lem:quasi-embedding:varphi_into_bta}
	Let~${f: \beta \to \bta_{\rho, 1} \setminus \{0 \star []\}}$ be a quasi-embedding. Then, we can construct a quasi-embedding~${g: \varphi_{1 + \alpha}(\beta) \to \bta_{\omega^\alpha + \rho, 1}}$.
\end{lemma}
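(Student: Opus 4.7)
The proof will parallel that of Lemma~\ref{lem:quasi-embedding_varphi_into_weak}, with $g$ defined recursively on Veblen normal forms and the order-reflection property verified by induction on combined normal-form size. The new feature exploited in the tree case is the binary branching of $\bta$, which provides the extra Veblen level accounting for the shift from $\varphi_\alpha$ to $\varphi_{1+\alpha}$.

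Concretely, I would set $g(0) := 0 \star []$; for decomposable $\sigma \phinf \gamma_0 + \delta$ (with $\gamma_0$ indecomposable and $\delta > 0$), $g(\sigma) := 0 \star [g(\gamma_0), g(\delta)]$; for indecomposable $\sigma \phinf \varphi_\mu(\delta)$ in normal form with $\mu < 1+\alpha$,
\begin{equation*}
    g(\sigma) := \omega^\mu \star [\omega^\mu + g(\delta), 0 \star []]\text{,}
\end{equation*}
where left addition shifts all inner labels of $g(\delta)$ up by $\omega^\mu$ so that the weakly ascending labels condition is met; and for $\sigma \phinf \varphi_{1+\alpha}(\delta)$ with $\delta < \beta$, I set $g(\sigma) := \omega^\alpha + f(\delta)$, which is non-trivial because $f(\delta)$ is not a leaf by hypothesis. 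To show that $g$ is a quasi-embedding, given $g(\sigma) \leq g(\tau)$, I would split cases first on whether $g(\sigma)$ is embedded into a proper subtree of $g(\tau)$ (rule~iv of the order on $\bta$) or directly at the root (rule~v), and then on the Veblen-normal-form shapes of $\sigma$ and $\tau$. The tree versions of Lemma~\ref{lem:left_subtraction} strip off the shifts by $\omega^\mu$ and $\omega^\alpha$, after which the inductive hypothesis yields $\sigma \leq \tau$ in each configuration, in close analogy with the case distinction carried out for the sequence case.

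The main obstacle I expect is the substantially enlarged case analysis: rule~iv opens up subtree-embedding possibilities without analogue in the sequence setting, and much of the effort lies in showing that if $g(\sigma)$ embeds into a subtree of $g(\tau)$, then a smaller inductive instance already forces $\sigma \leq \tau$. A secondary technical point is ensuring that the label $\omega^\mu$ always lies in $\omega^\alpha + \rho$; this holds unconditionally for $\mu < \alpha$, and for the remaining edge case (namely $\mu = \alpha$ when $\rho = 0$, which by the hypothesis forces $\beta = 0$) a small adjustment to the construction is needed, for instance substituting a strictly smaller label $\omega^{\mu'}$ with $\mu' < \alpha$ and compensating through additional tree depth along the left spine.
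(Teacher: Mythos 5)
Your overall strategy (recursive definition on Veblen normal forms, order reflection by induction with the tree version of Lemma~\ref{lem:left_subtraction}) is the right one and matches the paper. But the concrete label assignment is off by one Veblen level, and this breaks the lemma in exactly the instance it is needed for. The paper introduces \emph{no new node} for an application of $\varphi_{1+\gamma}$: it sets $g(\varphi_{1+\gamma}(\delta)) := \omega^\gamma + g(\delta)$, a pure shift of all inner labels by $\omega^\gamma$, and in the decomposable case $\sigma \phinf \tilde\gamma + \delta$ with $\tilde\gamma = \omega^\gamma$ it puts $g(\gamma)$ --- the \emph{logarithm} of the leading summand --- into the left subtree. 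That logarithm is the key idea: binary branching alone then realizes the entire $\varphi_0$-level (just as finite binary trees realize $\varepsilon_0$), which is where the shift from $\varphi_\alpha$ to $\varphi_{1+\alpha}$ actually comes from, and it keeps every inner label of the form $\omega^{\gamma_1} + \omega^{\gamma_2} + \cdots$ with all $\gamma_i < \alpha$, hence strictly below $\omega^\alpha$. Your construction instead spends a fresh label $\omega^\mu$ on every $\varphi_\mu$, including $\mu = 0$, so the branching buys you nothing and you need labels up to $\omega^\alpha$ itself when $\alpha$ is finite.

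You correctly identify the resulting failure ($\mu = \alpha$, $\rho = 0$, finite $\alpha$), but this is not a marginal edge case that a "small adjustment" can absorb: it is precisely the instance $\varphi_{n+1}(0) \to \bta_{\omega^n, 1}$ that Proposition~\ref{prop:lower_bound_bta} invokes to establish $F(\omega^n) \leq o(\bta_{\omega^n,1})$, i.e.\ one of the lower bounds the lemma exists to deliver. Your proposed patch --- replace $\omega^\alpha$ by some $\omega^{\mu'}$ with $\mu' < \alpha$ and "compensate through additional tree depth along the left spine" --- is not a proof: a smaller label conflates $\varphi_\alpha$ with $\varphi_{\mu'}$ unless the extra depth is designed to disambiguate them, and designing it amounts to rediscovering the paper's logarithm trick. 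The remaining parts of your sketch (the case split on subtree-embedding versus root-matching, stripping shifts with Lemma~\ref{lem:left_subtraction}, arguing as in Lemma~\ref{lem:quasi-embedding_varphi_into_weak}) are sound in outline, so the fix is to replace your second and third clauses by the paper's: $g(\sigma) := 0 \star [g(\gamma), g(\delta)]$ for $\sigma \phinf \tilde\gamma + \delta$ with $\tilde\gamma = \omega^\gamma$, and $g(\sigma) := \omega^\gamma + g(\delta)$ for $\sigma \phinf \varphi_{1+\gamma}(\delta)$ with $\gamma < \alpha$, after which no label ever reaches $\omega^\alpha$ and no edge case arises.
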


\begin{proof}
	We define\footnote{Notice the slightly cumbersome notation in the second case. While one might be tempted to simply write~``$\sigma \phinf \omega^\gamma + \delta$'', this would exclude the cases in which~$\gamma$ is an~$\varepsilon$-number.}
	\begin{equation*}
		g(\sigma) :=
		\begin{cases}
			0 \star [0 \star [], 0 \star []] &\text{if~${\sigma = 0}$,}\\
			0 \star [g(\gamma), g(\delta)] &\text{if~${\sigma \phinf \tilde{\gamma} + \delta}$ with~${\tilde{\gamma} = \omega^\gamma}$ and~${\delta > 0}$,}\\
			\omega^\gamma + g(\delta) &\text{if~${\sigma \phinf \varphi_{1 + \gamma}(\delta)}$ and~${\gamma < \alpha}$,}\\
			\omega^\alpha + f(\delta) &\text{if~${\sigma \phinf \varphi_{1 + \alpha}(\delta)}$.}
		\end{cases}
	\end{equation*}
	We argue that~${g}$ only produces valid trees, i.e., with weakly ascending labels: Clearly, if~${\sigma = 0}$ holds, this is true. For~${\sigma \phinf \tilde{\gamma} + \delta}$, this follows by induction. Finally, in the case of~${\sigma \phinf \varphi_{1 + \gamma}(\delta)}$ our claim holds for~${g(\delta)}$ (and~${f(\delta)}$ in the case~${\gamma = \alpha}$). We know that this property stays intact even after adding~${\omega^{\gamma}}$ from the left.

	We prove that~${g}$ is a quasi-embedding by showing that~${g(\sigma) \leq g(\tau)}$ implies~${\sigma \leq \tau}$ using induction along Veblen normal forms:
	
	First, assume that~${\tau = 0}$ holds. Clearly,~${\sigma}$ cannot be of the form~${\sigma \phinf \tilde{\gamma} + \delta}$ with~${\tilde{\gamma} = \omega^\gamma}$ as this would lead to the contradiction~${0 \star [g(\gamma), g(\delta)] \leq 0 \star [0 \star [], 0 \star []]}$. Now, assume that~${\sigma \phinf \varphi_{1 + \gamma}(\delta)}$ holds. Using a simple induction, it is clear that any node in the range of~${g}$ has at least one inner node. By definition, the same holds for~${f}$. Thus,~${g(\sigma)}$ must have a positive inner node and can, therefore, not be embedded into~${g(\tau) = 0 \star [0 \star [], 0 \star []]}$.
	
	Assume~${\tau \phinf \tilde{\gamma}' + \delta'}$ with~${\tilde{\gamma}' = \omega^{\gamma'}}$. If~${g(\sigma) \leq \tau}$ holds because of~${g(\sigma) \leq g(\gamma')}$ (or~${g(\sigma) \leq g(\delta')}$), then our induction hypothesis yields~${\sigma \leq \gamma' \leq \tau}$ (or~${\sigma \leq \delta' \leq \tau}$). Therefore, assume that the root label of~${g(\sigma)}$ is equal to zero and the left and right subtree of~${g(\sigma)}$ embed into the left and right subtree of~${g(\tau)}$, respectively. By previous considerations, this means that~${\sigma}$ is of the form~${\sigma \phinf \tilde{\gamma} + \delta}$ with~${\tilde{\gamma} = \omega^\gamma}$ and we have both~${g(\gamma) \leq g(\gamma')}$ and~${g(\delta) \leq g(\delta')}$. By induction hypothesis, this yields~${\gamma \leq \gamma'}$ and~${\delta \leq \delta'}$. We conclude~${\sigma \leq \tau}$.
	
	Assume~${\tau \phinf \varphi_{1 + \gamma'}(\delta')}$ with~${\gamma' < \alpha}$. If~${\sigma \phinf \tilde{\gamma} + \delta}$ holds with~${\tilde{\gamma} = \omega^\gamma}$, then we have~${g(\gamma), g(\delta) \leq g(\sigma) \leq g(\tau)}$. By induction hypothesis, this yields~${\gamma, \delta \leq \tau}$ and also~${\tilde{\gamma} \leq \tau}$. In particular, these inequalities are strict. Now, since~${\tau}$ is indecomposable, our claim follows. If~${\sigma \phinf \varphi_{1 + \gamma}(\delta)}$ holds with~${\gamma < \alpha}$, there are three cases. If~${\gamma < \gamma'}$, then~${g(\delta) \leq g(\sigma) \leq g(\tau)}$ together with our induction hypothesis yields~${\delta \leq \tau}$ and, thus, our claim. If~${\gamma = \gamma'}$ holds, then~${\omega^{\gamma} + g(\delta) \leq \omega^{\gamma} + g(\delta')}$ entails~${\delta \leq \delta'}$ by Lemma~\ref{lem:left_subtraction}~b) and, hence, our claim. If~${\gamma > \gamma'}$ holds, then the inequality~${\omega^{\gamma} + g(\delta) \leq \omega^{\gamma'} + g(\delta')}$ implies~${\omega^{\gamma} + g(\delta) \leq g(\delta')}$ by Lemma~\ref{lem:left_subtraction}~c) and, therefore, our claim. For the same reason, if~${\sigma \phinf \varphi_{1 + \alpha}(\delta)}$ holds, then the inequality~${\omega^{\alpha} + f(\delta) \leq \omega^{\gamma'} + g(\delta')}$ entails~${\omega^{\alpha} + f(\delta) \leq g(\delta')}$ and we arrive at our claim.
	
	Assume~${\tau \phinf \varphi_{1 + \alpha}(\delta')}$. If~${\sigma \phinf \tilde{\gamma} + \delta}$ holds with~${\tilde{\gamma} = \omega^\gamma}$, then our argument is as in the previous paragraph. If~${\sigma \phinf \varphi_{1 + \gamma}(\delta)}$ holds for~${\gamma < \alpha}$, then~${g(\delta) \leq g(\sigma) \leq g(\tau)}$ leads to our claim using the induction hypothesis. Finally, if we have~${\sigma \phinf \varphi_{1 + \alpha}(\delta)}$, then~${g(\sigma) \leq g(\tau)}$ entails~${f(\delta) \leq f(\delta')}$ by Lemma~\ref{lem:left_subtraction}~b). Since~${f}$ is a quasi-embedding, this leads to~${\delta \leq \delta'}$ and, hence, our claim.
\end{proof}
In the case where~${\alpha}$ is a~${\Gamma}$-ordinal\footnote{By this, we mean an ordinal~$\alpha$ satisfying~${\varphi_\alpha(0) = \alpha}$.}, we can not only embed~${\varphi_\alpha(0)}$ into~${\gapw_\alpha}$ but even~${\varphi_\alpha(1)}$. This stems from the fact that the Veblen hierarchy has fixed points whereas the order types of~${\gapw_\alpha}$ do not.
\begin{lemma}\label{lem:quasi-embedding_varphi_1_into_weak}
	Let~${\alpha}$ be a~${\Gamma}$-ordinal. Then, there exists a map~${f: \varphi_\alpha(1) \to \gapw_{\alpha}}$ that is a quasi-embedding.
\end{lemma}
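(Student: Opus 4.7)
The plan is to obtain the quasi-embedding by iterating Lemma~\ref{lem:quasi-embedding_varphi_into_weak} twice along a cofinal sequence in~$\alpha$, with a manual extension step in between. Since~$\alpha$ is a~$\Gamma$-ordinal we have $\alpha = \omega^\alpha = \varphi_\alpha(0)$ and $\alpha$ is a limit; fix a strictly increasing cofinal sequence $(\alpha_n)_{n<\omega}$ of positive ordinals below~$\alpha$. (For uncountable cofinality the construction is analogous.) Crucially,~$\alpha$ is additively indecomposable, so $\omega^{\alpha_n} + \rho = \alpha$ whenever $\rho = \alpha$ and $\alpha_n < \alpha$, which is the identity that keeps the codomain alphabet from growing past~$\alpha$.

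First, I would iterate Lemma~\ref{lem:quasi-embedding_varphi_into_weak} starting from the trivial quasi-embedding $1 \to \gapw_1$ sending $0 \mapsto \langle 0 \rangle$: at step~$n$, an application with main ordinal~$\alpha_n$, source~$\beta_n$ and alphabet~$\rho_n$ produces a quasi-embedding $\varphi_{\alpha_n}(\beta_n) \to \gapw_{\omega^{\alpha_n} + \rho_n}$. By additive indecomposability, $\rho_n$ stays strictly below~$\alpha$. Because the construction in Lemma~\ref{lem:quasi-embedding_varphi_into_weak} is purely recursive on the Veblen normal form and reuses the previous map unchanged for inputs that do not involve the new~$\varphi_{\alpha_n}$, the maps at successive steps agree on the overlap of their domains, and their union is a single quasi-embedding $g : \alpha \to \gapw_\alpha$, since $\sup_n \beta_n = \varphi_\alpha(0) = \alpha$. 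Fourth (and dually), once a quasi-embedding $g' : \alpha + 1 \to \gapw_\alpha$ is available, the same iteration scheme applied with base $(\beta,\rho) = (\alpha + 1, \alpha)$ and the cofinal sequence $(\alpha_n)$ produces compatible quasi-embeddings $\varphi_{\alpha_n}(\alpha + 1) \to \gapw_{\omega^{\alpha_n} + \alpha} = \gapw_\alpha$, whose union is a quasi-embedding from $\sup_n \varphi_{\alpha_n}(\alpha + 1) = \varphi_\alpha(1)$ into $\gapw_\alpha$; the supremum equals $\varphi_\alpha(1)$ because this is, by definition, the least common fixed point of $\{\varphi_\beta : \beta < \alpha\}$ strictly above~$\alpha$.

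The main obstacle is the middle step: extending~$g$ to~$g'$ by specifying $g'(\alpha) \in \gapw_\alpha$ with $g'(\alpha) \not\leq_\weak g(\sigma)$ for every~$\sigma < \alpha$. A uniform choice such as $g'(\alpha) = \langle \omega^{\alpha_0} \rangle$ is hopeless because the letters appearing in the images $g(\sigma)$ for $\sigma < \alpha$ are cofinal in~$\alpha$. Instead, I would choose $g'(\alpha)$ using a structural invariant: analyse the recursive shape of $g(\sigma)$, which is built by alternations of the operators $s \mapsto \omega^{\alpha_n} + s$ (indecomposable case) and $(s,t) \mapsto s * \langle 0 \rangle * t$ (decomposable case) strictly following the Veblen normal form of~$\sigma$; this forces specific patterns on how letters at different ``levels'' can interleave. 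Then pick~$g'(\alpha)$ as a finite sequence whose letter pattern violates these invariants for every $\sigma < \alpha$, for instance a short sequence of the form $\langle \omega^{\alpha_0}, \omega^{\alpha_1}, 0, \omega^{\alpha_0} \rangle$ (or a suitable variant) which cannot occur as a weak sub-sequence of any image of the recursive construction. Verifying this incompatibility, together with the case distinctions required by the two subsequent iteration steps, is the technical core of the proof; the remaining verifications proceed exactly as in Lemma~\ref{lem:quasi-embedding_varphi_into_weak}, using Lemmas~\ref{lem:gap_split_weak_weak_or_strong}, \ref{lem:gap_remove_head_weak}, and~\ref{lem:left_subtraction}.
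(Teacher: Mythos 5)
Your overall architecture (embed $\alpha$, adjoin one new point for $\alpha$ itself, then recurse upward to reach $\varphi_\alpha(1)$) is workable in principle, but the proposal leaves its self-identified ``technical core'' unproved, and the candidate you offer for it fails. If $g$ is built by iterating Lemma~\ref{lem:quasi-embedding_varphi_into_weak} as you describe, then with $\tau := \varphi_{\alpha_0}(0) + \varphi_{\alpha_0}(0)$ and $\sigma := \varphi_{\alpha_1}(\tau) + \varphi_{\alpha_0}(0) < \alpha$ one computes
\[
g(\sigma) = \langle\, \omega^{\alpha_1}+\omega^{\alpha_0},\ \omega^{\alpha_1},\ \omega^{\alpha_1}+\omega^{\alpha_0},\ 0,\ \omega^{\alpha_0}\,\rangle\comma
\]
and your candidate $\langle \omega^{\alpha_0}, \omega^{\alpha_1}, 0, \omega^{\alpha_0}\rangle$ embeds weakly into this via the positions $0,1,3,4$: the only inner gap sits directly above the letter $0$, so the weak gap condition is vacuous there. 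The images $g(\sigma)$ for $\sigma<\alpha$ are so rich (arbitrarily long, with letters cofinal in $\alpha$) that finding \emph{any} finite sequence avoiding all of them would require isolating and proving a genuine structural invariant of the range of $g$ --- exactly the work you defer. There is a second gap: Lemma~\ref{lem:quasi-embedding_varphi_into_weak} cannot be applied as a black box with $\beta = \alpha+1$, because $\alpha = \varphi_\alpha(0)$ is a common fixed point of every $\varphi_{\alpha_n}$, so $\sigma = \alpha$ (and every $\sigma < \varphi_{\alpha_n}(\alpha+1)$ whose Veblen normal form contains $\varphi_\alpha(0)$ as a summand or nested argument) falls under none of the four clauses of that lemma's construction when the main ordinal is $\alpha_n < \alpha$. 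You would have to rewrite the recursion so that such subterms are routed through $g'$ and reprove order reflection for the modified map; the compatibility of the maps across $n$, needed for both of your unions, is likewise asserted rather than checked.

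The paper dissolves the hard step instead of solving it: it sets $f(\sigma) := \langle \sigma \rangle$ for $\sigma < \alpha$ (single letters, not an iterated construction), $f(\alpha) := \langle 1,1\rangle$, and only recurses on Veblen normal forms for $\sigma > \alpha$, where $\alpha = \varphi_\alpha(0)$ guarantees that every $\sigma \phinf \varphi_\gamma(\delta) > \alpha$ has $\gamma < \alpha$ and $\delta > \alpha$, so the recursion never revisits the region below $\alpha$ except through these base cases. With single-letter images below $\alpha$, your extension problem becomes trivial: a sequence of length two never weakly embeds into a sequence of length one. If you want to rescue your plan, replace the iterated $g$ on $\alpha$ by this trivial embedding; as it stands, the argument has a genuine hole at its central step.
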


\begin{proof}
	We define~${f}$ as follows:
	\begin{equation*}
		f(\sigma) :=
		\begin{cases}
			\langle \sigma \rangle & \text{if } \sigma < \alpha\comma\\
			\langle 1, 1 \rangle & \text{if } \sigma = \alpha\comma\\
			f(\gamma) * \langle 0 \rangle * f(\delta) & \parbox[t]{15em}{if~${\sigma \phinf \gamma + \delta > \alpha}$ where~${\gamma}$ is indecomposable and~${\delta}$ is positive,}\\
			\omega^{\gamma} + f(\delta) & \text{if } \sigma \phinf \varphi_\gamma(\delta) > \alpha\period
		\end{cases}
	\end{equation*}
	We verify that the range of~${f}$ lies within~${\gapw_{\alpha}}$. Since~${\alpha}$ is a~${\Gamma}$-ordinal, we can use the fact~${\alpha = \varphi_\alpha(0)}$. The case for~${\sigma < \alpha}$ is trivial. Also,~${1 < \varphi_\alpha(0)}$ clearly holds. If~${\sigma}$ is decomposable, we apply induction and the fact~${0 < \varphi_\alpha(0)}$. Finally, if~${\sigma \phinf \varphi_\gamma(\delta) > \alpha}$, we know that~${\gamma}$ must be strictly smaller than~${\alpha}$: Assume that it is equal to~${\alpha}$. Then,~${\delta = 0}$ must hold in order to satisfy~${\sigma < \varphi_\alpha(1)}$. However,~${\sigma = \varphi_\alpha(0)}$ contradicts~${\sigma > \alpha}$. Now, left-addition of~${\omega^{\gamma}}$ for~${\gamma < \alpha}$ to any element in~${\alpha}$ results, again, in an element in~${\alpha}$ since, as a~${\Gamma}$-ordinal,~${\alpha}$ must be indecomposable.
	
	We prove that~${f}$ is a quasi-embedding. Let~${\sigma, \tau \in \varphi_\alpha(1)}$ be such that~${f(\sigma) \leq f(\tau)}$ holds. Using induction along Veblen normal forms, we prove that this implies~${\sigma \leq \tau}$. If both~${\sigma}$ and~${\tau}$ strictly lie below~${\alpha}$, then~${\langle \sigma \rangle = f(\sigma) \leq f(\tau) = \langle \tau \rangle}$ immediately results in our claim. Assume for contradiction that~${\sigma \geq \alpha}$ and~${\tau < \alpha}$ hold. Now,~${\sigma = \alpha}$ cannot hold since~${\langle 1, 1 \rangle}$ can never be less than or equal to~${\langle \sigma \rangle}$. If~${\sigma \phinf \gamma + \delta > \alpha}$ holds, then~${f(\sigma)}$ has at least a length of~${3}$. Thus, for the same reason as before,~${f(\sigma)}$ can never be less than or equal to~${f(\tau)}$. Finally, if~${\sigma \phinf \varphi_\gamma(\delta) > \alpha}$ holds, then~${\gamma}$ must be strictly below~${\alpha}$ (as we have seen in the previous paragraph). Now, in order to satisfy~${\varphi_\gamma(\delta) > \alpha = \varphi_\alpha(0)}$, already~${\delta}$ must be greater than~${\alpha}$. Thus,~${f(\delta)}$ cannot be less than or equal to~${f(\tau)}$. The assumption~${f(\sigma) \leq f(\tau)}$ leads then to a contradiction with~${f(\delta) \leq \omega^{\gamma} + f(\delta) \leq f(\tau)}$.
	
	Assume for contradiction that~${\sigma > \alpha}$ and~${\tau = \alpha}$ hold. If~${\sigma \phinf \gamma + \delta}$ is decomposable, then~${\gamma}$ must be greater than~${\alpha}$. If~${\gamma = \alpha}$ holds, then~${f(\sigma)}$ clearly has more than~${2}$ members. If~${\gamma > \alpha}$ holds, then we apply our induction hypothesis to~${\gamma}$ and yield the contradiction~${f(\gamma) \leq f(\sigma) \leq f(\tau)}$. If~${\sigma \phinf \varphi_\gamma(\delta)}$ is not decomposable, then (using the same argument as in the previous paragraph),~${\delta}$ must be greater than or equal to~${\alpha}$. With~${\delta > \alpha}$, we apply the induction hypothesis to~${\delta}$ and arrive at the contradiction~${f(\delta) < f(\sigma) \leq f(\tau)}$.
	Finally, for the case where both~${\sigma}$ and~${\tau}$ are strictly above~${\alpha}$, we simply reuse the reasoning from Lemma~\ref{lem:quasi-embedding_varphi_into_weak}.
\end{proof}
Similar to before, this result can be transferred to trees:
\begin{corollary}\label{cor:quasi-embedding_varphi_into_bta_gamma}
	Let~${\alpha}$ be a~${\Gamma}$-ordinal. Then, there exists a map~${\varphi_\alpha(1) \to \bta_{\alpha, 1}}$ that is a quasi-embedding.
\end{corollary}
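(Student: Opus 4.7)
The plan is to obtain the required quasi-embedding as a composition of maps already available from earlier lemmas, rather than constructing one from scratch. Since $\alpha$ is a $\Gamma$-ordinal, Lemma~\ref{lem:quasi-embedding_varphi_1_into_weak} supplies a quasi-embedding $f\colon \varphi_\alpha(1) \to \gapw_\alpha$. Composing $f$ with the quasi-embedding $\gapw_\alpha \to \btal_{\alpha,1}$ from Lemma~\ref{lem:embed_weak_into_btal} yields a quasi-embedding into $\btal_{\alpha,1}$, and the inclusion $\btal_{\alpha,1} \hookrightarrow \bta_{\alpha,1}$ is itself a quasi-embedding since $\btal_{\alpha,1}$ is a suborder of $\bta_{\alpha,1}$. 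Composing all three gives the desired map.

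The only fact to invoke is that compositions of quasi-embeddings are quasi-embeddings, which follows directly from the definition: if each map is order-reflecting, so is their composition. There is no real obstacle here; the result is essentially a bookkeeping corollary of previously established embeddings.

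If one instead preferred a direct construction paralleling Lemma~\ref{lem:quasi-embedding_varphi_1_into_weak}, one could define $g\colon \varphi_\alpha(1) \to \bta_{\alpha,1}$ by Veblen normal form, using $0 \star [g(\gamma), g(\delta)]$ for decomposable arguments, $\omega^\gamma + g(\delta)$ for indecomposable ones with $\gamma < \alpha$, and a suitable tree encoding of the two base cases $\sigma < \alpha$ and $\sigma = \alpha$ (playing the roles of $\langle \sigma \rangle$ and $\langle 1, 1 \rangle$). The verification would then follow the pattern of Lemma~\ref{lem:quasi-embedding:varphi_into_bta} combined with the case analysis of Lemma~\ref{lem:quasi-embedding_varphi_1_into_weak}. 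But composition is much shorter, so that is the route I would take.
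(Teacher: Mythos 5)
Your proposal is correct and matches the paper's own proof exactly: the paper also composes the quasi-embedding $\varphi_\alpha(1) \to \gapw_\alpha$ from Lemma~\ref{lem:quasi-embedding_varphi_1_into_weak} with the quasi-embedding $\gapw_\alpha \to \btal_{\alpha,1} \subseteq \bta_{\alpha,1}$ from Lemma~\ref{lem:embed_weak_into_btal}. Nothing is missing.
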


\begin{proof}
	By Lemma~\ref{lem:embed_weak_into_btal}, there is a quasi-embedding~${\gapw_{\alpha} \to \bta_{\alpha, 1}}$. Thus, we can reuse the result from Lemma~\ref{lem:quasi-embedding_varphi_1_into_weak}.
\end{proof}
At this point, most of the work for the lower bounds of~${\gapw_\alpha}$,~${\btal_{\alpha, 1}}$, and~${\bta_{\alpha, 1}}$ is done. For those of the former two, the cases where~${\alpha}$ is a natural number, however, require further inspection. We introduce a new order on sequences that is quite similar to the order by Higman. The only difference is a new fourth case:
\begin{definition}
	Let~${X}$ be a partial order. We define~${X^\bullet}$ to be the relation on (the underlying set of)~${X^*}$ that is the smallest relation satisfying the following properties:
	\begin{enumerate}[label=\roman*)]
		\item ${\langle \rangle \leq s}$ for any~${s \in X^\bullet}$,
		\item ${\langle x \rangle * s \leq \langle y \rangle * t}$ for any~${x, y \in X}$ with~${x \leq y}$ and~${s, t \in X^\bullet}$ with~${s \leq t}$,
		\item ${s \leq \langle y \rangle * t}$ for any~${y \in X}$ and~${s, t \in X^\bullet}$ with~${s \leq t}$,
		\item ${s_l * s_r \leq \langle y \rangle * t}$ for any~${y \in X}$ and~${s_l, s_r, t \in X^\bullet}$ with~${(s_l)_i < y}$ for all~${i < |s_l|}$ and~${s_r \leq t}$.
	\end{enumerate}
\end{definition}
Using the gap condition, we want to quasi-embed~${\alpha^*}$ into~${\gapw_\beta}$ for large~${\alpha}$ and small~$\beta$. However, it seems that for any such quasi-embedding, the partial order on~${\alpha^*}$ is too strict. We will see this in the proof of Lemma~\ref{lem:quasi-embedding:alpha_dot_into_weak}. The additional comparisons that we need in our proof are given by the new case iv) in the definition of~${\alpha^\bullet}$. Later, in Remark~\ref{rem:cannot_embed_omega_star_into_gapw_2}, we show that, while Lemma~\ref{lem:quasi-embedding:alpha_dot_into_weak} yields a quasi-embedding~${\omega^\bullet \to \gapw_2}$, there cannot be a quasi-embedding~${\omega^* \to \gapw_2}$.

Now, we prove that our new partial order is transitive. For this, we use the following lemma:
\begin{lemma}\label{lem:split_dot}
	Let~${X}$ be a partial order. For any~${s, t_l, t_r \in X^\bullet}$ with~${s \leq t_l * t_r}$, we can find sequences~${s_l}$ and~${s_r}$ with~${s = s_l * s_r}$ such that~${s_l \leq t_l}$ and~${s_r \leq t_r}$ hold.
\end{lemma}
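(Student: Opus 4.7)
The plan is to proceed by induction on the length of~$t_l$ (with an inner case analysis on the last clause of the definition of~${\leq}$ applied to derive~${s \leq t_l * t_r}$). The base case~${t_l = \langle \rangle}$ is immediate: take~${s_l := \langle \rangle}$ and~${s_r := s}$, so that~${s_l \leq t_l}$ holds by clause~i) and~${s_r = s \leq t_r}$ by assumption.

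For the inductive step, write~${t_l = \langle y \rangle * t'_l}$ and distinguish which clause was used to obtain~${s \leq \langle y \rangle * t'_l * t_r}$. If clause~i) was used, then~${s = \langle \rangle}$ and both halves are empty. In the remaining three clauses ii), iii), iv) the premise contains an inequality of the form~${u \leq t'_l * t_r}$ (with~${u}$ equal to~${s'}$,~${s}$, or~${\tilde s_r}$, respectively, where I write~${s = \langle x \rangle * s'}$ in case~ii) and~${s = \tilde s_l * \tilde s_r}$ in case~iv)). Since~${|t'_l| < |t_l|}$, the induction hypothesis applies and yields a split~${u = u_l * u_r}$ with~${u_l \leq t'_l}$ and~${u_r \leq t_r}$.

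It remains to reassemble~$s$ into~$s_l * s_r$ so that the left half inherits a relation to~${t_l = \langle y \rangle * t'_l}$. In case~ii) I set~${s_l := \langle x \rangle * u_l}$ and~${s_r := u_r}$; since~${x \leq y}$ and~${u_l \leq t'_l}$, clause~ii) gives~${s_l \leq t_l}$. In case~iii), I set~${s_l := u_l}$ and~${s_r := u_r}$; clause~iii) applied to~${u_l \leq t'_l}$ yields~${s_l \leq \langle y \rangle * t'_l = t_l}$. In case~iv), I set~${s_l := \tilde s_l * u_l}$ and~${s_r := u_r}$; since the first~${|\tilde s_l|}$ members of~${s_l}$ are exactly those of~${\tilde s_l}$, which are strictly below~$y$ by hypothesis of clause~iv), and~${u_l \leq t'_l}$, clause~iv) again yields~${s_l \leq t_l}$.

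The whole argument is a routine derivation-chase, and the only real subtlety is that clause~iv) is tailor-made to survive this splitting: the new head~$\tilde s_l$ of the left half is composed of elements strictly below the head~$y$ of~$t_l$, which is precisely the side condition in clause~iv). There is no genuine obstacle beyond correctly matching the induction hypothesis with the appropriate rebuilding in each of the four cases.
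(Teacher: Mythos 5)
Your proposal is correct and matches the paper's own proof essentially step for step: induction on the length of~$t_l$, case analysis on the clause producing~${s \leq \langle y \rangle * t'_l * t_r}$, and the same reassembly of the left half in each case (in particular prepending~$\tilde s_l$ in the clause~iv) case, exactly as the paper does). Nothing further is needed.
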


\begin{proof}
	We proceed by induction on the length of~${t_l}$. If~${t_l}$ is the empty sequence, then we simply choose~${s_l := \langle \rangle}$ and~${s_r := s}$. In the following, we assume that~${t_l = \langle y \rangle * t'_l}$ holds for some element~${y \in X}$ and a sequence~${t'_l}$.
	
	If~${s \leq t_l * t_r}$ holds because of i), we have~${s = \langle \rangle}$ and are immediately finished. If it holds because of ii), then~${s = \langle x \rangle * s'}$ holds for some element~${x \in X}$ and a sequence~${s'}$ together with~${x \leq y}$ and~${s' \leq t'_l * t_r}$. By induction hypothesis, we find~${s'_l}$ and~${s'_r}$ with~${s' = s'_l * s'_r}$ and both~${s'_l \leq t'_l}$ and~${s'_r \leq t_r}$. We choose~${s_l := \langle x \rangle * s'_l}$ and~${s_r := s'_r}$. Now,~${s_l \leq t_l}$ holds by ii) and~${s_r \leq t_r}$ holds by definition.
	
	If~${s \leq t_l * t_r}$ holds because of iii), then we have~${s \leq t'_l * t_r}$. By induction hypothesis, we find~${s_l}$ and~${s_r}$ with~${s = s_l * s_r}$ and both~${s_l \leq t'_l}$ and~${s_r \leq t_r}$. Now,~${s_l \leq t_l}$ holds by iii) and we are finished.
	
	Finally, if~${s \leq t_l * t_r}$ holds because of iv), then there are sequences~${s'_l}$ and~${s'_r}$ with~${s = s'_l * s'_r}$ such that we have~${(s'_l)_i < y}$ for all~${i < |s'_l|}$ together with~${s'_r \leq t'_l * t_r}$. By induction hypothesis, we find sequences~${s'_{rl}}$ and~${s'_{rr}}$ with~${s'_r = s'_{rl} * s'_{rr}}$ such that both~${s'_{rl} \leq t'_l}$ and~${s'_{rr} \leq t_r}$ hold. We set~${s_l := s'_l * s'_{rl}}$ and~${s_r := s'_{rr}}$. Now,~${s_l \leq t_l}$ holds by iv) and~${s_r \leq t_r}$ holds by definition.
\end{proof}
Next, we prove transitivity:
\begin{lemma}
	Let~${X}$ be a partial order. Then,~${X^\bullet}$ is transitive.
\end{lemma}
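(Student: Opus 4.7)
The plan is to prove transitivity by induction on $|u|$: given $s \leq t$ and $t \leq u$ in $X^\bullet$, I want to derive $s \leq u$. The outer case analysis is on the clause justifying $t \leq u$; in the two interesting outer cases I will subdivide further on the clause justifying $s \leq t$.

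First I would dispose of the easy outer cases. If $t \leq u$ uses clause i), then $t = \langle\rangle$, which forces $s = \langle\rangle$, and clause i) yields $s \leq u$. If it uses clause iii), so $u = \langle z\rangle * u'$ with $t \leq u'$, the inductive hypothesis on $(s, t, u')$ (noting $|u'| < |u|$) gives $s \leq u'$, and clause iii) concludes.

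Next, when $t \leq u$ uses clause ii), write $u = \langle z\rangle * u'$ and $t = \langle y\rangle * t'$ with $y \leq z$ and $t' \leq u'$. I would split on the derivation of $s \leq t$: clause i) is immediate, clause ii) gives $s = \langle x\rangle * s'$ with $x \leq y \leq z$ and $s' \leq t'$, clause iii) gives $s \leq t'$ directly, and clause iv) gives $s = s_l * s_r$ with $(s_l)_i < y \leq z$ and $s_r \leq t'$. In each subcase the inductive hypothesis applied at $u'$ supplies the recursive comparison, and the same clause (ii, iii, or iv) reassembles it into $s \leq u$.

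The main obstacle is the outer case where $t \leq u$ uses clause iv): then $t = t_l * t_r$ with every entry of $t_l$ strictly below $z$ and $t_r \leq u'$. Here I plan to invoke Lemma~\ref{lem:split_dot} on $s \leq t_l * t_r$ to obtain $s = s_l * s_r$ with $s_l \leq t_l$ and $s_r \leq t_r$; the inductive hypothesis yields $s_r \leq u'$, and clause iv) will finish provided every entry of $s_l$ lies strictly below $z$. This pointwise bound will come from an auxiliary observation, which I would establish independently by a short induction on derivations: if $a \leq b$ in $X^\bullet$ and every entry of $b$ is strictly below some fixed $z \in X$, then the same holds for $a$. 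In each clause the step uses only that $x \leq y$ together with $y < z$ forces $x < z$ in $X$, together with the inductive hypothesis on the recursive premise. Since this auxiliary lemma does not invoke transitivity of $X^\bullet$, there is no circularity, and the main induction on $|u|$ goes through.
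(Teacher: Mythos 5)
Your proof is correct and follows essentially the same route as the paper's: a case analysis on the clause justifying $t \leq u$, with Lemma~\ref{lem:split_dot} handling the clause~iv) case and the same auxiliary pointwise-bound observation (elements of $s_l$ inherit the strict bound from $t_l$) closing the argument. The only cosmetic difference is that you induct on $|u|$ alone rather than on $|s|+|t|+|u|$, which works since every recursive appeal is made at the proper tail $u'$.
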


\begin{proof}
	Let~${s, t, u \in X^\bullet}$ with~${s \leq t}$ and~${t \leq u}$. We prove by induction on~${|s| + |t| + |u|}$ that this entails~${s \leq u}$. We omit all cases in which both~${s \leq t}$ and~${t \leq u}$ hold because of i)-iii) as these cases are already contained in the usual transitivity proof of the order~${X^*}$.
	
	Assume that~${s \leq t}$ holds and~${t \leq u}$ holds because of iv). We have~${t = t_l * t_r}$ and~${u = \langle z \rangle * u'}$ for sequences~${t_l, t_r, u' \in X^\bullet}$ and~${z \in X}$ such that~${(t_l)_i < z}$ holds for all~${i < |t_l|}$ together with~${t_r \leq u'}$. By Lemma~\ref{lem:split_dot}, we find sequences~${s_l}$ and~${s_r}$ with both~${s_l \leq t_l}$ and~${s_r \leq t_r}$. Clearly,~${(s_l)_i < z}$ holds for all~${i < |s_l|}$ since~${s_l \leq t_l}$ entails that any element in~${s_l}$ is less than or equal to some element in~${t_l}$, which a short induction shows. Moreover, we have~${s_r \leq u'}$ by transitivity, i.e., our induction hypothesis. Finally, by iv), this yields~${s \leq u}$.
	
	Assume that~${s \leq t}$ holds because of iv) and~${t \leq u}$ holds because of i)-iii). Clearly, the case where~${t \leq u}$ follows from iv) is already covered by the previous paragraph. There are sequences~${s_l}$ and~${s_r}$ with~${s = s_l * s_r}$ and a sequence~${t'}$ with an element~${y \in X}$ such that~${t = \langle y \rangle * t'}$ holds together with~${(s_l)_i < y}$ for all~${i < |s_l|}$ and~${s_r \leq t'}$.
	Assume that~${t \leq u}$ holds because of i). This immediately contradicts the fact that~${t}$ cannot be empty. Assume that~${t \leq u}$ holds because of ii), i.e., there is a sequence~${u'}$ with an element~${z \in X}$ such that~${u = \langle z \rangle * u'}$,~${y \leq z}$, and~${t' \leq u'}$ hold. Now, we have~${(s_l)_i < y \leq z}$ for all~${i < |s_l|}$ and~${s_r \leq t' \leq u'}$ by transitivity, i.e., our induction hypothesis. By iv), this yields~${s \leq u}$.
	
	Finally, assume that~${t \leq u}$ holds because of iii). Then, there is an element~${z \in X}$ together with a sequence~${u'}$ such that both~${u = \langle z \rangle * u'}$ and~${t \leq u'}$ hold. Now, we have~${s \leq u'}$ by induction hypothesis. Thus, iii) yields~${s \leq u}$.
\end{proof}
Next, we construct lower bounds for the order types of our new partial order on sequences. If the set of allowed values is singular, this yields the following expected result:
\begin{lemma}\label{lem:quasi-embedding_omega_dot}
	There is a quasi-embedding~${\omega \to 1^\bullet}$.
\end{lemma}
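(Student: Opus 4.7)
The plan is to define $f(n)$ to be the unique sequence in $1^\bullet$ of length $n$, i.e.\ $f(n) := \langle \ast \rangle * \dots * \langle \ast \rangle$ with $n$ copies of the sole element $\ast$ of $1$. Then $f$ is visibly strictly monotone: by iterated application of case iii) in the definition of $\leq$ on $1^\bullet$ we obtain $f(m) \leq f(n)$ whenever $m \leq n$.

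The key step is to verify that $f$ is order-reflecting, for which I would establish the auxiliary claim that $s \leq t$ in $1^\bullet$ implies $|s| \leq |t|$, by induction on the derivation of $s \leq t$. The four cases are all immediate: case i) gives $|s| = 0$; case ii) gives $|s| = 1 + |s'| \leq 1 + |t'| = |t|$ by induction hypothesis; case iii) gives $|s| \leq |t'| < |t|$ by induction hypothesis; and case iv) needs special attention but in fact collapses here, since the condition $(s_l)_i < y$ cannot be satisfied by any member of $s_l$ when $X = 1$ (the unique element has nothing strictly below it), forcing $s_l = \langle \rangle$ and reducing case iv) to case iii).

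With the length-monotonicity lemma in hand, the proof concludes in one line: if $f(m) \leq f(n)$, then $m = |f(m)| \leq |f(n)| = n$. The only subtlety worth flagging is the degenerate role of case iv) in the singleton setting; the reader should be told explicitly that on $1^\bullet$ the new case adds nothing, because the hypothesis $(s_l)_i < y$ is vacuous only when $s_l$ is empty. Once this observation is made, no obstacle remains.
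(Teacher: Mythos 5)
Your proof is correct and takes essentially the same route as the paper: both arguments hinge on the observation that case~iv) degenerates for~$X = 1$ (no element lies strictly below the unique one, so~$s_l$ must be empty and iv) reduces to iii)), and your length-monotonicity claim for~$1^\bullet$ is just a repackaging of the paper's induction on~$n+m$, since~$f$ is a length-preserving bijection onto~$1^\bullet$. The only cosmetic remark: strict monotonicity of~$f$ is not needed for a quasi-embedding (only order reflection is), and establishing it would in any case require cases~ii)/i) in addition to iterated iii).
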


\begin{proof}
	We consider the usual map~${f: \omega \to 1^\bullet}$ that maps any natural number to the unique sequence in~${1^\bullet}$ of the same length. We verify that this constitutes a quasi-embedding: Let~${n, m \in \n}$ be such that~${f(n) \leq f(m)}$ holds. By induction on the sum~${n + m}$ we prove that this implies~${n \leq m}$. If~${m}$ equals zero, then~${f(n) \leq f(m) = \langle \rangle}$ immediately implies the same for~${n}$. We consider positive numbers~${n}$ and~${m}$ in the following. We proceed by case distinction on the properties i)-iv): Since~${f(n)}$ is not the empty sequence, i) cannot be the reason for our inequality. If it holds because of ii), then we have~${f(n-1) \leq f(m-1)}$ and, by induction hypothesis~${n-1 \leq m-1}$, which results in~${n \leq m}$. If it holds because of iii), then we have~${f(n) \leq f(m-1)}$ and by induction hypothesis~${n \leq m-1 \leq m}$. The last case iv) is identical to iii) since~${1}$ only consists of a single element.
\end{proof}
Before we continue with the lower bounds for sequences~${\alpha^\bullet}$, we need some fundamental properties:
\begin{lemma}\label{lem:properties_dot}
	Let~${X}$ be a partial order.
	\begin{enumerate}[label=\alph*)]
		\item If~$X$ has a top element~$\top$ and there are two sequences~${s = s_l * \langle \top \rangle * s_r}$ and~${t = t_l * \langle \top \rangle * t_r}$ in~${X^\bullet}$ with~${s \leq t}$ such that~${s_l}$ and~${t_l}$ do not contain~${\top}$, then already~${s_r \leq t_r}$ holds.
		\item Consider sequences~$r$,~${s = s_l * r}$, and~${t = t_l * r}$ in~${X^\bullet}$ with~${s \leq t}$. Then, we already have~${s_l \leq t_l}$.
	\end{enumerate}
\end{lemma}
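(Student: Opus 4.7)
For part a), the natural approach is to introduce the \emph{strip-to-first-$\top$} operation: if $u = u_l * \langle \top \rangle * u_r$ with $u_l$ containing no $\top$, then $f(u) := u_r$. The claim then becomes that $s \leq t$ (with both sequences containing $\top$) implies $f(s) \leq f(t)$. As a preliminary, reflexivity $u \leq u$ follows by a short induction via rule ii), and iterated application of rule iii) starting from $f(u) \leq f(u)$ yields $f(u) \leq u$; transitivity (already established in the paper) then gives the half-strip consequence $u \leq v \Rightarrow f(u) \leq v$. I would prove the main claim by induction on $|s| + |t|$, analyzing the final rule of some derivation of $s \leq t$. The case split hinges on whether the exposed head on the right equals $\top$: if it lies strictly below $\top$, then $f$ commutes with stripping that head and the induction hypothesis on the rule's premise suffices; if it equals $\top$, then $f(t)$ already equals the rule's tail $t'$, and the half-strip auxiliary applied to the rule's premise ($s' \leq t'$, or $s \leq t'$, or $s^2 \leq t'$) delivers $f(s) \leq f(t)$. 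The asymmetric case $x = \top$ with $y < \top$ in rule ii) is excluded by $x \leq y$, and in rule iv) the bound $s^1_i < y \leq \top$ forces any $\top$ in $s$ to lie inside $s^2$.

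For part b), I would induct on $|s| + |t|$ and case-split on the final rule of the derivation of $s \leq t$, carefully aligning the rule's decomposition with the fixed common-suffix decomposition $s = s_l * r$, $t = t_l * r$. Rule i) is immediate. Rule ii) with both $s_l, t_l$ nonempty combines the induction hypothesis on $s' \leq t'$ (same suffix $r$) with $x \leq y$ via rule ii) to reassemble $s_l \leq t_l$; when $s_l = \langle \rangle$, rule i) suffices. Rule iii) with $t_l$ nonempty is handled analogously, using the induction hypothesis and then rule iii). Degenerate configurations in which $t_l$ is empty but $s_l$ is not are shown impossible by re-applying the induction hypothesis with the common suffix taken strictly inside $r$: the hypothesis would then force a nonempty sequence to embed into $\langle \rangle$, contradicting rule i).

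The principal obstacle is rule iv), where $s = s^1 * s^2$ with $s^1_i < y$ and $s^2 \leq t'$ for $t = \langle y \rangle * t'$, because the rule-induced split $s^1 * s^2$ need not coincide with $s_l * r$. I would split according to whether $|s^1| \leq |s_l|$ or $|s^1| > |s_l|$. In the first sub-case, $s^2 = w * r$ with $w$ the suffix of $s_l$ after $s^1$; the induction hypothesis applied to $w * r \leq t_l' * r$ yields $w \leq t_l'$, and rule iv) applied to $s_l = s^1 * w$ with $s^1_i < y$ and $w \leq t_l'$ gives $s_l \leq \langle y \rangle * t_l' = t_l$. In the second sub-case, $s_l$ sits entirely inside $s^1$, so $(s_l)_i < y$ is inherited from $s^1_i < y$, and rule iv) with trivial second component $\langle \rangle \leq t_l'$ yields $s_l \leq t_l$ directly. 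The sub-case where $t_l$ is empty is again ruled out by invoking the induction hypothesis with a common suffix chosen strictly inside $r$, producing an impossible embedding of a nonempty sequence into $\langle \rangle$.
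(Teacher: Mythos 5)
Your part a) is essentially the paper's own argument in different clothing: the paper also analyses the last rule of the derivation, uses that $\top \leq y$ forces $y = \top$ and that no element is strictly below $\top$, and treats an exposed $\top$ on the right by the same ``a suffix embeds into the whole sequence, then transitivity'' device that your half-strip lemma packages. One small wrinkle there: in the case $y = \top$ under rule ii) with $x = \top$, the stripped sequence $f(s) = s'$ need not contain $\top$, so the half-strip auxiliary is not literally applicable; the premise $s' \leq t'$ already is the desired conclusion, so nothing breaks, but the sub-case should be stated.

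Part b) takes a genuinely different route. The paper first proves a separate auxiliary claim (a sequence of length greater than one in which $x$ occurs is never below $\langle x \rangle$) and then inducts on the length of the common suffix $r$ with a side induction on $|t_l|$; its degenerate case $t_l = \langle\rangle$, $s_l \neq \langle\rangle$ is reduced, by re-applying the suffix induction to the \emph{same} pair $s \leq t$ with suffix $r'$, to $s_l * \langle y \rangle \leq \langle y \rangle$, which the auxiliary claim refutes. You instead run a single induction on $|s| + |t|$ over the last rule and, in the degenerate case, apply the induction hypothesis to the rule's \emph{premise} with the common suffix taken to be that premise's entire right-hand side $t' = r'$, so the contradiction bottoms out in the trivial fact that only $\langle\rangle$ embeds into $\langle\rangle$; no singleton lemma is needed. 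That is a clean simplification, but note it only works in the premise formulation: applied to the pair $s \leq t$ itself, your measure $|s| + |t|$ does not decrease and the conclusion would be $s_l * w \leq w$ rather than an embedding into $\langle\rangle$, so keep the premise version explicit.

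There is one missing sub-case in your degenerate analysis of rule iv). If the rule's split has $|s^1| > |s_l|$, then $s^2$ is a suffix of $r'$ and the suffix-$r'$ decomposition of the premise is unavailable (for $|s^1| = |s_l| + 1$ the induction hypothesis only returns the vacuous $\langle\rangle \leq \langle\rangle$). The contradiction must instead come directly from the observation that the element $y$ sitting at position $|s_l|$ of $s$ then lies in $s^1$, so the side condition $(s^1)_i < y$ would force $y < y$. This is the same irreflexivity point that powers the paper's auxiliary claim; with it added, your argument for b) is complete.
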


\begin{proof}\mbox{}
	\begin{enumerate}[leftmargin=*, label=\alph*)]
		\item Assume that~${t_l}$ is empty. If~${s \leq t}$ holds because of ii) or iii), then~${s_r \leq t_r}$ follows by transitivity. If~${s \leq t}$ holds because of iv), then there are sequences~${s_{ll}}$ and~${s_{lr}}$ with~${s_l = s_{ll} * s_{lr}}$ and~${s_{lr} * \langle \top \rangle * s_r \leq t_r}$. The reason why this split must happen before the first~${\top}$ in~${s}$ lies in the fact that~${\top}$ cannot be strictly smaller than any element in~${X}$. From this, we get~${s_r \leq t_r}$ by transitivity.
		
		Assume that~${t_l = \langle y \rangle * t'_l}$ holds for some element~${y \in X}$ and sequence~${t'_l}$. If~${s \leq t}$ holds because of ii), then~${s_l}$ cannot be empty, i.e., it must be of the form~${s_l = \langle x \rangle * s'_l}$ for some element~${x \in X}$ and sequence~${s'_l}$. We have~${x \leq y}$ and~${s'_l * \langle \top \rangle * s_r \leq t'_l * \langle \top \rangle * t_r}$. Our claim follows by induction. If~${s \leq t}$ holds because of iii), then we have~${s \leq t'_l * \langle \top \rangle * t_r}$ and our claim follows by induction. Finally, assume that~${s \leq t}$ holds because of iv). Then, we split~${s_l}$ like before, i.e., we have~${s_l = s_{ll} * s_{lr}}$ with~${s_{lr} * \langle \top \rangle * s_r \leq t'_l * \langle \top \rangle * t_r}$. Again, the split must happen before the first~${\top}$ in~${s}$ because of the same considerations as before. Our claim follows by induction.
		\item First, we prove that for an element~${x \in X}$ and a sequence~${s \in X^\bullet}$, we get~${s \nleq \langle x \rangle}$ if~$x$ appears in~$s$ and this sequence consists of more than one member. We proceed by case distinction: It is clear that this relation does not hold because of~i). If it holds because of~ii) or~iii), then this entails that a nonempty sequence is less than or equal to the empty sequence. It is easy to see that this cannot be the case. Finally, if it holds because of~iv), then there are sequences~$s_l$ and~$s_r$ with~${s = s_l * s_r}$ as well as both~${(s_l)_i < x}$ for all~$i < |s_l|$ and~${s_r \leq \langle \rangle}$. Clearly, the latter entails~${s_r = \langle\rangle}$ and, hence,~${s_i < x}$ for all~${i < |s|}$. This contradicts our assumption that~$x$ appears in~$s$.

        Now, consider~${r, s, s_l, t, t_l \in X^\bullet}$ with~${s = s_l * r}$,~${t = t_l * r}$, and~${s \leq t}$. We show~${s_l \leq t_l}$ by induction on the length of~$r$. Our claim is trivial if~$s_l$ is empty. Therefore, assume that it is of the form~${s_l = \langle x \rangle * s'_l}$ for some~${x \in X}$ and~${s'_l \in X^\bullet}$. Now, we continue by side induction along the length of~$t_l$ and lead the assumption~${t_l = \langle \rangle}$ to a contradiction. Since~$s$ is nonempty, the same must hold for~$t$ and we can find~${y \in X}$ with~$r'$ such that~${r = \langle y \rangle * r'}$ holds. By our induction hypothesis, we know that this entails~${s_l * \langle y \rangle \leq \langle y \rangle}$. However, as~$s_l$ is nonempty, this leads to a contradiction by our previous thoughts.

        Assume that~$t_l$ is nonempty. Then, there must be~${y \in X}$ and~${t'_l \in X^\bullet}$ with~$t_l = \langle y \rangle * t'_l$. If~${s \leq t}$ holds because of~ii), we have~${x \leq y}$ and~${s'_l * r \leq t'_l * r}$. The latter leads to~${s'_l \leq t'_l}$ by our side induction hypothesis and, together with the former, we arrive at~${s \leq t}$. The argument is quite similar if~${s \leq t}$ holds because of~iii).
        
        Finally, if~${s \leq t}$ holds due to~iv), then there are~${\bar{s}_l, \bar{s}_r \in X^\bullet}$ with~${s = \bar{s}_l * \bar{s}_r}$ satisfying~${(\bar{s}_l)_i < y}$ for all~${i < |\bar{s}_l|}$ as well as~${\bar{s}_r \leq t'_l * r}$. If~$r$ is a final segment of~$\bar{s}_r$, then there is a sequence~$s_{lr}$ with~${s_l = \bar{s}_l * s_{lr}}$ and~${\bar{s}_r = s_{lr} * r}$. By our side induction hypothesis, we get~${s_{lr} \leq t'_l}$. This yields our claim~${s_l \leq t_l}$. Otherwise, if~$r$ is \emph{not} a final segment of~$\bar{s}_r$, then~$s_l$ must be an initial segment of~$\bar{s}_l$. In particular, we have~${(s_l)_i < y}$ for all~${i < |s_l|}$ and, therefore,~${s_l \leq t_l}$.\qedhere
	\end{enumerate}
\end{proof}
With this, we can construct quasi-embeddings into our new sequences. This corresponds to the lower bounds for infinite~${\alpha}$ given by Theorem~\ref{thm:Higman_order_types}.
\begin{lemma}\label{lem:quasi-embedding_omega_omega_alpha_into_dot_alpha}
	For any infinite ordinal~${\alpha}$, there is a quasi-embedding~${f: \omega^{\omega^{\alpha}} \to \alpha^\bullet}$.
\end{lemma}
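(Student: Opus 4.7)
The plan is to build $f$ from an auxiliary quasi-embedding $h\colon \omega^\alpha \to \alpha^\bullet$ together with a carefully chosen way of combining several $h$-images into a single sequence in $\alpha^\bullet$. For $\gamma < \omega^\alpha$ with Cantor normal form $\gamma \eqnf \omega^{\delta_0} + \dots + \omega^{\delta_{k-1}}$ (weakly decreasing, $\delta_i < \alpha$) I set $h(\gamma) := \langle \delta_0, \dots, \delta_{k-1}\rangle$, so that $h(\gamma)$ is itself a weakly decreasing sequence in $\alpha^*$. The quasi-embedding property of $h$ is proved by simultaneous induction on $|h(\gamma)|+|h(\gamma')|$, inspecting the four clauses of $\leq_\bullet$. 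The interesting case is clause~iv): if $h(\gamma) = s_l * s_r$ with $(s_l)_i < m_0$ (the leading target element), then weak monotonicity of $h(\gamma)$ forces $s_l$ to be an initial segment $\langle \delta_0,\dots,\delta_{j-1}\rangle$ with all $\delta_i < m_0$, so that $\omega^{\delta_0}+\dots+\omega^{\delta_{j-1}} < \omega^{m_0}$; combined with the induction hypothesis $\omega^{\delta_j}+\dots+\omega^{\delta_{k-1}} \leq \omega^{m_1}+\dots+\omega^{m_{l-1}}$ this yields $\gamma \leq \gamma'$.

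For the outer layer, $\beta$ with double Cantor normal form $\beta \eqnf \omega^{\gamma_0}+\dots+\omega^{\gamma_{n-1}}$ and each $\gamma_i \eqnf \omega^{\delta_{i,0}}+\dots+\omega^{\delta_{i,k_i-1}}$ is encoded as the concatenation of the block encodings $h(\gamma_i)$, with separator elements inserted between successive blocks to prevent cross-block Higman-style matching. Since $\alpha$ is infinite, the separators can be chosen from a cofinal sequence in $\alpha$ bounded strictly above the contents of the surrounding blocks (using that each $\delta_{i,j}<\alpha$ and the peaks $\delta_{i,0}$ are weakly decreasing in $i$). Order-reflection of $f$ is then proved by induction on $|f(\beta)|+|f(\beta')|$, repeatedly invoking Lemma~\ref{lem:split_dot} to split the source sequence at block boundaries of the target and Lemma~\ref{lem:properties_dot}~b) to cancel matching suffixes when two corresponding leading blocks agree.

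The main obstacle is the design of the outer encoding: a naive concatenation $h(\gamma_0)*\dots*h(\gamma_{n-1})$ is not order-reflecting, as witnessed by $\omega^{\omega\cdot 2}$ (whose block encoding is $\langle 1,1\rangle$) and $\omega^{\omega+1}\cdot 2$ (concatenating to $\langle 1,0,1,0\rangle$) — the former Higman-embeds into the latter while the ordinal inequality runs the other way. Overcoming this forces a careful choice of separators that simultaneously blocks Higman's rule~ii) (matching source elements into different target blocks) and rule~iv) (absorbing a source prefix into a single oversized target element). This is precisely where the infinitude of $\alpha$ becomes essential, since it gives enough room to place separators strictly above the blocks they delimit, making the block structure of $f(\beta)$ canonical and forcing any witness of $f(\beta) \leq_\bullet f(\beta')$ to match the $i$-th block of $f(\beta)$ into the $i$-th block of $f(\beta')$ in a way that transfers the ordinal comparison upwards via~$h$.
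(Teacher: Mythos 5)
Your inner map $h\colon\omega^{\alpha}\to\alpha^{\bullet}$ is fine, but the outer layer does not work, and the gap is visible on the very example you use to motivate it. Take $\alpha=\omega$, $\beta=\omega^{\omega\cdot 2}$ and $\beta'=\omega^{\omega+1}+\omega^{\omega+1}$. Then $f(\beta)=h(\omega\cdot2)=\langle 1,1\rangle$ is a single block with no separator, while $f(\beta')=\langle 1,0\rangle*\langle\sigma\rangle*\langle 1,0\rangle$ for some separator $\sigma\geq 2$. The strictly increasing map sending position $0$ to position $0$ and position $1$ to position $3$ witnesses $\langle 1,1\rangle\leq\langle 1,0,\sigma,1,0\rangle$ already in Higman's order (a fortiori in $\alpha^{\bullet}$), yet $\beta>\beta'$. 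Separators placed \emph{above} the block contents cannot "prevent cross-block matching": a Higman embedding is free to skip target positions, and a large separator in the target can only make things worse, since small source elements may also map \emph{onto} it. The problem is that ordinals below $\omega^{\omega^{\alpha}}$ have varying numbers of blocks, so a source with fewer separators than the target is never forced to respect the target's block structure; no choice of separator values repairs this. There is a second, smaller defect: for successor $\alpha$ (e.g.\ $\alpha=\omega+1$) a block may contain the top element of $\alpha$, and then no separator in $\alpha$ lies strictly above it.

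This is exactly the difficulty the paper's proof is organized around. It never constructs one uniform map on $\omega^{\omega^{\alpha}}$. Instead, given a quasi-embedding $g\colon\beta\to\alpha^{\bullet}$, it builds for each \emph{fixed} $n$ a quasi-embedding $h_{n}\colon\beta^{n}\to(\alpha+1)^{\bullet}$ whose images all contain exactly $n$ occurrences of the separator, where the separator is the \emph{new top element} of $\alpha+1$ (so it cannot occur inside any block and, by Lemma~\ref{lem:properties_dot}~a), the first separator of the source must be matched at or after the first separator of the target, letting one peel off blocks one at a time). The passage from "all $\beta^{n}$" to $\beta^{\omega}$, and likewise the limit stage, is then done non-constructively via de Jongh--Parikh (maximal order types are attained), precisely to avoid having to compare encodings with different numbers of separators. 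To fix your proof you would either need to adopt this fixed-block-count-plus-de-Jongh--Parikh strategy, or replace the concatenation scheme by one of the genuinely intricate explicit quasi-embeddings $\omega^{\omega^{n}}\to(n+1)^{*}$ known from the sharpness proofs of Higman's lemma; "concatenate with large separators" is not enough.
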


\begin{proof}
	We begin by showing that if there is a quasi-embedding~${g: \beta \to \alpha^\bullet}$, then there is also a quasi-embedding~${h: \beta^\omega \to (\alpha + 1)^\bullet}$. In light of the fact due to de Jongh and Parikh, that maximal order types are always attained, it suffices to construct quasi-embeddings~${h_n: \beta^n \to (\alpha + 1)^\bullet}$ for every~${n \in \n}$. The definition of~${h_0}$ is simply done by mapping~${0}$ to, e.g., the empty sequence. Now, assume that~${h_n}$ has been defined for some~${n \in \n}$. We set
	\begin{equation*}
		h_{n+1}(\beta \cdot \gamma + \delta) := g(\delta) * \langle \alpha \rangle * h_n(\gamma)
	\end{equation*}
	for~${\gamma < \beta^n}$ and~${\delta < \beta}$. Let us verify that~${h_{n+1}}$ is, indeed, a quasi-embedding. For this, let~${\gamma, \gamma' < \beta^n}$ and~${\delta, \delta' < \beta}$ be such that~${h_{n+1}(\sigma) \leq h_{n+1}(\tau)}$ holds for ordinals~${\sigma := \beta \cdot \gamma + \delta}$ and~${\tau := \beta \cdot \gamma' + \delta'}$. We prove that this entails~${\sigma \leq \tau}$. From Lemma~\ref{lem:properties_dot}~a), it follows that~${h_n(\gamma) \leq h_n(\gamma')}$ holds. Since we already know that~${h_n}$ is a quasi-embedding, this entails~${\gamma \leq \gamma'}$. Now, if~${\gamma < \gamma'}$ holds, we immediately have~${\sigma < \tau}$ and are finished. Otherwise, assume~${\gamma = \gamma'}$. Clearly, this implies~${h_n(\gamma) = h_n(\gamma')}$. Thus, by Lemma~\ref{lem:properties_dot}~b), we have~${g(\delta) \leq g(\delta')}$. We conclude~${\delta \leq \delta'}$ and, finally,~${\sigma \leq \tau}$.
	
	We continue by showing the claim of our lemma for the smallest infinite ordinal, i.e.,~${\alpha = \omega}$. Starting from the simple quasi-embedding~${\omega \to 1^\bullet}$ (Lemma~\ref{lem:quasi-embedding_omega_dot}), consecutive applications of the previous paragraph yield maps~${\omega^{\omega^n} \to (n + 1)^\bullet}$ that are quasi-embeddings for any~${n \in \n}$. Thus, by de Jongh and Parikh, there is a quasi-embedding~${\omega^{\omega^{\omega}} \to \omega^\bullet}$.
	
	Next, we consider some infinite successor ordinal~${\alpha + 1}$ and assume that our claim has already been shown for~${\alpha}$. This step is immediate by the considerations of the first paragraphs if we put~${\beta := \omega^{\omega^{\alpha}}}$.
	
	Finally, let~${\alpha > \omega}$ be a limit ordinal. Thus, for every~${\beta < \alpha}$, there is a quasi-embedding~${\omega^{\omega^{\gamma}} \to \gamma^\bullet}$ if we set~${\gamma := \max(\beta, \omega)}$. Clearly, this can be extended into a quasi-embedding with codomain~${\alpha^\bullet}$. Therefore, by de Jongh and Parikh, we conclude that there must be a quasi-embedding~${\omega^{\omega^{\alpha}} \to \alpha^\bullet}$.
\end{proof}
Next, we prove the following quasi-embeddings from our new partial order on sequences into sequences with gap condition:
\begin{lemma}\label{lem:quasi-embedding:alpha_dot_into_weak}
	For any ordinal~${\alpha}$ and natural number~${n \in \n}$ such that there exists a quasi-embedding~${f: \alpha \to \gapw_{n}}$, we can find a quasi-embedding~${g: \alpha^\bullet \to \gapw_{n+1}}$.
\end{lemma}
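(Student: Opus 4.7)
The plan is to define $g : \alpha^\bullet \to \gapw_{n+1}$ recursively by $g(\langle \rangle) := \langle \rangle$ and $g(\langle a \rangle * s) := \bar{f}(a) * \langle 0 \rangle * g(s)$, where $\bar{f}(a)$ denotes the sequence $f(a) \in \gapw_n$ with each entry shifted up by one (so its entries lie in $\{1, \dots, n\}$). By construction, $g$ is concatenation-preserving: $g(s * t) = g(s) * g(t)$. Each $g(s)$ consists of blocks $\bar{f}(a_i)$ separated (and trailed) by single zeros, and any strictly positive entry of $g(s)$ must be mapped by a weak-gap embedding to a strictly positive entry of $g(u)$, since a target zero in a matching or gap position cannot dominate a positive source entry. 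Since $f$ reflects orders and $\alpha$ is antisymmetric, $f$ is automatically injective.

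To show that $g$ is a quasi-embedding, the plan is to prove $g(s) \leq_\weak g(u) \Rightarrow s \leq u$ (in $\alpha^\bullet$) by induction on $|s| + |u|$. The base case $s = \langle \rangle$ follows from~i). Otherwise write $s = \langle a \rangle * s'$ and (forced by the length inequality $|g(u)| \geq |g(s)| \geq 1$) $u = \langle b \rangle * u'$, and fix an embedding $e : |g(s)| \to |g(u)|$ realizing the weak embedding. The case-split is on the image $p := e(|f(a)|)$ of the first zero separator of $g(s)$ relative to the first separator of $g(u)$ at position $|f(b)|$.

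If $p = |f(b)|$, the first separators align, $\bar{f}(a)$ embeds into $\bar{f}(b)$ and $g(s')$ into $g(u')$; the former yields $a \leq b$ via the quasi-embedding property of $f$, the latter yields $s' \leq u'$ by induction, and~ii) concludes. If $p > |f(b)|$, the whole of $g(s)$ is pushed past the first separator of $g(u)$; after possibly absorbing a leading zero of $g(s)$ into that separator, one obtains $g(s) \leq_\weak g(u')$ (or a closely related inequality) and~iii) closes the case via the induction hypothesis. The main case is $p < |f(b)|$: one sets $k \geq 1$ to be the number of zero separators of $g(s)$ mapped strictly below $|f(b)|$, splits $s = s_l * s_r$ with $|s_l| = k$, shows that $g(s_l)$ embeds into $\bar{f}(b)$ while $g(s_r)$ embeds into the suffix of $g(u)$ beyond position $|f(b)|$ and thence into $g(u')$, and concludes $s \leq u$ by~iv) using $s_r \leq u'$ from the induction hypothesis.

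The main obstacle is establishing the strict inequalities $(s_l)_i < b$ demanded by~iv). Restricting $e$ to positive positions and unshifting by one produces an embedding $f(a_0) * \dots * f(a_{k-1}) \leq_\weak f(b)$ in $\gapw_n$, which gives $f(a_i) \leq_\weak f(b)$ and hence $a_i \leq b$ for each $i$. Strictness then follows from a length count: since $g(s_l)$ embeds into $\bar{f}(b)$, one has $\sum_{i<k} |f(a_i)| + k \leq |f(b)|$, and $k \geq 1$ forces $|f(a_i)| < |f(b)|$, so $f(a_i) \neq f(b)$, and the injectivity of $f$ upgrades $a_i \leq b$ to $a_i < b$. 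The remaining bookkeeping — handling elements $a_i$ with $f(a_i) = \langle \rangle$, which produce leading zeros in $g(s_r)$ that the weak gap condition forces to absorb the separator at position $|f(b)|$, and verifying that the restriction of $e$ to positive positions genuinely yields a weak-gap embedding in $\gapw_n$ — is technical but routine within the same framework.
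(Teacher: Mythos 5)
Your map $g$ and the overall architecture (trichotomy on where the first separator lands, length count plus injectivity of $f$ for the strict inequalities, closing the main case with clause~iv)) match the paper's proof, which instead of tracking the realizer by hand invokes Lemma~\ref{lem:gap_split_weak_weak_or_strong} to split $g(s)$ against the decomposition $g(u) = (1+f(b)) * (\langle 0 \rangle * g(u'))$. But your main case contains a genuine error: you define $k$ as the number of zero separators of $g(s)$ mapped strictly below $|f(b)|$ and set $|s_l| = k$. The correct split point is the least position of $g(s)$ whose image is $\geq |f(b)|$ (one checks, via the gap condition against the zero at position $|f(b)|$, that this position must itself carry a separator zero), and this puts \emph{one more block} into $s_l$ whenever the block $\bar{f}(a_k)$ following the $k$-th separator still lands inside $\bar{f}(b)$ while its own trailing separator overshoots. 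In that situation $g(s_r)$ does \emph{not} embed into $g(u')$. Concretely: with $f(m) = \langle 0 \rangle^m$, $s = \langle 1,1 \rangle$, $u = \langle 3 \rangle$, the realizer sends $g(s) = \langle 1,0,1,0 \rangle$ identically into $g(u) = \langle 1,1,1,0 \rangle$; only the first separator maps strictly below $|f(3)| = 3$, so your recipe gives $s_r = \langle 1 \rangle$ and demands $\langle 1,0 \rangle \leq_\weak g(u') = \langle \rangle$, which is false. (The statement $s \leq u$ is of course still true, via iv) with $s_l = s$.) The same oversight appears in your case $p > |f(b)|$: there $\bar{f}(a)$ may still land inside $\bar{f}(b)$ even though the separator overshoots, so that subcase needs clause~ii) rather than iii). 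Both are repaired by splitting according to which \emph{blocks} map into $\bar{f}(b)$; the paper's splitting lemma does exactly this, its strong-gap clause forcing the remainder to begin with a zero.

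A secondary caveat: restricting the realizer to the positive positions does not in general yield a weak-gap embedding of the concatenation $f(a_0) * \dots * f(a_{k-1})$ into $f(b)$, because deleting a source zero merges two gaps and the merged gap is only known to dominate the deleted zero, not the next positive entry. You only ever use the per-block consequences $f(a_i) \leq_\weak f(b)$, and those do follow (each $\bar{f}(a_i)$ is a contiguous, zero-free block, so its restriction inherits all gap conditions verbatim), so this costs you nothing, but the stronger claim should not be asserted. With the split corrected, your length count $\sum_{i<k}|f(a_i)| + (k-1) \leq |f(b)|$ still forces $|f(a_i)| < |f(b)|$ when at least one internal separator lands in $\bar{f}(b)$, so the strictness argument survives intact.
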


\begin{proof}
	We define
	\begin{equation*}
		g(s) :=
		\begin{cases}
			\langle \rangle & \text{if } s = \langle \rangle\comma\\
			(1 + f(\alpha_0)) * \langle 0 \rangle * g(s') & \text{if } s = \langle \alpha_0 \rangle * s'\period
		\end{cases}
	\end{equation*}
	Let us prove that~${g}$ is a quasi-embedding. Assume~${g(s) \leq_\weak g(t)}$ holds for~${s, t \in \alpha^\bullet}$. By induction on the length of~${s}$ and~${t}$, we show that~${s \leq t}$ holds. Clearly,~${t = \langle \rangle}$ entails~${s = \langle \rangle}$. Thus, we assume that~${s}$ and~${t}$ are both not empty. We can write~${t = \langle \beta_0 \rangle * t'}$ for some~${\beta_0 \in \alpha}$ and~${t' \in \alpha^\bullet}$. Also, we write~${s = \langle \alpha_0, \dots, \alpha_{n-1} \rangle}$ with~${n > 0}$.
	
	By Lemma~\ref{lem:gap_split_weak_weak_or_strong}, we can find sequences~${u, v \in \gapw_{n+1}}$ with~${g(s) = u * v}$ and both inequalities~${u \leq_{\weak} 1 + f(\beta_0)}$ and~${v \leq_{\weak} \langle 0 \rangle * g(t')}$.
	Assume that~${u}$ is empty. If~${f(\alpha_0)}$ is empty as well, this leads to~${\langle 0 \rangle * g(\langle \alpha_1, \dots, \alpha_{n-1} \rangle) \leq_\weak \langle 0 \rangle * g(t')}$. Hence, we arrive at~${g(\langle \alpha_1, \dots, \alpha_{n-1} \rangle) \leq_\weak g(t')}$ and, by induction hypothesis, at~${\langle \alpha_1, \dots, \alpha_{n-1} \rangle \leq t'}$. This yields~${s \leq t}$ since~${\alpha_0 = 0}$ must hold as~${f(\alpha_0)}$ is the smallest element in~${\gapw_n}$. If~${f(\alpha_0)}$ is nonempty, then~${g(s) \leq_\weak \langle 0 \rangle * g(t')}$ leads to~${g(s) \leq g(t')}$ since~${g(s)}$ begins with a positive member. By induction hypothesis, we conclude~${s \leq t' \leq t}$.
	
	If~${u}$ is nonempty, Lemma~\ref{lem:gap_split_weak_weak_or_strong} yields~${v \leq_{\strong} \langle 0 \rangle * g(t')}$ with respect to the strong gap condition.
	Thus, we conclude that~${v}$ is either empty or starts with~${0}$. Combining this knowledge with the definition of~${g}$ yields some number~${0 < k < n}$ such that
	\begin{align*}
		u &={} (1 + f(\alpha_0)) * \langle 0 \rangle * \dots * \langle 0 \rangle * (1 + f(\alpha_{k-1}))\\
		\text{and } v &={} \langle 0 \rangle * (1 + f(a_k)) * \dots * \langle 0 \rangle * (1 + f(a_{n-1})) * \langle 0 \rangle
	\end{align*}
	hold. Note that~${u}$ contains~$(k-1)$-many zeros, while~${v}$ contains~${(n + 1 -k)}$-many zeros. By Lemma~\ref{lem:gap_strong_and_weak}, we know that already~${g(\langle a_k, \dots, a_{n-1} \rangle) \leq_\weak g(t')}$ holds. Thus, by induction hypothesis, we arrive at~${\langle a_k, \dots, a_{n-1} \rangle \leq t'}$.
    
    If~${k = 1}$ holds, we have~${1 + f(\alpha_0) = u \leq_\weak 1 + f(\beta_0)}$ and, thus,~${\alpha_0 \leq \beta_0}$. Together with~${\langle a_1, \dots, a_{n-1} \rangle \leq t'}$, this yields our claim.
	Finally, if~${k > 1}$ holds, notice that~${u}$ has at least one zero. Thus,~${1 + f(\alpha_i) <_\weak u}$ holds (with a strict inequality) for every~${i < k}$. This yields~${1 + f(\alpha_i) <_\weak u \leq_\weak f(\beta_0)}$ and, hence,~${\alpha_i < \beta_0}$ for every~${i < k}$. Together with~${\langle a_k, \dots, a_{n-1} \rangle \leq t'}$ and the additional case iv) in the definition of the order for~${\alpha^\bullet}$ (in contrast to~${\alpha^*}$), this yields our claim.
\end{proof}
Combining everything yields the lower bounds for sequences with gap condition of members from finite sets:
\begin{corollary}\label{cor:quasi-embedding_omega_omega_alpha_weak}
	For any infinite ordinal~${\alpha}$ and natural number~${n \in \n}$, if there exists a quasi-embedding~${\alpha \to \gapw_{n}}$, then there is also a map~${\omega^{\omega^{\alpha}} \to \gapw_{n+1}}$ that is a quasi-embedding.
\end{corollary}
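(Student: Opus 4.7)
The plan is to prove this corollary by composing the two quasi-embeddings already established. Specifically, Lemma~\ref{lem:quasi-embedding_omega_omega_alpha_into_dot_alpha} provides a quasi-embedding $h: \omega^{\omega^{\alpha}} \to \alpha^\bullet$ for any infinite ordinal $\alpha$, and Lemma~\ref{lem:quasi-embedding:alpha_dot_into_weak} shows that from the assumed quasi-embedding $f: \alpha \to \gapw_n$ we can construct a quasi-embedding $g: \alpha^\bullet \to \gapw_{n+1}$.

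Given these, I would simply take $g \circ h: \omega^{\omega^\alpha} \to \gapw_{n+1}$ and observe that the composition of quasi-embeddings is again a quasi-embedding: if $(g \circ h)(\sigma) \leq_\weak (g \circ h)(\tau)$, then $g(h(\sigma)) \leq_\weak g(h(\tau))$, which yields $h(\sigma) \leq h(\tau)$ since $g$ reflects the order, and this in turn yields $\sigma \leq \tau$ since $h$ reflects the order.

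There is no real obstacle here, as all the substantive work has been done in the two preceding lemmas. The only minor point worth mentioning explicitly is that the composition is well defined because both lemmas apply with the same~$\alpha$ on the matching sides, and that $\alpha$ being infinite is exactly the hypothesis needed to invoke Lemma~\ref{lem:quasi-embedding_omega_omega_alpha_into_dot_alpha}. The proof is essentially a one-line composition argument.
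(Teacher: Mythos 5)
Your proposal is correct and matches the paper's proof exactly: the corollary is obtained by composing the quasi-embedding from Lemma~\ref{lem:quasi-embedding_omega_omega_alpha_into_dot_alpha} with the one from Lemma~\ref{lem:quasi-embedding:alpha_dot_into_weak}. Nothing further is needed.
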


\begin{proof}
	Simply combine the previous two lemmas, i.e., Lemma~\ref{lem:quasi-embedding_omega_omega_alpha_into_dot_alpha} and Lemma~\ref{lem:quasi-embedding:alpha_dot_into_weak}.
\end{proof}

In \cite{RMWGapCondition}, Rathjen, van der Meeren, and Weiermann give a much simpler proof for the lower bounds of the \emph{original} version of~${\gapw_{n}}$ for natural numbers~${n \in \n}$ as it was studied by Sch\"{u}tte and Simpson in \cite{SchuetteSimpson}. There, they simply construct direct quasi-embeddings from (nested) sequences into~${\gapw_{n}}$ and use the well-known maximal order types for Higman's relation on sequences in order to derive the claim. However, we can show that the reasoning from \cite{RMWGapCondition} cannot simply be transferred to our situation and already fails in the case of~${\gapw_{2}}$:

\begin{remark}\label{rem:cannot_embed_omega_star_into_gapw_2}
	There is no quasi-embedding~${f: \omega^* \to \gapw_{2}}$. Assume, for contradiction, that~${f}$ exists. Moreover, let~${g: \gapw_{2} \to \n}$ be a map such that for any~${s \in \gapw_{2}}$ of length~${n \in \n}$, the value of~${g(s)}$ is given by the largest length~${l \in \n}$ such that there exists a start index~${i \in \n}$ with~${i + l \leq n}$ and~${s_j = 1}$ for all~${j}$ with~${i \leq j < i + l}$. Intuitively,~${g(s)}$ gives us the length of the longest interval in~${s}$ that only consists of~ones. Equivalently, we can also define~${g(s)}$ to be equal to~${|s'|}$ for the longest sequence~${s'}$ that only consists of~${1}$s and satisfies~${s' \leq_\weak s}$.
	
	Let~${n > 0}$ be some upper bound on the length of~${f(\langle 0, 0 \rangle)}$. This value yields a bound such that~${(g \circ f)(\langle m \rangle) < n}$ holds for any~${m \in \n}$: Assume that there were some~${m \in \n}$ with~${(g \circ f)(\langle m \rangle) \geq n}$. Then,~${f(\langle 0, 0 \rangle) \leq_\weak f(\langle m \rangle)}$ holds as we can fit~${f(\langle 0, 0 \rangle)}$ into the subinterval of~${f(\langle m \rangle)}$ that only consists of~${1}$s. Since~${f}$ is a quasi-embedding, this implies the contradiction~${\langle 0, 0 \rangle \leq \langle m \rangle}$. The value of~${n}$ will stay fixed for the rest of this proof.
	
	Let~${T := f((n+1)^*)}$, i.e.,~${T}$ is the suborder of~${\gapw_2}$ consisting of all sequences~${f(s)}$ for~${s \in (n+1)^*}$. Clearly, we have~${\omega^{\omega^n} \leq o(T)}$ since~${f}$ is a quasi-embedding and the lower bound~${\omega^{\omega^n} \leq o((n+1)^*)}$ holds by Theorem~\ref{thm:Higman_order_types}.
	We will lead this to a contradiction by showing~${o(T) < \omega^{\omega^n}}$. First, we define a function~${h: \gapw_2 \to \n}$ and then argue why its range is bounded on~${T}$:
	\begin{equation*}
		h(s) :=
		\begin{cases}
			0 & \text{if~${s}$ only consists of ones and~${|s| < n}$ holds,}\\
			1 & \text{if~${s}$ only consists of ones and~${|s| \geq n}$ holds,}\\
			h(s_l) + h(s_r) & \text{if~${s = s_l * \langle 0 \rangle * s_r}$ and~${s_l}$ is as short as possible.}
		\end{cases}
	\end{equation*}
    Intuitively,~$h(s)$ counts the amount of intervals consisting only of ones in~$s$ that are separated by zeros and have a length of at least~$n$.
	We prove that for any sequences~${s, t \in \gapw_2}$ with~${g(s) \leq n}$ and~${h(t) \geq |s|}$, we have~${s \leq_\weak t}$ by induction on the amount of zeros in~${s}$. If~${s}$ has no zero, then the assumption~${g(s) \leq n}$ implies~${|s| \leq n}$. Assume there is some~${t}$ with~${h(t) \geq |s|}$ such that~${s \leq_\weak t}$ does not hold. Clearly,~${s}$ cannot be the empty sequence. Thus, we have~${h(t) > 0}$. By definition of~${h}$, the sequence~${t}$ must include an interval of length~${n}$ that only consists of ones. Since~${|s|}$ is less than or equal to~${n}$, we conclude~${s \leq_\weak t}$.
	
	Otherwise, if~${s}$ has~${k+1}$-many zeros for~${k \in \n}$, then~${s}$ can be written in the form~${s = s_l * \langle 0 \rangle * s_r}$, where~${s_l}$ does not contain any zeros and~${s_r}$ contains~${k}$-many zeros. Assume that there is some~${t}$ of minimal length with~${h(t) \geq |s|}$ such that~${s \leq t}$ does not hold. By assumption on~${s}$, we have~${|s| > 0}$ and, therefore,~${h(t) > 0}$. If~${h(t) = 1}$ holds, then this implies~${s = \langle 0 \rangle}$ and~${s \leq_\weak t}$ follows easily. Thus, we assume~${h(t) > 1}$. With this, we can write~${t = t_l * \langle 0 \rangle * t_r}$ where~${t_l}$ is as short as possible, i.e.,~${t_l}$ only consists of ones. Assume that~${|t_l| < n}$ holds. Then, by definition of~$h$, we have~${h(\langle 0 \rangle * t_r) = h(t) \geq |s|}$. Together with~${s \nleq_\weak \langle 0 \rangle * t_r}$, this contradicts the minimality of~${t}$. Therefore,~${|t_l| \geq n}$ and~${h(t_r) = h(t) - 1 \geq |s| - 1 \geq |s_r|}$ must hold. From the former together with our assumption~${g(s) \leq n}$, which yields~${|s_l| \leq n}$ since~${s_l}$ only contains ones, we conclude~${s_l \leq_\weak t_l}$ similar to before. From the latter together with~${g(s_r) \leq g(s) \leq n}$, we arrive at~${s_r \leq_\weak t_r}$. Combining everything using the usual properties of the gap condition leads to our claim~${s \leq_\weak t}$.
	
	Now, assume that~${h}$ is unbounded on~${T}$. Then, there exists some~${t \in T}$ satisfying~${h(t) \geq |f(\langle n+1 \rangle)|}$. By previous considerations, we know that~${(g \circ f)(\langle m \rangle) < n}$ holds for all numbers~${m}$, in particular for~${m := n+1}$. Thus, we can apply the result from the previous paragraph, which yields~${f(\langle n+1 \rangle) \leq_\weak t}$. By definition of~$T$, we find~${s \in (n+1)^*}$ such that~${f(s) = t}$ holds. Since~${f}$ is a quasi-embedding, we conclude~${\langle n+1 \rangle \leq_\weak s}$. This, of course, is a contradiction since all members of~${s}$ must lie strictly below~${n+1}$.
	
	For every~${m \in \n}$, let~${T_m}$ be the restriction of~${T}$ to sequences~${t}$ with~${h(t) \leq m}$. We define a family of quasi-embeddings~${k_m}$:
	The map~${k_0: T_0 \to n^*}$ is given as follows:
	\begin{equation*}
		k_0(s) :=
		\begin{cases}
			\langle |s| \rangle & \text{if~${s}$ only consists of ones,}\\
			k_0(s_l) * k_0(s_r) & \text{if~${s = s_l * \langle 0 \rangle * s_r}$ and~${s_l}$ is as short as possible.}
		\end{cases}
	\end{equation*}
	A short induction shows that~${k_0}$ is a quasi-embedding.
	
	For~${m > 0}$, we define quasi-embeddings~${k_m: T_m \to T_{m-1} \otimes \omega \otimes T_{m-1}}$ as follows: Let~${t \in T_m}$ be arbitrary and write~${t = t_l * t_m * t_r}$ where~${t_m}$ only consists of ones,~${|t_m| = g(t)}$ holds, and~${t_l}$ is as short as possible. We set~${k_m(t) := \langle t_l, |t_m|, t_r \rangle}$. It~can be shown that~${h(t_l), h(t_r) < m}$ hold.
	Assume that there is a second sequence~${s = s_l * s_m * s_r \in T_m}$ split in an analogous way such that~${k_m(s) \leq k_m(t)}$ holds. We conclude, by induction on~${m}$, that~${s_l \leq_\weak t_l}$,~${s_m \leq_\weak t_m}$, and~${s_r \leq_\weak t_r}$ hold. Now,~${s_r}$ and~${t_r}$ are either empty or start with~${0}$. Thus, we have~${s_m * s_r \leq_\weak t_m * t_r}$. Similarly,~${s_l}$ and~${t_l}$ are either empty or end with~${0}$. Hence, we have~${s \leq_\weak t}$ (using the identity between~$\leq_\weak$ and~$\leq_\gordeev$, which entails that all of our results on the weak gap condition also hold if we consider sequences in reversed order). We conclude that~${k_m}$ is a quasi-embedding for any~${m \in \n}$.
	
	By induction, we show that~${o(T_m) < \omega^{\omega^n}}$ holds. Clearly, we have the upper bound~${o(T_0) \leq \omega	^{\omega^{n-1}} < \omega^{\omega^n}}$ using~${k_0}$ and our knowledge on the maximal order types of~${n^*}$. Assume that~${o(T_m) < \omega^{\omega^n}}$ holds for some~${m \in \n}$. We conclude~${o(T_{m+1}) \leq o(T_m) \hessMul \omega \hessMul o(T_m) < \omega^{\omega^n}}$ using~${k_{m+1}}$,~${n > 0}$, and the fact that the ordinal~${\omega^{\omega^n}}$ is multiplicatively indecomposable. Finally, by previous considerations, we know that there is some~${m \in \n}$ with~${T = T_m}$. Therefore, the inequalities~${\omega^{\omega^n} \leq o(T) = o(T_m) < \omega^{\omega^n}}$ lead to a contradiction.
\end{remark}

We return to the lower bounds of Theorem~\ref{thm:maximal_order_types}:

\begin{proposition}\label{prop:lower_bound_bta}
	For any ordinal~${\alpha}$, we have~${F(\alpha) \leq o(\bta_{\alpha, 1})}$.
\end{proposition}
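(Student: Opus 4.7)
The plan is to construct, by transfinite induction on $\alpha$, a quasi-embedding from the ordinal $F(\alpha)$ into $\bta_{\alpha,1}$. This suffices by de Jongh and Parikh's result. The construction splits into the same three cases as the definition of $F$.

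The base case $\alpha = 0$ is trivial since $F(0) = 1 = o(\bta_{0,1})$. For $\alpha = \omega^\gamma$ with $\gamma < \varphi_\gamma(0)$, I apply Lemma~\ref{lem:quasi-embedding:varphi_into_bta} with the vacuous quasi-embedding from $0$ (the empty ordinal) into $\bta_{0,1} \setminus \{0 \star []\}$; this directly yields a quasi-embedding $\varphi_{1+\gamma}(0) \to \bta_{\omega^\gamma, 1}$. The ``otherwise'' case $\alpha \eqnf \omega^\gamma + \delta$ must be split further according to whether $\delta$ vanishes.

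When $\delta = 0$, we necessarily have $\gamma = \varphi_\gamma(0)$, i.e.\ $\gamma$ is a $\Gamma$-ordinal; then $\gamma$ is an $\varepsilon$-number, so $\omega^\gamma = \gamma$ and $1 + \gamma = \gamma$, giving $F(\alpha) = \varphi_{1+\gamma}(F(0)) = \varphi_\gamma(1)$. In this subcase Corollary~\ref{cor:quasi-embedding_varphi_into_bta_gamma} directly supplies the desired quasi-embedding $\varphi_\gamma(1) \to \bta_{\gamma, 1} = \bta_{\alpha, 1}$. When $\delta > 0$, the induction hypothesis gives a quasi-embedding $f \colon F(\delta) \to \bta_{\delta,1}$. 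To apply Lemma~\ref{lem:quasi-embedding:varphi_into_bta} with $\rho = \delta$, however, I need the codomain to be $\bta_{\delta,1} \setminus \{0 \star []\}$. I therefore post-compose $f$ with the shift map $h \colon \bta_{\delta,1} \to \bta_{\delta,1} \setminus \{0 \star []\}$ defined by $h(t) := 0 \star [t, 0 \star []]$, which is well-defined because $\delta \geq 1$. The resulting map $h \circ f$ lands in $\bta_{\delta,1} \setminus \{0 \star []\}$, and Lemma~\ref{lem:quasi-embedding:varphi_into_bta} then yields $\varphi_{1+\gamma}(F(\delta)) \to \bta_{\omega^\gamma + \delta, 1}$, as required.

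The main technical point, and the only nontrivial verification, is that $h$ reflects the order on $\bta_{\delta,1}$. Suppose $h(s) \leq h(t)$. Analysing the definition of embeddability between $0 \star [s, 0 \star []]$ and $0 \star [t, 0 \star []]$, either $h(s)$ embeds into a subtree of $h(t)$, or both roots agree and the left/right subtrees embed pairwise. Embedding $h(s)$ into the right subtree $0 \star []$ is impossible since $h(s)$ contains an inner node; embedding $h(s)$ into the left subtree $t$ gives $s \leq h(s) \leq t$ using that $s$ itself is a subtree of $h(s)$; and the matched-root subcase gives $s \leq t$ outright from the left-subtree comparison. In every case $s \leq t$, so $h$ is order-reflecting, and consequently $h \circ f$ is the required quasi-embedding. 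This ``shift'' argument is the one place where a small ad hoc construction is needed, but it is short and self-contained once the three cases have been separated correctly.
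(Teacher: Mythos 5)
Your proof is correct and follows the paper's argument essentially verbatim: induction along the Cantor normal form of~$\alpha$, with Lemma~\ref{lem:quasi-embedding:varphi_into_bta} handling the main cases and Corollary~\ref{cor:quasi-embedding_varphi_into_bta_gamma} handling the $\Gamma$-ordinal case (which you correctly identify as the $\delta = 0$ branch of the ``otherwise'' clause). The only difference is cosmetic: to avoid $0 \star []$ in the codomain, the paper shifts the domain (using that $F(\delta)$ is infinite and that only the minimum of $F(\delta)$ can be mapped to the minimum $0 \star []$), whereas you post-compose with the order-reflecting map $t \mapsto 0 \star [t, 0 \star []]$; both work.
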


\begin{proof}
	The case for~${\alpha = 0}$ is clear since there is exactly one element~${0 \star []}$ in~$\bta_{0, 1}$. Let~${\alpha = \omega^\gamma}$. By Lemma~\ref{lem:quasi-embedding:varphi_into_bta}, we can find a quasi-embedding~${\varphi_{1 + \gamma}(0) \to \bta_{\alpha, 1}}$. Moreover, if~${\gamma}$ is a~${\Gamma}$-ordinal, then Corollary~\ref{cor:quasi-embedding_varphi_into_bta_gamma} yields that there is a quasi-embedding~${\varphi_{1 + \gamma}(1) \to \bta_{\alpha, 1}}$. Finally, we consider the case where~${\alpha = \omega^\gamma + \delta}$ holds for~${\delta > 0}$. Consider the quasi-embedding~${F(\delta) \to \bta_{\delta, 1}}$ that exists by induction hypothesis. Since~${\delta > 0}$ holds,~${F(\delta)}$ must be an infinite ordinal. We conclude that there already exists a quasi-embedding~${F(\delta) \to \bta_{\delta, 1} \setminus \{0 \star []\}}$ since~${0 \star []}$ is the smallest element in~${\bta_{\delta, 1}}$. Now, by Lemma~\ref{lem:quasi-embedding:varphi_into_bta}, this yields~${\varphi_{1 + \gamma}(F(\delta)) \to \bta_{\omega^\gamma + \delta, 1}}$.
\end{proof}

\begin{proposition}
	For any ordinal~${\alpha}$, we have~${G(\alpha) \leq o(\gapw_\alpha) \leq o(\btal_{\alpha, 1})}$.
\end{proposition}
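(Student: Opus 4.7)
The second inequality $o(\gapw_\alpha) \leq o(\btal_{\alpha, 1})$ is immediate from the quasi-embedding constructed in Lemma~\ref{lem:embed_weak_into_btal}. For the first inequality, the plan is to construct, by transfinite induction on $\alpha$, a quasi-embedding $G(\alpha) \to \gapw_{\alpha}$, handling each clause in the definition of $G$ separately using the lemmas gathered earlier in this subsection.

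The two base cases ($\alpha = 0$ with $G(\alpha) = 1$, and $\alpha = 1$ with $G(\alpha) = \omega$) are trivial, since $\gapw_0$ has exactly one element and $\gapw_1$ is order-isomorphic to $\omega$. For the finite successor case $\alpha = n+1$ with $0 < n < \omega$, the induction hypothesis supplies a quasi-embedding $G(n) \to \gapw_n$, and $G(n)$ is infinite (already $G(1) = \omega$); hence Corollary~\ref{cor:quasi-embedding_omega_omega_alpha_weak} yields $\omega^{\omega^{G(n)}} \to \gapw_{n+1}$, which is exactly $G(n+1) \to \gapw_\alpha$.

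For the indecomposable case $\alpha = \omega^\gamma$ with $0 < \gamma < \varphi_\gamma(0)$, the plan is to invoke Lemma~\ref{lem:quasi-embedding_varphi_into_weak} with parameters $\alpha \leftarrow \gamma$, $\rho \leftarrow 0$, and $\beta \leftarrow 0$; the hypothesis is trivially satisfied by the empty function, and since every $\sigma < \varphi_\gamma(0)$ has a Veblen normal form whose first arguments all lie strictly below $\gamma$, the fourth clause in the construction of $g$ never fires, so no reference to the empty $f$ is ever made. This produces the desired map $\varphi_\gamma(0) \to \gapw_{\omega^\gamma}$. The remaining sub-case of an indecomposable $\alpha = \omega^\gamma$, namely $\gamma = \varphi_\gamma(0)$, falls under the ``otherwise'' clause of $G$ with $\delta = 0$ and so yields $G(\alpha) = \varphi_\gamma(1)$; here the plan is to apply Lemma~\ref{lem:quasi-embedding_varphi_1_into_weak} directly, using that $\omega^\gamma = \gamma$ for $\Gamma$-ordinals, to obtain $\varphi_\gamma(1) \to \gapw_\gamma = \gapw_\alpha$.

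For the remaining case $\alpha \eqnf \omega^\gamma + \delta$ with $\delta > 0$, the induction hypothesis gives a quasi-embedding $G(\delta) \to \gapw_\delta$. Since $\delta > 0$ forces $G(\delta) \geq \omega$, composing with the shift $\sigma \mapsto \sigma + 1$ on the domain produces a quasi-embedding $G(\delta) \to \gapw_\delta \setminus \{\langle\rangle\}$ (which is still a quasi-embedding because it is an order-isomorphism of $G(\delta)$ onto a suborder of itself followed by the original map). Feeding this into Lemma~\ref{lem:quasi-embedding_varphi_into_weak} with $\alpha \leftarrow \gamma$, $\beta \leftarrow G(\delta)$, $\rho \leftarrow \delta$ yields $\varphi_\gamma(G(\delta)) \to \gapw_{\omega^\gamma + \delta} = \gapw_\alpha$. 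The main thing to be careful about throughout is that each application of Lemma~\ref{lem:quasi-embedding_varphi_into_weak} requires the target to avoid the empty sequence, but this is purely a routine fix given that $G(\delta)$ is always an infinite ordinal whenever it is needed; there is no substantive obstacle, only a case split that has to align precisely with the definition of $G$.
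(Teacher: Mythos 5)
Your proposal is correct and follows essentially the same route as the paper: the second inequality via Lemma~\ref{lem:embed_weak_into_btal}, the base and finite cases via Lemma~\ref{lem:small_upper_bounds_btal} and Corollary~\ref{cor:quasi-embedding_omega_omega_alpha_weak}, and the infinite cases via Lemma~\ref{lem:quasi-embedding_varphi_into_weak} and Lemma~\ref{lem:quasi-embedding_varphi_1_into_weak}, including the same trick of shifting past the minimal element to land in $\gapw_\delta \setminus \{\langle\rangle\}$ before applying Lemma~\ref{lem:quasi-embedding_varphi_into_weak}. The paper states this more tersely by referring to the analogous argument for $\bta_{\alpha,1}$, but the case analysis you spell out is exactly the intended one.
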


\begin{proof}
	Any lower bound for~${\gapw_{\alpha}}$ is also a lower bound for~${\btal_{\alpha, 1}}$ via Lemma~\ref{lem:embed_weak_into_btal}.
	The order types of~${\gapw_{0, 1}}$ and~${\gapw_{1, 1}}$ are already given by Lemma~\ref{lem:small_upper_bounds_btal}~a) and~b). If~${\alpha > 1}$ is a natural number, we invoke Corollary~\ref{cor:quasi-embedding_omega_omega_alpha_weak}. For infinite~${\alpha}$, the argument is similar to that of the proof for Proposition~\ref{prop:lower_bound_bta}. We simply substitute Lemma~\ref{lem:quasi-embedding:varphi_into_bta} and Corollary~\ref{cor:quasi-embedding_varphi_into_bta_gamma} with Lemma~\ref{lem:quasi-embedding_varphi_into_weak} and Lemma~\ref{lem:quasi-embedding_varphi_1_into_weak}, respectively.
\end{proof}
The lower bounds for sequences using the strong gap condition are a bit more involved. We begin with a result that lets us increase a lower bound if we switch from the weak gap condition to the strong one.
\begin{lemma}
	Let~${f: \alpha \to \gapw_{\omega^\gamma + \delta}}$ be a quasi-embedding. Then, we can also construct a quasi-embedding~${g: \alpha^{(\omega^\gamma)} \to \gaps_{\omega^\gamma + \delta}}$ such that all sequences in the range of~${g}$ are empty or begin with an ordinal strictly below~${\omega^\gamma}$.
\end{lemma}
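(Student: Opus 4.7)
My plan is to construct $g$ by transfinite recursion on $\beta < \omega^\gamma$, producing a compatible family of quasi-embeddings $g_\beta : \alpha^\beta \to \gaps_{\omega^\gamma + \delta}$ (satisfying $g_\beta|_{\alpha^{\beta'}} = g_{\beta'}$ for every $\beta' < \beta$) whose union is the desired $g$. The base is $g_0(0) := \langle \rangle$, and at any limit stage the compatibility condition forces $g_\beta$ to be $\bigcup_{\xi < \beta} g_\xi$, so the whole argument concentrates in the successor step.

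For the successor step, every $\sigma \in \alpha^{\xi+1}$ has a unique base-$\alpha$ decomposition $\sigma = \alpha^\xi \cdot \rho + \tau$ with $\rho < \alpha$ and $\tau < \alpha^\xi$. When $\rho = 0$, I set $g_{\xi+1}(\sigma) := g_\xi(\tau)$, which immediately delivers the required compatibility and starting property. When $\rho > 0$, I adjoin to $g_\xi(\tau)$ a ``level-$\xi$ block'' that begins with the separator $\xi$, followed by a $\langle 0 \rangle$-padding and then $f(\rho)$. Because each $\xi$ that is ever used as a separator satisfies $\xi < \omega^\gamma$, the first entry of any nonempty $g_\beta(\sigma)$ is strictly below~$\omega^\gamma$, as claimed. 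The role of the $\langle 0 \rangle$-padding is to activate Lemma~\ref{lem:gap_strong_and_weak}: a strong-gap comparison of two padded blocks reduces, once the separators are matched, to a weak-gap comparison of $f(\rho)$ against $f(\rho')$, from which $\rho \leq \rho'$ follows by the hypothesis that $f$ is a quasi-embedding.

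The quasi-embedding property of $g_{\xi+1}$ is then proved by induction on the combined lengths of the base-$\alpha$ Cantor normal forms of $\sigma$ and~$\sigma'$. Assuming $g_{\xi+1}(\sigma) \leq_\strong g_{\xi+1}(\sigma')$, I apply the splitting lemma (Lemma~\ref{lem:gap_split_weak_weak_or_strong}) together with the head-removal lemmas (Lemma~\ref{lem:gap_remove_head} and Lemma~\ref{lem:gap_remove_head_weak}) to isolate a leading-block comparison from a tail comparison; the former yields $\rho \leq \rho'$ as above, while the latter yields $\tau \leq \tau'$ by the inductive hypothesis, which combine to $\sigma \leq \sigma'$ by the definition of base-$\alpha$ Cantor normal forms.

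The principal obstacle I anticipate is ensuring that the splitting lemma produces the \emph{correct} alignment between source and target blocks. Because $f$ may return sequences containing arbitrarily large values below $\omega^\gamma + \delta$, the source's leading separator $\xi$ need not automatically be matched with a target separator; in principle it could be matched with a large value sitting inside some $f(\rho'_j)$. Overcoming this will require enforcing a block-boundary signature that the outer and inner gap conditions of the strong gap order preclude elsewhere in the target: the $\langle 0 \rangle$-padding is a first attempt in this direction, but finer marking (for instance duplicating the separator, as in $\langle \xi, \xi \rangle$) may ultimately be necessary, and the verification is likely to constitute the technical core of the proof.
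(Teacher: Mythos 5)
Your overall skeleton (recursion on base-$\alpha$ normal forms, reading the leading exponent off the first entry via the outer gap condition, then the coefficient, then the tail) matches the paper, but the block format you propose does not work, and the obstacle you flag at the end is not a technicality to be deferred --- it is fatal to the construction as stated. Concretely, take $\gamma = 1$, $\delta = 0$, $\alpha = \omega$ and $f(n) := \langle n \rangle$. With your blocks (separator, $\langle 0 \rangle$-padding, then an \emph{unshifted} copy of $f(\rho)$) one gets $g(\omega^2 \cdot 5) = \langle 2, 0, 5 \rangle$ and $g(\omega^2 + \omega \cdot 5) = \langle 2, 0, 1, 1, 0, 5 \rangle$, and the map $0 \mapsto 0$, $1 \mapsto 4$, $2 \mapsto 5$ witnesses $\langle 2,0,5 \rangle \leq_\strong \langle 2,0,1,1,0,5 \rangle$ (the inner gap $\langle 0,1,1 \rangle$ only has to dominate the member $0$), although $\omega^2 \cdot 5 > \omega^2 + \omega \cdot 5$. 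So order reflection fails: exactly as you feared, the entries of $f(\rho)$ in the source block latch onto a lower-level block of the target. Your proposed repair of duplicating the separator ($\langle \xi, \xi \rangle$) does not help --- the same counterexample goes through with $\langle 2,2,0,5 \rangle \leq_\strong \langle 2,2,0,1,1,1,0,5 \rangle$ --- because the problem is not the marking of block boundaries but that a block may contain members \emph{smaller} than the levels of later blocks.

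The missing idea in the paper is to shift the \emph{entire} block by the exponent: for $\sigma \alphanf \alpha^{\beta} \cdot \kappa + \lambda$ one sets $g(\sigma) := \bigl(\beta + \langle 0 \rangle * f(\kappa)\bigr) * g(\lambda)$, where $\beta + s$ adds $\beta$ on the left to every member of $s$ (this stays inside $\omega^\gamma + \delta$ because $\omega^\gamma$ is additively indecomposable and $\beta < \omega^\gamma$). Then every member of the level-$\beta$ block is $\geq \beta$, while each later block begins with an exponent strictly below $\beta$; this stratification is precisely what lets Lemma~\ref{lem:gap_remove_tail} cut off the leading blocks (so the source block must embed into the target block alone, eliminating your alignment problem) and Lemma~\ref{lem:gap_remove_head} cancel equal leading blocks to compare tails. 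The coefficient comparison is then recovered via Lemma~\ref{lem:left_subtraction}~b) and Lemma~\ref{lem:gap_strong_and_weak}, giving $f(\kappa) \leq_\weak f(\kappa')$ and hence $\kappa \leq \kappa'$. Your transfinite-recursion scaffolding with compatible restrictions is harmless but unnecessary (a direct definition by normal-form recursion suffices); the essential content you would still have to supply is this level shift, without which no amount of padding or separator marking makes the verification go through.
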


\begin{proof}
	If~${\alpha < 2}$ holds, the statement is trivial. For~${\alpha \geq 2}$, we know that any ordinal can be written in base-$\alpha$ notation.
	We define
	\begin{equation*}
		g(\sigma) :=
		\begin{cases}
			\langle \rangle & \text{if } \sigma = 0\comma\\
			(\beta + \langle 0 \rangle * f(\kappa)) * g(\lambda)& \text{if } \sigma \alphanf \alpha^\beta \cdot \kappa + \lambda\period
		\end{cases}
	\end{equation*}
	Note that since~${\omega^\gamma}$ is indecomposable, adding~${\beta}$ to~${f(\kappa)}$ produces a sequence whose members are, again, strictly below~${\omega^\gamma + \delta}$. Moreover, in the second line of our definition, it becomes clear that sequences in the range of~${g}$ can only begin with ordinals strictly below~${\omega^\gamma}$.
	
	We prove that~${g}$ is a quasi-embedding. Let~${\sigma, \tau < \alpha^{(\omega^\gamma)}}$ be ordinals satisfying~${g(\sigma) \leq_\strong g(\tau)}$. By induction along~${\sigma \hess \tau}$, we prove that this entails~${\sigma \leq \tau}$. If~${\tau = 0}$, our assumption implies~${\sigma = 0}$. Thus, we assume that~${\sigma}$ and~${\tau}$ are both positive. We write~${\sigma \alphanf \alpha^{\beta} \cdot \kappa + \lambda}$ and~${\tau \alphanf \alpha^{\beta'} \cdot \kappa' + \lambda'}$. Since we are working with the \emph{strong} variant of gap-sequences, our assumption~${g(\sigma) \leq_\strong g(\tau)}$ implies that the first element in~${g(\sigma)}$ must be less than or equal to the first element in~${g(\tau)}$. We conclude~${\beta \leq \beta'}$. Now, if~${\beta < \beta'}$ holds, we immediately have our claim~${\sigma \leq \tau}$. Assume~${\beta = \beta'}$. Either,~${g(\lambda')}$ is empty or it must begin with an element that lies strictly below~${\beta}$ by definition of our normal forms. Therefore, Lemma~\ref{lem:gap_remove_tail} yields that~${\beta + \langle 0 \rangle * f(\kappa) \leq_\strong \beta + \langle 0 \rangle * f(\kappa')}$ holds. With Lemma~\ref{lem:left_subtraction}~b) and Lemma~\ref{lem:gap_strong_and_weak}, this results in~${f(\kappa) \leq_\weak f(\kappa')}$ and, by assumption on~${f}$, in~${\kappa \leq \kappa'}$. Again, if this inequality is strict, we are done. Assume~${\kappa = \kappa'}$. Notice that~${g(\sigma)}$ and~${g(\tau)}$ begin with the same sequence followed by~${g(\lambda)}$ and~${g(\lambda')}$, respectively. Since both~${g(\lambda)}$ and~${g(\lambda')}$ begin with a member that lies strictly below all members in~${\beta + \langle 0 \rangle * f(\kappa)}$, we can invoke Lemma~\ref{lem:gap_remove_head}, which yields~${g(\lambda) \leq_\strong g(\lambda')}$. Finally, by induction hypothesis, we have~${\lambda \leq \lambda'}$.
\end{proof}
In the previous lemma, we did not make use of the whole codomain of our quasi-embedding. With the next lemma, we fill these last gaps:
\begin{lemma}
	Let~${\alpha, \beta, \gamma, \delta}$ be such that there are quasi-embeddings~${f: \alpha \to \gaps_{\omega^{\gamma} + \delta}}$ and~${g: \beta \to \gaps_{\delta}}$ where all sequences in the range of~${f}$ can only begin with ordinals strictly below~${\omega^\gamma}$. Then, there is also a map~${h: \alpha \cdot \beta \to \gaps_{\omega^\gamma + \delta}}$ that is a quasi-embedding.
\end{lemma}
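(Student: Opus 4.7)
The plan is to set $h(\alpha \cdot \kappa + \mu) := (\omega^\gamma + g(\kappa)) * f(\mu)$ for $\kappa < \beta$ and $\mu < \alpha$, using the uniqueness of the base-$\alpha$ expansion $\sigma = \alpha \cdot \kappa + \mu$ of any $\sigma < \alpha \cdot \beta$. Each shifted member $\omega^\gamma + (g(\kappa))_i$ lies in $[\omega^\gamma, \omega^\gamma + \delta)$ since $g(\kappa) \in \gaps_\delta$, while the members of $f(\mu)$ are already in $\omega^\gamma + \delta$, so $h(\sigma)$ is a valid element of $\gaps_{\omega^\gamma + \delta}$. The key design choice is to place the shifted block \emph{in front of} $f(\mu)$: every member of the shifted block is at least $\omega^\gamma$, whereas $f(\mu)$ opens with an ordinal strictly below $\omega^\gamma$ by the hypothesis on the range of $f$.

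To verify that $h$ is a quasi-embedding, I suppose $h(\sigma) \leq_\strong h(\tau)$ for $\sigma = \alpha \cdot \kappa + \mu$ and $\tau = \alpha \cdot \kappa' + \mu'$ and derive $\sigma \leq \tau$. First I invoke Lemma~\ref{lem:gap_split_weak_weak_or_strong} to decompose $h(\sigma) = u * v$ with $u \leq_\strong \omega^\gamma + g(\kappa')$ and $v \leq_\strong f(\mu')$. Whenever $v$ is nonempty, the strong outer gap condition (combined with property i) of Definition~\ref{def:strong_gap_sequence}) forces $(v)_0 \leq (f(\mu'))_0 < \omega^\gamma$, so $v$ cannot begin inside the shifted block $\omega^\gamma + g(\kappa)$ of $h(\sigma)$. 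Hence $u$ always contains $\omega^\gamma + g(\kappa)$ as a prefix; say $u = (\omega^\gamma + g(\kappa)) * p$ for some (possibly empty) prefix $p$ of $f(\mu)$.

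Applying Lemma~\ref{lem:gap_remove_tail} with $s_l := \omega^\gamma + g(\kappa)$, $s_r := p$, $t_l := \omega^\gamma + g(\kappa')$, and $t_r := \langle \rangle$ then yields $\omega^\gamma + g(\kappa) \leq_\strong \omega^\gamma + g(\kappa')$, which Lemma~\ref{lem:left_subtraction}~b) (in its strong-gap version) reduces to $g(\kappa) \leq_\strong g(\kappa')$ in $\gaps_\delta$; since $g$ is a quasi-embedding, $\kappa \leq \kappa'$. If $\kappa < \kappa'$, then $\sigma < \alpha \cdot (\kappa+1) \leq \alpha \cdot \kappa' \leq \tau$ holds automatically. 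If $\kappa = \kappa'$, the hypothesis becomes $(\omega^\gamma + g(\kappa)) * f(\mu) \leq_\strong (\omega^\gamma + g(\kappa)) * f(\mu')$, and Lemma~\ref{lem:gap_remove_head} applies because $f(\mu')$ is either empty or begins with an ordinal strictly below $\omega^\gamma$, hence strictly below every member of $\omega^\gamma + g(\kappa)$; this gives $f(\mu) \leq_\strong f(\mu')$, so $\mu \leq \mu'$ and $\sigma \leq \tau$.

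The main subtlety will be justifying why the shifted block must be placed in front of $f(\mu)$ rather than after. With the reverse concatenation $f(\mu) * (\omega^\gamma + g(\kappa))$, the split from Lemma~\ref{lem:gap_split_weak_weak_or_strong} could occur inside $f(\mu)$ — because $f(\mu)$ is allowed to contain members of size $\geq \omega^\gamma$ after its initial position — and parts of the shifted $g(\kappa)$-block could then be matched into such large members of $f(\mu')$, so no separation of the two factors would be forced. The chosen order turns the outer gap condition into a clean separator between the shifted $g$-block and the $f$-block and makes Lemma~\ref{lem:gap_remove_tail} with an empty tail directly applicable.
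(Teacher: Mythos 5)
Your proposal is correct and takes essentially the same route as the paper: the same definition of $h$, then Lemma~\ref{lem:gap_remove_tail} to isolate the shifted $g$-block, Lemma~\ref{lem:left_subtraction}~b) (strong version) to get $g(\kappa) \leq_\strong g(\kappa')$, and Lemma~\ref{lem:gap_remove_head} for the case $\kappa = \kappa'$. The only difference is your preliminary detour through Lemma~\ref{lem:gap_split_weak_weak_or_strong}; the paper avoids it by applying Lemma~\ref{lem:gap_remove_tail} directly to $h(\sigma) \leq_\strong h(\tau)$ with $t_r := f(\mu')$, which is permitted since $f(\mu')$ is empty or begins strictly below $\omega^\gamma$ and hence below every member of the shifted block.
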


\begin{proof}
	We simply put~${h(\alpha \cdot \beta' + \alpha') := (\omega^\gamma + g(\beta')) * f(\alpha')}$ and prove that this constitutes a quasi-embedding: Let there be two elements of~${\alpha \cdot \beta}$ given by~${\alpha \cdot \beta_0 + \alpha_0}$ and~${\alpha \cdot \beta_1 + \alpha_1}$ for ordinals~${\alpha_0, \alpha_1 < \alpha}$ and~${\beta_0, \beta_1 < \beta}$. We assume that the inequality~${h(\alpha \cdot \beta_0 + \alpha_0) \leq_\strong h(\alpha \cdot \beta_1 + \alpha_1)}$ holds and show~${\alpha \cdot \beta_0 + \alpha_0 \leq \alpha \cdot \beta_1 + \alpha_1}$:
	
	First, we notice that, by definition,~${f(\alpha_1)}$ must be empty or begin with an element that lies strictly below~${\omega^\gamma}$. Moreover,~${\omega^\gamma + g(\beta_0)}$ clearly only consists of elements greater than or equal to~${\omega^\gamma}$. Thus, by Lemma~\ref{lem:gap_remove_tail}, we conclude the inequality~${\omega^\gamma + g(\beta_0) \leq_\strong \omega^\gamma + g(\beta_1)}$ and, therefore, also~${g(\beta_0) \leq_\strong g(\beta_1)}$ and, finally,~${\beta_0 \leq \beta_1}$. Now, if this inequality is strict, we are done. Otherwise, we continue with~${\beta_0 = \beta_1}$, which clearly entails~${\omega^\gamma + g(\beta_0) = \omega^\gamma + g(\beta_1)}$. Hence, by Lemma~\ref{lem:gap_remove_head}, we conclude~${f(\alpha_0) \leq_\strong f(\alpha_1)}$. Finally, since~${f}$ is a quasi-embedding, this entails~${\alpha_0 \leq \alpha_1}$ and we arrive at our claim~${\alpha \cdot \beta_0 + \alpha_0 \leq \alpha \cdot \beta_1 + \alpha_1}$.
\end{proof}
Finally, we combine everything and verify the lower bounds~${H(\alpha) \leq o(\gaps_\alpha)}$, for any ordinal~${\alpha}$:
\begin{corollary}
	Let~${\alpha, \beta, \gamma, \delta}$ be ordinals as well as quasi-embeddings~${\alpha \to \gapw_{\omega^\gamma + \delta}}$ and~${\beta \to \gaps_{\delta}}$. Then, we can also construct a quasi-embedding~${\alpha^{(\omega^\gamma)} \cdot \beta \to \gaps_{\omega^\gamma + \delta}}$.
\end{corollary}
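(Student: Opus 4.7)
The plan is to simply compose the two lemmas that immediately precede the corollary, which have been set up precisely so that their conclusions and hypotheses match.

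First, I would take the given quasi-embedding $\alpha \to \gapw_{\omega^\gamma + \delta}$ and feed it into the first of the two preceding lemmas. That yields a quasi-embedding $f : \alpha^{(\omega^\gamma)} \to \gaps_{\omega^\gamma + \delta}$ with the extra property that every sequence in the range of $f$ is either empty or begins with an ordinal strictly below $\omega^\gamma$. This is exactly the side condition required as input to the second lemma.

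Second, I would apply the second lemma with this $f$ together with the given quasi-embedding $\beta \to \gaps_\delta$. Instantiating the statement of that lemma with ``$\alpha$'' replaced by $\alpha^{(\omega^\gamma)}$ produces a quasi-embedding $\alpha^{(\omega^\gamma)} \cdot \beta \to \gaps_{\omega^\gamma + \delta}$, which is exactly the claim of the corollary.

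There is no real obstacle here: the entire non-trivial work has been carried out in the two lemmas (the careful left-subtraction arguments using Lemmas~\ref{lem:gap_remove_head} and~\ref{lem:gap_remove_tail}, and the verification that the constructed maps reflect order). The only thing to check is that the types line up, namely that ``$\alpha$'' in the hypothesis of the second lemma can be taken to be $\alpha^{(\omega^\gamma)}$, which is immediate from the side condition delivered by the first lemma. Thus the proof should be essentially a one-line invocation of the two lemmas in sequence.
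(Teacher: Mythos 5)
Your proposal is correct and matches the paper's proof exactly: the paper also just chains the two preceding lemmas, feeding the output of the first (which supplies precisely the range condition the second requires) into the second. Nothing further is needed.
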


\begin{proof}
	Combine the previous two lemmas.
\end{proof}

\bibliographystyle{amsplain}
\bibliography{reification}

\end{document}